\numberwithin{equation}{section}
\theoremstyle{definition}
\newtheorem{theorem}{Theorem}[section]
\newtheorem{proposition}[theorem]{Proposition}
\newtheorem{corollary}[theorem]{Corollary}
\newtheorem{lemma}[theorem]{Lemma}
\newtheorem{assumption}[theorem]{Assumption}
\newtheorem{remark}[theorem]{Remark}
\newtheorem{definition}[theorem]{Definition}
\newtheorem{example}[theorem]{Example}
\newcommand{\revised}[1]{{\color{blue}#1}}
\providecommand{\keywords}[1]
{
  \small	
  \textbf{\textit{Keywords---}} #1
}
\title{Koopman Operators with Intrinsic Observables in \\
Rigged Reproducing Kernel Hilbert Spaces}
\date{}
\author[1,4]{Isao Ishikawa\thanks{ishikawa.isao.5s@kyoto-u.ac.jp}}
\author[2,4]{Yuka Hashimoto\thanks{yuka.hashimoto@ntt.com}}
\author[3,4]{Masahiro Ikeda\thanks{ikeda@ist.osaka-u.ac.jp}}
\author[3,4]{Yoshinobu Kawahara\thanks{kawahara@ist.osaka-u.ac.jp}}
\affil[1]{Kyoto University}
\affil[2]{NTT, Inc.}
\affil[3]{The University of Osaka}
\affil[4]{RIKEN}
\begin{document}
\maketitle

\begin{abstract}
This paper presents a novel approach for estimating the Koopman operator defined on a reproducing kernel Hilbert space (RKHS) and its spectra. 
We propose an estimation method, what we call {\em Jet Extended Dynamic Mode Decomposition (JetEDMD)}, leveraging the intrinsic structure of RKHS and the geometric notion known as jets to enhance the estimation of the Koopman operator. 
This method refines the traditional Extended Dynamic Mode Decomposition (EDMD) in accuracy, especially in the numerical estimation of eigenvalues.
This paper proves JetEDMD's superiority through explicit error bounds and convergence rate for special positive definite kernels, offering a solid theoretical foundation for its performance. 
We also investigate the spectral analysis of the Koopman operator, proposing the notion of an extended Koopman operator within a framework of a rigged Hilbert space. This notion leads to a deeper understanding of estimated Koopman eigenfunctions and capturing them outside the original function space. 
Through the theory of rigged Hilbert space, our study provides a principled methodology to analyze the estimated spectrum and eigenfunctions of Koopman operators, and enables eigendecomposition within a rigged RKHS. 
We also propose a new effective method for reconstructing the dynamical system from temporally-sampled trajectory data of the dynamical system with solid theoretical guarantee. 
We conduct several numerical simulations using the van der Pol oscillator, the Duffing oscillator, the H\'enon map, and the Lorenz attractor, and illustrate the performance of JetEDMD with clear numerical computations of eigenvalues and accurate predictions of the dynamical systems.
\end{abstract}

\keywords{Koopman operator, composition operator, dynamical system, dynamic mode decomposition, Gelfand triple, rigged Hilbert space, reproducing kernel Hilbert space}

\section{Introduction}\label{sec: introduction}

The Koopman operator is ``linearization'' of a dynamical system introduced in \cite{Koo-1931} for analyzing a nonlinear dynamical system as a linear transform on a function space. 
In recent years, the Koopman operator has significantly evolved as a method in the data-driven analysis of dynamical systems, pioneered by \cite{Mezic2005, MezicBanaszuk2004} (see \cite{doi:10.1137/21M1401243} for a recent review).
The Koopman operator is also known as the {\em composition operator} and has developed at the interface of operator theory and analytic function theory, originating from \cite{Ryff66, Nordgren_1968}.
Over the past few decades, research has spanned various fields in analysis in mathematics, including real analysis, complex analysis, and harmonic analysis.

Dynamic Mode Decomposition (DMD) has been actively studied as a data-driven estimation method for the Koopman operator.
DMD was originally introduced as a numerical method to factorize complex dynamics into simple and essential components in fluid dynamics \cite{Schmid2009, SCHMID_2010}, and its connection with the Koopman operator has been discussed \cite{ROWLEY_MEZIC_BAGHERI_SCHLATTER_HENNINGSON_2009}.
Since DMD can be considered as a finite-dimensional approximation of the Koopman operator, linear algebraic operations such as eigendecomposition of the approximation matrix enable the decomposition of a dynamical system into sums of simple components  and the extraction of an essential component governing its complex behavior.
As a result, DMD and Koopman analysis have been attracting attention in recent years as a data-driven analysis method for dynamical systems, leading to a vast number of applications across a wide range of fields,  
for example, fluid dynamics \cite{Giannakis_Kolchinskaya_Krasnov_Schumacher_2018,SCHMID_2010,ROWLEY_MEZIC_BAGHERI_SCHLATTER_HENNINGSON_2009, Mezic2013_fluid}, epidemiology \cite{Proctor2015-xa}, neuroscience \cite{Brunton2015-zp, MS20}, plasma physics \cite{RKMNB18, KMHB20},  quantum physics \cite{Klus_2022}, finance \cite{MK16}, robotics \cite{BSVJA15, AM19, Vasudevan-RSS-19, BFGRV21}, the power grid \cite{Susuki2011, Susuki11}, and machine learning \cite{Kawahara, TKY17, TKY17-2, Takeishi_Kawahara_2021, TFTK22, kostic2023sharp, bevanda2023koopman, meanti2023estimating, ohnishi2021koopman, giannakis2023koopman}.
See \cite{Schmid22} for recent overview of variants of DMD.

Among the many extensions of DMD, Extended DMD (EDMD) \cite{Williams2015} is known as one of the most general and basic extensions of DMD and provides a foundation for various DMD-based methods.
EDMD can be described as a method for estimating the Koopman operator based on a set of observables with trajectory data from a dynamical system, and the Koopman operator is approximated through simple linear algebraic operations using the observables and the trajectory data.
Here, it is important to note that, in estimating the Koopman operator, at least implicitly, one function space containing the observables is fixed, and the Koopman operator is considered to be defined on the fixed function space.


A problem lies in the fact that, for a given set of observables, there may exist (infinitely) many function spaces that can include these observables, while the mathematical properties of the Koopman operator can drastically differ depending on the function space where it acts.
For example, on $L^2$-spaces with invariant measures, they act as unitary operators, whereas on other spaces, they might not even be bounded operators.
As mentioned above, the Koopman operator has been extensively studied in mathematical literature.
For example, there exist many studies on the operators' fundamental properties such as boundedness (see, for example, \cite{Zh07, CM95, Taniguchi2023, ISHIKAWA2023109048, SS17} and references therein).
These studies also have shown that the properties of Koopman operators drastically change depending on the choice of function spaces.
That is, EDMD actually captures information of the Koopman operator on a specific function space determined by the chosen observables.
This problem is continuous with the major challenges faced by DMD-based methods, such as spectral pollution, invariant subspaces, continuous spectrum, and the treatment of chaotic dynamics, as mentioned in \cite{colbrook_ayton_szoke_2023,ColbrookTownsend24}.
Therefore, we require a detailed analysis of both observables and function spaces.
However, in previous studies, the relationship between observables and function spaces has not been sufficiently discussed.

In this paper, we focus on the Koopman operator defined on a reproducing kernel Hilbert space (RKHS) and propose a fundamental solution to the significant challenge on the estimation of the Koopman operator by mathematically analyzing the relationship between the function spaces and observables.
An RKHS is formally defined as a Hilbert space composed of functions where any point evaluation is a continuous linear functional.
RKHS provides a general and effective theoretical framework through functional analysis, establishing a solid field in mathematics \cite{MR3560890_SaitohSawano} and machine learning \cite{HSS08}, with applications for various problems within each domain.
The Koopman operator on RKHS has been studied in complex analysis, and rapidly developed and extended in machine learning and data analysis in recent years \cite{Kawahara,HIIMK20, DAS2020573, Klus2020, HIIKKK21, IIS22, hashimoto2024koopmanbased, kostic2023sharp, bevanda2023koopman, meanti2023estimating, ohnishi2021koopman, FUJII201994}. 

The core notion we introduced is the space of {\em intrinsic observables} for estimating the Koopman operator in RKHS, constructed utilizing a geometric structure called jet on a fixed point of the dynamical system.
Jet is a geometric formulation of the Taylor expansion, providing a geometrically canonical object on a manifold as a higher-order counterpart of the tangent bundle (see, for example, \cite[Section 4]{KPS_NatOpMan93}).

Leveraging the intrinsic observables, we propose a new estimation method, referred to as {\em Jet Extended Dynamic Mode Decomposition (JetEDMD)}, that provides refinement of a special class of EDMDs, such as EDMD with monomials, and significantly improves the estimation performance of the Koopman operator (see also Section \ref{sec:NumericalSimulation} for its detailed algorithms and numerical results\footnote{The code for the numerical simulation is available at \url{https://github.com/1sa014kawa/JetEDMD}.}).
For instance, JetEDMD provides much more precise depiction in numerical estimation of eigenvalues than EDMD as shown in Figure \ref{fig: difference of spectra quadratic map}.
It is also noteworthy that we obtain a new interpretation of EDMD within the framework of JetEDMD.
For example, EDMD using monomials can be rephrased as ``JetEDMD without truncation using an exponential kernel for a dynamical system with a fixed point at the origin.''

\begin{figure}[t]
    \captionsetup{width=1.0\linewidth}
    \centering
    \includegraphics[keepaspectratio, width=0.7\linewidth]{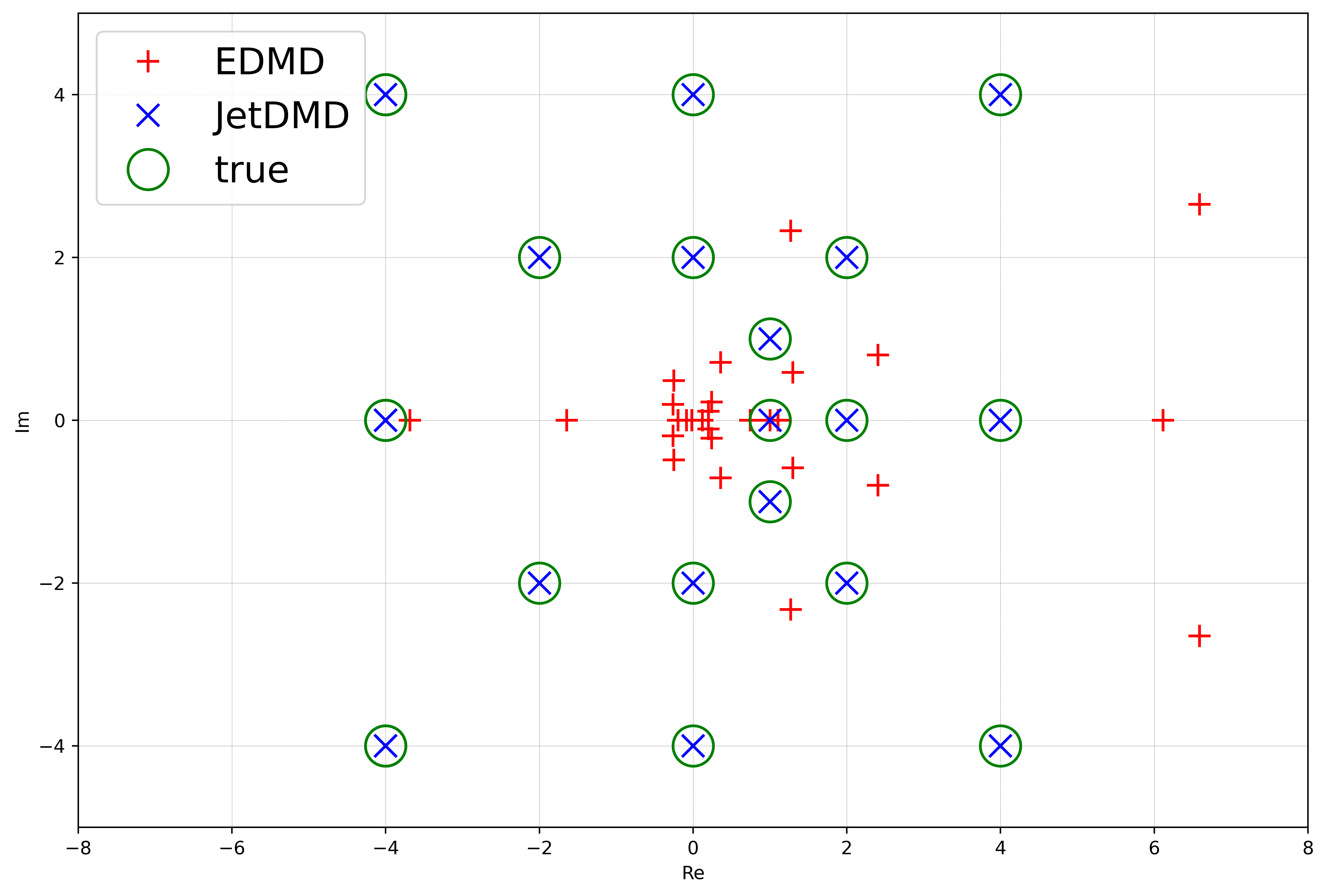} 
    \caption{
    The comparison of the computed eigenvalues of the dynamical system $f(x,y) := (x^2 - y^2 + x - y, 2xy + x + y)$ on $\mathbb{R}^2$ using data of $100$ pairs of sample from the uniform distribution on $[-1,1]^2$ and their images under $f$.
    The red $+$'s indicate the eigenvalues computed via EDMD using monomials of degree up to $10$.
    The blue $\times$'s indicate the estimated eigenvalues with JetEDMD.
    The green circles indicate set $\{\lambda_+^m\lambda_-^n\}_{m + n \le 5}$, where $\lambda_\pm = 1 \pm \mathrm{i}$ are the eigenvalues of the Jacobian matrix of $f$ at the origin.
    }
    \label{fig: difference of spectra quadratic map}
\end{figure}

Furthermore, we elucidate the mathematical machinery behind the performance of JetEDMD as described in Figure \ref{fig: difference of spectra quadratic map}.
In fact, the superior performance of JetEDMD is achieved through the surprisingly simple operation of the ``truncation to a leading principal submatrix'', and our paper clarifies the reason why this works.
We provide an explicit error bound for JetEDMD, and for the Gaussian kernel and the exponential kernel, we prove the convergence of JetEDMD with explicit convergence rate.
As a result, we show that eigenvalues and eigenvectors estimated by JetEDMD actually converge to the corresponding theoretical counterparts.
It implies that JetEDMD provides an alternative solution for the spectral pollution that always appears in the estimation of the Koopman operator.
We also emphasize that we do not impose the boundedness of the Koopman operator, and our proofs are based on a detailed mathematical analysis in RKHS.
Although some convergence results for EDMD proved in \cite{Korda2018}, their results do not cover ours because they essentially impose the boundedness on the Koopman operator and basically obtain a weak convergence of eigenvectors and eigenvalues.

In addition, JetEDMD provides a novel interpretation for the ``Koopman eigenfunctions'' estimated using observables and rigorous eigendecomposition of the ``extended'' Koopman operator.
In DMD-based methods including EDMD, the interpretation of the estimated eigenvalues and eigenfunctions has always been a significant issue, and it is an essential element in extracting quantitative information from complex and chaotic dynamical systems.
When the Koopman operator does not preserve the space of observables, the question of what the eigenfunctions estimated by EDMD represent has always been a problem.
As an answer to this problem, we show that the ``Koopman eigenfunctions'' generally exist outside the function space and that the space to which they truly belong can be captured using the theory of rigged Hilbert space.
The rigged Hilbert space is defined as a Hilbert space equipped with a dense linear subspace with a finer topology and embedded into the dual space of the dense subspace.
The triple composed of the Hilbert space, the dense linear subspace,  and its dual space is referred to the Gelfand triple.
For more detail of the accurate formulation and application, see Section \ref{sec: generalized spectrum}, \cite{BG89, CHIBA2015324, Gel2, Gel, CII} and references therein. 

More precisely, we show that JetEDMD actually approximates the ``extended'' Koopman operator, not Koopman operator itself, in the rigged RKHS defined by a Gelfand triple constructed from the space of intrinsic observables and its dual space.
This ``extended'' Koopman operator is defined as the dual operator of the Perron-Frobenius operator on the space of the observables, inducing a continuous linear map on a linear topological space (not function space) that includes the original RKHS, and its eigenvectors are not necessarily functions.
Our framework provides a theoretical methodology to deal with the estimated ``Koopman eigenfunctions'' using the observables as the approximation of the eigenvector of the ``extended'' Koopman operator in the rigged RKHS whether Koopman operator preserves the space of observables or not.
We also note that considering the Koopman operators as the dual of Perron-Frobenius operator also appears in \cite{SLIPANTSCHUK2020105179} and they have rigorous convergence result for EDMD for analytic maps on the circle.
Although the spectrum of the original Koopman operator on RKHS is quite complicated and can include continuous spectrum, the spectrum of the ``extended'' Koopman operator is much simpler and clearer, and it is possible to prove the Jordan-Chevalley decomposition and the eigendecomposition of the ``extended'' Koopman operator in the dual space of the observables.
Therefore, our framework provides an effective approach to analyze the estimated eigenvectors and the spectrum of the Koopman operators via the theory of rigged Hilbert space.


As an application of JetEDMD, we propose a method for reconstructing the original dynamical system from temporally-sampled trajectory data or vector field data of the dynamical system.
As shown in Figure \ref{fig: intro_reconstruction}, our method is capable of accurately reconstructing the chaotic system such as the Lorenz attractor from temporally-sampled trajectory data.
Our method also significantly generalizes the lifting method proposed in \cite{MauroyGoncalves2020} based on JetEDMD, and we theoretically prove the reconstruction capability of a broad class of analytic dynamical systems with arbitrary precision if we have sufficient number of samples and computational resources.
\begin{figure}
\begin{minipage}[c]{1.0\linewidth}
    \begin{minipage}[c]{0.24\linewidth}
      \includegraphics[keepaspectratio, width=\linewidth]{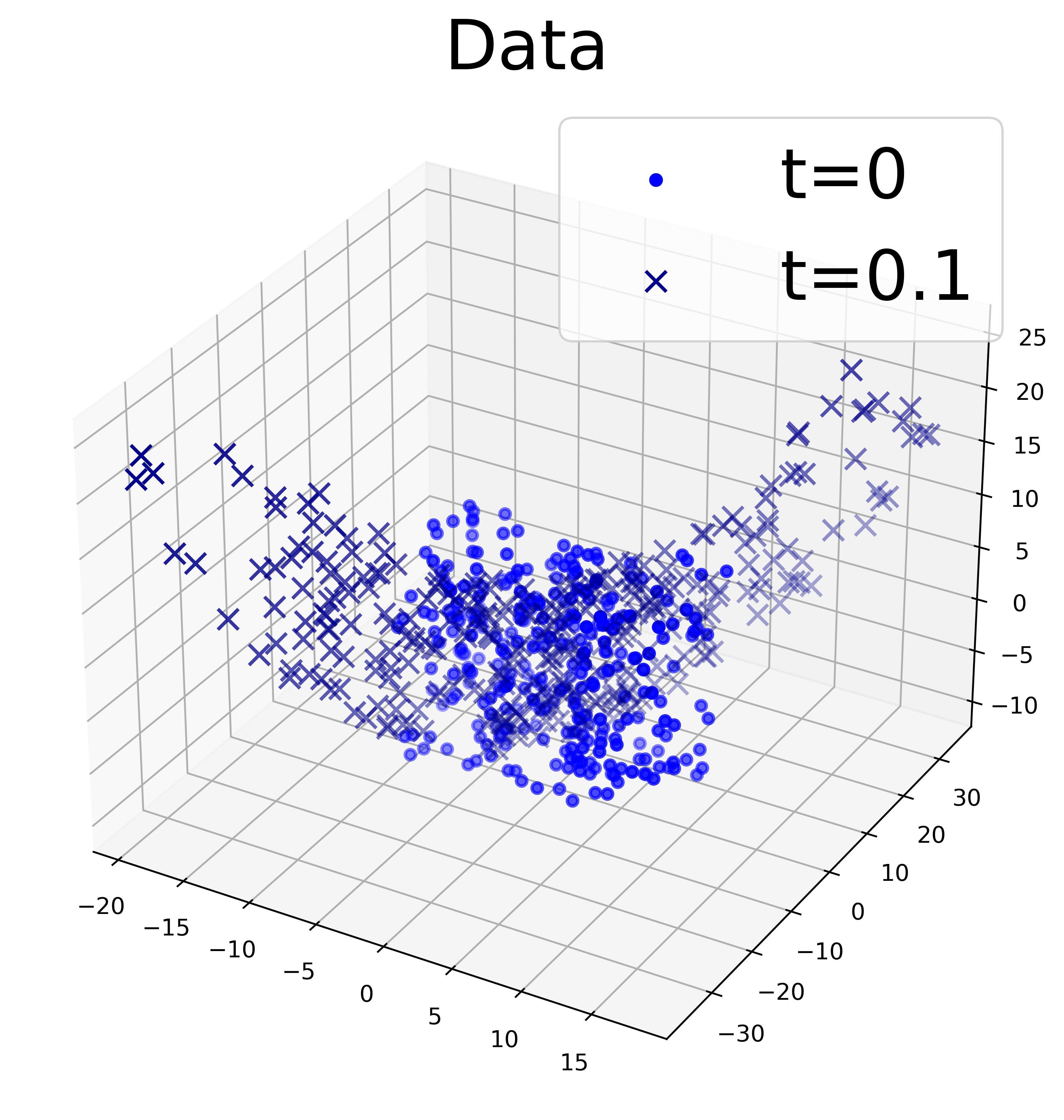}
    \end{minipage}\hfill 
    \begin{minipage}[c]{0.24\linewidth}
      \includegraphics[keepaspectratio, width=\linewidth]{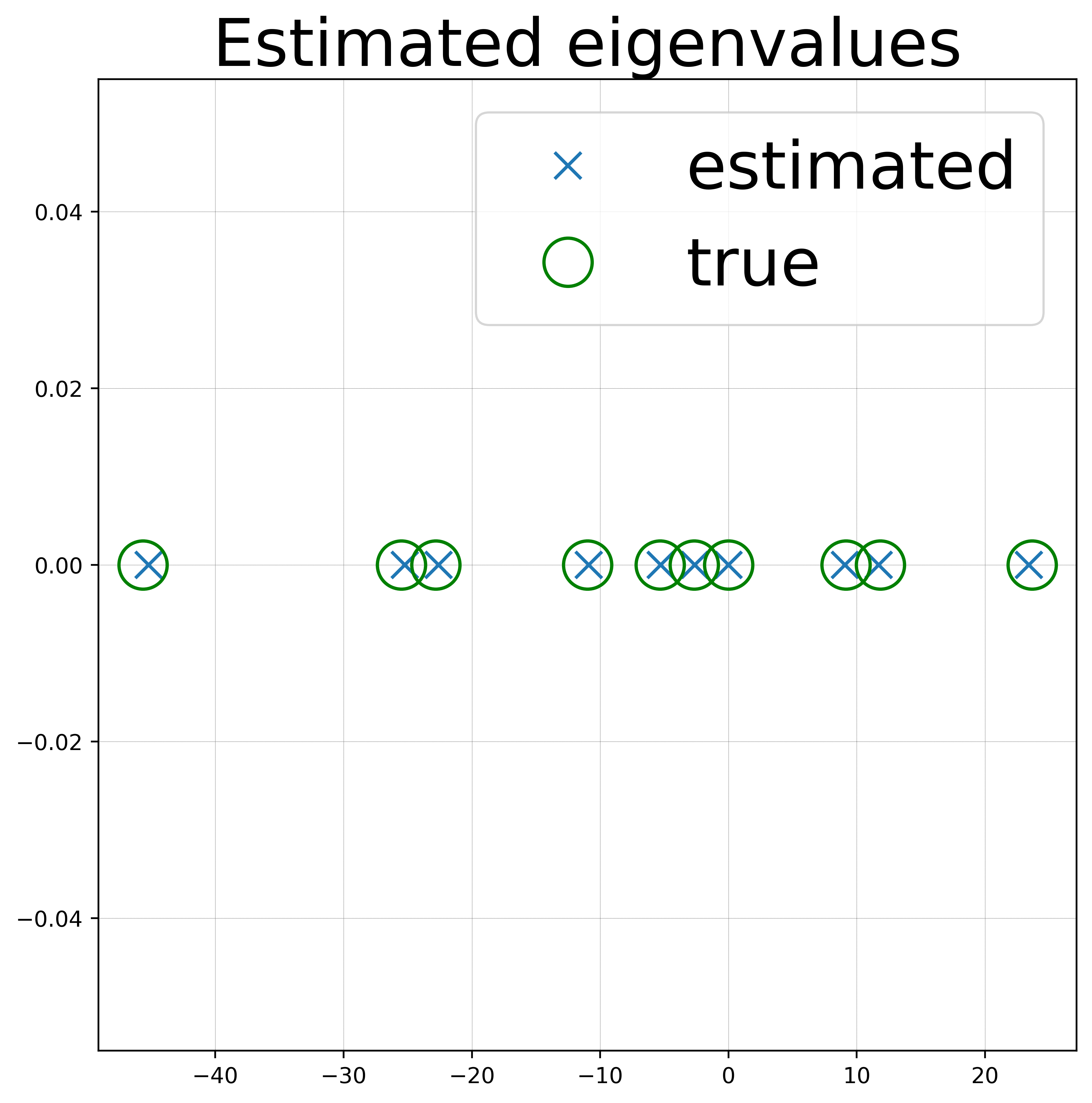}
    \end{minipage}\hfill 
    \begin{minipage}[c]{0.24\linewidth}
      \includegraphics[keepaspectratio, width=\linewidth]{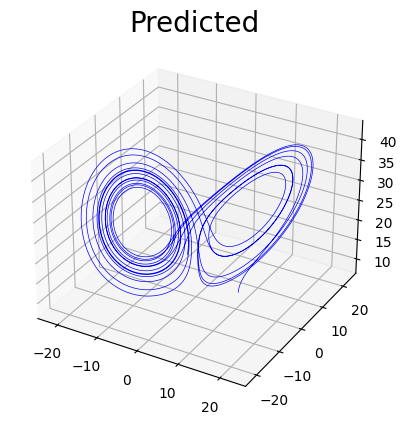}
    \end{minipage}\hfill 
    \begin{minipage}[c]{0.24\linewidth}
      \includegraphics[keepaspectratio, width=\linewidth]{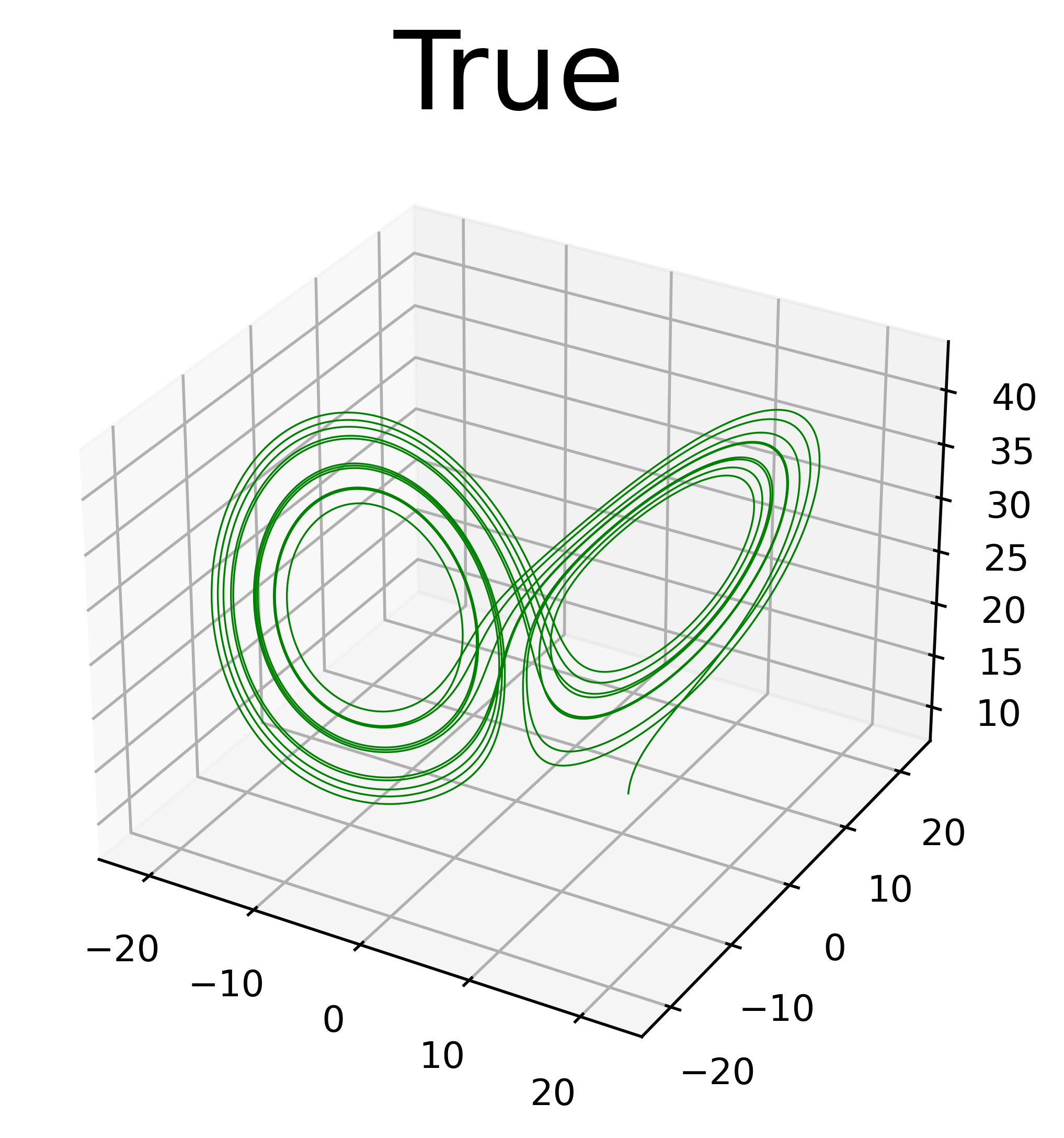}
    \end{minipage}
\end{minipage}
\caption{
Illustration of the data-driven reconstruction of the Lorenz system with Algorithm \ref{algorithm: reconstruction, conti. with disc.}, corresponding to the right images of Figure \ref{fig: reconstruction_lorenz, discrete}. {\bf Left:} data used for estimation—300 input-output pairs, with inputs sampled uniformly from $[-10,10]^3$ and outputs given by the flow map at time $0.1$. {\bf Middle-left:} estimated and theoretical eigenvalues of the generator of the Koopman operator. {\bf Middle-right:} trajectory of the dynamical system reconstructed by our method. {\bf Right:} true trajectory.
}
\label{fig: intro_reconstruction}
\end{figure}

The structure of this paper is as follows:
In Section \ref{sec: theory}, we introduce the notion of jet, construct the canonical invariant subspaces in RKHS, and prove their basic properties.
In Section \ref{sec: datadriven estimation of PF operator}, we provide several theoretical results for data-driven estimation of Perron-Frobenius operator and show the error bound in the general setting.
In Section \ref{sec: generalized spectrum}, we introduce the notion of Gelfand triple and rigged Hilbert space.
We define the extended Koopman operator $C_f^\times$ and prove the  Jordan-Chevalley decomposition of the extended Koopman operator.
We also show the eigendecomposition of the extended Koopman operator under mild condition.
In Section \ref{sec: theory for continuous dynamical systems}, we provide the framework for estimating the generator of the Koopman operator using discrete data of vector fields.
We also show the corresponding result for continuous dynamical systems to those in the previous sections.
In Section \ref{sec: special kernels}, we focus on the specific positive definite kernel, the Gaussian kernel and exponential kernel, respectively.
We provide sufficient conditions for the technical assumptions for the convergence results in the previous sections, and we also present more explicit error bounds.
In Section \ref{sec:NumericalSimulation}, we propose JetEDMD and provide its algorithm and perform numerical simulations.
We illustrate the estimated eigenvalues for the van der Pol oscillator, the Duffing oscillator, and H\'enon map, and depict the approximated eigenfunctions of the extended Koopman operators.
Furthermore, we describe an application of JetEDMD for data-driven reconstruction of dynamical systems.
We also provide the theoretical guarantee of the algorithms as well and include discussion on the comparison of JetEDMD with other popular methods, EDMD and Kernel DMD.

\subsection{Notation}\label{subsec: notation}
We denote the set of the real (resp. complex) numbers by $\mathbb{R}$ (resp. $\mathbb{C}$).
We denote by $\mathrm{i} \in \mathbb{C}$ the imaginary unit.
We denote the set of integers by $\mathbb{Z}$.
For any subset of $S \subset \mathbb{R}$, we denote by $S_{>0}$ (resp. $S_{\ge 0}$) the set of positive (resp. non-negative) elements of $S$.
We denote by $\partial_{x_j}$ the partial derivative with respect to the $j$-th variable of a differentiable function on an open set of $\mathbb{R}^d$.
We denote by ${\rm d}x$ the Lebesgue measure on $\mathbb{R}^d$.

We denote by $\mathbb{R}^{m \times n}$ (resp. $\mathbb{C}^{m \times n}$) the set of real (resp. complex) $m \times n$ matrices.
We define the Frobenius norm and the operator norm of the matrix as $\|\cdot\|$ and $\|\cdot\|_{\rm op}$, respectively.
We usually regard $x \in \mathbb{C}^d$ as an element of $\mathbb{C}^{d \times 1}$ and describe the Euclidean norm by $\|x\|$.
We always use the bold uppercase letters to describe the matrices, like $\mathbf{A}$, $\mathbf{B}$, $\mathbf{C}$.
For a matrix $\mathbf{A}$, we denote the transpose (resp. adjoint) matrix of $\mathbf{A}$ by $\mathbf{A}^\top$ (resp. $\mathbf{A}^* :=\overline{\mathbf{A}}^\top$).
We denote the identity matrix and the zero matrix of size $n$ by $\mathbf{I}_n$ and $\mathbf{O}_n$, respectively.

For any subset $S\subset V$ of a complex vector space $V$, we denote by ${\rm span}(S)$ the linear subspace generated by $S$.

For a function $h: X \to \mathbb{C}$ on a topological space $X$,
 we define ${\rm supp}(h)$ the closure of the set $\{x \in X: h(x) = 0\}$.

We basically use the multi-index notation.
For example, for $\alpha = (\alpha_1,\dots, \alpha_d) \in \mathbb{Z}_{\ge 0}^d$ and  $z=(z_1, \dots, z_d) \in \mathbb{C}^d$, we write $z^\alpha := z_1^{\alpha_1}\dots z_d^{\alpha_d}$.
We also define $|\alpha| := \sum_{i=1}^d \alpha_i$, $\partial_{x}^\alpha := \partial_{x_1}^{\alpha_1}\dots\partial_{x_d}^{\alpha_d}$, and  $\alpha! := \prod_{i=1}^d \alpha_i!$ for $\alpha = (\alpha_1,\dots, \alpha_d) \in \mathbb{Z}_{\ge 0}^d$

For an open subset $\Omega \subset \mathbb{R}^d$, we denote by $\mathfrak{E}(\Omega)$ (resp. $\mathfrak{H}(\Omega)$) the space of $\mathbb{C}$-valued infinitely differentiable functions (resp. real analytic functions) on $\Omega$.
The topology of $\mathfrak{E}(\Omega)$ is the uniform convergence topology of the higher derivatives on the compact subsets (see \cite[p.26--27]{Yosida1980}).
The topology of $\mathfrak{H}(\Omega)$ is the inductive limit topology of $\displaystyle\lim_{\longrightarrow}\mathfrak{H}(U)$.
Here, we denote by $\mathfrak{H}(U)$ the space of holomorphic function on an open subset $U \subset \mathbb{C}^d$, equipped with the uniform convergence topology on the compact subsets, and $\mathfrak{H}(U)$'s constitute the inductive system via the restriction maps and $U$ varies over the open subset of $\mathbb{C}^d$ such that $U \cap \mathbb{R} = \Omega$ (see \cite{MARTINEAU1966} for details).

For a topological vector space $V$, we denote the dual space with the strong topology by $V'$.
For $\mu \in V'$ and $h \in V$, we denote $\mu(h)$ by $\langle \mu \mid h \rangle$.
For a continuous linear map $L: V_1 \rightarrow V_2$ between topological vector spaces $V_1$ and $V_2$, we denote by $L'$ the dual linear operator $L': V_2' \rightarrow V_1'$.

For a set $X$ and $a,b \in X$, we define the Kronecker delta by
\begin{align*}
    \delta_{a,b} = \begin{cases}
        1 & \text{ if $a = b$},\\
        0 & \text{otherwise.}
    \end{cases}
\end{align*}

For a family of real numbers $\{a_\lambda\}_{\lambda \in \Lambda}$, we denote by $a_\lambda \lesssim b_\lambda$ if there exists $C>0$ independent of $\lambda \in \Lambda$ such that $a_\lambda \le C b_\lambda$ for all $\lambda$.

\subsection{Summary of main results and their implications}\label{subsec: overview}

In the following, we overview our main results for the convenience of the readers. 
We introduce minimal notation and describe the essence of our main results.
Here, although we only consider the discrete dynamical system, we have corresponding results to the continuous dynamical system, and provide a theoretical framework for estimating the generator using discrete data of vector fields.
See Section \ref{sec: theory for continuous dynamical systems} for details.
We will use notation described in Section \ref{subsec: notation}.

Let ${\Omega} \subset \mathbb{R}^d$ be a connected open subset and let $p \in \Omega$.
Let $f: {\Omega} \to {\Omega}$ be a map of class $C^\infty$.
Let $\lambda_1,\dots, \lambda_d$ be the eigenvalues (with multiplicity) of the Jacobian matrix ${\rm d}f_p$ at $p$ and let $\lambda = (\lambda_1,\dots, \lambda_d)$.
In this paper, as the function space where the Koopman operator acts, we always consider the RKHS $H$ associated with a positive definite kernel $k: \Omega \times \Omega \to \mathbb{C}$ of class $C^\infty$.
Readers should refer to, for example, \cite{MR3560890_SaitohSawano} for the general theory of RKHS.
\begin{example}
Let $\Omega = \mathbb{R}^d$ and $k(x,y) = e^{x^\top y}$, the RKHS $H$ associated with $k$ is explicitly described as follows:
\begin{align*}
H = \left\{ h|_{\mathbb{R}^d} : h\text{ is holomorphic on }\mathbb{C}^d\text{ and }\int_{\mathbb{R}^d \times \mathbb{R}^d} |h(x+y \mathrm{i})|^2 e^{-\|x\|^2-\|y\|^2}\,{\rm d}x{\rm d}y < \infty\right\}.
\end{align*}
This space is usually called the Fock space \cite{Zhu12} and recently treated in the context of the Koopman operator \cite{Mezic2020}.
\end{example}

We define the {\em Koopman operator} $C_f: H \to H$ by the linear operator defined by $C_f h := h \circ f$ with domain $D(C_f) := \{h \in H : h \circ f \in H\}$.
We assume that $C_f$ is densely defined on $H$, namely, $D(C_f)$ is a dense subspace of $H$.
We denote the Perron--Frobenius operator, that is the adjoint operator of $C_f$, by $C_f^*$.

An important fact here is that the RKHS associated with the positive definite kernel $k$ of class $C^\infty$ is characterized as a Hilbert space satisfying the following two conditions: it is included in the space $\mathfrak{E}(\Omega)$ of the functions of class $C^\infty$ and the inclusion map \[\iota: H \hookrightarrow \mathfrak{E}(\Omega)\]
is continuous \cite[Theorem 2.6]{6fc4e20b-e6b8-3651-8988-bc9975065f31}.
In the function space $\mathfrak{E}(\Omega)$, we can naturally define the continuous linear map $f^*:\mathfrak{E}(\Omega) \to \mathfrak{E}(\Omega);~h \mapsto h \circ f$, known as the pull-back, equivalent notion of the Koopman operator.
Since RKHS is a subspace of $\mathfrak{E}(\Omega)$, the Koopman operator $C_f$ on $H$ can be regarded as the restriction of the pull-back $f^*$ to $H$.
In terms of category theory, the functor $M \mapsto \mathfrak{E}(M)$ from the category of smooth manifolds to the category of $\mathbb{C}$-algebras is fully faithful \cite{NevSancho03}, and  the pull-back $f^*$ (equivalently, Koopman operator) on $\mathfrak{E}(\Omega)$ provides an algebraic counterpart that is entirely equivalent to the dynamical system on $\Omega$.
Therefore, RKHS naturally appears as a machinery for numerically handling the universal object $f^*: \mathfrak{E}(\Omega) \to \mathfrak{E}(\Omega)$, and provides a general theoretical framework for numerical treatments.
In this sense, JetEDMD can be described as a method for data-driven estimation of the pull-back $f^*$ on $\mathfrak{E}(\Omega)$ using RKHS.

Let us introduce {\em jet}.
The space $\mathfrak{J}_{p,n}$ of $n$-jet at $p \in \Omega$ is defined as the quotient space $\mathfrak{E}(\Omega)/\sim$, where the equivalence relation $\sim$ is defined as $h \sim g$ if and only if the $\partial^\alpha h(p) = \partial^\alpha g(p)$ for $|\alpha| \le n$, where we use the multi-index notation.
We denote the natural surjection $\mathfrak{E}(\Omega) \to \mathfrak{J}_{p,n}$ by $\mathfrak{j}_{p,n}$.
We note that the space of $n$-jets at $p$ is geometrically canonical object, namely, it is invariant under the change of coordinate.

Let $p \in \Omega$. 
For each integer $n \ge 0$, we introduce a finite-dimensional subspace $V_{p,n} \subset H$ as follows:
\begin{align*}
    V_{p,n} &=  \sum_{|\alpha| \le n} \mathbb{C} \cdot \left.\partial_x^\alpha k(x,\cdot)\right|_{x=p},\\
    V_p &=  \bigcup_{n \ge 0} V_{p,n} = \sum_{\alpha \in \mathbb{Z}_{\ge 0}^d} \mathbb{C} \cdot \partial_x^\alpha k(x,\cdot)|_{x=p}. 
\end{align*}
This space $V_p$ is what we call the space of {\em intrinsic observables}.
We note that we can intrinsically define $V_{p,n}$ as the image under the dual map of the composition of continuous  linear maps $H \overset{\iota}{\to} \mathfrak{E}(\Omega) \overset{\mathfrak{j}_{p,n}}{\to} \mathfrak{J}_{p,n}$ (see Section \ref{sec: theory} for details).
Originally, $V_{p,n}$ is introduced in \cite{ISHIKAWA2023109048} as a key notion for connecting the boundedness of Koopman operators with the behavior of dynamical systems.
For special RKHSs, the equivalent notion is considered in \cite{Ikeda_Ishikawa_Sawano-2022} as a machinery for proving the linearity of dynamical systems with bounded Koopman operators.

As in the following proposition, the union of $V_{p,n}$ for $n \ge 0$ constitutes a dense subspace of $H$ and, if $p$ is a fixed point of $f$, $V_{p,n}$ is invariant under the Perron--Frobenius operator $C_f^*$ for all $n \ge 0$ (Theorem \ref{thm: basic theorem for PF operator, discrete} and Proposition \ref{prop: density}):
\begin{proposition}\label{prop:_intro_Vpn}
    The space $V_p$ is dense in $H$.
    Moreover, if $p$ satisfies $f(p) = p$, we have $C_f^*(V_{p,n}) \subset V_{p,n}$ for any $n \ge 0$.
    In addition, if $f(p)=p$ and the dual map $\iota'$ of $\iota$ is injective on the image of $\mathfrak{j}_{p,n}'$, the set of eigenvalues of $C_f^*|_{V_{p,n}}$ coincides with
    \[\left\{ \lambda^\alpha : |\alpha| \le n, \alpha \in \mathbb{Z}_{\ge 0}^d\right\}.\]
\end{proposition}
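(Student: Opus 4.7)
The plan is to dispatch the three claims in order, exploiting the intrinsic identification of $V_{p,n}$ with the Riesz image in $H$ of the dual map $(\mathfrak{j}_{p,n}\circ \iota)':\mathfrak{J}_{p,n}'\to H'$.

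\textbf{Density.} Since $k\in C^\infty$, the derivative $\partial_x^\alpha k(x,\cdot)|_{x=p}$ lies in $H$ for every $\alpha$, and differentiating the reproducing identity $\langle h,k(\cdot,x)\rangle_H=h(x)$ under the inner product yields $\langle h,\partial_x^\alpha k(x,\cdot)|_{x=p}\rangle_H=\overline{\partial^\alpha h(p)}$. Hence $h\in V_p^\perp$ if and only if $(\mathfrak{j}_{p,\infty}\circ \iota)(h)=0$; concluding $h=0$ from this is a unique-continuation statement that relies on the structural features of $H$ supplied by $k$ (e.g.\ real-analyticity inherited from $k$ together with the connectedness of $\Omega$) and is the content of the referenced Proposition \ref{prop: density}.

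\textbf{Invariance and intertwining.} The kernel-trick identity
\[\langle C_f h,k(\cdot,y)\rangle_H=(h\circ f)(y)=\langle h,k(\cdot,f(y))\rangle_H\qquad(h\in D(C_f),\ y\in\Omega)\]
gives $C_f^*k(\cdot,y)=k(\cdot,f(y))$. Differentiating in $y$ and applying Fa\`a di Bruno writes $C_f^*(\partial_y^\alpha k(\cdot,y)|_{y=p})$ as a linear combination of $\partial_x^\beta k(\cdot,x)|_{x=f(p)}$ with $|\beta|\le|\alpha|$, the coefficients being polynomial in the derivatives of $f$ at $p$; when $f(p)=p$ these lie in $V_{p,|\alpha|}\subset V_{p,n}$, proving invariance. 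The same chain rule shows that $f^*$ descends to an endomorphism $f^{*,n}$ of the finite-dimensional jet space $\mathfrak{J}_{p,n}$ with the intertwining $(\mathfrak{j}_{p,n}\circ\iota)\circ C_f=f^{*,n}\circ(\mathfrak{j}_{p,n}\circ\iota)$ on $D(C_f)$. Dualising produces $C_f^*\circ(\mathfrak{j}_{p,n}\circ\iota)'=(\mathfrak{j}_{p,n}\circ\iota)'\circ(f^{*,n})'$, and the injectivity of $\iota'$ on $\mathfrak{j}_{p,n}'(\mathfrak{J}_{p,n}')$ turns $(\mathfrak{j}_{p,n}\circ\iota)'$ into a linear isomorphism onto $V_{p,n}$, so $C_f^*|_{V_{p,n}}$ is conjugate to $(f^{*,n})'$; in finite dimensions the transpose shares the spectrum of $f^{*,n}$.

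\textbf{Eigenvalues.} I choose coordinates centred at $p$ in which ${\rm d}f_p$ is in Jordan form. Taylor-expanding $f$ shows that, in the basis $\{(x-p)^\alpha\}_{|\alpha|\le n}$ of $\mathfrak{J}_{p,n}$ ordered by increasing total degree, the matrix of $f^{*,n}$ is block upper-triangular with the degree-$k$ block being the induced action of ${\rm d}f_p$ on degree-$k$ symmetric tensors; the eigenvalues of that induced action are exactly $\{\lambda^\alpha\}_{|\alpha|=k}$, yielding the advertised spectrum $\{\lambda^\alpha\}_{|\alpha|\le n}$. The main obstacle of the whole argument is the unique-continuation ingredient in the density step: beyond the analytic setting, showing that an element of $H$ with vanishing full jet at a single point must vanish identically genuinely needs the regularity carried by $k$, whereas the invariance and the eigenvalue computation reduce to routine bookkeeping with Fa\`a di Bruno and the standard triangularity of the induced action of a Jordan matrix on symmetric powers.
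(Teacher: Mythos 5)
Your proposal is correct and follows essentially the same route as the paper: orthogonality to $V_p$ is equivalent to the vanishing of the full jet of $h$ at $p$, so real-analyticity plus connectedness (Assumption \ref{asm: basic}) gives density exactly as in Proposition \ref{prop: density}, while invariance and the spectrum come from intertwining $C_f^*$ with the induced action of the pull-back on the jet space $\mathfrak{J}_{p,n}$, whose matrix in coordinates adapted to ${\rm d}f_p$ is block upper-triangular with the symmetric powers of ${\rm d}f_p$ on the diagonal — the paper phrases this on the dual side via the push-forward $f_*$ on $\mathfrak{D}_{p,n}={\rm span}\{\delta_p^{(\alpha)}: |\alpha|\le n\}$ (Propositions \ref{prop: explicit description of representation matrix of f_*} and \ref{prop: domain of PF operators}), which is just the transpose of your $f^{*,n}$. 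The only point to make explicit is that the vectors $\partial_x^\alpha k(x,\cdot)|_{x=p}$ lie in $D(C_f^*)$ before differentiating $C_f^*k_y=k_{f(y)}$, but your dualisation of the intertwining on $D(C_f)$ (boundedness of $h\mapsto\partial^\alpha(h\circ f)(p)$ as a functional on $H$) already supplies this, exactly as in the paper's Proposition \ref{prop: domain of PF operators}.
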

We note that RKHSs of the Gaussian kernel and the exponential kernel, mainly treated in this paper, satisfies the condition of the injectivity of $\iota'$ on the image of $\mathfrak{j}_{p,n}'$.

Now, we describe the JetEDMD (see Section \ref{sec:NumericalSimulation} for details).
As seen below, Jet DMD provides a refinement of EDMD with the intrinsic observables.
Let $r_n := \mathop{\rm dim}V_{p,n}$.
Let $\{v_i\}_{i=1}^\infty$ be an orthonormal basis of $V_p$ such that $\{v_1,\dots, v_{r_n}\}$ constitutes a basis of $V_{p,n}$. 
Then, for $x \in \Omega$, we define a vector of length $r_n$ by
\begin{align*}
     {\bf v}_{n}(x) &:= \left(\overline{v_i(x)}\right)_{i=1}^{r_n}.
\end{align*}
For $X = (x_1, \dots, x_N) \in {\Omega}^N$, we define a matrix of size $r_n \times N$ by
\begin{align}
    \mathbf{V}_{n}^X :=& \left({\bf v}_n(x_1), \dots, {\bf v}_{n}(x_N)\right).
\end{align}
Let $m \le n$.
Let $y_i = f(x_i)$ for $i=1,\dots,N$ and define $Y:=(y_1,\dots,y_N)$.
Then, we consider the matrix $\widehat{\mathbf{C}}_{m,n,N} \in \mathbb{C}^{r_m \times r_m}$ defined by the leading principal submatrix of order $r_m$ of $\mathbf{V}_n^Y (\mathbf{V}_n^X)^\dagger$, namely, 
\[
\mathbf{V}_n^Y (\mathbf{V}_n^X)^\dagger = 
\begin{pmatrix}
    \widehat{\mathbf{C}}_{m,n,N} & *\\
    * &*
\end{pmatrix},
\]
where $(\cdot)^\dagger$ is the Moore-Penrose pseudo-inverse.
We remark that EDMD corresponds to the case of $m=n$.
According to the following example, EDMD with monomials is recovered in JetEDMD with exponential kernel:
\begin{example}
    Let $\Omega = \mathbb{R}^d$ and let $k(x,y) = e^{x^\top y}$.
    Then, we can take the following set of polynomials of degree up to $n$
    \[\left\{1,~x_1,~\dots,~x_d,~\dots, \frac{x_1^{\alpha_1}\cdots x_d^{\alpha_d}}{\sqrt{\alpha_1!\cdots\alpha_d!}},~\dots,~\frac{x_d^n}{\sqrt{n!}}\right\}\]
    as the orthonormal basis of $V_{0,n}$ (here, we take $p=0$) and we have $r_n = \binom{n+d}{d}$.
    Figure \ref{fig: difference of spectra quadratic map} actually describes the computational result in this setting.
    Using this orthonormal basis, the matrix $\mathbf{V}_n^Y (\mathbf{V}_n^X)^\dagger$ essentially coincides with the one considered in EDMD with monomials.
    In Figure \ref{fig: difference of spectra quadratic map}, we considered $f(x,y) := (x^2 - y^2 + x - y, 2xy + x + y)$, and performed the calculation using $x_1,\dots,x_{100}$ from the uniform distribution on $[-1,1]^2$.
    The the red $+$'s indicates the eigenvalues of $\widehat{\mathbf{C}}_{n,n,N}$ for $n=10$, corresponding to EDMD, and the blue $\times$'s indicates those of $\widehat{\mathbf{C}}_{m,n,N}$ for $(m,n)=(5,10)$, corresponding to JetEDMD.
\end{example}
When we take $N \to \infty$ and $n \to \infty$, the matrix $\widehat{\mathbf{C}}_{m,n,N}$ with JetEDMD actually converges to the correct target $C_f^*|_{V_{p,m}}$, more precisely, we have the following theorem (Theorem \ref{thm: error analysis for PF operator for algorithm, disc} and Theorem \ref{thm: explicit asymptotic, disc} for the general and precise statements):
\begin{theorem}\label{thm:_intro_convergence}
    Let $m \le n$.
    Let $\Omega = \mathbb{R}^d$ and let $p \in \Omega$.
    Let $k(x,y) = e^{(x-p)^\top (y-p)}$ or $k(x,y) = e^{-|x-y|^2/2\sigma^2}$.
    Let $f=(f_1,\dots, f_d): \mathbb{R}^d \to \mathbb{R}^d$ be a map such that $f(p) = p$ and $V_p \subset D(C_f)$. 
    Let $x_1,\dots, x_N$ be i.i.d random variables of the distribution with compactly supported density function $\rho$ such that ${\rm ess.inf}_{x \in U}\rho(x) >0$ for some open subset $U \subset \mathbb{R}^d$.
    Let $y_i = f(x_i)$ for $i=1,\dots, N$.
    Then, there exist $b_m>0$ and $R>0$ such that we have
    \begin{align}
        \lim_{N\to \infty} \left\|\widehat{\mathbf{C}}_{m,n,N} -\mathbf{C}_{f,m}^\star\right\| 
        \le& \left\|C_f|_{V_{m,p}}\right\|_{\rm op} \cdot b_mn^m \cdot \frac{R^n}{\sqrt{n!}}~~~{\rm a.e.} \label{intro: error bound}\\
        &\underset{n \to \infty}{\longrightarrow}~~0~~~{\rm a.e.} \notag
    \end{align}
    where $\mathbf{C}_{f,m}^\star \in \mathbb{C}^{r_m \times r_m}$ is the representation matrix of $C_f^*|_{V_{p,m}}$ with respect to the orthonormal basis $\{v_1,\dots, v_{r_m}\}$.
\end{theorem}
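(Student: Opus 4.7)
The strategy is to read Theorem \ref{thm:_intro_convergence} off from the more general error-analysis result Theorem \ref{thm: error analysis for PF operator for algorithm, disc} (an almost-sure error bound for the data-driven estimator $\widehat{\mathbf C}_{m,n,N}$) together with the kernel-specific asymptotic Theorem \ref{thm: explicit asymptotic, disc} of Section \ref{sec: special kernels}. The plan is therefore to verify the hypotheses of both results under the assumptions here and combine the resulting bounds, splitting the error as a sampling error (controlled as $N\to\infty$) plus a deterministic truncation error (controlled as $n\to\infty$).

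For the sampling limit, since $\rho$ has compact support and both $v_i$ and $C_f v_i=v_i\circ f$ are continuous there, the strong law of large numbers gives
\begin{align*}
\tfrac{1}{N}\mathbf V_n^X(\mathbf V_n^X)^* &\xrightarrow{\text{a.s.}} \mathbf G_{n,\rho}:=[\langle v_i,v_j\rangle_{L^2(\rho)}]_{i,j},\\
\tfrac{1}{N}\mathbf V_n^Y(\mathbf V_n^X)^* &\xrightarrow{\text{a.s.}} \mathbf T_{n,\rho}:=[\langle C_f v_i,v_j\rangle_{L^2(\rho)}]_{i,j}.
\end{align*}
For the Gaussian and exponential kernels each $v_i$ is real analytic on $\mathbb R^d$ and the family $\{v_i\}$ is linearly independent as such; combined with $\mathrm{ess.inf}_U\rho>0$ on the open set $U$, this forces $\mathbf G_{n,\rho}$ to be positive definite. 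Hence the Moore--Penrose pseudo-inverse $(\mathbf V_n^X)^\dagger$ converges almost surely, and so does $\widehat{\mathbf C}_{m,n,N}$, with limit equal to the leading $r_m\times r_m$ block $\mathbf M_{m,n}^\rho$ of $\mathbf T_{n,\rho}\mathbf G_{n,\rho}^{-1}$.

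For the deterministic bound on $\|\mathbf M_{m,n}^\rho-\mathbf C_{f,m}^\star\|$, the fixed-point hypothesis $f(p)=p$ together with Proposition \ref{prop:_intro_Vpn} yields $C_f^*(V_{p,m})\subset V_{p,m}$; consequently, in the $H$-orthonormal basis $\{v_i\}_{i=1}^{r_n}$ extending a basis of $V_{p,m}$, the matrix $\mathbf C_{f,n}^\star$ of $C_f^*|_{V_{p,n}}$ is block upper-triangular with top-left block $\mathbf C_{f,m}^\star$. The deviation $\mathbf M_{m,n}^\rho-\mathbf C_{f,m}^\star$ then measures exactly the mismatch between the $L^2(\rho)$- and $H$-inner products on $V_{p,n}$, i.e.\ the off-diagonal coupling between $V_{p,m}$ and its $H$-orthogonal complement in $V_{p,n}$ as seen by $\mathbf G_{n,\rho}$. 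For the exponential kernel (resp.\ the Gaussian), the intrinsic basis of $V_{p,n}$ is, up to affine normalization, the Fock-normalized monomials $\{x^\alpha/\sqrt{\alpha!}\}_{|\alpha|\le n}$ (resp.\ a Hermite-type modification), whose off-diagonal $L^2(\rho)$-Gram entries decay factorially in $|\alpha|$. A Schur-complement computation then produces the rate $R^n/\sqrt{n!}$, with the polynomial prefactor $n^m$ arising from $\dim V_{p,m}$ and $\|C_f|_{V_{p,m}}\|_{\rm op}$ appearing as the natural operator norm on the finite-dimensional invariant subspace.

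The main obstacle is the second step. The sampling argument is routine, but extracting the explicit factorial rate requires tracking the factorial decay of the off-diagonal Gram entries through the Schur-complement inversion of $\mathbf G_{n,\rho}$ and coupling it with the block upper-triangular structure of $\mathbf C_{f,n}^\star$ without cancellation. The use of the intrinsic (jet) basis is essential here; a generic basis of $V_{p,n}$ would yield at best polynomial rates, and the detailed kernel-specific Gram matrix estimates of Section \ref{sec: special kernels} are precisely what allow the analysis to close.
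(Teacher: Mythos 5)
Your overall plan---read the statement off from Theorem \ref{thm: error analysis for PF operator for algorithm, disc} combined with the kernel-specific Theorem \ref{thm: explicit asymptotic, disc}---is exactly the paper's route, and your sampling step (law of large numbers, positive definiteness of the limiting Gram matrix forced by ${\rm ess.inf}_{U}\rho>0$, hence a.s.\ convergence of $\widehat{\mathbf C}_{m,n,N}$ to the leading block of $\mathbf T_{n,\rho}\mathbf G_{n,\rho}^{-1}$) is sound and corresponds to Corollary \ref{cor: error analysis for PF operator, infinite samples: disc.}. The gap is in your second step, which mis-identifies the source of the factorial decay. The deviation of the population limit from $\mathbf C_{f,m}^\star$ is \emph{not} ``exactly the mismatch between the $L^2(\rho)$- and $H$-inner products on $V_{p,n}$'': if $C_f(V_{p,m})\subset V_{p,n}$ held, the truncated population matrix would be exact even though that mismatch persists. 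What actually drives the error is the component of $C_fv_i$ (dually, of $k_x$) lying outside $V_{p,n}$; in the paper's proof this enters as $\pi_m C_f^*(\pi_n k_{x}-k_{x})$, so the entire $1/\sqrt{n!}$ rate comes from the kernel-section residual $\mathcal E_{p,n}(x)=\|k_x-\pi_nk_x\|_H$, whose explicit bound of order $(\|x-p\|/\sigma)^{n+1}/\sqrt{(n+1)!}$ is given in Propositions \ref{prop: explicit asymptotic of E, exp kernel} and \ref{prop: explicit asymptotic of E, gaussian}. This quantity never appears in your argument, and the factor $\left\|C_f|_{V_{p,m}}\right\|_{\rm op}$ arises precisely as $\|\pi_m C_f^*\|_{\rm op}$ applied to that residual in the row-truncation step, not merely as ``the natural operator norm on the invariant subspace.''

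Moreover, the ``Schur-complement computation'' you invoke hides the real difficulty. The factorial decay of the $L^2(\rho)$-Gram entries of the Fock-normalized monomials cuts both ways: the diagonal entries decay factorially too, so $\mathbf G_{n,\rho}$ is factorially ill-conditioned and inverting it could, a priori, consume the entire factorial gain; nothing in your sketch rules this out. The substantive content of Theorem \ref{thm: explicit asymptotic, disc} is exactly that this amplification is only exponential: one passes to an $L^2(\nu)$-orthonormal system of Legendre-type polynomials on a rectangle where $\rho$ is bounded below, and the bounds on their leading coefficients (of order $2^n$) and on their roots give $\|\mathbf Q_{n,m}(\nu)\|_{\rm op}\lesssim C_m n^{m+d}(2B_2)^n$, which multiplied by $\|\mathcal E_{p,n}\|_{L^2(\mu)}\lesssim B_1^n/(\sigma^n\sqrt{(n+1)!})$ closes the estimate. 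Without a bound of this type your argument does not go through; note also that the polynomial prefactor ($n^{m+d}$ in the paper, absorbed into $b_mn^mR^n$ in the statement) comes from these Legendre coefficient estimates with column multi-indices $|\beta|\le m$, not from $\dim V_{p,m}$, which is a constant independent of $n$.
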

This theorem clearly explains why the step of ``truncating $\mathbf{V}_n^Y (\mathbf{V}_n^X)^\dagger$ to a leading principal submatrix'' is inevitable.
From the proofs of Theorem \ref{thm: error analysis for PF operator for algorithm, disc} and Theorem \ref{thm: explicit asymptotic, disc},  in \eqref{intro: error bound}, the constant $\left\|C_f|_{V_{m,p}}\right\|_{\rm op}$ 
arises from extracting the first $r_m$ rows of the matrix $\mathbf{V}_n^Y (\mathbf{V}_n^X)^\dagger$, while the constant $b_mn^m$ appears as a result of extracting the first $r_m$ columns.
Since $V_p$ is dense in $H$, we easily see that the sequence $\big\{ \left\|C_f|_{V_{p,m}}\right\|_{\rm op} \big\}_{m \ge 0}$ is bounded if and only if the Koopman operator $C_f$ is a bounded linear operator on $H$.
However, according to \cite{CMS} and \cite{Ikeda_Ishikawa_Sawano-2022}, no nonlinear dynamics induces a bounded Koopman operator on $H$.
Thus, we have the divergence $\|C_f|_{V_{p,n}}\|_{\rm op} \to \infty$ as $n \to \infty$ for the nonlinear dynamical system $f$.
This fact implies the difficulty of analyzing EDMD, corresponding to JetEDMD with $m=n$, and it generally suggests that EDMD fails to estimate the target operator since the term $\left\|C_f|_{V_{p,n}}\right\|_{\rm op} \cdot b_nn^n$ will rapidly diverge if we take $n \to \infty$.
We also emphasize that here we prove this theorem under the assumption that $C_f$ is densely defined, but we do not assume its boundedness.

As explained above, the matrix $\widehat{\mathbf{C}}_{m,n,N}$ produced by JetEDMD is capable of approximating the Perron-Frobenius operator $C_f^*|_{V_{p,m}}$ restricted to $V_{p,m}$.
Thus, by considering the adjoint of $\widehat{\mathbf{C}}_{m,n,N}$ in $V_{p,m}$, we can estimate the adjoint linear map $(C_f^*|_{V_{p,m}})^*$ on $V_{p,m}$ (note that the first ``$*$'' means the adjoint in $H$, but the second one means the adjoint in $V_{p,m}$).
While it seems that the limit of $(C_f^*|_{V_{p,m}})^*$ along $m$ becomes the Koopman operator $C_f$ on $H$, it actually converges not to the Koopman operator itself, but to the ``extended'' Koopman operator defined through the Gelfand triple.

Since a rigorous explanation of the Gelfand triple and the extended Koopman operator is provided in Section \ref{sec: generalized spectrum}, here we will explain the ``extended'' Koopman operator in a somewhat rough manner.
Let $\Phi := V_p$.
We equip $\Phi$ with the inductive limit topology.
Then, $(\Phi, H, \Phi')$ determine the Gelfand triple and satisfies the inclusion relation $\Phi \subset H = H' \subset \Phi'$, where $(\cdot)'$ indicates the dual space with the strong topology, and we identify $H$ with its dual via the Riesz representation theorem.
We usually call a Hilbert space equipped with a Gelfand triple the {\em rigged Hilbert space}.
The Gelfand triple is introduced for further investigation of the spectrum of linear operators, and it has been well studied in quantum mechanics.
For more detail of the mathematical formulation and applications, see \cite{BG89, CHIBA2015324, Gel2, Gel} and references therein. 
Since $C_f^*(\Phi) \subset \Phi$ by Proposition \ref{prop:_intro_Vpn}, the dual operator of $C_f^*|_\Phi$ induces a continuous linear operator on $\Phi'$.
We denote this induced continuous linear operator by $C_f^\times:=(C_f^*|_\Phi)': \Phi' \to \Phi'$ and call it the {\em extended Koopman operator}.
It can be shown that $C_f^\times$ is actually the extension of Koopman operator, namely, it satisfies $C_f^\times|_{H} = C_f$ on the domain of $C_f$ (Proposition \ref{prop: extension}) and is regarded as the limit of $(C_f^*|_{V_{p,n}})^*$.
Moreover, we can prove the Jordan--Chevalley decomposition of $C_f^\times$ on $\Phi'$ (Theorem \ref{thm: jordan decomposition of C_f}) and the eigendecomposition of $C_f^\times$ on $\Phi'$ under mild conditions as follows:
\begin{theorem}\label{thm: intro eigendecomposition}
    Assume that  $\iota'$ is injective on the image of $\mathfrak{j}_{p,n}'$ and that $\lambda^\alpha \neq \lambda^\beta$ for $\alpha, \beta \in \mathbb{Z}_{\ge 0}^d$ with $\alpha \neq \beta$.
    Then, there exist families of vectors $\big\{w_{\alpha}\big\}_{\alpha \in \mathbb{Z}_{\ge 0}^d} \subset \Phi$ and $\left\{u_{\alpha}\right\}_{\alpha \in \mathbb{Z}_{\ge 0}^d} \subset \Phi'$ such that
        \begin{align*}
            C_f^\times u_\alpha &= \lambda^\alpha u_\alpha,\\
            \langle u_\alpha \mid w_\beta \rangle &= \delta_{\alpha, \beta}
        \end{align*}
        for all $\alpha, \beta \in \mathbb{Z}_{\ge 0}^d$, where $\delta_{\alpha, \beta}$ is the Kronecker delta, and 
        \begin{align*}
            C_f^\times u &= \sum_{\alpha \in \mathbb{Z}_{\ge 0}^d} \lambda^\alpha \langle w_{\alpha} \mid u \rangle u_{\alpha}.
        \end{align*}
        for $u \in \Phi'$ and $w \in \Phi$.
\end{theorem}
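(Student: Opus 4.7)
The strategy is to reduce everything to finite-dimensional linear algebra on the nested $C_f^*$-invariant subspaces $V_{p,n}$ provided by Proposition \ref{prop:_intro_Vpn}, dualize through the Gelfand triple $\Phi\subset H\subset\Phi'$, and verify the decomposition by testing against a suitable basis of $\Phi$. \textbf{Step 1: construct the $w_\alpha$.} Under the injectivity assumption on $\iota'$, Proposition \ref{prop:_intro_Vpn} gives that $C_f^*|_{V_{p,n}}$ has eigenvalues exactly $\{\lambda^\alpha:|\alpha|\le n\}$. Since these are pairwise distinct, $C_f^*|_{V_{p,n}}$ is diagonalizable with one-dimensional eigenspaces, and the $\lambda^\alpha$-eigenline is common to every $V_{p,n}$ with $|\alpha|\le n$. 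I would construct the $w_\alpha$ inductively on $|\alpha|$: having chosen $\{w_\beta:|\beta|\le n-1\}$ as an eigenbasis of $V_{p,n-1}$, for each $|\alpha|=n$ pick a nonzero eigenvector $w_\alpha\in V_{p,n}$ for the newly appearing eigenvalue $\lambda^\alpha$. The resulting family is a vector-space basis of $\Phi=\bigcup_n V_{p,n}$.

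\textbf{Step 2: construct the $u_\alpha$ and verify the eigenvalue equation.} Define $u_\alpha$ on $\Phi$ by $u_\alpha(w_\beta):=\delta_{\alpha,\beta}$ and extend linearly. Since $\Phi$ carries the inductive-limit topology of the finite-dimensional $V_{p,n}$, continuity of $u_\alpha$ reduces to continuity on each $V_{p,n}$, which is automatic because every linear functional on a finite-dimensional space is continuous; hence $u_\alpha\in\Phi'$, and the biorthogonality $\langle u_\alpha\mid w_\beta\rangle=\delta_{\alpha,\beta}$ is built in by construction. Using the definition of $C_f^\times$ as the dual of $C_f^*|_\Phi$, for every $\beta$,
\[
\langle C_f^\times u_\alpha\mid w_\beta\rangle=\langle u_\alpha\mid C_f^* w_\beta\rangle=\lambda^\beta\delta_{\alpha,\beta}=\lambda^\alpha\delta_{\alpha,\beta}=\langle\lambda^\alpha u_\alpha\mid w_\beta\rangle,
\]
and since $\{w_\beta\}$ spans $\Phi$, this forces $C_f^\times u_\alpha=\lambda^\alpha u_\alpha$.

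\textbf{Step 3: the decomposition formula.} For any $\phi\in\Phi$ there exists $n$ with $\phi\in V_{p,n}$, giving a finite expansion $\phi=\sum_{|\beta|\le n}a_\beta w_\beta$. Then $u_\alpha(\phi)$ vanishes for $|\alpha|>n$, so the series
\[
\sum_{\alpha\in\mathbb{Z}_{\ge 0}^d}\lambda^\alpha\,\langle w_\alpha\mid u\rangle\,u_\alpha(\phi)=\sum_{|\alpha|\le n}\lambda^\alpha a_\alpha\,u(w_\alpha)=u\bigl(C_f^*\phi\bigr)=(C_f^\times u)(\phi)
\]
collapses to a finite sum for each fixed $\phi$, which establishes the formula. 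The genuine obstacle lies in the spectral identification in Proposition \ref{prop:_intro_Vpn}; once that is granted, the remaining work is essentially bookkeeping, with the only subtlety being the mode of convergence in $\Phi'$. Since $\Phi$ is a strict inductive limit of finite-dimensional subspaces, every bounded subset of $\Phi$ lies in some $V_{p,n}$, so the tail of the above series vanishes uniformly on bounded sets; this promotes the pointwise (weak-$*$) identity to convergence in the strong topology of $\Phi'$.
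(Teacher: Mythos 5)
Your proposal is correct for the statement as formulated, but it follows a leaner route than the paper does. The paper never proves this theorem directly: it deduces it (via Corollary \ref{cor: eigendecomposition: fixed point}) from the general Jordan--Chevalley decomposition of Theorem \ref{thm: jordan decomposition of C_f}, whose proof takes the finite-dimensional Jordan--Chevalley decompositions of $C_f^*|_{V_{\Lambda,n}}$, uses uniqueness of that decomposition to make them compatible as $n$ grows, and then realizes the dual eigenvectors as compatible sequences $u_i=(u_{i,n})_n$ in the projective limit $\Phi^*=\varprojlim V_{\Lambda,n}$, with $u_{i,n}$ obtained from orthogonality in $H$ against the other $w_j$'s. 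You instead exploit the hypothesis that the $\lambda^\alpha$ are pairwise distinct to bypass the Jordan--Chevalley machinery entirely: the nested invariant subspaces $V_{p,n}$ have simple spectrum, the $\lambda^\alpha$-eigenlines stabilize across $n$, the $u_\alpha$ are just the algebraic dual basis (continuous because $\Phi$ is an inductive limit of finite-dimensional spaces), and the expansion is verified by testing on $\Phi$ and upgraded to strong convergence in $\Phi'$ because bounded subsets of a strict inductive limit of finite-dimensional (hence closed) subspaces lie in some $V_{p,n}$ --- a point the paper dismisses as ``obvious'' but which you actually argue. What the paper's route buys is generality (non-diagonalizable case, several fixed points, and the simultaneous decomposition of $C_f^*|_\Phi$); what yours buys is a short, self-contained proof of exactly the stated special case. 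One caveat to be aware of, though it is not a gap: the body of the paper works with the sesquilinear pairing on $\Phi\times\Phi^*$ and records $C_f^*w_{\alpha}=\overline{\lambda^\alpha}w_{\alpha}$, $C_f^\times u_\alpha=\lambda^\alpha u_\alpha$, whereas your bilinear dual-pairing formulation on $\Phi'$ assigns $C_f^*w_\alpha=\lambda^\alpha w_\alpha$. These are reconciled because ${\rm d}f_p$ is a real matrix, so the family $\{\lambda^\alpha\}$ is stable under conjugation and your labeling merely permutes the multi-indices; the theorem only asserts existence of families with the listed properties, so your convention is admissible, but your $w_\alpha$ need not be the vector canonically attached to $\delta_p^{(\alpha)}$ via $\mathfrak{r}\circ\iota'$.
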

It is worth noting that the eigenvectors of the extended Koopman operator $C_f^\times$ can be interpreted as eigenfunctions of the Koopman operator in a weak sense (see Proposition \ref{prop: eigenfunction in weak sense}), but they are not eigenfunctions in the usual sense.
An important consequence derived from the usage of Gelfand triple is that what is estimated using a family of observables is not the Koopman operator itself, but the extended Koopman operator defined on the space of functional of the intrinsic observables.
Therefore, the estimated eigenvectors with the observables are strictly those of the extended Koopman operator and, unless the Koopman operator preserves the space of the observables, they are generally not eigenfunctions of the Koopman operator.
According to the numerical experiment in Section \ref{sec:NumericalSimulation}, the eigenvectors of the extended Koopman operator capture some important characteristics of the dynamical system.
However, the extended Koopman operator and its eigenvectors remain abstract and are not fully understood.
Their mathematical properties and their relation with the behavior of the dynamical system are crucial research topics for the future.

\section*{Acknowledgement}
I I, M I, Y K acknowledge support from the Japan Science and Technology Agency (JST) under CREST Grant JPMJCR1913. Y K acknowledges support from the Japan Society of Promotion of Science (JSPS) under KAKENHI Grant Number JP22H05106. I I acknowledges support from the JST under ACT-X Grant JPMJAX2004, the JST under CREST Grant JPMJCR24Q6, JPMJCR24Q, and the JSPS under KAKENHI Grant Number JP24K06771, JP24K16950, and JP24K21316. M I aknowledges support from the JSPS KAKENHI Grant Number 25H01453.

\section{Intrinsic observables and jets}\label{sec: theory}
Here, we describe the core notion of this paper, the space of the intrinsic observables. 
We introduce the notion of jet and the space of the intrinsic observables, and show their basic properties.
Let ${\Omega} \subset \mathbb{R}^d$ be an open subset.

\subsection{\revised{Jets and derivatives of Dirac deltas}}

For $\alpha \in \mathbb{Z}_{\ge 0}^d$ and $p \in \Omega$, let $\delta_p^{(\alpha)}$ be the $\alpha$-th derivative of the Dirac delta at $p$, that is defined by
    \[\delta_p^{(\alpha)}(h) := \partial_x^\alpha h(p)\]
for $h \in \mathfrak{E}(\Omega)$.
\color{blue}
\begin{definition}\label{def: alternative definition of Delta}
For any $n \ge 0$, we define
\begin{align*}
    \mathfrak{D}_{p,n} &= \sum_{|\alpha| \le n} \mathbb{C} \delta_p^{(\alpha)}.
\end{align*}
We also define $\mathfrak{D}_{p,-1} := \{0\}$ and $\mathfrak{D}_{p} := \bigcup_{n \ge 0}\mathfrak{D}_{p,n}$.
\end{definition}
\color{black}
According to the following lemma, we see that $\mathop{\rm dim}\mathfrak{D}_{p,n}$ coincides with $\binom{n+d}{d}$.
\begin{lemma}\label{lem: linear independence of Delta p}
    The set
    \[\left\{ \delta_{p}^{(\alpha)} : p \in \Omega, ~\alpha \in \mathbb{Z}_{\ge 0} \right\}\]
    is linearly independent.
\end{lemma}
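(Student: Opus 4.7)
The plan is to prove the statement by the standard two-step reduction: first localize to a single base point, then test against monomials centered at that point.

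Suppose a finite linear relation
\[
\sum_{j=1}^N \sum_{\alpha \in F_j} c_{j,\alpha}\, \delta_{p_j}^{(\alpha)} = 0
\]
holds in $\mathfrak{E}(\Omega)'$, where $p_1,\dots,p_N \in \Omega$ are distinct and each $F_j \subset \mathbb{Z}_{\ge 0}^d$ is finite. The first step is to separate the contributions of different base points. For each $j$, choose a bump function $\phi_j \in \mathfrak{E}(\Omega)$ that is identically $1$ on some neighborhood of $p_j$ and identically $0$ on neighborhoods of all $p_i$ with $i \neq j$; such $\phi_j$ exist since the points are distinct and $\Omega$ is open in $\mathbb{R}^d$. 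Applying the relation to $\phi_j h$ for an arbitrary $h \in \mathfrak{E}(\Omega)$ annihilates all terms with $i \neq j$ because every partial derivative of $\phi_j$ vanishes at $p_i$, while the Leibniz rule together with $\phi_j \equiv 1$ near $p_j$ gives $\partial_x^\alpha(\phi_j h)(p_j) = \partial_x^\alpha h(p_j)$. Hence for each $j$ one obtains
\[
\sum_{\alpha \in F_j} c_{j,\alpha}\, \delta_{p_j}^{(\alpha)} = 0
\]
as a functional on $\mathfrak{E}(\Omega)$, reducing the problem to the single-point case.

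The second step is to test this one-point relation against the polynomial $h_\beta(x) := (x-p_j)^\beta$ for each $\beta \in F_j$. A direct multi-index computation gives
\[
\partial_x^\alpha (x-p_j)^\beta \big|_{x=p_j} = \beta!\, \delta_{\alpha,\beta},
\]
so that $\delta_{p_j}^{(\alpha)}(h_\beta) = \beta!\, \delta_{\alpha,\beta}$. Substituting $h_\beta$ isolates the $\beta$-term and yields $c_{j,\beta}\, \beta! = 0$, hence $c_{j,\beta} = 0$. Since $j$ and $\beta$ were arbitrary, every coefficient vanishes, proving the asserted linear independence.

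There is no real obstacle here; the only point requiring a little care is the existence and construction of the separating bump functions $\phi_j$, which is a routine partition-of-unity style argument on the open set $\Omega \subset \mathbb{R}^d$. Everything else is an elementary application of the Leibniz rule and the orthogonality of monomials under Taylor differentiation at their center.
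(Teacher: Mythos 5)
Your proof is correct and follows essentially the same route as the paper's: localize with bump functions supported away from the other base points, then test against monomials centered at $p_j$ to pick off each coefficient via $\partial_x^\alpha (x-p_j)^\beta|_{x=p_j}=\beta!\,\delta_{\alpha,\beta}$. The only cosmetic difference is that the paper tests directly against $(x-p_i)^\alpha\phi_i$ in one step, while you first derive the single-point relation and then test against the monomials; your explicit requirement that $\phi_j\equiv 1$ near $p_j$ is the precise version of what the paper uses implicitly.
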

\begin{proof}
    Let $p_1,p_2,\dots, p_r \in {\Omega}$ be distinct points.
    For each $i=1,\dots,r$, let
    \[D_i := \sum_\alpha c_{i,\alpha}\delta_{p_i}^{(\alpha)}.\]
    Here, we assume that $c_{i, \alpha}=0$ for all but finitely many $i$'s and $\alpha$'s.
    It suffices to show that  $c_{i, \alpha} = 0$ for $i=1,\dots,r$ and $\alpha \in \mathbb{Z}_{\ge 0}^d$ if $\sum_{i=1}^r D_i=0$. 
    Let $\phi_i \in \mathfrak{E}(\Omega)$ such that $\phi(p_i)=1$ and $p_j \notin {\rm supp}(\phi_i)$ for $j \neq i$.
    Then, for any $\alpha$, 
    \[\left(\sum_{k=1}^r D_k\right)\left((x-p_i)^\alpha \phi_i \right) = D_i\left((x-p_i)^\alpha \phi_i\right) =\alpha! c_{i,\alpha} = 0.\]
    Thus, we have $c_{i, \alpha} = 0$.
\end{proof}

For a map $f=(f_1,\dots, f_d) : {\Omega} \to {\Omega}$ of class $C^\infty$, let $f^*: \mathfrak{E}({\Omega}) \to \mathfrak{E}({\Omega})$ be the {\em pull-back} (Koopman operator) on $\mathfrak{E}({\Omega})$, that allocates $h \circ f \in \mathfrak{E}({\Omega})$ to $h \in \mathfrak{E}({\Omega})$, and let $f_*:=(f^*)': \mathfrak{E}({\Omega})' \to \mathfrak{E}({\Omega})'$ be the {\em push-forward}, that is the dual map of $f^*$.
Let $\mathbb{C}[X_1,\dots, X_d]_n$ be the space of homogeneous polynomials of degree equal to $n$:
\[\mathbb{C}[X_1,\dots, X_d]_n := \left\{\sum_{|\alpha| = n} a_\alpha X_1^{\alpha_1}\cdots X_d^{\alpha_d} : a_\alpha \in \mathbb{C}\text{ for }\alpha\text{ with }|\alpha| = n\right\}.\]
Then, by Lemma \ref{lem: linear independence of Delta p}, we have the linear isomorphism
\[\rho_n: \mathfrak{D}_{p,n}/\mathfrak{D}_{p,n-1} \longrightarrow \mathbb{C}[X_1,\dots, X_d]_n\]
defined by $\rho\left(\delta_p^{(\alpha)} + \mathfrak{D}_{p,n-1}\right) := X_1^{\alpha_1}\cdots X_d^{\alpha_d}$ for $\alpha$ with $|\alpha| = n$.
Then, we have the following statements:
\begin{proposition}\label{prop: explicit description of representation matrix of f_*}
    \begin{enumerate}[{\rm (1)}]
        \item  \label{preserving property, discrete}  For each integer $n \ge -1$, we have $f_*(\mathfrak{D}_{p,n}) \subset \mathfrak{D}_{f(p),n}$.
        \item   \label{graded map, discrete} Let ${\rm gr}^n_{f_*}: \mathfrak{D}_{p,n}/\mathfrak{D}_{p,n-1} \to \mathfrak{D}_{f(p),n}/\mathfrak{D}_{f(p),n-1}$ be the linear map induced $f_*$ via \eqref{preserving property, discrete}.
        Then, we have
    \begin{align}
        {\rm gr}^n_{f_*} = \rho_n^{-1} \circ S_n({\rm d}f_p) \circ \rho_n,
    \end{align}
    where $S_n({\rm d}f_p) \in \mathbb{C}^{d \times d}$ is the $n$-th symmetric product of the Jacobian matrix ${\rm d}f_p$, that is the linear map on $\mathbb{C}[X_1,\dots, X_d]_n$ defined by
    \[ S_n({\rm d}f_p)(Q(X_1,\dots, X_d)) := Q\left((X_1,\dots,X_d)^\top \cdot {\rm d}f_p\right)\]
    for $Q(X_1,\dots,X_d) \in \mathbb{C}[X_1,\dots, X_d]_n$.
    \item \label{representation matrix, discrete}Let $\mathbf{C}_{f,n}^\star \in \mathbb{C}^{\binom{n+d}{d} \times \binom{n+d}{d}}$ be the representation matrix of $f_*|_{\mathfrak{D}_{p,n}}: \mathfrak{D}_{p,n} \to \mathfrak{D}_{f(p),n}$ with respect to the basis $\{\delta_p^{(\alpha)} : |\alpha| \le n\}$ and $\{\delta_{f(p)}^{(\alpha)} : |\alpha| \le n\}$.
    Then, $\mathbf{C}_{f,n}^\star$ is in the form of 
    \begin{equation}\label{representation matrix}
        \mathbf{C}_{f,n}^\star=
        \begin{pmatrix}
            1 & * & * & *\\
            0 & \mathbf{J}_{p,1} & * & *\\
            \vdots & \ddots& \ddots & \vdots\\
            0 & \cdots & 0& \mathbf{J}_{p,n} 
        \end{pmatrix},
    \end{equation}
    where $\mathbf{J}_{p,i}$ is the representation matrix of $S_i({\rm d}f_p)$ with respect to the basis $\{X_1^{\alpha_1}\cdots X_d^{\alpha_d} : |\alpha| = i\} \subset \mathbb{C}[X_1,\dots,X_d]$ for $i = 1,\dots,n$.
    \item \label{eigenvalues of pushforward} Assume that $p = f(p)$.  Let $\lambda_1,\dots, \lambda_d$ be the eigenvalues (with multiplicity) of the Jacobian matrix ${\rm d}f_p$ and let $\lambda := (\lambda_1,\dots, \lambda_d)$.
    Then, the set of eigenvalues of $f_*|_{\mathfrak{D}_{p,n}}: \mathfrak{D}_{p,n} \to \mathfrak{D}_{p,n}$ is
    \[\left\{ \lambda^\alpha : |\alpha| \le n\right\}.\]
    \end{enumerate}
\end{proposition}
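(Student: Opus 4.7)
The plan is to reduce all four parts to an explicit formula for $f_*\delta_p^{(\alpha)}$ as a linear combination of the $\delta_{f(p)}^{(\beta)}$'s, obtained through the chain rule together with a clean ``test function'' trick. By Proposition \ref{prop: alternative definition of Delta}, the derivatives of Dirac deltas form the natural basis of $\mathfrak{D}_{p,n}$, so all the operator-theoretic statements reduce to coefficient identities. For part (1), I expand $\langle f_*\delta_p^{(\alpha)} \mid h\rangle = \partial_x^\alpha(h\circ f)(p)$ by Fa\`a di Bruno's formula: the result is a finite linear combination of $\partial_y^\beta h(f(p))$ with $|\beta|\le |\alpha|$ whose coefficients $c_{\alpha,\beta}$ depend polynomially on derivatives of $f$ at $p$ of order at most $|\alpha|$. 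Hence
\[
f_*\delta_p^{(\alpha)} = \sum_{|\beta|\le|\alpha|} c_{\alpha,\beta}\,\delta_{f(p)}^{(\beta)} \in \mathfrak{D}_{f(p),n}
\]
whenever $|\alpha|\le n$, which is (1).

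To prove (2), I would pin down the coefficients $c_{\alpha,\beta}$ by pairing $f_*\delta_p^{(\alpha)}$ with the convenient test function $h(x) = (x-f(p))^\beta/\beta!$, which yields
\[
c_{\alpha,\beta} = \frac{1}{\beta!}\,\partial_x^\alpha (f(x)-f(p))^\beta\Big|_{x=p}.
\]
When $|\alpha|=|\beta|=n$, the polynomial $(f(x)-f(p))^\beta$ vanishes to order $n$ at $p$, so only its degree-$n$ homogeneous part in $y := x-p$ survives upon taking $\partial_x^\alpha$ and evaluating at $p$. That part is exactly $(Jy)^\beta$ with $J := {\rm d}f_p$, giving $c_{\alpha,\beta} = (\alpha!/\beta!)\,[y^\alpha](Jy)^\beta$, where $[y^\alpha]$ denotes extraction of the $y^\alpha$-coefficient. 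A direct multinomial expansion then shows that this coincides with the $X^\beta$-coefficient of $(J^\top X)^\alpha = S_n(J)(X^\alpha)$, which after translating through the isomorphism $\rho_n$ yields ${\rm gr}^n_{f_*}=\rho_n^{-1}\circ S_n({\rm d}f_p)\circ\rho_n$.

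Part (3) is then essentially bookkeeping: ordering the basis $\{\delta_p^{(\alpha)}\}_{|\alpha|\le n}$ by $|\alpha|$, part (1) forces $\mathbf{C}_{f,n}^\star$ into block upper triangular shape, the top-left $1\times 1$ block is $1$ because $f_*\delta_p=\delta_{f(p)}$, and (2) identifies the $i$-th diagonal block with $\mathbf{J}_{p,i}$. For (4), the spectrum of a block upper triangular matrix is the union of the spectra of its diagonal blocks; the symmetric power $S_i(J)$ of a matrix $J$ with eigenvalues $\lambda_1,\ldots,\lambda_d$ has spectrum $\{\lambda^\alpha:|\alpha|=i\}$, as one sees by simultaneously upper-triangularizing $J$ (so that $S_i(J)$ inherits an upper-triangular form with diagonal entries $\lambda^\alpha$). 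Taking the union over $i=0,\ldots,n$ yields exactly $\{\lambda^\alpha:|\alpha|\le n\}$. The main obstacle is the combinatorial identification at the end of (2): the transpose and index conventions must be tracked carefully so that $(\alpha!/\beta!)[y^\alpha](Jy)^\beta$ matches the $X^\beta$-coefficient of $(J^\top X)^\alpha$; once this alignment is nailed down, parts (3) and (4) follow mechanically.
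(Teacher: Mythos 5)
Your argument is correct, but it is genuinely more self-contained than the paper's: the paper disposes of parts (1) and (2) in one line by citing an external result (Lemma 2.2 of the reference \cite{ISHIKAWA2023109048}) and then reads off (3) and (4) exactly as you do, whereas you reprove the core facts directly. Your route — Fa\`a di Bruno for the containment $f_*\delta_p^{(\alpha)}\in\mathfrak{D}_{f(p),|\alpha|}$, then extracting the coefficients $c_{\alpha,\beta}$ by testing against the monomials $(x-f(p))^\beta/\beta!$, and isolating the top-degree part $(Jy)^\beta$ when $|\alpha|=|\beta|=n$ — is sound, and the one step you flag as delicate does check out: expanding $\prod_j\bigl(\sum_i J_{ij}X_i\bigr)^{\alpha_j}$ and $\prod_k\bigl(\sum_j J_{kj}y_j\bigr)^{\beta_k}$ multinomially, both the $X^\beta$-coefficient of the former and $\tfrac{\alpha!}{\beta!}$ times the $y^\alpha$-coefficient of the latter equal $\sum_{(m_{ij})}\alpha!\prod_{ij}J_{ij}^{m_{ij}}/\prod_{ij}m_{ij}!$, the sum running over nonnegative integer matrices with column sums $\alpha$ and row sums $\beta$, so ${\rm gr}^n_{f_*}=\rho_n^{-1}\circ S_n({\rm d}f_p)\circ\rho_n$ as claimed. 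The block-triangularity in (3) and the spectral computation in (4) (Schur triangularization of ${\rm d}f_p$ making $S_i({\rm d}f_p)$ triangular with diagonal $\lambda^\alpha$ in a suitable monomial order) are standard and match the paper's deductions. What your approach buys is independence from the cited lemma and explicit control of the transpose/index conventions hidden in the definition of $S_n({\rm d}f_p)$; what it costs is the combinatorial verification above, which the paper avoids entirely by citation. If you write it up, spell out that matrix-sum identity rather than leaving it as ``a direct multinomial expansion shows''.
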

\begin{proof}
       The statements \eqref{preserving property, discrete} and \eqref{graded map, discrete} are direct consequences of \cite[Lemma 2.2]{ISHIKAWA2023109048}.
       The formula \eqref{representation matrix} follows from \eqref{graded map, discrete}.
       The statement \eqref{eigenvalues of pushforward} immediately follows from \eqref{representation matrix}.
\end{proof}
\revised{
Intuitively, the jet space $\mathfrak D_{p,n}$ collects all derivatives up to order $n$ at $p$.
By the chain rule, the push-forward $f_*=(f^*)'$ sends an $n$-jet at $p$
to an $n$-jet at $f(p)$.
On the homogeneous degree-$n$ layer $\mathfrak D_{p,n}/\mathfrak D_{p,n-1}\cong\mathbb C[X_1,\dots,X_d]_n$, this action is the symmetric power $S_n(df_p)$ of the Jacobian, so the full matrix in a jet basis is block-upper-triangular.
If $p$ is a fixed point, the eigenvalues of $f_*|_{\mathfrak D_{p,n}}$ are the monomials $\lambda^\alpha=\lambda_1^{\alpha_1}\cdots\lambda_d^{\alpha_d}$ with $|\alpha|\le n$, where $\lambda_1,\dots,\lambda_d$ are the eigenvalues of $df_p$.
This fact also compatible with the fact that the products of eigenvalues of Koopman operators are themselves eigenvalues of the Koopman operators (if eigenvalues exist).
}

As a remark, we also provide a canonical (not depending on the coordinate system) definition of $\mathfrak{D}_{p,n}$, using the notion of jets as follows.
Let $\mathfrak{P}_n \subset \mathfrak{E}(\Omega)$ be the set of polynomial functions of degree less than or equal to $n$:
\[ \mathfrak{P}_n := \left\{x \mapsto \sum_{|\alpha| \le n} a_\alpha x^\alpha : a_\alpha \in \mathbb{C}\text{ for }\alpha\text{ with }|\alpha| \le n\right\}.\]
For $p \in \Omega$ and non-negative integer $n \ge 0$, we define 
\begin{align}
    \tau_{p,n}: \mathfrak{E}(\Omega) \to \mathfrak{P}_n;~h \mapsto \sum_{|\alpha| \le n} \frac{\partial_x^\alpha h(p)}{\alpha !} (x-p)^\alpha.
\end{align}
\begin{definition}\label{def: jet}
    We define the set $\mathfrak{J}_{p,n}$ of {\em $n$-jets} from $\Omega$ to $\mathbb{C}$ at $p$ by the quotient space
    \begin{align}
        \mathfrak{J}_{p,n} := \mathfrak{E}(\Omega)/{\rm Ker}(\tau_{p,n}).
    \end{align}
    We denote by $\mathfrak{j}_{p,n}$ the natural map $\mathfrak{j}_{p,n}: \mathfrak{E}(\Omega) \to \mathfrak{J}_{p,n}$.
\end{definition}
By definition, $\tau_{p,n}$ induces the isomorphism from $\mathfrak{J}_{p,n}$ to $\mathfrak{P}_n$.
We note that the notion of the jet is a mathematically canonical object on a smooth manifold (not depending on the choice of coordinate systems).
For details of the theory of jets, see, for example, \cite[Section 4]{KPS_NatOpMan93}.

Then, we have the following proposition:
\color{blue}
\begin{proposition}[An alternative definition of $\mathfrak{D}_{p,n}$]
For each $n \ge 0$, we have $\mathfrak{D}_{p,n} = \mathfrak{j}_{p,n}'\left(\mathfrak{J}_{p,n}'\right)$.
\end{proposition}
\color{black}
\begin{proof}
    Since $\tau_{p,n}'(\mathfrak{P}_n) = \mathfrak{j}_{p,n}'(\mathfrak{J}_{p,n}')$, we prove $\tau_{p,n}'(\mathfrak{P}_n) = \mathfrak{D}_{p,n}$.
    For $\beta \in \mathbb{Z}_{\ge 0}^d$ with $|\beta| \le n$, let $\ell_\beta \in \mathfrak{P}_n'$ be the linear function of $\mathfrak{P}_n$ defined by $\ell_\beta(\sum_\alpha a_\alpha x^\alpha) := \beta!a_\beta$.
    Then, for any $h \in \mathfrak{E}(\Omega)$, we have 
    \[\langle \tau_{p,n}'(\ell_\beta) \mid h\rangle = \partial_x^\beta h|_{x=p} = \langle \delta_p^{(\beta)} \mid h\rangle.\]
    Thus, we conclude that the image of $\tau_{p,n}'$ includes $\mathfrak{D}_{p,n}$.
    Since $\tau_{p,n}$ is surjective, the dual map $\tau_{p,n}'$ is injective, thus the dimension of the image of $\tau_{p,n}'$ coincides with that of $\mathfrak{P}_n$.
    On the other hand, the dimension of $\mathfrak{D}_{p,n}$ is the same as $\mathop{\rm dim}\mathfrak{P}_n$ by Lemma \ref{lem: linear independence of Delta p}.
    Therefore, the image of $\tau_{p,n}'$ coincides with $\mathfrak{D}_{p,n}$.
\end{proof}

\subsection{Construction of the space of intrinsic observables in reproducing kernel Hilbert spaces}\label{subsubsec: canonical invariant subsapces}
Let $H \subset \mathfrak{E}(\Omega)$ be a Hilbert space with inner product $\langle\cdot,\cdot\rangle_H$.
We denote the inclusion map by 
\begin{align}
    \iota: H \hookrightarrow \mathfrak{E}({\Omega}). \label{inclusion}
\end{align}
We always assume the following assumption:
\begin{assumption}\label{asm: continuity of inclusion}
    The inclusion map $\iota$ is continuous.
\end{assumption}

What is important here is that such a Hilbert space is captured in the framework reproducing kernel Hilbert space (RKHS). 
First, we provide the definition of RKHS:
\begin{definition}
    Let $X$ be a set and let $H$ be a Hilbert space composed of maps from $X$ to $\mathbb{C}$ with inner product $\langle\cdot, \cdot\rangle_H$.
    We say that $H$ is a {\it reproducing kernel Hilbert space  (RKHS)} on $X$ if the point evaluation map $h \mapsto h(x)$ from $H$ to $\mathbb{C}$ is a continuous linear map for any $x \in X$.
\end{definition}
Let $H$ be an RKHS on a set $X$.
Then, for each $x \in X$, by the Riesz representation theorem, there uniquely exists $k_x \in H$  satisfying
\[\langle h, k_x\rangle_H =h(x).\]
We define $k(x,y) := \langle k_x, k_y\rangle$ for $x, y$ in $X$.
Then, $k$ is a positive definite kernel on $X$.
Here we say a map $k: X \times X \to \mathbb{C}$ is positive definite kernel on $X$ if the matrix $\left(k(x_i,x_j)\right)_{i,j=1,\dots, r}$ is a Hermitian positive semi-definite matrix for any finite elements $x_1,\dots, x_r \in X$.
Conversely, for a positive definite kernel $k$ on $X$, there uniquely exists an RKHS on $X$ (see \cite[Theorem 2.2]{6fc4e20b-e6b8-3651-8988-bc9975065f31}, for example).
Thus, we have a one-to-one correspondence between the RKHSs on $X$ and the positive definite kernels on $X$.
In the remainder of this paper, we always consider $X = \Omega$.

Under Assumption \ref{asm: continuity of inclusion}, $H$ is an RKHS on $\Omega$ since the point evaluation map at $x \in \Omega$ coincides with $\delta_x\circ \iota$.
We note that the associated positive definite kernel $k$ of $H$ is a function of class $C^\infty$.
Conversely, as in the following proposition, the RKHS on $\Omega$ of a positive definite kernel of class $C^\infty$ is continuously included in $\mathfrak{E}(\Omega)$.
Moreover, if the positive definite kernel $k$ satisfies $k_x \in \mathfrak{H}(\Omega)$ for all $x \in \Omega$, $H$ is continuously included in the space of real analytic functions.
\begin{proposition}\label{prop: continuity of inclusion}
    The correspondence $H \mapsto k$ induces a one-to-one correspondence between the Hilbert spaces $H\subset \mathfrak{E}(\Omega)$ (resp. $\mathfrak{H}(\Omega)$) with continuous inclusions and the continuous positive definite kernels $k$ such that $k_x \in \mathfrak{E}(\Omega)$ (resp. $\mathfrak{H}(\Omega)$) for all $x \in \Omega$.
\end{proposition}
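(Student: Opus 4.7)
The content of the proposition splits cleanly into two parts: the classical Moore--Aronszajn bijection between positive definite kernels and RKHSs on $\Omega$, and a matching between the two regularity conditions. I would invoke Moore--Aronszajn wholesale and focus on the equivalence ``continuous inclusion $H \hookrightarrow \mathfrak{E}(\Omega)$'' $\iff$ ``$k$ continuous with $k_x \in \mathfrak{E}(\Omega)$ for all $x$'', and likewise for $\mathfrak{H}(\Omega)$.

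Forward direction: suppose $\iota : H \hookrightarrow \mathfrak{E}(\Omega)$ is continuous. Since $\delta_x \in \mathfrak{E}(\Omega)'$, the functional $\delta_x \circ \iota$ lies in $H^*$, so by Riesz there exists $k_x \in H$ with $\langle h, k_x \rangle_H = h(x)$; because $k_x \in H \subset \mathfrak{E}(\Omega)$, smoothness of $k_x$ is automatic. For joint continuity of $k(x,y) = \langle k_x, k_y \rangle_H$, note that $y \mapsto \delta_y$ is continuous as a map $\Omega \to \mathfrak{E}(\Omega)'$ (strong topology), so the adjoint $\iota'$ makes $y \mapsto k_y = \iota'(\delta_y)$ continuous $\Omega \to H$, and sesquilinear continuity of the inner product does the rest.

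Backward direction: given a continuous positive definite $k$ with $k_x \in \mathfrak{E}(\Omega)$, construct $H$ by Moore--Aronszajn as the completion of $\mathrm{span}\{k_y : y \in \Omega\}$. I would prove by induction on $|\alpha|$ that for every multi-index $\alpha$ and $x \in \Omega$ there is $k^{(\alpha)}_x \in H$ with $\langle h, k^{(\alpha)}_x \rangle_H = \partial_x^\alpha h(x)$ for all $h \in H$, and that $x \mapsto k^{(\alpha)}_x$ is continuous $\Omega \to H$. The base case is the reproducing property. For the inductive step, I would approximate $k^{(\alpha + e_i)}_x$ by difference quotients $q_t := (k^{(\alpha)}_{x + t e_i} - k^{(\alpha)}_x)/t \in H$ and show they are Cauchy as $t \to 0$: expanding $\|q_t - q_s\|_H^2$ yields four inner products that reduce to finite differences of $\partial_x^\alpha \partial_y^\alpha k$, whose limits exist by the separate smoothness of $k$ combined with a Cauchy--Schwarz bound forced by positive definiteness. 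With all $k^{(\alpha)}_x$ in hand, continuity of $\iota$ follows from
\begin{equation*}
\sup_{x \in K} |\partial_x^\alpha h(x)| \;\le\; \bigl(\sup_{x \in K} \|k^{(\alpha)}_x\|_H\bigr)\,\|h\|_H
\end{equation*}
on compact $K \subset \Omega$, the supremum on the right being finite by continuity of $x \mapsto k^{(\alpha)}_x$. For the analytic variant, the same scheme yields the Cauchy-type bounds $\|k^{(\alpha)}_x\|_H \lesssim r^{-|\alpha|}/\sqrt{\alpha!}$ on compacta, from which $\sum_\alpha \frac{(z - x)^\alpha}{\alpha!} \partial_x^\alpha h(x)$ converges absolutely in $H$ on a complex neighborhood of each $x$, producing a holomorphic extension; matching these bounds with the inductive-limit topology of $\mathfrak{H}(\Omega)$ gives continuous inclusion.

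The main obstacle is the Cauchy argument for the $H$-valued derivative $\partial_{x_i} k_x$: a priori only separate smoothness of $k$ is given, and the existence of the joint mixed partials $\partial_x^\alpha \partial_y^\beta k$ needed to evaluate the relevant inner products must be deduced from positive definiteness. The cleanest route is to invoke a standard result (as in \cite{6fc4e20b-e6b8-3651-8988-bc9975065f31}) that continuity together with separate $C^\infty$-regularity of a positive definite kernel already implies joint $C^{\infty,\infty}$-regularity, via Cauchy--Schwarz applied to difference-quotient norms; once that is in place the reproducing properties of all derivatives, and the norm estimates above, become routine.
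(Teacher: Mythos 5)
The paper offers no argument of its own here: its ``proof'' is a one-line citation of Theorems 2.6 and 2.7 of the very reference you appeal to, which state exactly the two equivalences (the $\mathfrak{E}(\Omega)$ case and the $\mathfrak{H}(\Omega)$ case). Your forward direction is the standard argument and is fine. In the backward direction you have correctly isolated the only real difficulty: the difference quotients $q_t$ are Cauchy in $H$ only if the mixed partials $\partial_x^\alpha\partial_y^\beta k$ exist and are continuous near the diagonal, and the hypotheses ($k$ jointly continuous, each $k_x$ smooth in the second variable) do not hand you that. But your resolution of this crux is to ``invoke a standard result (as in the cited reference)'' --- which is precisely the theorem the paper cites for the whole proposition. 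So your proposal is not a genuinely more self-contained or more elementary route; it is the standard scaffolding wrapped around the same external citation, and if you intend it as a complete proof you must actually establish that joint-regularity lemma (positive definiteness plus separate smoothness plus continuity implies existence and continuity of all mixed partials), since that lemma carries essentially all the content of the cited theorems.

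One quantitative slip in the analytic case: what you can extract on a compact set is $\|k_x^{(\alpha)}\|_H^2=\partial_u^\alpha\partial_v^\alpha k(u,v)\big|_{u=v=x}\le \bigl(C\,\alpha!\,r^{-|\alpha|}\bigr)^2$, i.e. $\|k_x^{(\alpha)}\|_H\lesssim \alpha!\,r^{-|\alpha|}$, not $r^{-|\alpha|}/\sqrt{\alpha!}$; the latter type of bound holds for special kernels such as the exponential (Fock) kernel but not for a general real-analytic kernel. The weaker, correct bound still makes $\sum_\alpha \frac{(z-x)^\alpha}{\alpha!}\,k_x^{(\alpha)}$ converge in $H$ on a complex polydisc of radius comparable to $r$, uniformly in $\|h\|_H$, which is all you need for the holomorphic extension and for continuity of $H\hookrightarrow\mathfrak{H}(U)\to\mathfrak{H}(\Omega)$ into the inductive limit; so the structure of that step survives, only the stated estimate needs correcting.
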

\begin{proof}
    It follows from \cite[Theorems 2.6 and 2.7]{6fc4e20b-e6b8-3651-8988-bc9975065f31}.
\end{proof}

We define
\begin{align}
    \mathfrak{r}: H' \longrightarrow H \label{riesz representation}
\end{align}
be the anti-linear isomorphism such that $ \langle \ell \mid \cdot \rangle = \langle \cdot, \mathfrak{r}(\ell)\rangle_H$ for $\ell \in H'$ by the Riesz representation theorem.
Now, we define the space of the {\rm intrinsic observables} as follows:
\begin{definition}[Intrinsic observables]
For $p \in \Omega$ and $n \ge 0$, we define a finite dimensional subspace of $H$ by
\begin{align*}
    V_{p,n} &:= (\mathfrak{r} \circ \iota')(\mathfrak{D}_{p,n}). \\
\end{align*}
We denote the orthogonal projection to $V_{p,n}$ by
\begin{align}
    \pi_n: H \to V_{p,n} \label{orthogonal projection}
\end{align}
and define
\begin{align*}
    V_p &:= \bigcup_{n\ge0} V_{p,n}
\end{align*}
\end{definition}
\revised{
The space $V_p$ controls the Taylor coefficients of functions in $H$ at that point since the space $\mathfrak{D}_p$ is spanned by the functionals of all partial derivatives at $p$ (see also Corollary \ref{cor: alternative definition of Vpn}).
}
We remark that we can explicitly describe the elements $(\mathfrak{r} \circ \iota')(\delta_p^{(\alpha)}) \in H$ of $V_{p}$ using derivatives of $k$ as follows:
\begin{proposition}\label{prop: explicit formula of iota D delta p}
    For $\alpha \in \mathbb{Z}_{\ge 0}^d$ and $p \in \Omega$, we have
    \begin{align}
        (\mathfrak{r} \circ \iota')(\delta_p^{(\alpha)}) = \partial_x^\alpha k(x,\cdot)|_{x=p}.
    \end{align}
\end{proposition}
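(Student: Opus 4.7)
The plan is to unfold the definitions of $\mathfrak{r}$ and $\iota'$ and reduce the claim to a routine commutation of partial differentiation with the continuous conjugate-linear functional $\langle h, \cdot\rangle_H$, using the $C^\infty$-smoothness of the kernel $k$ guaranteed by Assumption \ref{asm: continuity of inclusion} together with Proposition \ref{prop: continuity of inclusion}.

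First I would unwind the target. By the defining property of the Riesz map $\mathfrak{r}$ in \eqref{riesz representation} and the definition of the transpose $\iota'$, the element $(\mathfrak{r} \circ \iota')(\delta_p^{(\alpha)})$ is characterized as the unique $g \in H$ such that
$$\langle h, g \rangle_H = \langle \iota'(\delta_p^{(\alpha)}) \mid h \rangle = \langle \delta_p^{(\alpha)} \mid \iota(h) \rangle = \partial_x^\alpha h(p)$$
for every $h \in H$. It therefore suffices to show that the function $g_\alpha(y) := \partial_x^\alpha k(x,y)\big|_{x=p}$ lies in $H$ and satisfies $\langle h, g_\alpha\rangle_H = \partial_x^\alpha h(p)$ for all $h \in H$.

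The key technical step is to establish that the $H$-valued map $F : \Omega \to H$, $x \mapsto k_x$, is $C^\infty$ in the norm topology, with $\partial_x^\alpha F(x) \in H$ represented pointwise by the function $y \mapsto \partial_x^\alpha k(x,y)$. I would proceed by induction on $|\alpha|$, the base case reducing via the reproducing identity $\langle k_x, k_y\rangle_H = k(x,y)$ to the computation
$$\left\|\frac{k_{x+te_i} - k_x}{t} - \frac{k_{x+se_i} - k_x}{s}\right\|_H^2 = \frac{k(x+te_i, x+te_i) - k(x+te_i, x)}{t^2} - \cdots,$$
after which a Taylor expansion of $k$ in both arguments shows the difference quotient is Cauchy in $H$ as $s, t \to 0$. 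Its limit, evaluated at $y$ via the reproducing property, is the function $y \mapsto \partial_{x_i} k(x,y)$. Iterating, $\partial_x^\alpha F(x)$ exists in the norm of $H$ and is represented by $y \mapsto \partial_x^\alpha k(x,y)$; in particular $g_\alpha = \partial_x^\alpha F(x)|_{x=p}$ belongs to $H$.

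Finally, since $\langle h, \cdot \rangle_H : H \to \mathbb{C}$ is a continuous conjugate-linear functional, it commutes with the norm limits defining the partial derivatives of $F$. Differentiating the reproducing identity $\langle h, F(x)\rangle_H = h(x)$ term by term in $x$ and specializing to $x = p$ therefore yields
$$\partial_x^\alpha h(p) = \partial_x^\alpha \langle h, F(x)\rangle_H\big|_{x=p} = \left\langle h,\, \partial_x^\alpha F(x)\big|_{x=p}\right\rangle_H = \langle h, g_\alpha\rangle_H,$$
which is the required identity, and uniqueness in the Riesz step concludes the proof. The main obstacle is the norm differentiability of $x \mapsto k_x$: everything else is a direct consequence of the reproducing property, but verifying that the formal derivatives actually converge in $H$ (rather than only pointwise on $\Omega$) is where the $C^\infty$-regularity of $k$ supplied by Proposition \ref{prop: continuity of inclusion} is genuinely used.
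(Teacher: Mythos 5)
Your proposal is correct, but it takes a noticeably heavier route than the paper. You reduce the claim to the derivative-reproducing property $\partial_x^\alpha h(p) = \langle h, \partial_x^\alpha k(x,\cdot)|_{x=p}\rangle_H$ for \emph{all} $h \in H$, and to get it you prove norm-$C^\infty$ differentiability of the feature map $x \mapsto k_x$ via a Cauchy difference-quotient/Taylor argument; this also carries the burden of showing $\partial_x^\alpha k(x,\cdot)|_{x=p} \in H$ and genuinely uses joint smoothness of $k$ on $\Omega\times\Omega$ (so you must invoke the kernel-regularity characterization behind Proposition \ref{prop: continuity of inclusion}). The paper instead exploits that the left-hand side $(\mathfrak{r}\circ\iota')(\delta_p^{(\alpha)})$ is \emph{already} an element of $H$ by construction ($\delta_p^{(\alpha)} \in \mathfrak{E}(\Omega)'$, $\iota'$ lands in $H'$, and $\mathfrak{r}$ is the Riesz map), and simply evaluates it at an arbitrary $y \in \Omega$ by pairing with $k_y$:
\begin{align*}
(\mathfrak{r}\iota'\delta_p^{(\alpha)})(y) = \langle \mathfrak{r}\iota'\delta_p^{(\alpha)}, k_y\rangle_H = \overline{\langle \delta_p^{(\alpha)} \mid \iota(k_y)\rangle} = \overline{\partial_x^\alpha k_y(x)|_{x=p}} = \partial_x^\alpha k(x,y)|_{x=p},
\end{align*}
which needs only that the single function $k_y$ lies in $H \subset \mathfrak{E}(\Omega)$ (immediate from Assumption \ref{asm: continuity of inclusion}), and obtains membership of $\partial_x^\alpha k(x,\cdot)|_{x=p}$ in $H$ as a byproduct. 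So the paper's proof is a two-line computation with weaker explicit input, while your argument proves more (the general derivative-reproducing property and norm-smoothness of the kernel embedding) at the cost of the technical Cauchy step, which you only sketch but which is standard and completes correctly under the paper's hypotheses.
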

\begin{proof}
    Let $y \in \Omega$ be an arbitrary point.
    Then, it follows from the following computation:
    \begin{align*}
        (\mathfrak{r} \circ \iota')(\delta_p^{(\alpha)})(y) &= \langle \mathfrak{r}\iota'\delta_p^{(\alpha)}, k_y\rangle_H
        = \overline{\langle k_y, \mathfrak{r}\iota'\delta_p^{(\alpha)} \rangle_H} = \overline{\partial_x^\alpha k_y(x)|_{x=p}}\\
        &= \partial_x^{\alpha} k(x,y)|_{x=p}.
    \end{align*}
\end{proof}

We introduced the space of intrinsic observables in $H$ via the space of jets $\mathfrak{J}_{p,n}$ and the continuous inclusion $\iota: H \subset \mathfrak{E}(\Omega)$.
This definition is mathematically canonical, but it is a little abstract and not explicit as well.
As an immediate conclusion of Proposition \ref{prop: explicit formula of iota D delta p}, the space $V_{p,n}$ has an alternative explicit expression:
\begin{corollary}\label{cor: alternative definition of Vpn}
For $p \in \Omega$ and $n \ge 0$, we have    
\begin{align}
    V_{p,n} &=  \sum_{|\alpha| \le n} \mathbb{C} \cdot \partial_x^\alpha k(x,\cdot)|_{x=p}
\end{align}
\end{corollary}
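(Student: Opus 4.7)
The plan is to chase the definitions, using the two preceding propositions as the only non-trivial ingredients. By construction $V_{p,n} = (\mathfrak{r} \circ \iota')(\mathfrak{D}_{p,n})$, so the content of the corollary is simply to identify the image of $\mathfrak{D}_{p,n}$ under $\mathfrak{r}\circ\iota'$ with the span of partial derivatives of the reproducing kernel at $p$.

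First I would invoke Proposition \ref{prop: alternative definition of Delta} to rewrite $\mathfrak{D}_{p,n} = \sum_{|\alpha|\le n} \mathbb{C}\,\delta_p^{(\alpha)}$. Next, I would apply Proposition \ref{prop: explicit formula of iota D delta p} to each generator, giving $(\mathfrak{r}\circ\iota')(\delta_p^{(\alpha)}) = \partial_x^\alpha k(x,\cdot)|_{x=p}$. Taking the image of the span, the result follows, provided that applying $\mathfrak{r}\circ\iota'$ to a $\mathbb{C}$-linear combination of the $\delta_p^{(\alpha)}$'s produces a $\mathbb{C}$-linear combination of the $\partial_x^\alpha k(x,\cdot)|_{x=p}$'s.

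The only subtlety worth mentioning is that $\mathfrak{r}$ is anti-linear while $\iota'$ is linear, so the composite is anti-linear. However, this does not affect the $\mathbb{C}$-span: for any family $\{e_i\}$ in a complex vector space and any anti-linear map $T$, one has $T\bigl(\sum_i c_i e_i\bigr) = \sum_i \overline{c_i}\,T(e_i) \in \sum_i \mathbb{C}\cdot T(e_i)$, and conversely every element of $\sum_i \mathbb{C}\cdot T(e_i)$ arises this way by conjugating the coefficients. Hence
\[
(\mathfrak{r}\circ\iota')\!\left(\sum_{|\alpha|\le n}\mathbb{C}\,\delta_p^{(\alpha)}\right) \;=\; \sum_{|\alpha|\le n}\mathbb{C}\cdot\partial_x^\alpha k(x,\cdot)\big|_{x=p},
\]
which is exactly the claim. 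There is no real obstacle here; this corollary is a direct unwinding of the definition of $V_{p,n}$ via the two preceding propositions, and its role is to record the concrete, kernel-based description that will be used in the algorithmic developments of Section \ref{sec:NumericalSimulation}.
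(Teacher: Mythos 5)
Your proposal is correct and follows essentially the same route as the paper, which presents the corollary as an immediate consequence of Proposition \ref{prop: explicit formula of iota D delta p} combined with Proposition \ref{prop: alternative definition of Delta} and the definition $V_{p,n} = (\mathfrak{r}\circ\iota')(\mathfrak{D}_{p,n})$. Your explicit remark that the anti-linearity of $\mathfrak{r}$ does not change the $\mathbb{C}$-span is a fair point of care, though the paper treats it as implicit.
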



Now, let us introduce the Koopman operator on the RKHS and consider the relation between the intrinsic observables and the Koopman operator.
For a map $f=(f_1,\dots, f_d) : {\Omega} \to {\Omega}$ of class $C^\infty$, let $C_f$ be the Koopman operator, that is a linear operator defined by $C_fh := h \circ f$ with domain $D(C_f) = \{h \in H : h\circ f \in H\}$.
When $C_f$ is densely defined, we can define the adjoint operator $C_f^*: H \to H$, that is known as the Perron--Frobenius operator.
For Koopman operators with dense domain, the Perron--Frobenius operator on $H$ is compatible with the push-forward:
\begin{proposition}\label{prop: domain of PF operators}
    Assume that $C_f$ is densely defined.
    For any $\ell \in \mathfrak{E}(\Omega)'$, $(\mathfrak{r} \circ \iota')(\ell) \in D(C_f^*)$ and $C_f^* \circ (\mathfrak{r} \circ \iota')=  (\mathfrak{r} \circ \iota') \circ f_*$ holds.
\end{proposition}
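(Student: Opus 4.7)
The plan is to unwind the definitions of $C_f^*$, $f_* = (f^*)'$, and $\mathfrak{r}$, and rely on the fact that $C_f$ is nothing but the restriction of the pull-back $f^*$ under the continuous inclusion $\iota$.

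First I would record the compatibility $\iota \circ C_f = f^* \circ \iota$ on the domain $D(C_f)$: for $g \in D(C_f)$, both sides equal $g \circ f \in H \subset \mathfrak{E}(\Omega)$. This is the only structural fact needed to transfer a computation from $H$ to $\mathfrak{E}(\Omega)$, where the pull-back/push-forward duality is available without domain issues.

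Next, fix $\ell \in \mathfrak{E}(\Omega)'$ and set $h := (\mathfrak{r} \circ \iota')(\ell) \in H$. To show $h \in D(C_f^*)$ with $C_f^* h = (\mathfrak{r} \circ \iota')(f_*\ell)$, I would verify directly that
\begin{align*}
\langle C_f g, h \rangle_H = \langle g, (\mathfrak{r} \circ \iota')(f_*\ell) \rangle_H
\end{align*}
for every $g \in D(C_f)$; since the right-hand side is a bounded antilinear functional of $g$ extending continuously to all of $H$, this simultaneously establishes membership in $D(C_f^*)$ and identifies the image. The computation is a straightforward chain using $\langle \cdot, \mathfrak{r}(\cdot) \rangle_H = \langle \cdot \mid \cdot \rangle$:
\begin{align*}
\langle C_f g, h \rangle_H
&= \langle \iota'\ell \mid C_f g \rangle = \langle \ell \mid \iota \circ C_f g \rangle = \langle \ell \mid f^* \iota g \rangle \\
&= \langle (f^*)' \ell \mid \iota g \rangle = \langle f_* \ell \mid \iota g \rangle = \langle \iota' f_* \ell \mid g \rangle \\
&= \langle g, (\mathfrak{r} \circ \iota')(f_*\ell) \rangle_H,
\end{align*}
where the third equality uses $\iota \circ C_f = f^* \circ \iota$ on $D(C_f)$.

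There is no real obstacle here; the only point requiring care is the antilinearity of $\mathfrak{r}$, which means the defining identity must be used with $\mathfrak{r}(\cdot)$ in the second slot of $\langle\cdot,\cdot\rangle_H$ and with no conjugation on $\ell$. Once that bookkeeping is in place, the equality above is immediate, and the density of $D(C_f)$ together with the characterization of the adjoint yields the claim.
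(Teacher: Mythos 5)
Your proof is correct and follows essentially the same route as the paper: both reduce the adjoint computation to the pull-back/push-forward duality on $\mathfrak{E}(\Omega)$ via the identity $\iota \circ C_f = f^*\circ \iota$ on $D(C_f)$, with your single chain of equalities simply packaging the paper's two steps (continuity of $\ell \circ f^* \circ \iota$, hence membership in $D(C_f^*)$, and the identification $\mathfrak{r}(\ell\circ f^*\circ\iota)=\mathfrak{r}\iota' f_*(\ell)$) into one verification of the defining property of the adjoint.
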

\begin{proof}
    Since $f^*$ is a continuous linear map on $\mathfrak{E}(\Omega)$, we see that $\langle C_fh, \mathfrak{r}\iota'(\ell)\rangle_H = (\ell \circ f^*\circ \iota )(h)$ is also continuous on $h \in D(C_f)$, proving the first statement.
    As for the second statement,  since $\mathfrak{r}(\ell \circ f^*\circ \iota) = \mathfrak{r}\iota'f_*(\ell)$, we have $\langle h, \mathfrak{r}\iota'f_*(\ell) \rangle_H = \langle h, C_f^*\mathfrak{r}\iota'(\ell)\rangle_H$.
    Thus, we have the second statement.
\end{proof}
Then, we have the following theorem:
\begin{theorem}\label{thm: basic theorem for PF operator, discrete}
Let $p$ be a point $p$ such that $f(p)=p$.
Assume that $C_f$ is densely defined.
Then, we have the following statements:
\begin{enumerate}[{\rm (1)}]
    \item  $C_f^*(V_{p,n}) \subset V_{p,n}$ for all $n \ge 0$. \label{preserving property, PF operator}
    \item  Assume that $\iota'|_{\mathfrak{D}_p}: \mathfrak{D}_p \to H'$ is injective. \label{eigenvalues of PF operator}
    Then, the set of the eigenvalues of $C_f^*$ is 
    \[\left\{ \lambda^\alpha : |\alpha| \le n\right\},\]
    where $\lambda := (\lambda_1,\dots, \lambda_d)$ and $\lambda_1,\dots, \lambda_d$ are the eigenvalues (with multiplicity) of the Jacobian matrix ${\rm d}f_p$.
\end{enumerate}
\end{theorem}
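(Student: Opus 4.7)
The plan is to reduce both statements to the already-established structural properties of the push-forward $f_*$ on the jet-dual spaces $\mathfrak{D}_{p,n}$, using the intertwining identity of Proposition \ref{prop: domain of PF operators} as the bridge. That proposition gives $C_f^* \circ (\mathfrak{r}\circ\iota') = (\mathfrak{r}\circ\iota') \circ f_*$ on all of $\mathfrak{E}(\Omega)'$, while Proposition \ref{prop: explicit description of representation matrix of f_*} controls what $f_*$ does to $\mathfrak{D}_{p,n}$.

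For (1), I first note that $f(p)=p$ together with Proposition \ref{prop: explicit description of representation matrix of f_*}(1) yields $f_*(\mathfrak{D}_{p,n}) \subset \mathfrak{D}_{f(p),n} = \mathfrak{D}_{p,n}$. Applying $\mathfrak{r}\circ\iota'$ to both sides and invoking the intertwining identity gives $C_f^*(V_{p,n}) = (\mathfrak{r}\circ\iota')(f_*(\mathfrak{D}_{p,n})) \subset (\mathfrak{r}\circ\iota')(\mathfrak{D}_{p,n}) = V_{p,n}$, which is the invariance claim.

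For (2), the injectivity assumption makes $\mathfrak{r}\circ\iota'|_{\mathfrak{D}_{p,n}} : \mathfrak{D}_{p,n} \to V_{p,n}$ a bijection (both spaces have dimension $\binom{n+d}{d}$). Under this bijection the intertwining identity exhibits $C_f^*|_{V_{p,n}}$ as conjugate to $f_*|_{\mathfrak{D}_{p,n}}$; however, because $\mathfrak{r}$ is anti-linear while $\iota'$ is complex linear, the conjugation is anti-linear. Consequently, if $f_* v = \mu v$ then $C_f^*(\mathfrak{r}\iota' v) = \mathfrak{r}\iota'(\mu v) = \bar{\mu}\,\mathfrak{r}\iota'(v)$, so eigenvalues transfer under complex conjugation. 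By Proposition \ref{prop: explicit description of representation matrix of f_*}(4), the eigenvalues of $f_*|_{\mathfrak{D}_{p,n}}$ are exactly $\{\lambda^\alpha : |\alpha|\le n\}$, whence those of $C_f^*|_{V_{p,n}}$ are $\{\overline{\lambda^\alpha} : |\alpha|\le n\}$. Since $f: \Omega \to \Omega \subset \mathbb{R}^d$ is real-valued, the Jacobian ${\rm d}f_p$ is a real matrix and its eigenvalues come in conjugate pairs (with multiplicity), so the sets $\{\lambda^\alpha\}$ and $\{\overline{\lambda^\alpha}\}$ coincide, yielding the stated identification.

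The only real subtlety is the anti-linearity of the Riesz map $\mathfrak{r}$: one must track that it conjugates eigenvalues under transport from $\mathfrak{D}_{p,n}$ to $V_{p,n}$, and then use reality of ${\rm d}f_p$ to identify the two sets. Once this is observed, the proof is essentially a one-line consequence of Propositions \ref{prop: explicit description of representation matrix of f_*} and \ref{prop: domain of PF operators}.
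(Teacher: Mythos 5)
Your argument is correct and is essentially the paper's own proof, which simply cites Propositions \ref{prop: explicit description of representation matrix of f_*} and \ref{prop: domain of PF operators}; you have just written out the intertwining and dimension-count details explicitly. Your extra care with the anti-linearity of $\mathfrak{r}$ (eigenvalues transfer as conjugates, and the set is self-conjugate because ${\rm d}f_p$ is real) is a sound and welcome clarification, consistent with the conjugated eigenvalues appearing later in Corollary \ref{cor: eigendecomposition: fixed point}.
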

\begin{proof}
    It follows from Proposition \ref{prop: explicit description of representation matrix of f_*} and Proposition \ref{prop: domain of PF operators}.
\end{proof}
The assumption of the injectivity of $\iota'|_{\mathfrak{D}_p}: \mathfrak{D}_p \to H$ in \eqref{eigenvalues of PF operator} in Theorem \ref{thm: basic theorem for PF operator, discrete} holds for typical RKHSs used in applications, for example, the exponential kernel $k(x,y)=e^{(x-b)^\top (y-b)/\sigma^2}$ and the Gaussian kernel $e^{-|x-y|^2/2\sigma^2}$.
We provide more theoretical results for these two kernels in Section \ref{sec: special kernels}.

We end this section with several special properties of the space of intrinsic observables when the positive definite kernel $k$ has a real analyticity.
We consider the following assumption:
\begin{assumption}\label{asm: basic}
    The open set $\Omega$ is connected and $k$ is continuous on $\Omega \times \Omega$, satisfying $k_x \in \mathfrak{H}(\Omega)$ for any $x \in \Omega$.
\end{assumption}
Then, we have the following proposition:
\begin{proposition}\label{prop: density}
    Under Assumption \ref{asm: basic}, $V_p$ is dense in $H$.
\end{proposition}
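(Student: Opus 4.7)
The plan is to show that the orthogonal complement $V_p^\perp$ in $H$ is trivial, which is equivalent to $V_p$ being dense. So I take an arbitrary $h \in V_p^\perp$ and aim to prove $h = 0$.

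First, I unpack what $V_p^\perp$ means concretely. By Corollary \ref{cor: alternative definition of Vpn}, $V_p$ is spanned by $\{\partial_x^\alpha k(x,\cdot)|_{x=p} : \alpha \in \mathbb{Z}_{\ge 0}^d\}$, and by Proposition \ref{prop: explicit formula of iota D delta p} each such generator equals $(\mathfrak{r} \circ \iota')(\delta_p^{(\alpha)})$. Using the defining property of $\mathfrak{r}$, for any $h \in H$ and any multi-index $\alpha$,
\begin{align*}
\bigl\langle h, \partial_x^\alpha k(x,\cdot)|_{x=p}\bigr\rangle_H
 = \langle \iota'\delta_p^{(\alpha)} \mid h\rangle
 = \langle \delta_p^{(\alpha)} \mid \iota(h)\rangle
 = \partial_x^\alpha h(p).
\end{align*}
Hence $h \in V_p^\perp$ if and only if $\partial_x^\alpha h(p) = 0$ for all $\alpha \in \mathbb{Z}_{\ge 0}^d$, i.e.\ the Taylor expansion of $h$ at $p$ is identically zero.

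Second, I upgrade the regularity of $h$ from smooth to real analytic. Assumption \ref{asm: basic} asserts that $k_x \in \mathfrak{H}(\Omega)$ for every $x \in \Omega$, so Proposition \ref{prop: continuity of inclusion} yields the continuous inclusion $H \hookrightarrow \mathfrak{H}(\Omega)$. In particular, $h$ is real analytic on $\Omega$.

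Finally, since $\Omega$ is connected (again by Assumption \ref{asm: basic}) and $h$ is a real analytic function on $\Omega$ whose full jet at $p$ vanishes, the identity theorem for real analytic functions on a connected domain forces $h \equiv 0$. Thus $V_p^\perp = \{0\}$, so $V_p$ is dense in $H$. There is no real obstacle here once the framework is set up; the only thing to be careful about is linking $V_p^\perp$ to the vanishing of the jet via the reproducing identity before invoking analytic continuation.
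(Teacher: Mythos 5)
Your proof is correct and follows essentially the same route as the paper: take $h \in V_p^\perp$, use the reproducing property (via $\mathfrak{r}\circ\iota'$ applied to $\delta_p^{(\alpha)}$) to see that all derivatives $\partial_x^\alpha h(p)$ vanish, and then conclude $h\equiv 0$ from real analyticity of elements of $H$ together with connectedness of $\Omega$. The only difference is that you spell out the real-analyticity step via Proposition \ref{prop: continuity of inclusion}, which the paper leaves implicit.
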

\begin{proof}
    Let $h \in (V_p)^\perp$.
    Then, $\langle h, (\mathfrak{r} \circ \iota')\delta_p^{(\alpha)} \rangle_H = \partial_x^\alpha h(p)=0$ for any $\alpha \in \mathbb{Z}_{\ge 0}^d$.
    Thus, $h$ is zero on a neighborhood of $p$, and thus $h=0$ since $h$ is real analytic and ${\Omega}$ is connected.
    Therefore, we have $(V_p)^\perp=\{0\}$ and $V_p$ is dense in $H$.
\end{proof}
\revised{
Intuitively, if $h\in H$ is orthogonal to $V_p$, meaning that every derivative of $h$ at $p$ is zero by the reproducing property of $H$, all its Taylor coefficients at $p$ vanish.
In our setting functions in $H$ are real analytic, so vanishing of the entire Taylor series at one point forces $h$ to vanish on a neighborhood of $p$, hence everywhere on the connected $\Omega$. 
Thus the only function orthogonal to $V_p$ is $0$, which means $V_p$ is dense in $H$.
}

For $x \in {\Omega}$, we introduce the minimal approximation error for between $k_x$ and the elements of $V_{p,n}$ as follows:
\begin{align}
    \mathcal{E}_{p,n}(x) := \min_{v\in V_{p,n}}\left\|k_x - v \right\|_H = \|k_x-\pi_nk_x\|_H,\label{error}
\end{align}
where $\pi_n: H \to V_{p,n}$ is the orthogonal projection of \eqref{orthogonal projection}.
Proposition \ref{prop: density} implies that using elements of $V_{p,n}$ for sufficiently large $n$, we can approximate the family $\{k_x\}_{x \in K}$ with arbitrary precision for any compact set $K$:
\begin{proposition}\label{prop: uniform convergence of the minimal error}
    Suppose Assumption \ref{asm: basic} holds.
    Let $K \subset {\Omega}$ be a compact set.
    Then, $\sup_{x \in K}\mathcal{E}_{p,n}(x) \to 0$ as $n \to \infty$.
\end{proposition}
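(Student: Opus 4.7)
The plan is to recognize this as a direct application of Dini's theorem. First, I would observe that the subspaces are nested, $V_{p,n} \subset V_{p,n+1}$, which is immediate from the explicit description in Corollary \ref{cor: alternative definition of Vpn}. Consequently the orthogonal projections $\pi_n$ form a monotone family, and $\mathcal{E}_{p,n}(x) = \|k_x - \pi_n k_x\|_H$ is a non-increasing sequence in $n$ for every fixed $x \in \Omega$. Combined with the density of $V_p = \bigcup_n V_{p,n}$ in $H$ provided by Proposition \ref{prop: density}, the standard fact that orthogonal projections onto a nested sequence of subspaces whose union is dense converge strongly to the identity yields $\pi_n h \to h$ in $H$ for every $h \in H$. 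Specializing to $h = k_x$ gives the pointwise convergence $\mathcal{E}_{p,n}(x) \to 0$ for all $x \in \Omega$.

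Next I would establish the continuity of $x \mapsto \mathcal{E}_{p,n}(x)$ on $\Omega$. The feature map $x \mapsto k_x$ is continuous from $\Omega$ to $H$, since
\begin{align*}
\|k_x - k_y\|_H^2 = k(x,x) - k(x,y) - \overline{k(x,y)} + k(y,y)
\end{align*}
depends continuously on $(x,y)$ by the assumed continuity of $k$ on $\Omega \times \Omega$. Composing with the bounded linear map $I - \pi_n$ (whose operator norm is at most $1$) and with the norm on $H$ shows that $\mathcal{E}_{p,n} = \|(I - \pi_n)k_\cdot\|_H$ is a continuous function on $\Omega$, hence in particular on the compact set $K$.

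Finally, the sequence $\{\mathcal{E}_{p,n}\}_{n \ge 0}$ is a monotone non-increasing sequence of continuous nonnegative functions on the compact set $K$ converging pointwise to the (continuous) zero function. By Dini's theorem, this convergence is uniform on $K$, which is precisely the claim $\sup_{x \in K}\mathcal{E}_{p,n}(x) \to 0$. I do not anticipate any real obstacle; the only step requiring care is verifying the hypotheses of Dini, where the nestedness of the $V_{p,n}$ and the density statement of Proposition \ref{prop: density} are both essential, and the continuity of $\mathcal{E}_{p,n}$ hinges on the continuity of $k$ from Assumption \ref{asm: basic}.
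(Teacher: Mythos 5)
Your proof is correct and follows exactly the paper's argument: monotonicity of $\mathcal{E}_{p,n}$ in $n$, pointwise convergence to zero via the density of $V_p$ (Proposition \ref{prop: density}), continuity of $\mathcal{E}_{p,n}$, and then Dini's theorem on the compact set $K$. You merely spell out the details (strong convergence of the projections and continuity of the feature map) that the paper's proof leaves implicit.
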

\begin{proof}
    By definition, for each $x \in {\Omega}$, we have $\mathcal{E}_{p,n}(x) \ge \mathcal{E}_{p,n+1}(x)$ for all $n \ge 0$ and $\mathcal{E}_{p,n}(x) \to 0$ as $n \to \infty$.
    Since $\mathcal{E}_{p,n}(x)$ is continuous, by Dini's theorem,  $\mathcal{E}_{p,n}(x)$ uniformly converges to $0$ on $K$, namely, $\lim_{n\to\infty}\sup_{x \in K}\mathcal{E}_{p,n}(x) \to 0$.
\end{proof}
Then, we have the following lemma, which will play a main role in the theoretical results for the reconstruction of dynamical systems in Section \ref{sec:NumericalSimulation}:
\begin{lemma} \label{lem: lemma for system identification}
    Assume $C_f$ is densely defined.
    Let $B \subset D(C_f)$ be a subset such that $C_f(B)$ is a bounded subset in $H$.
    Then, for any compact set $K \subset \Omega$, we have
    \[ \sup_{\substack{x \in K \\ h \in B}}|\langle k_{f(x)} - C_f^*\pi_nk_x, h\rangle| \le \sup_{x \in K}\mathcal{E}_{p,n}(x)\cdot \sup_{h \in B}\|C_fh\|_H.\]
    Moreover, if we further assume Assumption \ref{asm: basic}, we have
    \[\sup_{\substack{x \in K \\ h \in B}}\big|\langle k_{f(x)} - C_f^*\pi_nk_x, h\rangle_H \big| \to 0\]
    as $n \to \infty$.
\end{lemma}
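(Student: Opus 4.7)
The plan is to rewrite the inner product on the left-hand side so that the difference lands entirely in the first slot as $k_x - \pi_n k_x$, after which a single application of Cauchy--Schwarz gives the claimed estimate.

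First I would use the reproducing property together with a duality manipulation to peel off $C_f$ from the right slot onto the left. Concretely, for any $h \in D(C_f)$ and any $x \in \Omega$, the reproducing property gives $(C_fh)(x) = h(f(x))$, i.e.\ $\langle C_fh, k_x\rangle_H = \langle h, k_{f(x)}\rangle_H$; conjugating yields $\langle k_{f(x)}, h\rangle_H = \langle k_x, C_fh\rangle_H$. Next I need that $\pi_n k_x$ lies in the domain of $C_f^*$: since $V_{p,n} = (\mathfrak{r}\circ \iota')(\mathfrak{D}_{p,n})$ and $\mathfrak{D}_{p,n} \subset \mathfrak{E}(\Omega)'$, Proposition~\ref{prop: domain of PF operators} shows $V_{p,n} \subset D(C_f^*)$, so $\pi_n k_x \in D(C_f^*)$ and consequently $\langle C_f^*\pi_nk_x, h\rangle_H = \langle \pi_nk_x, C_fh\rangle_H$.

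Combining these two identities and subtracting gives the key rewriting
\begin{equation*}
\langle k_{f(x)} - C_f^*\pi_nk_x,\, h\rangle_H \;=\; \langle k_x - \pi_n k_x,\, C_fh\rangle_H.
\end{equation*}
Cauchy--Schwarz then bounds the modulus by $\|k_x - \pi_n k_x\|_H \cdot \|C_fh\|_H = \mathcal{E}_{p,n}(x)\cdot \|C_fh\|_H$, and taking supremum over $x \in K$ and $h \in B$ yields the first inequality. For the second part, Assumption~\ref{asm: basic} activates Proposition~\ref{prop: uniform convergence of the minimal error}, which yields $\sup_{x \in K}\mathcal{E}_{p,n}(x) \to 0$; since $C_f(B)$ is bounded in $H$ by hypothesis, $\sup_{h \in B}\|C_fh\|_H < \infty$, and the product tends to $0$ as $n \to \infty$.

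There is really no hard step here; the only subtlety worth flagging is the domain issue, namely the verification that $\pi_n k_x \in D(C_f^*)$, which is exactly what Proposition~\ref{prop: domain of PF operators} was established for. Once that is in place, the rest is a straightforward rearrangement followed by Cauchy--Schwarz and an appeal to the previously proved uniform convergence of $\mathcal{E}_{p,n}$ on compacta.
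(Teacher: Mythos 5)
Your proof is correct and follows essentially the same route as the paper: rewrite the pairing as $\langle k_x-\pi_n k_x, C_f h\rangle_H$ via the identity $C_f^*k_x=k_{f(x)}$ (equivalently, the reproducing-property manipulation you use) together with the adjoint relation, apply Cauchy--Schwarz, and then invoke Proposition~\ref{prop: uniform convergence of the minimal error} for the limit. Your explicit check that $\pi_n k_x\in D(C_f^*)$ via Proposition~\ref{prop: domain of PF operators} is a detail the paper leaves implicit, but it is the same underlying argument.
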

\begin{proof}
The first statement follows from $|\langle k_{f(x)} - C_f^*\pi_nk_x, h\rangle_H| \le \mathcal{E}_{p,n}(x)\|C_fh\|_H$ using the Cauchy--Schwarz inequality and the formula $C_f^*k_x = k_{f(x)}$.
The second statement follows from the first statement with Proposition \ref{prop: uniform convergence of the minimal error}.
\end{proof}

\section{Estimation of Perron--Frobenius operators}\label{sec: datadriven estimation of PF operator}
Here, we provide the key error bound for the estimation of the Perron--Frobenius operator using the intrinsic observables.
In this section, we always assume $C_f$ is densely defined and consider the following assumptions:
\begin{assumption}[Existence of a fixed point]\label{asm: existence of a fixed point}
    $p$ is a fixed point of $f$, namely, $f(p) = p$.
\end{assumption}
\begin{assumption}[Domain condition for $C_f$]\label{asm: domain of C_f}
    $V_p \subset D(C_f)$.
\end{assumption}
We note that Assumptions \ref{asm: basic} and \ref{asm: domain of C_f} imply that $C_f$ is densely defined by Proposition \ref{prop: density}.
As for the dynamical system $f$, although we are only assuming that $f$ is $C^\infty$, if we further impose Assumption \ref{asm: domain of C_f} on $f$, then $f$ automatically has a stronger analytic property than mere smoothness.
We also note that we a have more specific sufficient condition of Assumption \ref{asm: domain of C_f} for the exponential kernel and the Gaussian kernel (see Section \ref{subsec: validity of domain assumption of koopman}).

\subsection{Error analysis} \label{subsec: error bound analysis}
Let $r_n := \mathop{\rm dim}V_{p, n}$.
We fix a basis $\mathcal{B}_p = \{ v_n \}_{n \ge 0}$ of $V_p$ such that $\mathcal{B}_{p,n} = \{v_i\}_{i=1, \dots, r_n}$ constitutes a basis of $V_{p,n}$.
Then, we define 
\begin{align}
     {\bf v}_{n}(x) &:= \left(\overline{v_i(x)}\right)_{i=1}^{r_n}, \label{bold v}\\
     \mathbf{G}_{n} &:= \left(\langle v_i, v_j \rangle_H\right)_{i,j = 1,\dots, r_n}. \label{gram matrix}
\end{align}
For $X = (x_1, \dots, x_N) \in {\Omega}^N$, we define 
\begin{align}
    \mathbf{V}_{n}^X :=& \left({\bf v}_{n}(x_1), \dots, {\bf v}_{n}(x_N)\right). \label{bold V}
\end{align}

We have an explicit description of the coefficients of the basis $\mathcal{B}_{p,n}$ for the minimizer that attains the minimization problem \eqref{error}:
\begin{proposition}\label{prop: coefficient of minimizer}
Let $x \in {\Omega}$.
Let $\pi_nk_x = \sum_{i=1}^{r_n} c_i^*(x) v_i$.
Then, $c_i^*(x)$ satisfies
\begin{align}
\mathbf{G}_{n}\cdot \left(c_i^*(x)\right)_{i=1}^{r_n} = {\bf v}_{n}(x).\label{coefficient of minimizer}
\end{align}
\end{proposition}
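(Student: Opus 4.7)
The plan is to use the variational characterization of the orthogonal projection: $\pi_n k_x$ is the unique element of $V_{p,n}$ such that $k_x - \pi_n k_x$ is orthogonal to every element of $V_{p,n}$. This produces the standard normal equations for best approximation in a finite-dimensional subspace, and the Gram matrix $\mathbf{G}_n$ appears essentially by construction.

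Concretely, I would first test the orthogonality condition against each basis element $v_i$ of $V_{p,n}$, writing $\langle v_i, k_x - \pi_n k_x\rangle_H = 0$. Using the reproducing property (Section \ref{subsec: overview}) to compute $\langle v_i, k_x\rangle_H = v_i(x)$, and expanding $\pi_n k_x = \sum_{j=1}^{r_n} c_j^*(x) v_j$, the orthogonality condition becomes a linear system in the coefficients $c_j^*(x)$ whose matrix entries are the Hermitian inner products $\langle v_i, v_j\rangle_H$. Conjugating both sides and invoking the Hermitian symmetry $\overline{\langle v_i, v_j\rangle_H} = \langle v_j, v_i\rangle_H$ converts this system into the claimed form $\mathbf{G}_n \cdot (c_i^*(x))_{i=1}^{r_n} = \mathbf{v}_n(x)$, where the right-hand side is precisely the vector of conjugated evaluations $(\overline{v_i(x)})_{i=1}^{r_n}$ from \eqref{bold v}.

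I do not anticipate a genuine obstacle here: the statement is essentially the Hermitian analogue of the textbook normal equations, and all the ingredients (reproducing property, Hermitian symmetry of $\mathbf{G}_n$, linear independence of $\{v_i\}_{i=1}^{r_n}$) are already in place. The only thing requiring mild care is tracking complex conjugates through the sesquilinearity of $\langle \cdot, \cdot\rangle_H$ so that the final equation matches the convention \eqref{bold v} with conjugation on the evaluation side rather than on the coefficient side. As a byproduct, invertibility of $\mathbf{G}_n$ (which follows from $\mathcal{B}_{p,n}$ being a basis) guarantees uniqueness of the coefficients $c_i^*(x)$, so \eqref{coefficient of minimizer} is not just necessary but also characterizes $\pi_n k_x$.
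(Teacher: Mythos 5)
Your argument is correct and is essentially the paper's own proof: the paper writes $k_x = v' + \sum_{i=1}^{r_n} c_i^*(x) v_i$ with $v' \in V_{p,n}^\perp$ and pairs against each basis vector $v_j$, which is exactly your orthogonality/normal-equations computation via the reproducing property $\langle v_j, k_x\rangle_H = v_j(x)$ and the Hermitian structure of $\mathbf{G}_n$. Your added remark on uniqueness via invertibility of $\mathbf{G}_n$ is a harmless byproduct not needed for the statement.
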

\begin{proof}
Let $k_x = v' + \sum_{i=1}^{r_n} c_i^*(x) v_i$, where $v' \in V_{p,n}^\perp$.
Then, considering $\langle v_j, k_x\rangle_H$ for each $j$, we have the identity \eqref{coefficient of minimizer}.
\end{proof}
\begin{example}
Assume that $\iota'|_{\mathfrak{D}_p}$ is injective.
We fix a numbering of $\mathbb{Z}_{\ge 0}^d$ such that  $\mathbb{Z}_{\ge 0}^d = \{\alpha^{(i)}\}_{i=1}^\infty$ and $|\alpha^{(i)}| \le |\alpha^{(j)}|$ if $i \le j$.
Then, if we take $\partial_x^{\alpha^{(i)}} k(x,\cdot)|_{x=p}$ as $v_i$, we have
    \[{\bf v}_{n}(x) = (\partial_x^{\alpha^{(i)}} k(x,p))_{i=1}^{r_n},
    \quad \mathbf{G}_{n} = \left(\partial_x^{\alpha^{(i)}} \partial_y^{\alpha^{(j)}} k(p,p) \right)_{i,j=1,\dots,r_n}.\]
\end{example}

Then, we have the following theorem:
\begin{theorem}\label{thm: error analysis for PF operator for algorithm, disc}
    Let $p \in \Omega$.
    Let $m, n \ge 0$ be integers with $m \le n$.
    Let $\mathbf{C}_{f,m}^\star$ be the representation matrix of $C_f^*|_{V_{p,m}}$ with respect to $\mathcal{B}_{p,m}$.
    Let $X_N := (x_1,\dots, x_N) \in {\Omega}^N$ and $Y_N := (y_1, \dots, y_N) \in {\Omega}^N$ such that $f(x_i) = y_i$ for $i=1,\dots,N$.
    We assume Assumption \ref{asm: basic}, $r_n \le N$, ${\rm span}(\{\pi_nk_{x_1},\dots,\pi_nk_{x_N}\}) = V_{p,n}$, and that $p$ satisfies Assumptions \ref{asm: existence of a fixed point} and \ref{asm: domain of C_f}.
    We define the square matrices $\widehat{\mathbf{C}}_{m,n, N}$ and $\mathbf{E}_{m,n}^{X_N}$ of size $r_m$ by the leading principal minor matrix of order $r_m$ such that
    \begin{align}
    \mathbf{G}_m^{-1}\mathbf{V}_m^{Y_N} (\mathbf{V}_n^{X_N})^\dagger\mathbf{G}_n = 
    \begin{pmatrix}
        \widehat{\mathbf{C}}_{m, n, N} & *
    \end{pmatrix}, \label{CnmX}\\
    \mathbf{G}_n\left(N^{-1}\mathbf{V}_n^{X_N}(\mathbf{V}_n^{X_N})^*\right)^{-1} \mathbf{G}_n = 
    \begin{pmatrix}
        \mathbf{E}_{m,n}^{X_N} & *\\
        * &*
    \end{pmatrix}.
    \end{align}
    Then, we have
    \begin{align}
        &\left\| \mathbf{C}_{f, m}^\star - \widehat{\mathbf{C}}_{m, n, N}\right\|
        \le 
        \left\|C_f|_{V_{p,m}}\right\|_{\rm op}\big\|\mathbf{G}_m^{-1/2}\big\|_{\rm op}\big\|\mathbf{E}_{m, n}^{X_N}\big\|_{\rm op}^{1/2}\sqrt{\frac{1}{N}\sum_{i=1}^N \mathcal{E}_{p,n}(x_i)^2}. \label{explicit error}
    \end{align}
    
\end{theorem}
\begin{proof}
Let $\mathbf{P}_m :=\begin{pmatrix} \mathbf{G}_m^{-1} &\mathbf{O}_{r_n-r_m} \end{pmatrix} \mathbf{G}_n$ be the representation matrix of $\pi_m|_{V_{p,n}}\colon V_{p,n} \to V_{p,m}$ with respect to the basis $\mathcal{B}_{p,n}$ and $\mathcal{B}_{p,m}$.
We denote by $\mathbf{P}_m^\diamond := \begin{pmatrix} \mathbf{I}_{r_m} &\mathbf{O}_{r_n-r_m} \end{pmatrix}^\top$ the representation matrix of the adjoint operator of $\pi_m|_{V_{p,n}}$.
Then, we have
\begin{align*}
    \widehat{\mathbf{C}}_{m, n, N}
    &=
    \mathbf{P}_m
    \mathbf{G}_n^{-1}\mathbf{V}_n^{Y_N} (\mathbf{V}_n^{X_N})^\dagger\mathbf{G}_n
    \mathbf{P}_m^\diamond,\\
    \mathbf{C}_{f, m}^\star
    &=
    \mathbf{P}_m
    \mathbf{C}_{f,n}^\star
    \mathbf{P}_m^\diamond.
\end{align*}
By the assumption that $\pi_nk_{x_1},\dots,\pi_nk_{x_N}$ span $V_{p,n}$, we have $\mathbf{V}_n^{X_N} (\mathbf{V}_n^{X_N})^\dagger = \mathbf{I}_{r_n}$.
Therefore, we see that
\begin{align}
    \left\| \mathbf{C}_{f, m}^\star - \widehat{\mathbf{C}}_{m, n, N}\right\|
    \le 
    \|\mathbf{G}_m^{-1/2}\|_{\rm op} 
    \big\|\mathbf{G}_m^{1/2} \mathbf{P}_m \mathbf{C}_{f,n}^\star\mathbf{G}_n^{-1}\mathbf{V}_n^{X_N} -
    \mathbf{G}_m^{1/2}\mathbf{P}_m\mathbf{G}_n^{-1}\mathbf{V}_n^{Y_N}
    \big\| 
    \cdot 
    \left\|(\mathbf{V}_n^{X_N})^\dagger \mathbf{G}_n
    \mathbf{P}_m^\diamond \right\|_{\rm op}. \label{estimation 1}
\end{align}
By \cite{4aa77599-3504-3cfd-9aba-80cca6f578f2}, we have
\[\left((\mathbf{V}_n^{X_N})^\dagger\right)^*(\mathbf{V}_n^{X_N})^\dagger = \left(\mathbf{V}_n^{X_N}(\mathbf{V}_n^{X_N})^*\right)^{-1}.\]
Thus, 
\begin{align}
\left\|(\mathbf{V}_n^{X_N})^\dagger \mathbf{G}_n \mathbf{P}_m^\diamond \right\|_{\rm op} = N^{1/2}\|\mathbf{E}_{m,n}^{X_N}\|_{\rm op}^{1/2}. \label{estimation 2}
\end{align}
     By Proposition \ref{prop: coefficient of minimizer}, we see that
    \begin{align*}
        (\pi_mC_f^* \pi_n k_{x_1},\dots,\pi_mC_f^* \pi_n k_{x_N}) &= (v_i)_{i=1}^{r_m} \mathbf{P}_m\mathbf{C}_{f,n}^\star \mathbf{G}_{n}^{-1}\mathbf{V}_{n}^{X_N},\\
        (\pi_m \pi_n k_{y_1},\dots,\pi_m \pi_n k_{y_N}) &= (v_i)_{i=1}^{r_m} \mathbf{P}_m\mathbf{G}_{n}^{-1}\mathbf{V}_{n}^{Y_N}.
    \end{align*}  
    Thus, we have
    \begin{align*}
       \|\mathbf{G}_m^{1/2} \mathbf{P}_m \mathbf{C}_{f,n}^\star\mathbf{G}_n^{-1}\mathbf{V}_n^{X_N} -
    \mathbf{G}_m^{1/2}\mathbf{P}_m\mathbf{G}_n^{-1}\mathbf{V}_n^{Y_N}\| ^2 = \sum_{i=1}^N \|\pi_m C_f^* \pi_n k_{x_i}  -  \pi_m  \pi_n k_{y_i}\|_H^2.
    \end{align*}
    Since $\pi_m \pi_n k_{y_i} = \pi_mk_{y_i} = \pi_mC_f^*k_{x_i}$, we have
    \begin{align}
   \|\pi_m C_f^* \pi_n k_{x_i}  -  \pi_m  \pi_n k_{y_i}\| 
    & \le  \left\|\pi_mC_f^*( \pi_n k_{x_i} - k_{x_i})\right\|_H \notag\\
    & \le  \left\|\pi_mC_f^*\right\|_{\rm op} \|\pi_n k_{x_i}  - k_{x_i}\|_H \notag\\
    & =  \left\|\pi_mC_f^*\right\|_{\rm op}\mathcal{E}_{p,n}(x_i). \label{estimation 3}
    \end{align}
    By Assumption \ref{asm: domain of C_f}, the linear map $C_f|_{V_{p,m}}: V_{p,m} \to H$ is automatically bounded, and its adjoint coincides with $\pi_mC_f^*$.
    Thus, we have $\big\|\pi_mC_f^*\big\|_{\rm op} =  \left\|C_f|_{V_{p,m}}\right\|_{\rm op} < \infty$.
    Therefore, combining the inequality \eqref{estimation 1} with \eqref{estimation 2} and \eqref{estimation 3}, we have \eqref{explicit error}.
\end{proof}
This theorem describes the essence of the truncating process in JetEDMD as mentioned in the following remark of Theorem \ref{thm:_intro_convergence} in Section \ref{sec: introduction}.
The presence of the constant $\|C_f|_{V_{p,m}}\|_{\rm op}$ arises from extracting the first $r_m$ rows of the matrix, while the the constant $\|\mathbf{E}_{m,n}^{X_N}\|_{\rm op}$ appears as a result of extracting the first $r_m$ columns.
Estimating the growth rate of those constants $\|C_f|_{V_{p,m}}\|_{\rm op}$ and  $\|\mathbf{E}_{m,n}^{X_N}\|_{\rm op}$ as $m$ and $n$ increase is extremely challenging, and it is generally expected that they will grow rapidly with $m$ and $n$.
This indicates the difficulty of analyzing EDMD, corresponding to the case of $m=n$, and generally suggests that the estimation with EDMD is likely to fail.
In contrast, if we fix $m$, then $\|C_f|_{V_{p,m}}\|_{\rm op}$ becomes just a constant and the behavior of $\|\mathbf{E}_{m,n}^{X_N}\|_{\rm op}$ along $n$ gets much tamer and more controllable.
As a result, we have rigorous convergence results in some specific RKHSs (see Section \ref{sec: special kernels}).
We also remark that an alternative method, Analytic EDMD, to approximate $C_f|_{V_{p,m}}^*$ is proposed in \cite{mauroy2024analyticextendeddynamicmode}.

\begin{remark}
It is easy to show that $C_f$ is bounded on $H$ if and only if $\{\|C_f|_{V_{p,n}}\|_{\rm op}\}_{n=1}^\infty$ is a bounded sequence since $V_p = \cup_{n} V_{p,n}$ is dense in $H$.
However, some theoretical results show the dynamical system becomes linear when the Koopman operator is bounded on specific RKHSs corresponding to a certain important kernels such as the exponential kernel \cite{CMS} and the Gaussian kernel \cite{Ikeda_Ishikawa_Sawano-2022} (we will use them in the numerical simulation).
Moreover, it is also shown that the dynamical system is never chaotic if the Koopman operator is bounded \cite{ISHIKAWA2023109048}.
Thus, in the context of nonlinear analysis, we cannot expect that the Koopman operator on an RKHS is bounded, namely, we should assume that $\|C_f|_{V_{p,n}}\|_{\rm op} \to \infty$ as $n \to \infty$, in many cases.
\end{remark}

We will use the following corollary for further investigation of \eqref{estimation 1} when the sample number $N$ goes to infinity:
\begin{corollary}\label{cor: error analysis for PF operator, infinite samples: disc.}
    We use the notation and assume the assumptions in Theorem \ref{thm: error analysis for PF operator for algorithm, disc}.
    Let $\mu$ be a Borel probability measure on $\Omega$.
    Let $\nu$ be an arbitrary $\sigma$-finite Borel measure absolutely continuous with respect to $\mu$ on $\Omega$.
    Suppose that $x_1,\dots, x_N$ be the i.i.d. random variables with respect to $\mu$.
    Assume that the Radon-Nikodym derivative $\partial_\mu\nu$ of $\nu$ is an element of $L^\infty(\mu)$.
    We further assume that $\mathcal{B}_{p,n}$ constitutes an orthogonal system in $H$ and that $V_{p,n} \subset L^2(\mu)$ (more precisely, any element of $V_{p,n}$ is square integrable with respect to $\mu$ and the natural map $V_{p,n} \to L^2(\mu)$ is injective).
    Let $\{u_i\}_{i=1}^{r_n}$ be an orthonormal basis of $V_{p,n}$ in $L^2(\nu)$.
    We define $\{q_{ij}\}_{i,j=1,\dots,r_n}$ by the complex numbers satisfying
    \[u_i = \sum_{j=1}^{r_n} q_{ij} v_j.\]
    For $k,\ell \ge 0$, let
    \[\mathbf{Q}_{k,\ell}(\nu) := (q_{ij})_{i \le r_k, j \le r_\ell}.\]
    Then, we have
       \begin{align}
        &\limsup_{N \to \infty}\left\| \mathbf{C}_{f, m}^\star - \widehat{\mathbf{C}}_{m, n, N}\right\|
        \le 
        L_m\|\mathcal{E}_{p,n}\|_{L^2(\mu)}\|\mathbf{Q}_{n,m}(\nu)\|_{\rm op}\quad{\rm a.e.}, \label{explicit asymptotics, C_f}
    \end{align}
    where 
    \[ L_m := \left\|\partial_\mu\nu \right\|_{L^\infty(\mu)} \|\mathbf{G}_m\|_{\rm op} \big\|\mathbf{G}_m^{-1/2} \big\|_{\rm op} \|C_f|_{V_{p,m}}\|_{\rm op}.\]
\end{corollary}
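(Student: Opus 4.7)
The plan is to pass to the limit $N\to\infty$ in the bound of Theorem~\ref{thm: error analysis for PF operator for algorithm, disc} via Kolmogorov's strong law of large numbers (SLLN), and then reduce the limiting matrix factor to $\|\mathbf{Q}_{n,m}(\nu)\|_{\rm op}$ using the Radon--Nikodym comparison between $\mu$ and $\nu$. Write $(\mathbf{M}_n^\mu)_{ij} := \int \overline{v_i}v_j\,d\mu$ and $(\mathbf{M}_n^\nu)_{ij} := \int \overline{v_i}v_j\,d\nu$ for the respective Gram matrices of $\mathcal{B}_{p,n}$. Since $V_{p,n}\subset L^2(\mu)$ gives $\overline{v_i(x)}v_j(x)\in L^1(\mu)$ by Cauchy--Schwarz, and the statement's use of $\|\mathcal{E}_{p,n}\|_{L^2(\mu)}$ ensures $\mathcal{E}_{p,n}^2\in L^1(\mu)$, the SLLN yields the a.s.\ convergences $\tfrac{1}{N}\mathbf{V}_n^{X_N}(\mathbf{V}_n^{X_N})^*\to\mathbf{M}_n^\mu$ and $\tfrac{1}{N}\sum_{i=1}^N\mathcal{E}_{p,n}(x_i)^2\to\|\mathcal{E}_{p,n}\|_{L^2(\mu)}^2$. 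The injectivity hypothesis $V_{p,n}\hookrightarrow L^2(\mu)$ makes $\mathbf{M}_n^\mu$ invertible, so by continuity of matrix inversion and leading-block extraction, $\mathbf{E}_{m,n}^{X_N}\to\mathbf{E}_{m,n}^\infty := (\mathbf{G}_n(\mathbf{M}_n^\mu)^{-1}\mathbf{G}_n)_{[1:r_m,\,1:r_m]}$ a.s.

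The $H$-orthogonality of $\mathcal{B}_{p,n}$ makes $\mathbf{G}_n$ diagonal, so $\mathbf{G}_n\mathbf{P}_m^\diamond = \begin{pmatrix}\mathbf{G}_m\\ \mathbf{0}\end{pmatrix}$ and the limit simplifies to $\mathbf{E}_{m,n}^\infty = \mathbf{G}_m\cdot((\mathbf{M}_n^\mu)^{-1})_{[1:r_m,\,1:r_m]}\cdot\mathbf{G}_m$. The pointwise inequality $\|f\|_{L^2(\nu)}^2\le\|\partial_\mu\nu\|_{L^\infty(\mu)}\|f\|_{L^2(\mu)}^2$ for $f\in V_{p,n}$ translates to $\mathbf{M}_n^\nu\preceq\|\partial_\mu\nu\|_{L^\infty(\mu)}\mathbf{M}_n^\mu$ in the positive semidefinite order, whence $(\mathbf{M}_n^\mu)^{-1}\preceq\|\partial_\mu\nu\|_{L^\infty(\mu)}(\mathbf{M}_n^\nu)^{-1}$ by operator monotonicity of inversion, and the same inequality passes to leading-$r_m$-blocks.

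Finally, writing the orthonormality of $\{u_i\}$ in $L^2(\nu)$ together with $u_i=\sum_j q_{ij}v_j$ in matrix form yields $\overline{Q}\,\mathbf{M}_n^\nu\,Q^T=\mathbf{I}$, so $(\mathbf{M}_n^\nu)^{-1}=Q^T\overline{Q}$. Its leading $r_m$-block is $(\mathbf{P}_m^\diamond)^*Q^T\overline{Q}\mathbf{P}_m^\diamond$, in which $(\mathbf{P}_m^\diamond)^*Q^T$ consists of the first $r_m$ rows of $Q^T$ (namely $\mathbf{Q}_{n,m}(\nu)^T$) and $\overline{Q}\mathbf{P}_m^\diamond$ consists of the first $r_m$ columns of $\overline{Q}$ (namely $\overline{\mathbf{Q}_{n,m}(\nu)}$); thus the leading block equals $\mathbf{Q}_{n,m}(\nu)^T\overline{\mathbf{Q}_{n,m}(\nu)}$, whose operator norm is $\|\mathbf{Q}_{n,m}(\nu)\|_{\rm op}^2$. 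Substituting the chain of bounds back through Theorem~\ref{thm: error analysis for PF operator for algorithm, disc} and combining with the SLLN convergence of the $\mathcal{E}_{p,n}(x_i)^2$-average delivers the claim.

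The main obstacle is the matrix identity $(\mathbf{M}_n^\nu)^{-1}=Q^T\overline{Q}$ together with the reading of its leading $r_m$-block as $\mathbf{Q}_{n,m}(\nu)^T\overline{\mathbf{Q}_{n,m}(\nu)}$: since $\mathbf{Q}_{n,m}(\nu)$ is a specific submatrix of $Q$ tied to $\mathcal{B}_{p,n}$, no convenient change of basis is freely available, and one must track transposes and complex conjugations consistently under a fixed convention for the $H$-inner product. The $H$-orthogonality assumption on $\mathcal{B}_{p,n}$ is what makes this clean, since it collapses the sandwiching by $\mathbf{G}_n$ into a pure $\mathbf{G}_m$-factor and leaves the leading-block operation isolated inside the $L^2$-Gram structure.
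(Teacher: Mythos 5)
Your proposal is correct and follows essentially the same route as the paper's proof: the law of large numbers for the empirical Gram matrix $N^{-1}\mathbf{V}_n^{X_N}(\mathbf{V}_n^{X_N})^*$ and for the average of $\mathcal{E}_{p,n}(x_i)^2$, the positive semidefinite comparison $\mathbf{M}_n^\nu \preceq \|\partial_\mu\nu\|_{L^\infty(\mu)}\mathbf{M}_n^\mu$ followed by inversion, and the identity expressing $(\mathbf{M}_n^\nu)^{-1}$ through the change-of-basis matrix of the $L^2(\nu)$-orthonormal system, whose leading $r_m$-block has operator norm $\|\mathbf{Q}_{n,m}(\nu)\|_{\rm op}^2$. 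The only difference is cosmetic: your careful chain yields the factor $\|\partial_\mu\nu\|_{L^\infty(\mu)}^{1/2}$ where the statement (and the paper's own proof, which glosses the same step) writes the first power, a constant-level discrepancy that does not affect the argument.
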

\begin{proof}
    We put $\mathbf{Q}_{k,\ell} := \mathbf{Q}_{k,\ell}(\nu)$.
    By definition of $\mathbf{Q}_{n,n}$, we have
    \[ \mathbf{I}_{n} = \mathbf{Q}_{n,n} \left(\langle v_i,v_j \rangle_{L^2(\nu)}\right)_{i,j=1,\dots,r_n} \mathbf{Q}_{n,n}^*.\]
    Thus, we have
    \[\mathbf{G}_n\left(\langle v_i,v_j \rangle_{L^2(\nu)}\right)_{i,j=1,\dots,r_n}^{-1}\mathbf{G}_n = \mathbf{G}_n \mathbf{Q}_{n,n}^*\mathbf{Q}_{n,n} \mathbf{G}_n. \]
    By the law of large number, we have
    \begin{align}
        \mathbf{G}_n\left(N^{-1}\mathbf{V}_n^{X_N}(\mathbf{V}_n^{X_N})^*\right)^{-1} \mathbf{G}_n \longrightarrow 
    \left(\langle v_i,v_j \rangle_{L^2(\mu)} \right)_{i,j=1,\dots,r_n}^{-1}~~~\text{a.e.}
    \label{a.e. convergence of matrix}
    \end{align}
    as $N \to \infty$.
    Since $\mathbf{G}_n$ is a diagonal matrix, and the following matrix
    \[ 
    \left(\left\|\partial_\mu\nu \right\|_{L^\infty(\mu)} \langle v_i,v_j \rangle_{L^2(\mu)}
    - 
    \langle v_i,v_j \rangle_{L^2(\nu)}\right)_{i,j=1,\dots,r_n} \]
    is a positive semi-definite matrix by the assumption, we have
    \[\lim_{N\to \infty}\|\mathbf{E}_{m,n}^{X_N}\|_{\rm op}^{1/2}
    \le \left\|\partial_\mu\nu \right\|_{L^\infty(\mu)} \| \mathbf{G}_m \mathbf{Q}_{n,m}^*\mathbf{Q}_{n,m} \mathbf{G}_m \|_{\rm op}^{1/2} 
    \le \|\mathbf{G}_m\|_{\rm op}\left\|\partial_\mu\nu \right\|_{L^\infty(\mu)} \cdot \|\mathbf{Q}_{n,m}\|_{\rm op}\quad{\rm a.e.}\]
    by the convergence \eqref{a.e. convergence of matrix}.
    Therefore, the inequality \eqref{explicit asymptotics, C_f} follows from Theorem \ref{thm: error analysis for PF operator for algorithm, disc}.
\end{proof}
\begin{remark}
    Theorem \ref{thm: error analysis for PF operator for algorithm, disc} and Corollary \ref{cor: error analysis for PF operator, infinite samples: disc.} are equivalent (up to constant).
    In fact, Theorem \ref{thm: error analysis for PF operator for algorithm, disc} corresponds to Corollary \ref{cor: error analysis for PF operator, infinite samples: disc.} in the case of $\mu = \nu = \frac{1}{N} \sum_{i=1}^N \delta_{x_i}$ 
\end{remark}

\subsection{Several analysis with bounded Koopman operators}
Here, we provide several results when $C_f$ is bounded.
Unfortunately, the Koopman operator on an RKHS is not necessarily bounded and only linear dynamical system can induce bounded Koopman operators in some cases, for example, on the RKHS of the Gaussian kernel \cite{Ikeda_Ishikawa_Sawano-2022} and the exponential kernel \cite{CMS}.
We can also show that a dynamical system around the fixed point is stable if the Koopman operator is bounded and any chaotic dynamical system is out of scope in the Koopman analysis on RKHS with boundedness \cite{ISHIKAWA2023109048}.
However, there many examples and studies of bounded Koopman operators in the fields of complex analysis as well (see \cite{Zh07, CM95, SS17} and reference therein).
It will be worth providing several results when the Koopman operator is bounded.

We show that the level sets of an eigenvector of $C_f^*|_{V_{p,n}}$ for sufficiently large $n$ provide invariant subsets, more generally, we have the following theorem:
\begin{proposition}
    We assume that $C_f$ is bounded on $H$ and $\iota'|_{\mathfrak{D}_p}$ is injective.
    Let $\lambda_1,\dots, \lambda_d$ be the eigenvalues (with multiplicity) of the Jacobian matrix ${\rm d}f_p$ and write $\lambda := (\lambda_1,\dots, \lambda_d)$.
    Let $u_{\alpha, n} \in V_{p,n}$ be a unit eigenvector of $C_f^*|_{V_{p,n}}$ associated with $\lambda^\alpha$ for $\alpha \in \mathbb{Z}_{\ge 0}^d$.
    Then, for $x \in {\Omega}$, we have
    \[\big|u_{\alpha,n}(f(x)) - \lambda^\alpha u_{\alpha,n}(x)\big| \le \|C_f - \lambda^\alpha\|_{\rm op} \mathcal{E}_{p,n}(x).\]
    Moreover, for any compact subset $K \subset {\Omega}$,
    \[\sup_{x \in K}\big|u_{\alpha,n}(f(x)) - \lambda^\alpha u_{\alpha,n}(x)\big| \to 0\]
    as $n \to \infty$.
\end{proposition}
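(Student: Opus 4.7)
The plan is to express the residual $u_{\alpha,n}(f(x))-\lambda^\alpha u_{\alpha,n}(x)$ as an inner product against the kernel section $k_x$, exploit an orthogonality to $V_{p,n}$ inherited from the finite-dimensional eigenvector equation, and conclude by Cauchy--Schwarz with $\|u_{\alpha,n}\|_H=1$.

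First, since $V_{p,n}\subset D(C_f)$ and $C_f$ is bounded, the reproducing property together with $h(x)=\langle h,k_x\rangle_H$ yields
\[
u_{\alpha,n}(f(x)) - \lambda^\alpha u_{\alpha,n}(x)
=\bigl((C_f - \lambda^\alpha I)u_{\alpha,n}\bigr)(x)
=\bigl\langle (C_f - \lambda^\alpha I)u_{\alpha,n},\ k_x\bigr\rangle_H.
\]

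Set $w:=(C_f-\lambda^\alpha I)u_{\alpha,n}$. The crucial step is to verify $w\in V_{p,n}^{\perp}$. For any $v\in V_{p,n}$, Theorem \ref{thm: basic theorem for PF operator, discrete}(1) yields $C_f^* v\in V_{p,n}$, and the adjoint relation converts
\[
\langle w,v\rangle_H
=\langle u_{\alpha,n},C_f^*v\rangle_H-\lambda^\alpha\langle u_{\alpha,n},v\rangle_H
=\langle u_{\alpha,n},(C_f^*|_{V_{p,n}}-\overline{\lambda^\alpha}I)v\rangle_H.
\]
The vanishing of this expression for every $v\in V_{p,n}$ amounts to the finite-dimensional kernel-range duality: $u_{\alpha,n}$ is orthogonal to $\mathrm{Range}(C_f^*|_{V_{p,n}}-\overline{\lambda^\alpha}I)$ because it is the unit eigenvector attached to $\lambda^\alpha$ inside the invariant subspace $V_{p,n}$. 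Hence $w\perp V_{p,n}$.

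Then, because $\pi_n k_x\in V_{p,n}$, the value of $\langle w,k_x\rangle_H$ is unchanged under $k_x\mapsto k_x-\pi_n k_x$, and Cauchy--Schwarz with $\|u_{\alpha,n}\|_H=1$ gives
\[
|u_{\alpha,n}(f(x))-\lambda^\alpha u_{\alpha,n}(x)|
=|\langle w,k_x-\pi_n k_x\rangle_H|
\le \|w\|_H\,\mathcal{E}_{p,n}(x)
\le \|C_f-\lambda^\alpha\|_{\rm op}\,\mathcal{E}_{p,n}(x).
\]
The uniform convergence on a compact $K\subset\Omega$ follows upon taking the supremum and invoking Proposition \ref{prop: uniform convergence of the minimal error}: $\|C_f-\lambda^\alpha\|_{\rm op}$ is a constant independent of $n$, while $\sup_{x\in K}\mathcal{E}_{p,n}(x)\to 0$ as $n\to\infty$ under Assumption \ref{asm: basic}.

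I expect the delicate point to be the orthogonality step, where one must translate the eigenvector characterization of $u_{\alpha,n}$ under $C_f^*|_{V_{p,n}}$ into orthogonality of $u_{\alpha,n}$ to the range of $C_f^*|_{V_{p,n}}-\overline{\lambda^\alpha}I$ inside the finite-dimensional $V_{p,n}$; concretely this needs the $V_{p,n}$-adjoint (equivalently, the compression $\pi_n C_f|_{V_{p,n}}$) to select the correct dual direction. Once this orthogonality is secured, the remaining estimate is a one-line Cauchy--Schwarz coupled with the approximation-error decay provided by Proposition \ref{prop: uniform convergence of the minimal error}.
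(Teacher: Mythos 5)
Your skeleton is the same as the paper's proof (reproducing property, replacing $k_x$ by $k_x-\pi_n k_x$, Cauchy--Schwarz with $\|u_{\alpha,n}\|_H=1$, and Proposition \ref{prop: uniform convergence of the minimal error} for the uniform statement), but the justification of the one step you yourself flag as delicate does not go through. You need $(C_f-\lambda^\alpha)u_{\alpha,n}\perp V_{p,n}$, which, as you correctly compute, is equivalent to $u_{\alpha,n}\perp\mathrm{Range}\bigl(C_f^*|_{V_{p,n}}-\overline{\lambda^\alpha}I\bigr)$. By kernel--range duality in the finite-dimensional space $V_{p,n}$ this is equivalent to $u_{\alpha,n}\in\mathrm{Ker}\bigl((C_f^*|_{V_{p,n}})^{*}-\lambda^\alpha I\bigr)$, i.e.\ to $u_{\alpha,n}$ being an eigenvector of the \emph{adjoint} of the restriction (the compression $\pi_n C_f|_{V_{p,n}}$), which is a different condition from the stated hypothesis that $u_{\alpha,n}$ is an eigenvector of $C_f^*|_{V_{p,n}}$ itself. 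For a non-normal operator an eigenvector of $A$ for $\mu$ is in general not orthogonal to $\mathrm{Range}(A-\mu I)$: take $A=\bigl(\begin{smallmatrix}1&1\\0&2\end{smallmatrix}\bigr)$, whose eigenvector $e_1$ for the (real) eigenvalue $1$ is not orthogonal to $\mathrm{Range}(A-I)=\mathbb{C}\,(1,1)^{\top}$. Since $C_f^*|_{V_{p,n}}$ is a compression of a non-normal operator and no orthogonality of its eigenvectors is available, the phrase ``because it is the unit eigenvector attached to $\lambda^\alpha$'' does not deliver the orthogonality; as written this is a genuine gap.

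For comparison, the paper's own proof never attempts this translation: it writes $\langle C_f u_{\alpha,n},\pi_n k_x\rangle_H=\langle (C_f^*|_{V_{p,n}})^{*}u_{\alpha,n},\pi_n k_x\rangle_H=\lambda^\alpha\langle u_{\alpha,n},\pi_n k_x\rangle_H$, i.e.\ it uses $(C_f^*|_{V_{p,n}})^{*}u_{\alpha,n}=\lambda^\alpha u_{\alpha,n}$ outright, so the operative hypothesis there is that $u_{\alpha,n}$ is an eigenvector of the compression $\pi_n C_f|_{V_{p,n}}$ --- the object JetDMD actually estimates (cf.\ Theorem \ref{thm: description of Koopman operator as a limit} and Proposition \ref{prop: eigenfunction in weak sense}) --- rather than of $C_f^*|_{V_{p,n}}$ as literally stated. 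With that reading your orthogonality closes immediately and the remainder of your estimate is exactly the paper's argument; without it, you would need an extra input (normality, or an explicit left/right eigenvector pairing), which is not available here. So either adopt the hypothesis in the compression form or supply that missing ingredient; the step cannot be deduced from the hypothesis on $C_f^*|_{V_{p,n}}$ alone.
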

\begin{proof}
    Let $x \in {\Omega}$ and let $\tilde{v}_x := \pi_n k_x \in V_{n,p}$.
    We have 
    \begin{align*}
        u_{\alpha,n}(f(x)) 
        &= \langle u_{\alpha, n}, C_f^*k_{x} \rangle_H\\
        &= \langle C_f u_{\alpha, n}, k_{x} \rangle_H\\
        &= \langle C_f u_{\alpha, n}, \tilde{v}_x\rangle_H + \langle C_f u_{\alpha,n}, k_x - \tilde{v}_x\rangle_H.
    \end{align*}
    Since 
    \[\langle C_f u_{\alpha, n}, \tilde{v}_x\rangle_H = \langle C_f^*|_{V_{n,p}}^* u_{\alpha,n}, \tilde{v}_x\rangle_H = \lambda^\alpha \langle u_{\alpha,n}, \tilde{v}_x\rangle_H, \]
    combining with the Cauchy--Schwarz inequality, we have
    \[\big|u_{\alpha,n}(f(x)) - \lambda^\alpha u_{\alpha,n}(x)\big| \le \|(C_f - \lambda^\alpha)u_{\alpha,n}\|_H \mathcal{E}_{p,n}(x) \le \|C_f - \lambda^\alpha\|_H \mathcal{E}_{p,n}(x).\]
    The second statement follows from Proposition \ref{prop: uniform convergence of the minimal error}.
\end{proof}

Let $\mu$ be a probability measure on ${\Omega}$ and assume that $\int_\Omega \sqrt{k(x,x)} {\rm d}\mu < \infty$.
Let $\Phi(\mu) \in H$ be the kernel mean embedding of $\mu$, that is the unique element such that $\langle h, \Phi(\mu)\rangle = \int_{\Omega} h(x)\,{\rm d}\mu(x)$for all $h \in H$.
For details of the theory for kernel mean embedding, see \cite{MAL-060}.
We define
\[\mathcal{E}_{p,n}(\mu) := \min_{v \in V_{p,n}}\| \Phi(\mu) - v \|_H.\]
For an integer $m \ge 0$, we define 
\[\widehat{\Phi}_n^m(\mu) := C_{f^m}^*\pi_n\Phi(\mu).\]
Then, $\widehat{\Phi}_n^m(\mu)$ will provide a prediction of the push-forward measure of $\mu$ by $f^m$ as follows:
\begin{proposition}\label{prop: prediction of distribution: fixed point}
    Assume that $C_f$ is bounded on $H$ and $\int_\Omega \sqrt{k(x,x)} {\rm d}\mu < \infty$.
    Then, we have
    \[ \big\|\Phi(f_*\mu) - \widehat{\Phi}_n^m(\mu)\big\|_H \le \|C_{f^m}\|_{\rm op} \mathcal{E}_{p,n}(\mu).\]
\end{proposition}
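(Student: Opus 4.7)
The plan is to reduce the claim to the identity $\Phi((f^m)_*\mu) = C_{f^m}^{*}\Phi(\mu)$ combined with the fact that $\pi_n$ is the orthogonal projection onto $V_{p,n}$, so that $\|\Phi(\mu)-\pi_n\Phi(\mu)\|_H = \mathcal{E}_{p,n}(\mu)$ by the very definition of $\mathcal{E}_{p,n}(\mu)$.

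First I would verify the push-forward identity by unpacking the definitions: for any $h \in H$,
\begin{align*}
\langle h,\, \Phi((f^m)_*\mu)\rangle_H
&= \int_{\Omega} h(x)\,\mathrm{d}((f^m)_*\mu)(x)
= \int_{\Omega} h(f^m(x))\,\mathrm{d}\mu(x) \\
&= \int_{\Omega}(C_{f^m}h)(x)\,\mathrm{d}\mu(x)
= \langle C_{f^m}h,\, \Phi(\mu)\rangle_H
= \langle h,\, C_{f^m}^{*}\Phi(\mu)\rangle_H.
\end{align*}
Since this holds for all $h \in H$, we conclude $\Phi((f^m)_*\mu) = C_{f^m}^{*}\Phi(\mu)$. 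The integrability assumption $\int_\Omega \sqrt{k(x,x)}\,\mathrm{d}\mu<\infty$ guarantees that $\Phi(\mu)$ exists in $H$, and boundedness of $C_{f^m}$ ensures $C_{f^m}^{*}\Phi(\mu)\in H$ as well, so the manipulations are valid.

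Next, using $\widehat{\Phi}_n^m(\mu) := C_{f^m}^{*}\pi_n\Phi(\mu)$, I subtract:
\begin{equation*}
\Phi((f^m)_*\mu) - \widehat{\Phi}_n^m(\mu)
= C_{f^m}^{*}\bigl(\Phi(\mu) - \pi_n\Phi(\mu)\bigr).
\end{equation*}
Applying the operator norm bound (and recalling $\|C_{f^m}^{*}\|_{\rm op} = \|C_{f^m}\|_{\rm op}$) gives
\[
\|\Phi((f^m)_*\mu) - \widehat{\Phi}_n^m(\mu)\|_H
\le \|C_{f^m}\|_{\rm op}\cdot \|\Phi(\mu) - \pi_n\Phi(\mu)\|_H.
\]
Since $\pi_n$ is the orthogonal projection onto $V_{p,n}$, we have $\|\Phi(\mu) - \pi_n\Phi(\mu)\|_H = \min_{v \in V_{p,n}}\|\Phi(\mu) - v\|_H = \mathcal{E}_{p,n}(\mu)$, which closes the estimate.

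There is no real obstacle here; the only subtlety is the notational one that the statement writes $\Phi(f_*\mu)$, which should be read as $\Phi((f^m)_*\mu)$ to be consistent with the $m$-step predictor $\widehat{\Phi}_n^m(\mu)$. Boundedness of $C_{f^m}$ (which follows from that of $C_f$) is exactly what is needed to both make sense of $C_{f^m}^{*}$ on all of $H$ and to pull out the operator norm factor.
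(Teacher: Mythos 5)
Your proof is correct and follows essentially the same route as the paper's: both identify $\mathcal{E}_{p,n}(\mu)$ with the projection error $\|\Phi(\mu)-\pi_n\Phi(\mu)\|_H$, use the intertwining relations $\widehat{\Phi}_n^m(\mu)=C_{f^m}^*\pi_n\Phi(\mu)$ and $C_{f^m}^*\Phi(\mu)=\Phi(f_*^m\mu)$, and conclude with the operator-norm bound; you simply spell out the details the paper leaves implicit. Your reading of $\Phi(f_*\mu)$ as $\Phi(f_*^m\mu)$ matches the paper's own proof, so that notational point is handled correctly.
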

\begin{proof}
    Since $\Phi_n^0(\mu)$ is the orthonormal projection of $\Phi(\mu)$ to $V_{p,n}$, we see that
    \[\mathcal{E}_{p,n}(\mu) = \|\Phi_n^0(\mu) - \Phi(\mu)\|_H.\]
    Then, the statement follows since $\Phi_n^m(\mu) = C_{f^m}^*\Phi_n^0(\mu)$ and $C_{f^m}^*\Phi(\mu) = \Phi(f_*^m\mu)$.
\end{proof}

\section{Extended Koopman operators and rigged Hilbert spaces}\label{sec: generalized spectrum}
Here, we explain the theory of rigged Hilbert space with the Gelfand triple.
Then, we introduce the extended Koopman operator and show its Jordan--Chevalley decomposition and eigendecomposition.
In this section, we always assume Assumption \ref{asm: basic}.
We denote by $\mathfrak{r}: H' \to H$ the anti-linear isomorphism of \eqref{riesz representation}.

\subsection{Rigged Hilbert spaces}
First, we introduce the notion of the Gelfand triples.
We explicitly describe some natural identifications appearing in typical description of the theory of Gelfand triple, and our definition is slightly different from the usual one.
\begin{definition}\label{def: Gelfand triple}
Let $H$ be a Hilbert space.
Let $\Phi$ and $\Phi^*$ be locally convex Hausdorff topological spaces over $\mathbb{C}$
equipped with a continuous pairing
\begin{align}
\langle \cdot, \cdot \rangle : \Phi \times \Phi^* \longrightarrow \mathbb{C}
\label{pairing}
\end{align}
such that
\begin{align*}
     \langle f, a\mu + b\xi\rangle &= \overline{a}\langle f, \mu \rangle + \overline{b}\langle f, \xi \rangle \\
    \langle af + bg, \mu \rangle &= a\langle f, \mu\rangle + b\langle g, \xi \rangle
\end{align*}
for $a, b \in \mathbb{C}$, $f,g \in \Phi$, and $\mu, \xi \in \Phi^*$.
Assume that there exist injective continuous linear maps $\mathfrak{i}: \Phi \to H$ and $\mathfrak{j}:H \to \Phi^*$:
    \textcolor{blue}{
    \[ \Phi \overset{\mathfrak{i}}{\hookrightarrow} H  \overset{\mathfrak{j}}{\hookrightarrow} \Phi^* \cong \Phi'\]}
We call the triplet $(\Phi, H, \Phi^*)$ with continuous injections $\mathfrak{i}: \Phi \to H$ and $\mathfrak{j}:H \to \Phi^*$ the {\em Gelfand triple} if the following conditions hold:
\begin{enumerate}[(1)]
    \item $\mathfrak{i}(\Phi)$ is a dense subset of $H$, \label{density of test space}
    \item the natural anti-linear map $\mathfrak{s}: \Phi^* \to \Phi'; \mu \mapsto \langle \cdot, \mu \rangle$ \revised{via the paring \eqref{pairing}} is a bijective homeomorphism, \label{isomorphism of duals}
    \item the paring $\langle \cdot, \cdot \rangle$ is compatible with the bilinear form $\langle \cdot \mid \cdot\rangle$, namely, for $g \in \Phi$ and $\mu \in \Phi^*$,
    \begin{align}
        \langle g, \mu \rangle = \langle \mathfrak{s}(\mu) \mid g \rangle.
    \end{align}
    \item the paring $\langle \cdot, \cdot\rangle$ is compatible with the inner product $\langle\cdot, \cdot\rangle_H$, namely, for any $g \in \Phi$, $h \in H$, 
    \begin{align}
        \langle g, \mathfrak{j}(h)\rangle = \langle \mathfrak{i}(g), h\rangle_H.
    \end{align}

\end{enumerate}
We also call the Hilbert space equipped with the Gelfand triple {\em the rigged Hilbert space}.
\end{definition}

\revised{Usually, Gelfand triple is introduced only using $\Phi$ and its dual space $\Phi'$, and $\Phi'$ is identified with another more explicit topological vector space, so here we explicitly describe the identification by introducing $\Phi^*$ and $\mathfrak{s}$.
Although it depends on the context, we usually consider the weak$*$ topology for the dual space of $\Phi'$ instead of the strong topoology.
}

The Gelfand triplet is introduced for further investigation of the spectrum of linear operators.
We usually refer to the point spectrum computed via the ``analytic continuation'' with the Gelfand triplet as the resonance, the resonance poles, or the generalized eigenvalue, and they have been well studied in quantum mechanics.
For more details of the mathematical formulation and application based on the Gelfand triplet, see \cite{BG89, CHIBA2015324, Gel2, Gel, CII} and references therein. 

Using the Gelfand triple, we can define an extension of a linear operator, that will be used for defining the extended Koopman operator.
\begin{definition}\label{defn: extendion of linear operator}
Let $(\Phi, H, \Phi^*)$ be the Gelfand triple of a Hilbert space $H$.
Let $T: H \to H$ be a densely defined linear operator.
Assume that the adjoint operator $T^*$ satisfies $T^*(\mathfrak{i}(\Phi)) \subset \mathfrak{i}(\Phi)$ and $T^*|_\Phi := \mathfrak{i}^{-1}T\mathfrak{i}: \Phi \to \Phi$ is continuous.
We define the continuous linear map $T^\times: \Phi^* \to \Phi^*$ by the unique linear map satisfying
\[\langle f, T^\times \mu\rangle = \langle T^*|_\Phi f , \mu\rangle\]
for any $\mu \in \Phi^*$ and $f \in \Phi$, equivalently, 
\begin{align}
    T^\times := \mathfrak{s}^{-1} (T^*|_\Phi)'\mathfrak{s}. \label{equiv def of extention}
\end{align}
\end{definition}
As in the following proposition, we may actually regard $T^\times$ as an extension of $T$ to $\Phi^*$.
\begin{proposition}\label{prop: extension}
    Let $T: H \to H$ be a linear map with a dense domain $D(T)$.
    For $h \in D(T)$, we have $\mathfrak{j}Th = T^\times \mathfrak{j}h$.
\end{proposition}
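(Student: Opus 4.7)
The plan is to test both sides of the desired identity against an arbitrary $f \in \Phi$ via the pairing, and then chain through the defining compatibilities of the Gelfand triple. Since $\mathfrak{s}: \Phi^* \to \Phi'$ is bijective (in particular injective), the pairing $\langle\cdot,\cdot\rangle$ separates points of $\Phi^*$ in its second argument; hence to prove $\mathfrak{j}Th = T^\times \mathfrak{j}h$ it is enough to verify
\begin{equation*}
\langle f, \mathfrak{j}Th\rangle \;=\; \langle f, T^\times \mathfrak{j}h\rangle \qquad \text{for every } f \in \Phi.
\end{equation*}

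For the left-hand side I would use the compatibility of $\langle\cdot,\cdot\rangle$ with the Hilbert inner product to write $\langle f, \mathfrak{j}Th\rangle = \langle \mathfrak{i}(f), Th\rangle_H$. Since the standing hypothesis on $T^\times$ requires $T^*(\mathfrak{i}(\Phi)) \subset \mathfrak{i}(\Phi)$, in particular $\mathfrak{i}(f)$ lies in the domain of the Hilbert-space adjoint $T^*$, so the adjointness relation gives $\langle \mathfrak{i}(f), Th\rangle_H = \langle T^*\mathfrak{i}(f), h\rangle_H$ for the given $h \in D(T)$. Applying the identification $T^*\mathfrak{i}(f) = \mathfrak{i}(T^*|_\Phi f)$ and again the compatibility of the pairing with $\langle\cdot,\cdot\rangle_H$, this equals $\langle T^*|_\Phi f, \mathfrak{j}h\rangle$. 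By the very definition of $T^\times$, this last expression is $\langle f, T^\times \mathfrak{j}h\rangle$, closing the chain.

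This argument is really just an unwinding of definitions: the only things to watch out for are (i) invoking $\mathfrak{i}(f) \in D(T^*)$ correctly, which follows from the hypothesis that $T^*$ maps $\mathfrak{i}(\Phi)$ into itself, and (ii) the step that justifies passing from the equality of pairings for all $f \in \Phi$ to the equality of elements in $\Phi^*$, which uses condition \eqref{isomorphism of duals} in the definition of the Gelfand triple (together with the Hausdorff property of $\Phi^*$). I expect no serious obstacle; the proof is essentially a three-line diagram chase once the compatibilities of the pairing and the definition of $T^\times$ via $(T^*|_\Phi)'$ are assembled.
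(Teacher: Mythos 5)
Your proof is correct and follows essentially the same route as the paper's: both test the identity against an arbitrary element of $\Phi$, use the compatibility of the pairing with $\langle\cdot,\cdot\rangle_H$, the adjointness $\langle \mathfrak{i}(f),Th\rangle_H=\langle T^*\mathfrak{i}(f),h\rangle_H$, and the definition of $T^\times$ via $(T^*|_\Phi)'$, then conclude from the bijectivity of $\mathfrak{s}$ (the paper phrases this as proving $\mathfrak{s}\mathfrak{j}T=(T^*|_\Phi)'\mathfrak{s}\mathfrak{j}$). Your explicit remarks on why $\mathfrak{i}(f)\in D(T^*)$ and why the pairing separates points are exactly the points the paper's computation uses implicitly.
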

\begin{proof}
It suffices to show that $\mathfrak{s} \mathfrak{j}T = (T^*|_\Phi)'\mathfrak{s} \mathfrak{j} $.
Let $g \in \Phi$ be an arbitrary element.
By direct computation, we have
\[
\langle \mathfrak{s} \mathfrak{j}T(h) \mid g \rangle =  \langle \mathfrak{i}(g), Th\rangle_H = \langle T^*\mathfrak{i}(g), h\rangle_H 
= \langle \mathfrak{s} \mathfrak{j} (h) \mid T^* (g) \rangle =  \langle (T^*|_\Phi)' \mathfrak{s}\mathfrak{j}(h) \mid g\rangle.\]
Thus, $\mathfrak{s} \mathfrak{j}T = (T^*|_\Phi)'\mathfrak{s} \mathfrak{j}$.
\end{proof}

\subsection{Extended Koopman operators} \label{subsec: extended koopman operators}
Here, we specifically define $\Phi$ and $\Phi^*$ in Definition \ref{def: Gelfand triple}  to be the space of observables and a ``limit'' of the space of observables, respectively.
Then, we will define the extended Koopman operator on $\Phi^*$.

Let $f: \Omega \to \Omega$ be a map.
Assume that Assumption \ref{asm: basic} holds, that $C_f: H \to H$ is densely defined, and that there exist $p_1,\dots, p_r \in \Omega$ satisfying Assumption \ref{asm: existence of a fixed point}.
Let 
\[\Lambda := \{p_1,\dots, p_r\} \subset \Omega.\]
We define
\begin{align}
    V_{\Lambda,n} := \sum_{i=1}^r V_{p_i,n}~~ \subset H.    
\end{align}
\revised{
Let
\begin{align}
    \Phi := \bigcup_{n \ge 0} V_{\Lambda, n}
\end{align}
and $\iota_n: V_{\Lambda,n} \hookrightarrow \Phi$ the inclusion map.
We equip the strongest topology such that the inclusion map $\iota_n$ for all $n \ge 0$ with $\Phi$.
We define the injection $\mathfrak{i}: \Phi \to H$ in Definition \ref{def: Gelfand triple} by the inclusion map.
}

\revised{
Let $\pi_{n+1,n}: V_{\Lambda,n+1} \to V_{\Lambda,n}$ be the orthogonal projection and let
\begin{align}
    \Phi^*  := \left\{ (\mu_n)_n \in \prod_{n=0}^\infty V_{\Lambda,n} : \pi_{n+1, n}(\mu_{n+1}) = \mu_n \right\}.
\end{align}
We equip the relative topology of the product topology $\prod_{n=0}^\infty V_{\Lambda,n}$ with $\Phi^*$. 
Recall $\pi_n: H \to V_{p,n}$ is the orthogonal projection \eqref{orthogonal projection}.
Then, we define the injection $\mathfrak{j}: H \to \Phi^*$ in Definition \ref{def: Gelfand triple} by $\mathfrak{j}(h) := \left(\pi_n(h)\right)_{n=0}^\infty$.
}

We define a paring 
\begin{align}
    \langle \cdot, \cdot\rangle: \Phi \times \Phi^* \longrightarrow \mathbb{C}; \left(h, (g_n)_n \right) \mapsto \lim_{n \to \infty} \langle h, g_n\rangle_H.
\end{align}

Then, we have the following proposition:
\begin{proposition}
    The triplet $(\Phi, H, \Phi^*)$ defined as above is a Gelfand triple.
\end{proposition}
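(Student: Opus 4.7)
The plan is to verify each of the four conditions in the definition of a Gelfand triple in turn, after first confirming the ambient framework (topology, continuity, and sesquilinearity). The key technical input is the observation that for $(g_n)_n \in \Phi^*$ and $h \in V_{\Lambda,m}$, the sequence $\langle h, g_n\rangle_H$ stabilizes for $n \geq m$. Indeed, since $\pi_{n+1,n} g_{n+1} = g_n$, the difference $g_{n+1}-g_n \in V_{\Lambda,n+1}$ is orthogonal to $V_{\Lambda,n} \supset V_{\Lambda,m}$, so $\langle h, g_{n+1}\rangle_H = \langle h, g_n\rangle_H$ for all $n\ge m$. This makes the pairing well-defined, equal on $V_{\Lambda,m}\times \Phi^*$ to $(h,(g_n)_n)\mapsto \langle h, g_m\rangle_H$, from which joint continuity follows because projection onto the $m$-th coordinate is continuous on the product $\prod_n V_{\Lambda,n}$ and the restriction to $V_{\Lambda,m}$ is continuous on $\Phi$ (by the universal property of the inductive limit). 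Sesquilinearity is inherited directly from that of $\langle\cdot,\cdot\rangle_H$.

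Next I would verify the continuous injections. $\Phi$ is a countable strict inductive limit of finite-dimensional Hilbert spaces, hence a Hausdorff LCS, and the inclusion $\mathfrak{i}$ is continuous because each restriction $\mathfrak{i}|_{V_{\Lambda,n}}\colon V_{\Lambda,n}\hookrightarrow H$ is; injectivity is tautological. $\Phi^*$ is a closed subspace of $\prod_n V_{\Lambda,n}$ and so is Fréchet. The map $\mathfrak{j}(h)=(\pi_n h)_n$ is continuous since each $h\mapsto \pi_n h$ is bounded, and is injective because $\pi_n h = 0$ for all $n$ forces $h \perp V_\Lambda$, and $V_\Lambda \supset V_{p_1}$ is dense in $H$ by Proposition \ref{prop: density}. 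The same density argument simultaneously gives condition \eqref{density of test space}. For compatibility (3) there is nothing to prove once one unpacks the definitions: $\mathfrak{s}(\mu)$ is \emph{by definition} the functional $g\mapsto \langle g,\mu\rangle$, so $\langle \mathfrak{s}(\mu)\mid g\rangle = \langle g,\mu\rangle$. For compatibility (4), take $g\in V_{\Lambda,m}$; then $\langle g, \mathfrak{j}(h)\rangle = \langle g,\pi_m h\rangle_H = \langle g,h\rangle_H$, using that $g \in V_{\Lambda,m}$ and $h-\pi_m h\perp V_{\Lambda,m}$.

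The main obstacle is condition \eqref{isomorphism of duals}: that $\mathfrak{s}\colon \Phi^*\to \Phi'$ is a bijective homeomorphism. For surjectivity, given $\ell\in \Phi'$, restrict $\ell$ to each finite-dimensional $V_{\Lambda,n}$ and apply Riesz representation inside $V_{\Lambda,n}$ to get $g_n \in V_{\Lambda,n}$ with $\ell(h)=\langle h,g_n\rangle_H$ for $h\in V_{\Lambda,n}$. Compatibility $\pi_{n+1,n}g_{n+1}=g_n$ follows because for all $h\in V_{\Lambda,n}$ one has $\langle h, g_{n+1}\rangle_H = \ell(h) = \langle h, g_n\rangle_H$, uniquely characterizing $g_n$ as the projection. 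Injectivity: if $\mathfrak{s}((g_n)_n)=0$, then in particular $\langle h, g_n\rangle_H=0$ for all $h\in V_{\Lambda,n}$, so $g_n=0$ for every $n$. The homeomorphism property is checked using that strict $(LB)$-spaces have simple bounded sets: every bounded $B\subset \Phi$ is contained in and bounded in some $V_{\Lambda,n}$, so the strong topology on $\Phi'$ has a neighborhood base consisting of polars $B^\circ$ of such $B$. Continuity of $\mathfrak{s}$ follows from $\sup_{h\in B}|\langle h,g_n\rangle_H|\le (\sup_{h\in B}\|h\|_H)\|g_n\|_H$, bounding $\mathfrak{s}$ by the continuous seminorm $(g_m)_m\mapsto \|g_n\|_H$ on $\Phi^*$. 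For $\mathfrak{s}^{-1}$, given a basic neighborhood $\{(g_m)_m : \|g_n\|_H<\varepsilon\}$ of $0$ in $\Phi^*$, take $B$ to be the closed unit ball of $V_{\Lambda,n}$; then $\ell\in \varepsilon B^\circ$ implies $\|g_n\|_H = \sup_{h\in B}|\langle h,g_n\rangle_H|\le \varepsilon$, giving the required inclusion. This completes the verification.
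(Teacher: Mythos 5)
Your verification of the four numbered conditions is correct, and for the crucial condition \eqref{isomorphism of duals} you take a genuinely more hands-on route than the paper: the paper simply cites the general duality theorem identifying the strong dual of an inductive limit with the projective limit of the duals, and then composes with the finite-dimensional Riesz identifications $V_{\Lambda,n}'\cong V_{\Lambda,n}$, whereas you reprove exactly this fact in the situation at hand — Riesz representation on each finite-dimensional step for bijectivity of $\mathfrak{s}$, plus the Dieudonn\'e--Schwartz structure of bounded sets in a strict inductive limit with closed steps and the polar estimates $\sup_{\|h\|_H\le 1,\,h\in V_{\Lambda,n}}|\langle h,g_n\rangle_H|=\|g_n\|_H$ for the two-sided continuity. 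What the citation buys is brevity; what your argument buys is a self-contained proof whose only external input is the standard fact about bounded sets in strict (LB)-spaces. Your treatment of \eqref{density of test space} (via Proposition \ref{prop: density}), of the injections $\mathfrak{i},\mathfrak{j}$, and of conditions (3)--(4) matches what the paper dismisses as obvious, and the stabilization observation $\langle h,g_{n+1}\rangle_H=\langle h,g_n\rangle_H$ for $h\in V_{\Lambda,m}$, $n\ge m$ is the right way to make the pairing well defined.

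One step, however, is wrong as stated: the claim that the pairing is \emph{jointly} continuous on $\Phi\times\Phi^*$. Continuity of the restriction to each $V_{\Lambda,m}\times\Phi^*$ does not imply continuity on $\Phi\times\Phi^*$, because the inductive limit topology does not commute with the product: $\Phi\times\Phi^*$ is not the inductive limit of the $V_{\Lambda,m}\times\Phi^*$. In fact joint continuity genuinely fails here (for infinitely many jet orders this is the canonical pairing of $\varphi=\bigoplus_k\mathbb{C}$ with $\omega=\prod_k\mathbb{C}$, the same phenomenon as for $\mathfrak{D}\times\mathfrak{D}'$): given any neighborhood $U$ of $0$ in $\Phi$ and any basic neighborhood $\{(g_k)_k:\|g_n\|_H<\varepsilon\}$ of $0$ in $\Phi^*$, choose $w\in V_{\Lambda,m}$ orthogonal to $V_{\Lambda,n}$ with $w\neq 0$; then $\delta w\in U$ for some $\delta>0$ since $U$ is absorbing, while $\mathfrak{j}(cw)$ lies in the basic neighborhood for every $c>0$, and $\langle \delta w,\mathfrak{j}(cw)\rangle=\delta c\|w\|_H^2$ is unbounded. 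So the pairing is only separately continuous (indeed hypocontinuous), which is exactly what your stabilization argument establishes and what the definition can sensibly demand; this does not affect the numbered conditions, which are the content of the paper's proof, but you should delete the joint-continuity claim or replace it by separate continuity.
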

\begin{proof}
    The condition \eqref{density of test space} follows from Proposition \ref{prop: density}.
    Regarding the condition \eqref{isomorphism of duals}, by \cite[Theorem 12]{MR0217557}, the natural map $\displaystyle \Phi' \to \Phi^*;~\mu \mapsto (\mu|_{V_{\Lambda,n}})_n$ induces an isomorphism.
    Since the inner product induces an anti-linear isomorphism $V_{\Lambda,n} \cong V_{\Lambda,n}'$, we have the isomorphism $\mathfrak{s}$ in \eqref{isomorphism of duals} as the composition of the above isomorphisms.
    Other two conditions are obvious.
\end{proof}

By \eqref{preserving property, PF operator} of Theorem \ref{thm: basic theorem for PF operator, discrete}, $C_f^*(\Phi) \subset \Phi$ holds.
Thus, we define the {\em extended Koopman operator} 
\begin{align}
    C_f^\times: \Phi^* \to \Phi^*
\end{align}
by Definition \ref{defn: extendion of linear operator}.

\revised{
\begin{definition}\label{def: proj.lim of linear maps}
    Let $\{A_n: V_{\Lambda,n} \to V_{\Lambda,n}\}_{n=0}^\infty$ be a family of line maps satisfying $A_n \pi_{n+1,n} = \pi_{n+1,n} A_{n+1}$
    Then, we define the continuous linear map on $\Phi^*$ by 
\[\lim A_n: \Phi^* \to \Phi^*;~(g_n) \to (A_ng_n).\]
\end{definition}
}

Using the notation in Definition \ref{def: proj.lim of linear maps}, the extended Koopman operator $C_f^\times$ has an explicit description:
\begin{proposition}\label{prop: another description of extension of Cf}
    We regard $C_f^*|_{V_{\Lambda,n}}$ as a linear map on $V_{\Lambda,n}$ and denote its adjoint in $V_{\Lambda,n}$ by $(C_f^*|_{V_{\Lambda,n}})^*$.
    Then, we have
    \begin{align}
        C_f^\times = \lim (C_f^*|_{V_{\Lambda,n}})^*
    \end{align}
\end{proposition}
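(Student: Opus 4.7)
The plan is to verify two things: first, that the family $A_n := (C_f^*|_{V_{\Lambda,n}})^*$ satisfies the compatibility $A_n \pi_{n+1,n} = \pi_{n+1,n} A_{n+1}$ so that $\lim A_n$ is a well-defined continuous operator on $\Phi^*$; second, that this operator coincides with $C_f^\times$ defined through the Gelfand triple.

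For the first part, I would take adjoints within the finite-dimensional Hilbert spaces $V_{\Lambda,n}$. The adjoint of the orthogonal projection $\pi_{n+1,n}: V_{\Lambda,n+1} \to V_{\Lambda,n}$ is the canonical inclusion $\iota_{n,n+1}: V_{\Lambda,n} \hookrightarrow V_{\Lambda,n+1}$, because for $u \in V_{\Lambda,n+1}$ and $v \in V_{\Lambda,n}$ we have $\langle \pi_{n+1,n}(u),v\rangle_H = \langle u,v\rangle_H = \langle u, \iota_{n,n+1}(v)\rangle_H$. Therefore the desired compatibility is equivalent to $\iota_{n,n+1} \circ C_f^*|_{V_{\Lambda,n}} = C_f^*|_{V_{\Lambda,n+1}} \circ \iota_{n,n+1}$, which is obvious because both sides are restrictions of the single operator $C_f^*$ on $H$ to $V_{\Lambda,n}$, using the invariance $C_f^*(V_{\Lambda,n}) \subset V_{\Lambda,n}$ from Theorem \ref{thm: basic theorem for PF operator, discrete}\eqref{preserving property, PF operator}.

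For the second part, I would test against arbitrary $h \in \Phi$. Choose $m$ large enough so that $h \in V_{\Lambda,m}$; then $C_f^*|_\Phi h = C_f^* h \in V_{\Lambda,m}$ as well. The key observation is that the pairing between $\Phi$ and $\Phi^* = \lim_{\longleftarrow} V_{\Lambda,n}$ stabilizes: for $(g_n) \in \Phi^*$ and $n \ge m$, one has $g_m = \pi_{n,m}(g_n)$, so writing $g_n = g_m + (g_n - \pi_{n,m} g_n)$ with the second summand orthogonal to $V_{\Lambda,m}$ gives $\langle h, g_n\rangle_H = \langle h, g_m\rangle_H$. Applying this to both sides: on one hand
\[\langle h, C_f^\times (g_n)\rangle = \langle C_f^*|_\Phi h, (g_n)\rangle = \langle C_f^* h, g_m\rangle_H = \langle h, (C_f^*|_{V_{\Lambda,m}})^* g_m\rangle_H,\]
where the last equality is the defining property of the adjoint inside $V_{\Lambda,m}$; on the other hand
\[\langle h, (\lim A_n)(g_n)\rangle = \langle h, (A_n g_n)_n\rangle = \langle h, A_m g_m\rangle_H = \langle h, (C_f^*|_{V_{\Lambda,m}})^* g_m\rangle_H.\]
Since the two expressions agree for every $h \in \Phi$ and the pairing $\Phi \times \Phi^* \to \mathbb{C}$ is non-degenerate (the map $\mathfrak{s}: \Phi^* \to \Phi'$ is an isomorphism), we conclude $C_f^\times = \lim (C_f^*|_{V_{\Lambda,n}})^*$.

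The main obstacle is bookkeeping, not mathematical depth: one must carefully distinguish the adjoint of $C_f^*|_{V_{\Lambda,n}}$ taken inside the finite-dimensional Hilbert space $V_{\Lambda,n}$ from the adjoint in $H$, and one must verify that the pairing on the projective limit really is computed by the eventually-stabilizing partial inner products. Once these identifications are laid out explicitly, the proof reduces to the defining property of the adjoint in each $V_{\Lambda,n}$ together with the invariance property of the Perron--Frobenius operator established in Theorem \ref{thm: basic theorem for PF operator, discrete}.
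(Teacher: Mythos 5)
Your proof is correct and follows essentially the same route as the paper: establish the compatibility $(C_f^*|_{V_{\Lambda,n}})^*\pi_{n+1,n} = \pi_{n+1,n}(C_f^*|_{V_{\Lambda,n+1}})^*$ (which the paper asserts and you derive by dualizing the inclusion compatibility of $C_f^*$), and then identify $\lim (C_f^*|_{V_{\Lambda,n}})^*$ with $C_f^\times$ through the defining duality relation \eqref{equiv def of extention}. Your version merely spells out the stabilization of the pairing and the non-degeneracy via $\mathfrak{s}$, which the paper leaves implicit.
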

\begin{proof}
    Since $(C_f^*|_{V_{\Lambda,n}})^* \pi_{n+1,n} = \pi_{n+1,n}(C_f^*|_{V_{\Lambda,n+1}})^*$, it follows from the equivalent definition \eqref{equiv def of extention} of $C_f^\times$.
\end{proof}
Moreover, if $C_f$ satisfies Assumption \ref{asm: domain of C_f}, we have a more explicit description as follows:
\begin{theorem}\label{thm: description of Koopman operator as a limit}
    Assume that $C_f$ satisfies Assumption \ref{asm: domain of C_f}.
    Then, we have
    \begin{align}\label{prop: extension theorem}
        (C_f^*|_{V_{\Lambda,n}})^* = \pi_n C_f \iota_n.
    \end{align}
    In particular, 
    \begin{align}
        C_f^\times = \lim \pi_n C_f \iota_n.
    \end{align}
\end{theorem}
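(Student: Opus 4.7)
The plan is to verify the identity $(C_f^*|_{V_{\Lambda,n}})^* = \pi_n C_f \iota_n$ by a direct adjoint computation on the finite-dimensional Hilbert space $V_{\Lambda,n}$ (with its inner product inherited from $H$), and then invoke Proposition \ref{prop: another description of extension of Cf} to obtain the ``In particular'' statement. Since $V_{\Lambda,n}$ is finite-dimensional, every linear endomorphism is bounded and has a well-defined adjoint with respect to $\langle\cdot,\cdot\rangle_H|_{V_{\Lambda,n}}$, so it suffices to verify that $\pi_n C_f \iota_n$ satisfies the defining identity of the adjoint of $C_f^*|_{V_{\Lambda,n}}$.

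First I would check that both sides are well-defined operators on $V_{\Lambda,n}$. For $\pi_n C_f \iota_n$, we need $V_{\Lambda,n} \subset D(C_f)$: this holds because $V_{\Lambda,n} = \sum_{i=1}^r V_{p_i,n} \subset \sum_{i=1}^r V_{p_i} \subset D(C_f)$ by Assumption \ref{asm: domain of C_f} applied at each fixed point. For $C_f^*|_{V_{\Lambda,n}}$ to be an operator on $V_{\Lambda,n}$, I would note that $V_{\Lambda,n} \subset D(C_f^*)$ by Proposition \ref{prop: domain of PF operators} (since each generator $\partial_x^\alpha k(x,\cdot)|_{x=p_i}$ lies in the image of $\mathfrak{r}\circ \iota'$), and that $C_f^*(V_{\Lambda,n})\subset V_{\Lambda,n}$ by summing the invariance statement $C_f^*(V_{p_i,n})\subset V_{p_i,n}$ from Theorem \ref{thm: basic theorem for PF operator, discrete} (applicable because every $p_i$ is a fixed point by assumption).

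Next, I would perform the adjoint computation. For arbitrary $u,v\in V_{\Lambda,n}$:
\begin{align*}
\langle \pi_n C_f \iota_n u,\, v\rangle_H
&= \langle C_f u,\, \pi_n v\rangle_H
= \langle C_f u,\, v\rangle_H \\
&= \langle u,\, C_f^* v\rangle_H
= \langle u,\, C_f^*|_{V_{\Lambda,n}} v\rangle_H,
\end{align*}
where the second equality uses $v\in V_{\Lambda,n}$ and self-adjointness of $\pi_n$, the third uses $u\in D(C_f)$ and $v\in D(C_f^*)$, and the last uses $C_f^*(V_{\Lambda,n})\subset V_{\Lambda,n}$. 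Since the inner product on $V_{\Lambda,n}$ is the restriction of $\langle\cdot,\cdot\rangle_H$, this identity says exactly that $\pi_n C_f \iota_n$ is the adjoint of $C_f^*|_{V_{\Lambda,n}}$ within $V_{\Lambda,n}$, giving $(C_f^*|_{V_{\Lambda,n}})^* = \pi_n C_f \iota_n$.

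Finally, for the displayed formula $C_f^\times = \lim \pi_n C_f \iota_n$, I would simply substitute the identity just proved into the conclusion of Proposition \ref{prop: another description of extension of Cf}. There is no serious obstacle: the only subtlety is keeping track of which inner product and which adjoint is being used (the outer $*$ is the adjoint inside the finite-dimensional $V_{\Lambda,n}$, not the unbounded Hilbert-space adjoint in $H$), and ensuring that all domain conditions are satisfied so that the bra-ket manipulations above are legal, which is precisely where Assumption \ref{asm: domain of C_f}, Proposition \ref{prop: domain of PF operators}, and Theorem \ref{thm: basic theorem for PF operator, discrete} are used in combination.
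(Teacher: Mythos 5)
Your proposal is correct and follows essentially the same route as the paper: a direct adjoint computation on the finite-dimensional space $V_{\Lambda,n}$, using Assumption \ref{asm: domain of C_f} to move $C_f$ across the inner product, followed by an appeal to Proposition \ref{prop: another description of extension of Cf} for the limit statement. Your additional bookkeeping (self-adjointness of $\pi_n$, invariance of $V_{\Lambda,n}$ under $C_f^*$ via Theorem \ref{thm: basic theorem for PF operator, discrete}, and the domain checks via Proposition \ref{prop: domain of PF operators}) just makes explicit what the paper's shorter calculation leaves implicit.
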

\begin{proof}
    The second statement follows from Proposition \ref{prop: another description of extension of Cf}.
    We prove the first statement.
    It suffices to show that $\langle (C_f^*|_{V_{\Lambda,n}})^*g, h \rangle_H = \langle \pi_n C_f g, h\rangle_H$ for any $g,h \in V_{\Lambda,n}$.
    It is proved via the direct calculation as follows:
    \begin{align*}
        \langle (C_f^*|_{V_{\Lambda,n}})^*g, h \rangle_H = \langle g, C_f^*|_{V_{\Lambda,n}} h \rangle_H
         = \langle C_fg, h\rangle_H = \langle \pi_nC_fg, h \rangle_H.
    \end{align*}
\end{proof}
Each map $ \pi_n C_f \iota_n$ in Theorem \ref{thm: description of Koopman operator as a limit} is known as a finite approximation through projection into a finite dimensional subspace, which always appears when approximating the Koopman operator.
Theorem \ref{thm: description of Koopman operator as a limit} provides a crucial fact that $\pi_n C_f \iota_n$'s form a projective system, meaning that they are commutative with projections:
\[\pi_{n+1,n} ( \pi_{n+1} C_f \iota_{n+1}) = ( \pi_n C_f \iota_n) \pi_{n+1,n}.\]
This fact is important as it gives not only the approximation of the extended Koopman operator, but also the consistent family of eigenvectors.
Moreover, according to Theorem \ref{thm: error analysis for PF operator for algorithm, disc}, $ \pi_n C_f \iota_n$ is the adjoint of a matrix that can be estimated in a data-driven manner.
In this sense, the spaces $V_{\Lambda, n}$ of the intrinsic observables constructed from the jets provide an appropriate series of subspaces that enables the correct finite approximation of the Koopman operators.

\subsection{Eigendecomposition of the extended Koopman operators}
We use the same notation as in Section \ref{subsec: extended koopman operators}.
First, we describe the ``Jordan--Chevalley decompositions'' of the Perron--Frobenius operator $C_f^*|_{\Phi}$ and the extended Koopman operator $C_f^\times$:
\begin{theorem}\label{thm: jordan decomposition of C_f}
Suppose that Assumption \ref{asm: basic} holds and that $C_f$ is densely defined.
Let $r_n := \mathop{\rm dim}V_{\Lambda,n}$.
Then, there exist continuous linear operators $S_f^\times, N_f^\times: \Phi^* \to \Phi^*$ and $S_f, N_f: \Phi \to \Phi$ such that
\begin{align*}
    C_f^\times &= S_f^\times + N_f^\times,\\
    C_f^*|_{\Phi} &= S_f + N_f,
\end{align*}
with the following properties:
\begin{enumerate}[{\rm (1)}]
    \item there exist a family of complex numbers $\{\gamma_i\}_{i=1}^\infty$ and those of vectors $\{w_i\}_{i=1}^\infty \subset \Phi$ and $\{u_i\}_{i=1}^\infty \subset \Phi^*$ such that $\{w_i\}_{i=1}^{r_n}$ constitutes a basis of $V_{\Lambda, n}$, and
    \begin{align*}
        S_f^\times u_i &= \gamma_i u_i,\\
        S_f w_i &= \overline{\gamma_i} w_i,\\
        \langle u_i, w_j \rangle &= \delta_{i, j}
    \end{align*}
    for positive integers  $i,j \ge 1$, and 
    \begin{align*}
        S_f^\times u &= \sum_{i=1}^\infty \gamma_i \langle w_i, u \rangle u_i,\\
        S_fw &= \sum_{i=1}^\infty \overline{\gamma_i}  \langle w, u_i \rangle w_i
    \end{align*}
    hold for $u \in \Phi^*$ and $w \in \Phi$, where the two convergence on the right hand sides are in the topologies of $\Phi^*$ and $\Phi$, respectively, \label{eigendecompositions of S}
    \item for each $u \in \Phi^*$, $(N_f^\times)^nu \to 0$ as $n\to \infty$, \label{topologically nilpotent property of Nx}
    \item for each $w \in \Phi$, there exists $n \ge 0$ such that $N_f^nw = 0$, \label{nilpotent property of N}
    \item $S_fN_f = N_fS_f$ and $S_f^\times N_f^\times  = S_f^\times N_f^\times $. \label{commutativity of S and N}
\end{enumerate}
\end{theorem}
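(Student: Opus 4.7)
The plan is to lift the classical Jordan--Chevalley decomposition from each finite-dimensional $V_{\Lambda,n}$ to the limits $\Phi$ and $\Phi^\ast$ by exploiting compatibility with the filtration. By Theorem \ref{thm: basic theorem for PF operator, discrete}, each $V_{\Lambda,n}$ is $C_f^\ast$-invariant, so $T_n := C_f^\ast|_{V_{\Lambda,n}}$ is an endomorphism of a finite-dimensional complex vector space and admits a unique Jordan--Chevalley decomposition $T_n = S_n + N_n$. The key classical fact I would invoke is that $S_n$ and $N_n$ are polynomials in $T_n$ (without constant term for $N_n$); hence every $T_n$-invariant subspace is automatically $S_n$- and $N_n$-invariant. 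Applying this to the inclusion $V_{\Lambda,n} \subset V_{\Lambda,n+1}$ and using uniqueness of the decomposition of $T_{n+1}|_{V_{\Lambda,n}} = T_n$, I would conclude $S_{n+1}|_{V_{\Lambda,n}} = S_n$ and $N_{n+1}|_{V_{\Lambda,n}} = N_n$.

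With this compatibility established, I would patch the operators together. On $\Phi = \bigcup_n V_{\Lambda,n}$ with the inductive-limit topology, define $S_f$ and $N_f$ by $S_f|_{V_{\Lambda,n}} = S_n$ and $N_f|_{V_{\Lambda,n}} = N_n$; continuity in the inductive topology follows from continuity (automatic in finite dimensions) on each $V_{\Lambda,n}$, and the equality $C_f^\ast|_\Phi = S_f + N_f$ is checked level by level. On $\Phi^\ast$, taking Hilbert-space adjoints inside each $V_{\Lambda,n}$ produces $(S_n)^\ast, (N_n)^\ast$, which sum to $(C_f^\ast|_{V_{\Lambda,n}})^\ast$. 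Because the inclusion $\iota_{n,n+1}$ is the adjoint of the orthogonal projection $\pi_{n+1,n}$, the identity $S_{n+1}\iota_{n,n+1} = \iota_{n,n+1}S_n$ yields $\pi_{n+1,n}(S_{n+1})^\ast = (S_n)^\ast\pi_{n+1,n}$, and similarly for $N$. Hence the $(S_n)^\ast$ (resp.\ $(N_n)^\ast$) assemble into continuous maps $S_f^\times, N_f^\times : \Phi^\ast \to \Phi^\ast$ with $C_f^\times = S_f^\times + N_f^\times$ via Proposition \ref{prop: another description of extension of Cf}.

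For property (1), I would use semisimplicity of each $S_n$ together with $S_n$-invariance of $V_{\Lambda,n-1}$ (and the coincidence $S_n|_{V_{\Lambda,n-1}} = S_{n-1}$) to extend an $S_{n-1}$-eigenbasis of $V_{\Lambda,n-1}$ to an $S_n$-eigenbasis of $V_{\Lambda,n}$, thereby constructing $\{w_i\}_{i \ge 1} \subset \Phi$ with $S_f w_i = \overline{\gamma_i}w_i$ inductively. Let $u_i^{(n)} \in V_{\Lambda,n}$ be the vector dual to $w_i$ relative to the basis $\{w_j\}_{j \le r_n}$ when $i \le r_n$, and $u_i^{(n)} = 0$ otherwise; the identity $\langle w_j, u_i^{(n)}\rangle_H = \delta_{ij}$ immediately yields the projective-limit compatibility $\pi_{n+1,n}u_i^{(n+1)} = u_i^{(n)}$, giving $u_i \in \Phi^\ast$. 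The standard duality between eigenbases of $S_n$ and $(S_n)^\ast$ then gives $S_f^\times u_i = \gamma_i u_i$, and the two expansion formulas reduce at each level $n$ to the biorthogonal expansion in the finite-dimensional space $V_{\Lambda,n}$, whose convergence in the inductive (resp.\ projective) topology is automatic because $w \in \Phi$ lies in some $V_{\Lambda,N}$ (making the sum for $S_f w$ finite) and convergence in $\Phi^\ast$ is componentwise. Property (3) is immediate from nilpotency of each $N_n$, property (2) is the componentwise version of this, and property (4) is inherited from $S_n N_n = N_n S_n$ at each level.

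The main obstacle I anticipate is not algebraic but organizational: carefully threading the two distinct topologies (inductive on $\Phi$, projective on $\Phi^\ast$) and ensuring the global eigenbasis $\{w_i\}$ can be built coherently across all levels so that the $\{u_i\}$ fit into the projective limit. The single enabling fact that makes all this go through is the polynomial characterization of the Jordan--Chevalley pieces in $T_n$, which forces the restriction-compatibility without any extra hypothesis on the kernel beyond what is already assumed in Assumption \ref{asm: basic}.
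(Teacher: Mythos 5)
Your proposal is correct and follows essentially the same route as the paper's proof: level-wise Jordan--Chevalley decompositions on the invariant subspaces $V_{\Lambda,n}$, restriction-compatibility via uniqueness (you add the standard polynomial-in-$T_n$ justification for invariance, which the paper leaves implicit), assembly of $S_f,N_f$ on the inductive limit and of $S_f^\times,N_f^\times$ as the projective limit of the finite-dimensional adjoints, and construction of the biorthogonal families $\{w_i\}$, $\{u_i\}$ by extending eigenbases and taking the compatible dual vectors in each $V_{\Lambda,n}$. The remaining properties (2)--(4) and the expansion formulas are handled level by level exactly as in the paper.
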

\begin{proof}
Let $C_f^*|_{V_{\Lambda,n}} = S_{f,n} + N_{f,n}$ be the Jordan--Chevalley decomposition (see, for example, \cite[Proposition 4.2]{Borel1991}), namely, $S_{f,n}: V_{\Lambda,n} \to V_{\Lambda,n}$ is diagonalizable, $N_{f,n}: V_{\Lambda,n} \to V_{\Lambda,n}$ is nilpotent, and $S_{f,n}N_{f,n} = N_{f,n}S_{f,n}$.
Then, by the uniqueness of the Jordan--Chevalley decomposition, we have $S_{f,n+1}|_{V_{\Lambda,n}} = S_{f,n}$ and $N_{f,n+1}|_{V_{\Lambda,n}} = N_{f,n}$.
We define $S_f, N_f:\Phi \to \Phi$ by the linear map such that $S_fg := S_{f,n}g$ and $N_fg := N_{f,n}g$ for $g \in V_{p,n}$.
Since $S_{f,n}^*$'s and $N_{f,n}^*$'s constitute projective systems, they induce linear maps $S_f^\times := \lim S_{f,n}^*$ and $N_f^\times := \lim N_{f,n}^*$ on $\Phi^*$.
By definition, the linear maps satisfy \eqref{topologically nilpotent property of Nx}, \eqref{nilpotent property of N}, and \eqref{commutativity of S and N}.

Since each $S_{f,n}$ is diagonalizable and $S_{f,n+1}|_{V_{\Lambda,n}} = S_{f,n}$, there exist $\{\gamma_i\}_{i=1}^\infty \subset \mathbb{C}$ and $\{w_i\}_{i=1}^\infty \subset \Phi$ such that 
the set $W_n := \{w_i\}_{i=1}^{r_n}$ spans $V_{\Lambda,n}$ for any $n\ge0$ and
\begin{align}
    S_f w_i = \overline{\gamma_i} w_i \label{eigendecomposition of S in proof}
\end{align} 
holds.
Let $i \le r_n$ be an arbitrary positive integer.
Since the orthogonal complement of $W_n \setminus \{ w_i \}$ in $V_{\Lambda,n}$ is 1-dimensional, there uniquely exists $u_{i,n} \in V_{\Lambda,n}$ such that
\begin{align*}
    \langle  w_i, u_{i,n}\rangle_H &= 1.
\end{align*}
Since $\langle  w_i, S_{f,n}^* u_{i,n} - \gamma_i u_{i,n}\rangle_{V_{\Lambda,n}} = 0$ for all $j=1,\dots, r_n$, we see that $S_{f,n}^* u_{i,n} - \gamma_i u_{i,n}$ is orthogonal to all the elements of $W_n$, resulting in $S_{f,n}^* u_{i,n} = \gamma_i u_{i,n}$.
Thus, for any $i,j \le r_n$, we have $\langle \pi_{n+1,n}u_{i,n+1}, w_j\rangle_H = \langle 
 u_{i,n}, w_j\rangle_H = \delta_{i,j}$, thus $\pi_{n+1,n}u_{i,n+1} = u_{i,n}$.
 Therefore, $u_i := (u_{i,n} )_{n=1}^\infty$ determines an element of $\Phi^*$ and satisfies
 \begin{align*}
    S_f^\times u_i &= \gamma_i u_i,\\
    \langle w_i, u_j \rangle &= \delta_{i,j}.
 \end{align*}
for all $i,j \in \mathbb{Z}_{\ge 0}$.
By combining this with \eqref{eigendecomposition of S in proof}, we have the first statement of \eqref{eigendecompositions of S}.
The second statement of \eqref{eigendecompositions of S} is obvious.
\end{proof}
\begin{remark}
    Let $\lambda_{i,1},\dots, \lambda_{i,d}$ be the eigenvalues (with multiplicity) of the Jacobian matrix ${\rm d}f_{p_i}$ at the fixed point $p_i$. 
    Let $\lambda_i:= (\lambda_{i,1},\dots, \lambda_{i,d})$. 
    Assume that $\iota'|_{\sum_{i=1}^r\mathfrak{D}_{p_i}}: \sum_{i=1}^r\mathfrak{D}_{p_i} \to H'$ is injective.
    Then,  $\left\{\lambda_i^\alpha : i=1,\dots, r_n,~\alpha \in \mathbb{Z}_{\ge0}^d\right\}$ coincides with the family $\{\gamma_i\}_{i=1}^\infty$ of complex numbers introduced in Theorem \ref{thm: jordan decomposition of C_f}.
\end{remark}
The extended Koopman operator $C_f^\times$ are diagonalizable if the multiplications of the eigenvalues of the Jacobian of $f$ at $p_i$ are distinct for each fixed point $p_i$, and the Perron--Frobenius operator $C_f^*|_{\Phi}$ is also diagonalizable as in the following corollary:
\begin{corollary}\label{cor: eigendecomposition: fixed point}
    Let $\lambda_{i,1},\dots, \lambda_{i,d}$ be the eigenvalues (with multiplicity) of the Jacobian matrix ${\rm d}f_{p_i}$ at the fixed point $p_i$. 
    Let $\lambda_i:= (\lambda_{i,1},\dots, \lambda_{i,d})$.
    Assume that $\iota'|_{\sum_{i=1}^r\mathfrak{D}_{p_i}}: \sum_{i=1}^r\mathfrak{D}_{p_i} \to H'$ is injective and that $\lambda_i^\alpha \neq \lambda_i^\beta$ for $\alpha, \beta \in \mathbb{Z}_{\ge 0}^d$ with $\alpha \neq \beta$.
    Then, there exist families of vectors $\big\{w_{i,\alpha}\big\}_{i \in \{1,\dots,r\},  \alpha \in \mathbb{Z}_{\ge 0}^d} \subset \Phi$ and $\left\{u_{i,\alpha}\right\}_{i \in \{1,\dots,r\},  \alpha \in \mathbb{Z}_{\ge 0}^d} \subset \Phi^*$ such that
        \begin{align*}
            C_f^\times u_{i, \alpha} &= \lambda_i^\alpha u_{i,\alpha},\\
            C_f^* w_{i, \alpha} &= \overline{\lambda_i^\alpha} w_{i, \alpha},\\
            \langle u_{i,\alpha}, w_{j,\beta} \rangle &= \delta_{(i,\alpha), (j,\beta)}
        \end{align*}
        for all $i,j \in \{1,\dots, r\}$, $\alpha, \beta \in \mathbb{Z}_{\ge 0}^d$, and 
        \begin{align*}
            C_f^\times u &= \sum_{i \in \{1,\dots,r\}, \alpha \in \mathbb{Z}_{\ge 0}^d} \lambda_i^\alpha \langle w_{i,\alpha}, u \rangle u_{i,\alpha},\\
            C_f^*w &= \sum_{i \in \{1,\dots,r\}, \alpha \in \mathbb{Z}_{\ge 0}^d} \overline{\lambda_i^\alpha}  \langle w, u_{i,\alpha} \rangle w_{i,\alpha}
        \end{align*}
        for $u \in \Phi^*$ and $w \in \Phi$, where the two convergence on the right hand sides are in $\Phi^*$ and $\Phi$, respectively.
\end{corollary}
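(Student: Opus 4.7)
The plan is to deduce Corollary \ref{cor: eigendecomposition: fixed point} as a direct specialization of Theorem \ref{thm: jordan decomposition of C_f} by checking that, under the present hypotheses, the nilpotent parts $N_f$ and $N_f^\times$ vanish identically and the abstract indexing $\{\gamma_i, w_i, u_i\}$ in that theorem can be relabeled by pairs $(i,\alpha)\in\{1,\dots,r\}\times\mathbb{Z}_{\ge 0}^d$.

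First I would establish the direct sum decomposition $V_{\Lambda,n} = \bigoplus_{i=1}^r V_{p_i,n}$ for every $n$. By Lemma \ref{lem: linear independence of Delta p} the Dirac derivatives at distinct points are linearly independent, so the sum $\sum_i \mathfrak{D}_{p_i,n}$ is automatically direct. The hypothesis that $\iota'$ is injective on $\sum_i \mathfrak{D}_{p_i}$, together with the anti-linear isomorphism $\mathfrak{r}$, transports this directness to $V_{\Lambda,n}\subset H$ and in particular gives $\dim V_{p_i,n} = \binom{n+d}{d} = \#\{\alpha:|\alpha|\le n\}$. Because each $p_i$ is a fixed point, Theorem \ref{thm: basic theorem for PF operator, discrete}(1) implies that $C_f^*|_{V_{\Lambda,n}}$ preserves each summand $V_{p_i,n}$.

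Next I would identify the spectrum of each block. Theorem \ref{thm: basic theorem for PF operator, discrete}(2), applied to each fixed point $p_i$, identifies the eigenvalues of $C_f^*|_{V_{p_i,n}}$ with $\{\lambda_i^\alpha:|\alpha|\le n\}$. The distinctness assumption $\lambda_i^\alpha\neq\lambda_i^\beta$ for $\alpha\neq\beta$, combined with the dimension count above, forces each $C_f^*|_{V_{p_i,n}}$ to be diagonalizable with one-dimensional eigenspaces. Taking the direct sum across $i$ shows that $C_f^*|_{V_{\Lambda,n}}$ is diagonalizable for every $n$, so by uniqueness of the Jordan--Chevalley decomposition the nilpotent part $N_{f,n}$ vanishes. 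Consequently $N_f = 0$ on $\Phi$ and $N_f^\times = \lim N_{f,n}^* = 0$ on $\Phi^*$, giving $C_f^*|_\Phi = S_f$ and $C_f^\times = S_f^\times$.

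Finally I would apply Theorem \ref{thm: jordan decomposition of C_f}(1) after relabeling. Using the block decomposition from Step 1, the family $\{w_i\}$ can be chosen so that each $w_i$ lies in some single $V_{p_j,n}$ and diagonalizes $C_f^*|_{V_{p_j,n}}$; relabel it as $\{w_{i,\alpha}\}_{i,\alpha}$ with $w_{i,\alpha}\in V_{p_i,|\alpha|}$ and $C_f^* w_{i,\alpha} = \overline{\lambda_i^\alpha}w_{i,\alpha}$. The biorthogonal family $\{u_{i,\alpha}\}\subset\Phi^*$ is then obtained exactly as in the proof of Theorem \ref{thm: jordan decomposition of C_f} by solving $\langle w_{j,\beta}, u_{i,\alpha,n}\rangle_H = \delta_{(i,\alpha),(j,\beta)}$ inside each $V_{\Lambda,n}$ and checking that $\pi_{n+1,n}u_{i,\alpha,n+1} = u_{i,\alpha,n}$; this automatically yields $C_f^\times u_{i,\alpha} = \lambda_i^\alpha u_{i,\alpha}$. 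The two displayed expansions of $C_f^\times$ and $C_f^*$ in the corollary are then the expansions of Theorem \ref{thm: jordan decomposition of C_f}(1) rewritten in the $(i,\alpha)$ labeling. The main obstacle is the bookkeeping when eigenvalues coincide across different fixed points, that is when $\lambda_i^\alpha = \lambda_j^\beta$ for $i\neq j$: the duals $u_{i,\alpha}$ and $u_{j,\beta}$ live in a higher-dimensional generalized eigenspace of $C_f^\times$ and must be chosen consistently to preserve both the biorthogonality relation and the projective compatibility. This is handled precisely by the direct-sum decomposition from the first step, which keeps $V_{p_i,n}$ and $V_{p_j,n}$ linearly independent under eigenvalue collisions and hence allows a canonical choice of dual basis inside each $V_{\Lambda,n}$ that is automatically compatible under $\pi_{n+1,n}$.
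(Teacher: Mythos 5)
Your proposal is correct and follows essentially the same route as the paper, whose proof of this corollary is simply to invoke Theorem \ref{thm: jordan decomposition of C_f}; you supply the details the paper leaves implicit, namely that the injectivity of $\iota'$ and the distinctness of the $\lambda_i^\alpha$ within each block force every $C_f^*|_{V_{\Lambda,n}}$ to be diagonalizable (so $N_f=N_f^\times=0$), and that the block decomposition $V_{\Lambda,n}=\bigoplus_i V_{p_i,n}$ permits the relabeling of the eigen- and dual families by $(i,\alpha)$. No gaps.
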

\begin{proof}
It follows from Theorem \ref{thm: jordan decomposition of C_f}.
\end{proof}

According to Corollary \ref{cor: eigendecomposition: fixed point}, the extended Koopman operator $C_f^\times$ has eigenvectors under fairly mild conditions.
However, it is important to emphasize that these eigenvectors are not the eigenfunctions of the Koopman operator $C_f$ itself.
Nevertheless, as shown in the following proposition, they can be considered as approximate eigenvectors.
\begin{proposition}\label{prop: eigenfunction in weak sense}
    Assume that $C_f$ satisfies Assumption \ref{asm: domain of C_f}.
    Let $(g_n)_n \in \Phi^*$ be an eigenvector of the eigenvalue $\lambda$ of $C_f^\times$.
    Then, $C_fg_n - \lambda g_n \in V_{\Lambda,n}^\perp$.
\end{proposition}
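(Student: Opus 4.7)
The plan is to reduce the claim to the identity $(C_f^*|_{V_{\Lambda,n}})^* = \pi_n C_f \iota_n$ supplied by Theorem \ref{thm: description of Koopman operator as a limit}, using the explicit description of $C_f^\times$ as a projective limit.

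First I would unwind what the hypothesis $C_f^\times (g_n)_n = \lambda (g_n)_n$ means componentwise. By Proposition \ref{prop: another description of extension of Cf}, we have the equality $C_f^\times = \lim (C_f^*|_{V_{\Lambda,n}})^*$, where the right-hand side acts on $\Phi^* = \lim_{\longleftarrow} V_{\Lambda,n}$ coordinatewise. Therefore the eigenvector equation translates, for every $n$, into
\[
(C_f^*|_{V_{\Lambda,n}})^* g_n = \lambda g_n
\]
inside $V_{\Lambda,n}$.

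Next, I would invoke Assumption \ref{asm: domain of C_f} together with Theorem \ref{thm: description of Koopman operator as a limit} to rewrite the adjoint in $V_{\Lambda,n}$ as
\[
(C_f^*|_{V_{\Lambda,n}})^* = \pi_n\, C_f\, \iota_n,
\]
where $\iota_n : V_{\Lambda,n} \hookrightarrow H$ is the inclusion and $\pi_n : H \to V_{\Lambda,n}$ the orthogonal projection. Since $g_n \in V_{\Lambda,n}$, combining the two displays gives
\[
\pi_n\, C_f\, g_n = \lambda\, g_n.
\]
Because $g_n \in V_{\Lambda,n}$ we have $\lambda g_n = \pi_n(\lambda g_n)$, so
\[
\pi_n\bigl(C_f g_n - \lambda g_n\bigr) = 0,
\]
which is exactly the statement that $C_f g_n - \lambda g_n \in V_{\Lambda,n}^{\perp}$.

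There is no real obstacle here: the proof is a direct assembly of Proposition \ref{prop: another description of extension of Cf} and Theorem \ref{thm: description of Koopman operator as a limit}. The only subtlety worth mentioning is that the expression $C_f g_n$ makes sense because Assumption \ref{asm: domain of C_f} ensures $V_{\Lambda,n} \subset V_{p_1} + \cdots + V_{p_r} \subset D(C_f)$, so applying $C_f$ directly to $g_n \in V_{\Lambda,n}$ is legitimate and the factorization $(C_f^*|_{V_{\Lambda,n}})^* = \pi_n C_f \iota_n$ is genuinely available rather than only holding in an extended or closure sense.
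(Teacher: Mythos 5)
Your proof is correct and is essentially the paper's own argument: the paper likewise derives $\pi_n C_f g_n = \lambda g_n$ from the identity $(C_f^*|_{V_{\Lambda,n}})^* = \pi_n C_f \iota_n$ of Theorem \ref{thm: description of Koopman operator as a limit} (together with the componentwise action of $C_f^\times$ from Proposition \ref{prop: another description of extension of Cf}), and then concludes orthogonality to $V_{\Lambda,n}$, which you phrase via $\pi_n(C_fg_n - \lambda g_n)=0$ and the paper phrases via pairing with an arbitrary $h \in V_{\Lambda,n}$ — the same step.
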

\begin{proof}
    Let $h \in V_{\Lambda,n}$.
    By Proposition \ref{prop: extension theorem}, we have $\pi_n C_f g_n = \lambda g_n$.
    Thus, we have
    \begin{align*}
        \langle C_fg_n - \lambda g_n, h \rangle_H = \langle \pi_nC_fg_n - \lambda g_n, h \rangle_H = 0.
    \end{align*}
    Therefore, $C_fg_n - \lambda g_n \in V_{\Lambda,n}^\perp$.
\end{proof}

%

\section{Corresponding results to continuous dynamical systems} \label{sec: theory for continuous dynamical systems}
Here, we explain the corresponding theory for continuous dynamical systems and derive the results on the generator of Koopman operators.
As the Koopman operators for the flow maps of a continuous dynamical system constitute a semigroup, we can consider the derivative of the Koopman operators at time $0$.
We define the generator of the Koopman operator as the derivative of the Koopman operators at time $0$.
The behavior of the generator of the Koopman operators is generally tamer and more controllable than that of the Koopman operators.
Moreover, we may reconstruct the Koopman operators as the image of the generator under the exponential map.
Therefore, the elucidating the mathematical properties of the generators is a significant issue as well as those of the Koopman operators in the context of continuous dynamical system.

We use the notation introduced in Section \ref{sec: theory}.
Let $F=(F_1,\dots, F_d): {\Omega} \to \mathbb{R}^d$ be a map of class $C^\infty$.
We consider the ordinary differential equation:
\begin{align}
    \begin{split}
    \frac{{\rm d}z}{{\rm d}t} &= F(z).
    \end{split}\label{ODE}
\end{align}
For $x \in \Omega$ and an element $t$ of an open interval containing $0$, we denote $z(t)$ with $z(0) = x$ by $\phi^t(x)$.
We will assume the following assumption for some $p \in \Omega$:
\begin{assumption} \label{asm: existence of an equilibrium point}
    $p \in \Omega$ is an equilibrium point of $F$, namely $F(p) = 0$.
\end{assumption}

\subsection{The generator of Koopman operators on the canonical invariant subspaces}\label{subsec: invariant subspace of generators}

We define the continuous linear map $\mathcal{A}_F: \mathfrak{E}({\Omega}) \to \mathfrak{E}({\Omega})$ by
\begin{align}\label{generator}
    \mathcal{A}_F(h) := \lim_{t \to 0} \frac{h\circ \phi^t - h}{t} = \sum_{i=1}^d F_i\frac{\partial h}{\partial x_i} = F^\top \nabla h.
\end{align}
Then, we have the corresponding statements to Proposition \ref{prop: explicit description of representation matrix of f_*}:

\begin{proposition}\label{cor: explicit description of representation matrix of generator}
   \begin{enumerate}[{\rm (1)}]
        \item  \label{preserving property, continuous}  For each $n \ge 0$, $\mathcal{A}_F'(\mathfrak{D}_{p,n}) \subset \mathfrak{D}_{p,n}$.
        \item   \label{graded map, continuous} 
        Let ${\rm gr}^n_{\mathcal{A}_F'}: \mathfrak{D}_{p,n}/\mathfrak{D}_{p,n-1} \to \mathfrak{D}_{p,n}/\mathfrak{D}_{p,n-1}$ be the linear map induced by $\mathcal{A}_F'$ via \eqref{preserving property, continuous}.
        Then, we have
    \begin{align}
        {\rm gr}^n_{\mathcal{A}_F'} = \rho_n^{-1} \circ T_n({\rm d}F_p) \circ \rho_n,
    \end{align}
    where $T_n({\rm d}F_p)$ is the linear map on $\mathbb{C}[X_1,\dots, X_d]_n$ defined by
    \[ T_n({\rm d}F_p)(Q(X_1,\dots, X_d)) := (X_1,\dots,X_d)\cdot {\rm d}F_p \cdot \nabla Q\left(X_1,\dots,X_d\right)\]
    \item \label{representation matrix, continuous}
    Let $\mathbf{A}_{F,n}^\star \in \mathbb{C}^{\binom{n+d}{d} \times \binom{n+d}{d}}$ be the representation matrix of $\mathcal{A}_F'|_{\mathfrak{D}_{p,n}}: \mathfrak{D}_{p,n} \to \mathfrak{D}_{p,n}$ with respect to the basis $\{\delta_p^{(\alpha)} : |\alpha| \le n\}$.
    Then, $\mathbf{A}_{F,n}^\star$ is in the form of 
    \begin{align}\label{representation matrix of A_F}
        \mathbf{A}_{F,n}^\star=
        \begin{pmatrix}
            1 & * & * & *\\
            0 & \mathbf{K}_{p,1} & * & *\\
            \vdots & \ddots& \ddots & \vdots\\
            0 & \cdots & 0& \mathbf{K}_{p,n} 
        \end{pmatrix},
    \end{align}
    where $\mathbf{K}_{p,i}$ is the representation matrix of $T_i({\rm d}F_p)$ for the basis $\{X_1^{\alpha_1}\cdots X_d^{\alpha_d} : |\alpha| = i\} \subset \mathbb{C}[X_1,\dots,X_d]$.
    \item \label{eigenvalues of generator} Let $\mu_1,\dots, \mu_d$ be the eigenvalues (with multiplicity) of the Jacobian matrix ${\rm d}F_p$ and let $\mu := (\mu_1,\dots, \mu_d)$.
    Then, the set of eigenvalues of $\mathcal{A}_F^*|_{\mathfrak{D}_{p,n}}$ is
    \[\left\{ \alpha^\top \mu : |\alpha| \le n\right\}.\]
    \end{enumerate}
\end{proposition}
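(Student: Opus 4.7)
The plan is to mirror the proof of Proposition \ref{prop: explicit description of representation matrix of f_*}, with the equilibrium condition $F(p)=0$ from Assumption \ref{asm: existence of an equilibrium point} playing the role formerly played by $f(p)=p$. The core computation is a single application of the Leibniz rule to $\mathcal{A}_F(h) = \sum_i F_i\,\partial_{x_i}h$; everything else then follows formally. (One could alternatively deduce the result by applying Proposition \ref{prop: explicit description of representation matrix of f_*} to the flow map $\phi^t$ and differentiating at $t=0$, but the direct route is cleaner and avoids extra smoothness bookkeeping.)

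For parts (1) and (2), I would begin with the computation
\[
\langle \mathcal{A}_F' \delta_p^{(\alpha)} \mid h \rangle
= \partial_x^\alpha\Bigl(\sum_i F_i\,\partial_{x_i}h\Bigr)(p)
= \sum_i \sum_{\beta \le \alpha} \binom{\alpha}{\beta}\,\partial_x^\beta F_i(p)\cdot \partial_x^{\alpha-\beta+e_i}h(p)
\]
for arbitrary $h \in \mathfrak{E}(\Omega)$. The only term of order $|\alpha|+1$ is the $\beta=0$ contribution $\sum_i F_i(p)\,\partial_x^{\alpha+e_i}h(p)$, which vanishes by the equilibrium condition; every other term satisfies $|\alpha-\beta+e_i|\le |\alpha|$, proving $\mathcal{A}_F'\delta_p^{(\alpha)} \in \mathfrak{D}_{p,|\alpha|}$, hence (1). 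For (2), I would extract the top-degree contribution, coming from $\beta = e_j$ with $j = 1, \dots, d$, to get
\[
\mathcal{A}_F' \delta_p^{(\alpha)} \equiv \sum_{i,j} \alpha_j\,(\mathrm{d}F_p)_{ij}\,\delta_p^{(\alpha+e_i-e_j)} \pmod{\mathfrak{D}_{p,|\alpha|-1}},
\]
and then apply $\rho_n$ with $n=|\alpha|$ to obtain $\sum_{i,j}\alpha_j(\mathrm{d}F_p)_{ij}\,X^{\alpha+e_i-e_j}$, which equals $T_n(\mathrm{d}F_p)(X^\alpha) = (X_1,\dots,X_d)\cdot \mathrm{d}F_p\cdot \nabla X^\alpha$ by a direct expansion.

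Parts (3) and (4) are then formal consequences. Ordering the basis $\{\delta_p^{(\alpha)}\}_{|\alpha|\le n}$ by ascending $|\alpha|$, part (1) gives the block upper-triangular shape of $\mathbf{A}_{F,n}^\star$ while part (2) identifies the $i$-th diagonal block with $\mathbf{K}_{p,i}$. For (4), the spectrum of a block upper-triangular matrix is the union of the spectra of its diagonal blocks, so I need only compute the eigenvalues of each $T_i(\mathrm{d}F_p)$. The key observation is that $T_i(\mathrm{d}F_p)$ is a derivation on $\mathbb{C}[X_1,\dots,X_d]$ (by the Leibniz rule for $\nabla$): after a $\mathbb{C}$-linear change of coordinates $Y = PX$ that triangularizes $\mathrm{d}F_p$ with eigenvalues $\mu_1,\dots,\mu_d$ on the diagonal, $T_1(Y_j) \equiv \mu_j Y_j$ modulo lower-indexed $Y$'s, and propagating this through the derivation property yields $T_i(Y^\alpha) \equiv (\alpha^\top\mu)\,Y^\alpha$ modulo monomials preceding $Y^\alpha$ in a suitable lexicographic order. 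Thus $\mathbf{K}_{p,i}$ is upper-triangularizable with diagonal $\{\alpha^\top\mu : |\alpha|=i\}$, and taking the union over $0 \le i \le n$ gives $\{\alpha^\top \mu : |\alpha| \le n\}$. The only real work is multi-index bookkeeping in the Leibniz step and the order-of-monomials choice in the triangularization of $T_i$; the non-diagonalizable case of $\mathrm{d}F_p$ is handled uniformly by triangularization rather than diagonalization, so it poses no additional difficulty.
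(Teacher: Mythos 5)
Your proposal is correct and takes essentially the same route as the paper: part (1) is proved there by exactly your Leibniz-plus-$F(p)=0$ computation, and parts (2)--(4) are handled by saying they go ``in the same way as'' the discrete analogue (Proposition \ref{prop: explicit description of representation matrix of f_*}), i.e.\ the formal block-triangular argument you spell out -- your write-up is in fact more self-contained, since the paper cites an external lemma for the graded-map identification and leaves the spectrum of $T_i({\rm d}F_p)$ implicit, whereas you supply the derivation/triangularization argument. One incidental remark: your computation correctly yields $0$ (not the ``$1$'' printed in \eqref{representation matrix of A_F}) as the top-left block, which is what part (4) requires (eigenvalue $\alpha^\top\mu=0$ for $\alpha=0$), so the printed ``$1$'' is evidently a typo carried over from the discrete case.
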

\begin{proof}
    We only give the proof of \eqref{preserving property, continuous} as the other statements are proved in the same way as in the proof of Proposition \ref{prop: explicit description of representation matrix of f_*}.
    It suffices to show that $\mathcal{A}_F'\delta_p^{(\alpha)}\in \mathfrak{D}_{p,n}$.
    Let $h \in \mathfrak{E}(\Omega)$.
    Then, by direct calculation, we have 
    \begin{align*}
        \langle \mathcal{A}_F' \delta_p^{(\alpha)} \mid h \rangle &= \langle \delta_p^{(\alpha)} \mid F\cdot \nabla h\rangle = F(p)^\top \nabla(\partial_x^\alpha h)(p) + \langle D \mid h \rangle
    \end{align*}
    for some $D \in \mathfrak{D}_{p,n}$.
    Since $F(p)=0$, we have $D=\mathcal{A}_F'\delta_p^{(\alpha)} \in  \mathfrak{D}_{p,n}$.
\end{proof}

\subsection{The generators of the Koopman operators on reproducing kernel Hilbert spaces}\label{subsec: generator on RKHS}
Here, we use the same notation as those in Sections \ref{sec: datadriven estimation of PF operator}, \ref{subsec: invariant subspace of generators}, and \ref{subsec: generator on RKHS}.
Let $H \subset \mathfrak{E}(\Omega)$ be a Hilbert space and assume that Assumption \ref{asm: continuity of inclusion} holds.
We denote the corresponding positive definite kernel by $k \in \mathfrak{E}(\Omega \times \Omega)$.
We define the linear operator $A_F: H \to H$ by
\begin{align}\label{generator on H}
    A_F(h) := \sum_{i=1}^d F_i\frac{\partial h}{\partial x_i} = F^\top \nabla h.
\end{align}

First, we state the corresponding results to Proposition \ref{prop: domain of PF operators} and  Theorem \ref{thm: basic theorem for PF operator, discrete}.
We omit the proofs of these statements as they are the same as the corresponding ones.
\begin{proposition}\label{prop: domain of PF operators, conti}
    Assume that $A_F$ is densely defined.
    Then, for any $\ell \in \mathfrak{E}(\Omega)$, we have $(\mathfrak{r} \circ \iota')(\ell) \in D(A_F^*)$.
    In particular, $V_p \subset D(A_F^*)$.
\end{proposition}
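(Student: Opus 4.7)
The plan is to mimic the proof of Proposition \ref{prop: domain of PF operators} almost verbatim, substituting the continuous linear map $\mathcal{A}_F: \mathfrak{E}(\Omega) \to \mathfrak{E}(\Omega)$ in place of the pull-back $f^*$. The generator $A_F$ on $H$ is literally the restriction to $H$ of $\mathcal{A}_F$, so the two squares
\begin{equation*}
A_F = \mathfrak{r} \circ \iota' \circ \mathcal{A}_F'|_{\iota(\cdot)} \text{ in the appropriate sense, equivalently } \iota \circ A_F = \mathcal{A}_F \circ \iota \text{ on } D(A_F).
\end{equation*}

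First I would verify that $\mathcal{A}_F: \mathfrak{E}(\Omega) \to \mathfrak{E}(\Omega)$ is a continuous linear operator. This is standard: $\mathcal{A}_F h = F^\top \nabla h$ is a first-order linear differential operator with $C^\infty$ coefficients, and such operators are continuous in the uniform-on-compacta-of-all-derivatives topology of $\mathfrak{E}(\Omega)$. Hence its dual $\mathcal{A}_F': \mathfrak{E}(\Omega)' \to \mathfrak{E}(\Omega)'$ is well defined and, for any $\ell \in \mathfrak{E}(\Omega)'$, we have $\mathcal{A}_F'(\ell) = \ell \circ \mathcal{A}_F \in \mathfrak{E}(\Omega)'$.

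Next, for $h \in D(A_F)$, I compute
\begin{align*}
\langle A_F h, (\mathfrak{r} \circ \iota')(\ell) \rangle_H
&= \langle \iota'(\ell) \mid A_F h \rangle
= \langle \ell \mid \iota(A_F h) \rangle
= \langle \ell \mid \mathcal{A}_F(\iota h) \rangle
= \langle (\ell \circ \mathcal{A}_F \circ \iota) \mid h \rangle.
\end{align*}
Since $\iota: H \to \mathfrak{E}(\Omega)$ is continuous (Assumption \ref{asm: continuity of inclusion}), $\mathcal{A}_F$ is continuous on $\mathfrak{E}(\Omega)$, and $\ell \in \mathfrak{E}(\Omega)'$, the composition $\ell \circ \mathcal{A}_F \circ \iota$ is a continuous linear functional on $H$. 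Therefore the linear functional $h \mapsto \langle A_F h, (\mathfrak{r} \circ \iota')(\ell) \rangle_H$, a priori defined only on the dense subspace $D(A_F)$, is continuous there and hence extends continuously to $H$. By the definition of the adjoint of a densely defined operator, this is exactly the statement that $(\mathfrak{r} \circ \iota')(\ell) \in D(A_F^*)$.

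Finally, the inclusion $V_p \subset D(A_F^*)$ is immediate: by definition $V_p = (\mathfrak{r} \circ \iota')(\mathfrak{D}_p)$ and $\mathfrak{D}_p \subset \mathfrak{E}(\Omega)'$, so every element of $V_p$ is of the form $(\mathfrak{r} \circ \iota')(\ell)$ for some $\ell \in \mathfrak{E}(\Omega)'$. There is no real obstacle here; the only point one must not overlook is the continuity of $\mathcal{A}_F$ on $\mathfrak{E}(\Omega)$, which underlies the whole argument and which uses the fact that $F$ is of class $C^\infty$.
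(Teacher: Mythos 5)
Your proof is correct and follows essentially the same route as the paper: the paper omits this proof precisely because it is the verbatim analogue of Proposition \ref{prop: domain of PF operators}, replacing the pull-back $f^*$ by the continuous operator $\mathcal{A}_F$ on $\mathfrak{E}(\Omega)$ and noting that $h \mapsto \langle A_F h, \mathfrak{r}\iota'(\ell)\rangle_H = (\ell \circ \mathcal{A}_F \circ \iota)(h)$ is continuous on $D(A_F)$. Your added remark on the continuity of $\mathcal{A}_F$ (a first-order operator with $C^\infty$ coefficients) is exactly the point the paper takes for granted in \eqref{generator}.
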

\begin{theorem}\label{thm: basic theorem for PF operator, continuous}
Assume that Assumption \ref{asm: existence of an equilibrium point} for some $p \in \Omega$ holds and $A_F$ is densely defined.
Then, we have the following statement:
\begin{enumerate}[{\rm (1)}]
    \item  $A_F^*(V_{p,n}) \subset V_{p,n}$ for all $n \ge 0$. \label{preserving property, generator}
    \item  Assume that $(\mathfrak{r} \circ \iota')|_{\mathfrak{D}_p}: \mathfrak{D}_p \to H$ is injective. \label{eigenvalues of generators}
    Then, the set of the eigenvalues of $A_F^*$ is 
    \[\left\{ \alpha^\top \mu : |\alpha| \le n\right\},\]
    where $\mu_1,\dots, \mu_d$ are the eigenvalues (with multiplicity) of the Jacobian matrix ${\rm d}F_p$ and let $\mu := (\mu,\dots, \mu_d)$.
\end{enumerate}
\end{theorem}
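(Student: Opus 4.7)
The plan is to mirror the proof of Theorem 2.10 in the continuous setting, transporting both claims onto the side of derivatives of Dirac deltas via the anti-linear isomorphism $\mathfrak{r}\circ\iota'$ and then invoking Proposition 5.4. The one ingredient not recorded explicitly in the excerpt is the intertwining identity
\[
A_F^* \circ (\mathfrak{r}\circ\iota') \;=\; (\mathfrak{r}\circ\iota')\circ \mathcal{A}_F',
\]
which is the continuous analogue of the second half of Proposition 2.9 and which I would derive first. For $h\in D(A_F)$ and $\ell\in\mathfrak{E}(\Omega)'$, the definition $A_F = \mathcal{A}_F|_H$ gives $\iota(A_F h) = \mathcal{A}_F(\iota(h))$, and therefore
\[
\langle A_F h,\, \mathfrak{r}\iota'(\ell)\rangle_H
= \langle \iota'(\ell)\mid A_F h\rangle
= \ell\bigl(\mathcal{A}_F(\iota(h))\bigr)
= \langle \iota'(\mathcal{A}_F'\ell)\mid h\rangle
= \langle h,\, \mathfrak{r}\iota'(\mathcal{A}_F'\ell)\rangle_H.
\]
The right-hand side is continuous in $h$, which together with the density of $D(A_F)$ forces both $\mathfrak{r}\iota'(\ell)\in D(A_F^*)$ and $A_F^*(\mathfrak{r}\iota'(\ell)) = \mathfrak{r}\iota'(\mathcal{A}_F'\ell)$.

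With the intertwining in hand, part (1) follows immediately. Proposition 5.4(1) asserts $\mathcal{A}_F'(\mathfrak{D}_{p,n})\subset\mathfrak{D}_{p,n}$, so
\[
A_F^*(V_{p,n}) = (\mathfrak{r}\circ\iota')\bigl(\mathcal{A}_F'(\mathfrak{D}_{p,n})\bigr) \subset (\mathfrak{r}\circ\iota')(\mathfrak{D}_{p,n}) = V_{p,n}.
\]
For part (2), the injectivity hypothesis on $(\mathfrak{r}\circ\iota')|_{\mathfrak{D}_p}$ makes its restriction $(\mathfrak{r}\circ\iota')|_{\mathfrak{D}_{p,n}}\colon\mathfrak{D}_{p,n}\to V_{p,n}$ a linear isomorphism of finite-dimensional spaces intertwining $\mathcal{A}_F'|_{\mathfrak{D}_{p,n}}$ with $A_F^*|_{V_{p,n}}$. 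Consequently the two operators have identical spectra, and Proposition 5.4(4) identifies this common set as $\{\alpha^\top\mu : |\alpha|\le n\}$, which is the claimed description.

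The only point genuinely requiring care is the first paragraph: because $A_F$ is in general unbounded and only densely defined, the intertwining identity has to be set up through a continuous-functional argument rather than a direct adjoint manipulation. The crucial structural input making everything fit is the equilibrium condition $F(p)=0$, which is exactly what ensures that $\mathcal{A}_F'$ preserves each jet order rather than raising it (it is also what powers Proposition 5.4(1)). Beyond this bookkeeping, no new obstacle specific to the continuous case arises; the algebraic heart of the statement has already been done in Proposition 5.4.
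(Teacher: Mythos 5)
Your proposal is correct and follows essentially the same route as the paper, which omits the proof precisely because it is the discrete argument transplanted verbatim: establish the intertwining $A_F^*\circ(\mathfrak{r}\circ\iota')=(\mathfrak{r}\circ\iota')\circ\mathcal{A}_F'$ (the continuous analogue of the Perron--Frobenius/push-forward compatibility) and then invoke the jet-level result on $\mathcal{A}_F'|_{\mathfrak{D}_{p,n}}$. The only cosmetic caveat is that $\mathfrak{r}\circ\iota'$ is anti-linear, so the intertwining transfers eigenvalues up to complex conjugation; since ${\rm d}F_p$ is real, the set $\{\alpha^\top\mu:|\alpha|\le n\}$ is conjugation-invariant and the stated conclusion is unaffected.
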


We note the relation between $A_F$ and $C_{\phi^t}$ as in the following proposition:
\begin{proposition}\label{prop: expC is equal to A}
    Let $t \ge 0$.
    Assume that $\phi^t(x)$ exists for any $x$ (meaning the ODE \eqref{ODE} can be solved until the time $t$ for any initial point).
    Assume that $A_F$ is densely defined.
    Then, for any integer $n \ge 0$, we have
    \begin{align}
        C_{\phi^t}^*|_{V_{p,n}} = {\rm exp}(tA_F^*|_{V_{p,n}}). \label{relation between A and C}
    \end{align}
\end{proposition}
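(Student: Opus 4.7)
The plan is to transfer the identity to the finite-dimensional dual space $\mathfrak{D}_{p,n}$, where it reduces to the standard fact that a smooth one-parameter semigroup of matrices equals the exponential of its generator. Since $F(p)=0$ by Assumption~\ref{asm: existence of an equilibrium point}, the flow satisfies $\phi^t(p)=p$ for all $t\ge 0$, so $p$ is a fixed point of every $\phi^t$. Consequently Theorem~\ref{thm: basic theorem for PF operator, discrete}\,\eqref{preserving property, PF operator} applied to $f=\phi^t$ gives $C_{\phi^t}^*(V_{p,n})\subset V_{p,n}$, and Theorem~\ref{thm: basic theorem for PF operator, continuous}\,\eqref{preserving property, generator} gives $A_F^*(V_{p,n})\subset V_{p,n}$; hence both sides of \eqref{relation between A and C} are genuine endomorphisms of the finite-dimensional space $V_{p,n}$.

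Next I would exploit the anti-linear surjection $T:=\mathfrak{r}\circ\iota'\colon \mathfrak{D}_{p,n}\twoheadrightarrow V_{p,n}$, which is surjective by Proposition~\ref{prop: explicit formula of iota D delta p} together with Corollary~\ref{cor: alternative definition of Vpn}. Proposition~\ref{prop: domain of PF operators} (with $f=\phi^t$) yields $C_{\phi^t}^*\circ T = T\circ (\phi^t)_*$, and the same computation using $\langle A_Fh,\mathfrak{r}\iota'(\ell)\rangle_H=\overline{\langle \ell\mid \mathcal{A}_F h\rangle}$ together with Proposition~\ref{prop: domain of PF operators, conti} yields the continuous-time analogue $A_F^*\circ T=T\circ \mathcal{A}_F'$ on $\mathfrak{D}_{p,n}$. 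The core reduction is then the finite-dimensional identity
\begin{equation*}
(\phi^t)_*\big|_{\mathfrak{D}_{p,n}}=\exp\!\bigl(t\,\mathcal{A}_F'|_{\mathfrak{D}_{p,n}}\bigr),
\end{equation*}
which I would prove as follows. The left-hand side is a one-parameter semigroup in $\operatorname{End}(\mathfrak{D}_{p,n})$ by $\phi^{t+s}=\phi^t\circ\phi^s$, and it is smooth in $t$ because for any basis vector $\delta_p^{(\alpha)}$ and any $h\in\mathfrak{E}(\Omega)$ the scalar $\langle (\phi^t)_*\delta_p^{(\alpha)}\mid h\rangle=\partial_x^\alpha(h\circ\phi^t)(p)$ is smooth in $t$ by smoothness of flows. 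Testing against arbitrary $h\in\mathfrak{E}(\Omega)$ and using \eqref{generator} identifies the generator at $t=0$:
\begin{equation*}
\tfrac{d}{dt}\langle (\phi^t)_*D\mid h\rangle\big|_{t=0}=\langle D\mid F^\top\nabla h\rangle=\langle \mathcal{A}_F'D\mid h\rangle,
\end{equation*}
so $\tfrac{d}{dt}(\phi^t)_*D|_{t=0}=\mathcal{A}_F'D$. In a finite-dimensional space this uniquely determines the semigroup as the matrix exponential.

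Finally I would lift the identity along $T$: for any $v\in V_{p,n}$, choose $D\in\mathfrak{D}_{p,n}$ with $v=TD$. Since $T$ is anti-linear but $t$ is real, the coefficients $t^k/k!$ are unaffected by the conjugation, so iterating $A_F^*\circ T=T\circ \mathcal{A}_F'$ and summing the power series gives
\begin{equation*}
\exp(tA_F^*)\,v=T\exp(t\mathcal{A}_F')\,D=T(\phi^t)_*D=C_{\phi^t}^*\,v,
\end{equation*}
which is precisely \eqref{relation between A and C}. The only mildly delicate point is verifying that the semigroup $t\mapsto(\phi^t)_*|_{\mathfrak{D}_{p,n}}$ is $C^1$ in $t$ at $0$ with derivative $\mathcal{A}_F'$; once one tests against a generating family of $h\in\mathfrak{E}(\Omega)$ this is immediate, so the real content of the proof is organisational rather than technical.
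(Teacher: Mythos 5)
Your proof is correct, and it reaches the conclusion by a slightly different route than the paper. The paper argues directly on $V_{p,n}$: after noting that $p$ is fixed by every $\phi^s$, so that $C_{\phi^t}^*$ and $A_F^*$ both restrict to the finite-dimensional space $V_{p,n}$, it simply observes that $t\mapsto C_{\phi^t}^*|_{V_{p,n}}$ solves the matrix ODE $\frac{\rm d}{{\rm d}t}C_{\phi^t}^*|_{V_{p,n}}=A_F^*|_{V_{p,n}}C_{\phi^t}^*|_{V_{p,n}}$ and is therefore the exponential. You instead solve the analogous problem on the jet-dual side, proving $(\phi^t)_*|_{\mathfrak{D}_{p,n}}=\exp\bigl(t\,\mathcal{A}_F'|_{\mathfrak{D}_{p,n}}\bigr)$ via the semigroup property, differentiability obtained by testing against $h\in\mathfrak{E}(\Omega)$, and identification of the generator, and then transport the identity to $V_{p,n}$ through the (anti-linear, surjective) map $\mathfrak{r}\circ\iota'$ using the intertwinings $C_{\phi^t}^*\circ(\mathfrak{r}\circ\iota')=(\mathfrak{r}\circ\iota')\circ(\phi^t)_*$ (Proposition \ref{prop: domain of PF operators}) and $A_F^*\circ(\mathfrak{r}\circ\iota')=(\mathfrak{r}\circ\iota')\circ\mathcal{A}_F'$; your handling of the anti-linearity (real coefficients $t^k/k!$) is the right observation, and surjectivity rather than injectivity of the transport map is indeed all that is needed. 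What your route buys is an explicit justification of the differentiability/generator step, since $\partial_x^\alpha(h\circ\phi^t)(p)$ is visibly smooth in $t$, whereas the paper asserts the ODE on $V_{p,n}$ without detail; the paper's route is shorter and avoids introducing $(\phi^t)_*$ at all. Two minor points: like the paper, you implicitly assume $C_{\phi^t}$ is densely defined (this is needed both to speak of $C_{\phi^t}^*$ and to invoke Proposition \ref{prop: domain of PF operators} and Theorem \ref{thm: basic theorem for PF operator, discrete} for $f=\phi^t$), so this is not a gap relative to the paper; and with the paper's conventions the identity you quote should read $\langle A_Fh,\mathfrak{r}\iota'(\ell)\rangle_H=\langle\ell\mid\mathcal{A}_Fh\rangle$ without the conjugation, a slip that does not affect the operator identity you derive from it.
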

\begin{proof}
    Since $p$ is the fixed point of $\phi^s$ for all $0 \le s \le t$, 
    $C_{\phi^t}^*$ and $A_F^*$ induce the linear map on the finite dimensional subspace $V_{p,n}$.
    The statement follows from the following ordinary differential equation: 
    \[ \frac{{\rm d}}{{\rm d}t} C_{\phi^t}^*|_{V_{p,n}} = A_F^*|_{V_{p,n}} C_{\phi^t}^*|_{V_{p,n}} \]
\end{proof}
\subsection{Estimation of the generators}\label{subsec: Data driven estimation of the generators}
We use the same notation and assumptions as those in Section \ref{sec: datadriven estimation of PF operator} and the previous sections.
We will further assume the following condition:
\begin{assumption}[Domain condition for $A_F$]\label{asm: domain of A_F}
    $V_p \subset D(A_F)$. 
\end{assumption}
We provide a useful sufficient condition for Assumption \ref{asm: domain of A_F} as in the following proposition.
\begin{proposition}\label{prop: sufficient condition for (2)}
    Assume that $V_p$ is closed under differential operators and $F_ih \in H$ for any $h \in V_p$ for $i=1,\dots, d$.
    Then, the domain of $A_F$ contains $V_p$.
\end{proposition}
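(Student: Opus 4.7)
The proof is essentially a direct unpacking of the two hypotheses together with the definition of $A_F$ on its maximal domain $D(A_F)=\{h\in H : A_F h \in H\}$ (consistent with the convention used for $C_f$ earlier in the paper). The plan is as follows.

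First I would fix an arbitrary $h \in V_p$. By the hypothesis that $V_p$ is closed under differential operators, each partial derivative $\partial_{x_i} h$ lies in $V_p$ for $i=1,\dots,d$. Since $V_p \subset H$, this already shows that $\partial_{x_i} h \in H$, so in particular the expression $A_F h = \sum_{i=1}^d F_i\, \partial_{x_i} h$ is well-defined as an element of $\mathfrak{E}(\Omega)$ via the formula \eqref{generator on H}.

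Next I would apply the second hypothesis to the functions $g_i := \partial_{x_i} h \in V_p$: by assumption $F_i g_i \in H$ for every $i=1,\dots,d$. Taking the finite sum yields
\begin{equation*}
A_F h \;=\; \sum_{i=1}^d F_i\, \partial_{x_i} h \;\in\; H,
\end{equation*}
since $H$ is a vector space. Consequently $h \in D(A_F)$, and because $h \in V_p$ was arbitrary, we conclude $V_p \subset D(A_F)$.

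The only subtle point, and what I would emphasize in the write-up, is the order in which the two hypotheses are invoked: closure of $V_p$ under $\partial_{x_i}$ is used first to place $\partial_{x_i}h$ back inside $V_p$, and only then can the multiplication-by-$F_i$ hypothesis be applied (which assumes its argument lies in $V_p$, not merely in $H$). There is no genuine obstacle here; the statement is a short bookkeeping argument once the domain convention for $A_F$ is made explicit.
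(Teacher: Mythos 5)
Your proof is correct and follows essentially the same route as the paper's: apply closure of $V_p$ under differentiation to get $\partial_{x_i}h \in V_p$, then the multiplication hypothesis to get $F_i\,\partial_{x_i}h \in H$, and sum using \eqref{generator on H}. You simply spell out the bookkeeping that the paper's one-line proof leaves implicit.
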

\begin{proof}
    Since each component of $F_i\partial_{x_i} h$ is in $H$ for any $h \in V_p$ and $i=1,\dots,d$ by the assumption, it follows from \eqref{generator on H}.
\end{proof}
We have a more detailed sufficient condition for Assumption \ref{asm: domain of A_F} for special positive definite kernels in Section \ref{subsec: validity of domain assumption of koopman}. 
\begin{remark}
    Since the multiplication map $h \mapsto F_ih$ is a closed operator, the condition $F_ih \in H$ for all $h \in H$ is equivalent to the boundedness of $h \mapsto F_ih$.
\end{remark}

As in Section \ref{subsec: error bound analysis}, let $r_n := \mathop{\rm dim}V_{p, n}$.
We fix a basis $\mathcal{B}_p = \{ v_n \}_{n \ge 0}$ of $V_p$ such that $\mathcal{B}_{p,n} = \{v_i\}_{i=1, \dots, r_n}$ constitutes a basis of $V_{p,n}$.
Then, for $x \in \Omega$ and $X = (x_1, \dots, x_N) \in {\Omega}^N$, we define 
\begin{align*}
     {\bf v}_{n}(x) := \left(\overline{v_i(x)}\right)_{i=1}^{r_n},~~\mathbf{G}_{n}:= \left(\langle v_i, v_j \rangle_H\right)_{i,j = 1,\dots, r_n}, ~~\mathbf{V}_{n}^X :=& \left({\bf v}_{n}(x_1), \dots, {\bf v}_{n}(x_N)\right). 
\end{align*}
We also define 
\begin{align}
    \mathbf{W}_m^{X_N, F(X_N)} :=\sum_{i=1}^d \left(F_i(x_1) \partial_{x_i}{\bf v}_{n}(x_1), \dots, F_i(x_N)\partial_{x_i}{\bf v}_{n}(x_N)\right).
    \label{bold partial V}
\end{align}

Then, we have the corresponding results to Theorem \ref{thm: error analysis for PF operator for algorithm, disc} and Corollary \ref{cor: error analysis for PF operator, infinite samples: disc.} (we omit the proofs of these results as they are proved in the same way):
\begin{theorem}\label{thm: error analysis for PF operator for algorithm: conti.}
    Let $p \in \Omega$.
    Let $m, n \ge 0$ be integers with $m \le n$.
    Let $\mathbf{A}_{F,m}^\star$ be the representation matrix of $A_F^*|_{V_{p,m}}$ with respect to $\mathcal{B}_{p,m}$.
    Let $X_N := (x_1,\dots, x_N) \in {\Omega}^N$.
    We assume Assumption \ref{asm: basic}, $r_n \le N$, ${\rm span}(\{\pi_nk_{x_1},\dots,\pi_nk_{x_N}\}) = V_{p,n}$, and that $p$ satisfies Assumptions \ref{asm: existence of an equilibrium point} and \ref{asm: domain of A_F}.
    We define the matrices $\widehat{\mathbf{A}}_{m,n, N}$ and $\mathbf{E}_{m,n}^{X_N}$ of size $r_m$ by the leading principal minor matrices of order $r_m$ as follows:
    \begin{align*}
    \mathbf{G}_m^{-1}\mathbf{W}_m^{X_N, F(X_N)} (\mathbf{V}_n^{X_N})^\dagger\mathbf{G}_n = 
    \begin{pmatrix}
        \widehat{\mathbf{A}}_{m, n, N} & *
    \end{pmatrix},\\
    \mathbf{G}_n\left(N^{-1}\mathbf{V}_n^{X_N}(\mathbf{V}_n^{X_N})^*\right)^{-1} \mathbf{G}_n = 
    \begin{pmatrix}
        \mathbf{E}_{m,n}^{X_N} & *\\
        * &*
    \end{pmatrix}.
    \end{align*}
    Then, we have
    \begin{align}
        &\left\| \mathbf{A}_{F, m}^\star - \widehat{\mathbf{A}}_{m, n, N}\right\|
        \le 
        \left\|A_F|_{V_{p,m}}\right\|_{\rm op} \big\|\mathbf{G}_m^{-1/2}\big\|_{\rm op}
        \cdot\big\|\mathbf{E}_{m,n}^{X_N}\big\|_{\rm op}^{1/2} \sqrt{\frac{1}{N}\sum_{i=1}^N \mathcal{E}_{p,n}(x_i)^2}.
    \end{align}
\end{theorem}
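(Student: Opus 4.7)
The plan is to mirror the proof of Theorem \ref{thm: error analysis for PF operator for algorithm, disc} line by line, with $A_F^*$ playing the role of $C_f^*$ and with the data matrix $\mathbf{W}_n^{X_N, F(X_N)}$ playing the role of $\mathbf{V}_n^{Y_N}$. First I would introduce the matrices
\[ \mathbf{P}_m := \begin{pmatrix} \mathbf{G}_m^{-1} & \mathbf{O}_{r_n-r_m}\end{pmatrix}\mathbf{G}_n, \qquad \mathbf{P}_m^\diamond := \begin{pmatrix} \mathbf{I}_{r_m} & \mathbf{O}_{r_n-r_m}\end{pmatrix}^\top, \]
representing $\pi_m|_{V_{p,n}}\colon V_{p,n}\to V_{p,m}$ and its adjoint, respectively, so that $\widehat{\mathbf{A}}_{m,n,N} = \mathbf{P}_m\,\mathbf{G}_n^{-1}\mathbf{W}_n^{X_N,F(X_N)}(\mathbf{V}_n^{X_N})^\dagger\mathbf{G}_n\,\mathbf{P}_m^\diamond$ and $\mathbf{A}_{F,m}^\star = \mathbf{P}_m\,\mathbf{A}_{F,n}^\star\,\mathbf{P}_m^\diamond$ by Proposition \ref{cor: explicit description of representation matrix of generator}.

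The key identification, and the only place where the generator-specific structure enters, is that the $i$-th column of $\mathbf{W}_n^{X_N,F(X_N)}$ is, up to the conjugation built into ${\bf v}_n(x)$, the coordinate vector $(\langle A_F^* k_{x_i}, v_j\rangle_H)_{j=1}^{r_n}$. Indeed, by the reproducing property applied to derivatives,
\[ \langle v_j, A_F^* k_{x_i}\rangle_H = (A_F v_j)(x_i) = \sum_{\ell=1}^d F_\ell(x_i)\,\partial_{x_\ell} v_j(x_i), \]
which after conjugation (using that $F_\ell$ is real-valued) matches the entries in \eqref{bold partial V}. This is the continuous-time analog of the identity $k_{y_i}=C_f^*k_{x_i}$ used in the discrete proof; Proposition \ref{prop: domain of PF operators, conti} ensures $V_p\subset D(A_F^*)$, so $A_F^*k_{x_i}$ is a well-defined element of $H$ and the identification makes sense. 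Combined with Proposition \ref{prop: coefficient of minimizer}, this lets me interpret the columns of $\mathbf{P}_m\mathbf{A}_{F,n}^\star\mathbf{G}_n^{-1}\mathbf{V}_n^{X_N}$ and $\mathbf{P}_m\mathbf{G}_n^{-1}\mathbf{W}_n^{X_N,F(X_N)}$ as $\pi_m A_F^*\pi_n k_{x_i}$ and $\pi_m A_F^* k_{x_i}$, respectively.

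From here the argument transposes verbatim. The spanning assumption gives $\mathbf{V}_n^{X_N}(\mathbf{V}_n^{X_N})^\dagger = \mathbf{I}_{r_n}$, so the error splits as
\[ \left\| \mathbf{A}_{F, m}^\star - \widehat{\mathbf{A}}_{m, n, N}\right\| \le \|\mathbf{G}_m^{-1/2}\|_{\rm op}\cdot M\cdot \|(\mathbf{V}_n^{X_N})^\dagger\mathbf{G}_n\mathbf{P}_m^\diamond\|_{\rm op}, \]
where $M$ is the Frobenius norm of the middle difference. The Moore--Penrose identity $((\mathbf{V}_n^{X_N})^\dagger)^*(\mathbf{V}_n^{X_N})^\dagger = (\mathbf{V}_n^{X_N}(\mathbf{V}_n^{X_N})^*)^{-1}$ gives the third factor as $N^{1/2}\|\mathbf{E}_{m,n}^{X_N}\|_{\rm op}^{1/2}$, and by the identification above $M^2 = \sum_{i=1}^N \|\pi_m A_F^*(\pi_n k_{x_i} - k_{x_i})\|_H^2$. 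Under Assumption \ref{asm: domain of A_F}, the closed operator $A_F|_{V_{p,m}}$ is automatically bounded with adjoint $\pi_m A_F^*$, so each term is bounded by $\|A_F|_{V_{p,m}}\|_{\rm op}^2\,\mathcal{E}_{p,n}(x_i)^2$, yielding the claimed inequality.

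The one subtlety that is not a copy-paste of the discrete proof is the verification of the column identification in the second paragraph; this requires carefully unwinding the conjugation conventions embedded in ${\bf v}_n(x)$ and in the definition \eqref{bold partial V} of $\mathbf{W}_n^{X_N,F(X_N)}$, together with observing that the action of $A_F$ on a basis element is encoded by the reproducing formula for directional derivatives. Once this bookkeeping is settled, no genuinely new idea is required beyond what appears in the proof of Theorem \ref{thm: error analysis for PF operator for algorithm, disc}.
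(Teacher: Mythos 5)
Your proposal is correct and matches the paper's intent exactly: the paper omits this proof, stating it is proved in the same way as Theorem \ref{thm: error analysis for PF operator for algorithm, disc}, and your argument is precisely that transposition, with the right key identification $\langle v_j, A_F^*k_{x_i}\rangle_H = (A_Fv_j)(x_i) = \sum_\ell F_\ell(x_i)\partial_{x_\ell}v_j(x_i)$ playing the role of $k_{y_i} = C_f^*k_{x_i}$. No gap remains.
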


\begin{corollary}\label{cor: error analysis for PF operator, infinite samples: conti.}
    We use the same notation and assume the assumptions in Theorem \ref{thm: error analysis for PF operator for algorithm: conti.}.
    Let $\mu$ be a Borel probability measure on $\Omega$.
    Let $\nu$ be an arbitrary $\sigma$-finite Borel measure absolutely continuous with respect to $\mu$ on $\Omega$.
    Suppose that $x_1,\dots, x_N$ be the i.i.d. random variables with respect to $\mu$.
    Assume that the Radon-Nikodym derivative $\partial_\mu\nu$ of $\nu$ is an element of $L^\infty(\mu)$.
    We further assume that $\mathcal{B}_{p,n}$ constitutes an orthogonal system in $H$ and that $V_{p,n} \subset L^2(\mu)$ (more precisely, any element of $V_{p,n}$ is square integrable with respect to $\mu$ and the natural map $V_{p,n} \to L^2(\mu)$ is injective).
    We define $\{q_{ij}\}_{i,j=1,\dots,r_n}$ by the complex numbers satisfying
    \[u_i = \sum_{j=1}^{r_n} q_{ij} v_j.\]
    For $k,\ell \ge 0$, let
    \[\mathbf{Q}_{k,\ell}(\nu) := (q_{ij})_{i \le r_k, j \le r_\ell}.\]
    Then, we have
       \begin{align}
        &\limsup_{N \to \infty}\left\| \mathbf{A}_{F, m}^\star - \widehat{\mathbf{A}}_{m, n, N}\right\|
        \le 
        L_m\|\mathcal{E}_{p,n}\|_{L^2(\mu)}\|\mathbf{Q}_{n,m}(\nu)\|_{\rm op}\quad{\rm a.e.}, \label{explicit asymptotics, A_F}
    \end{align}
    where 
    \[ L_m := \left\|\partial_\mu\nu \right\|_{L^\infty(\mu)} \|\mathbf{G}_m\|_{\rm op} \big\|\mathbf{G}_m^{-1/2} \big\|_{\rm op} \|A_F|_{V_{p,m}}\|_{\rm op}.\]
\end{corollary}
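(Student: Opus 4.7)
The plan is to imitate the proof of Corollary~\ref{cor: error analysis for PF operator, infinite samples: disc.} step by step, replacing the Perron--Frobenius data $(\mathbf{C}_{f,m}^\star, \widehat{\mathbf{C}}_{m,n,N})$ by the generator data $(\mathbf{A}_{F,m}^\star, \widehat{\mathbf{A}}_{m,n,N})$ and invoking Theorem~\ref{thm: error analysis for PF operator for algorithm: conti.} in place of Theorem~\ref{thm: error analysis for PF operator for algorithm, disc}. The crucial observation is that once one has the pointwise inequality furnished by Theorem~\ref{thm: error analysis for PF operator for algorithm: conti.}, every remaining ingredient --- namely the control of the empirical second moment $\frac{1}{N}\sum_{i=1}^N \mathcal{E}_{p,n}(x_i)^2$ and of the operator norm $\|\mathbf{E}_{m,n}^{X_N}\|_{\rm op}$ --- depends only on the samples $x_1, \dots, x_N$, the basis $\mathcal{B}_{p,n}$, and the measures $\mu,\nu$, but not on whether we are estimating $C_f^*|_{V_{p,m}}$ or $A_F^*|_{V_{p,m}}$.

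Concretely, I would first fix an orthonormal system $\{u_i\}_{i=1}^{r_n}$ of $V_{p,n}$ in $L^2(\nu)$ and record the identity
\[
\mathbf{G}_n \bigl(\langle v_i,v_j\rangle_{L^2(\nu)}\bigr)_{i,j=1,\dots,r_n}^{-1} \mathbf{G}_n = \mathbf{G}_n \mathbf{Q}_{n,n}(\nu)^*\mathbf{Q}_{n,n}(\nu) \mathbf{G}_n,
\]
whose $m$-th leading principal submatrix is $\mathbf{G}_m\mathbf{Q}_{n,m}(\nu)^*\mathbf{Q}_{n,m}(\nu)\mathbf{G}_m$. The strong law of large numbers then gives, almost surely,
\[
\frac{1}{N}\sum_{i=1}^N \mathcal{E}_{p,n}(x_i)^2 \longrightarrow \|\mathcal{E}_{p,n}\|_{L^2(\mu)}^2, \qquad N^{-1}\mathbf{V}_n^{X_N}(\mathbf{V}_n^{X_N})^* \longrightarrow \bigl(\langle v_i,v_j\rangle_{L^2(\mu)}\bigr)_{i,j},
\]
so that $\mathbf{E}_{m,n}^{X_N}$ converges a.s.\ to the $m$-th leading principal submatrix of $\mathbf{G}_n(\langle v_i,v_j\rangle_{L^2(\mu)})^{-1}\mathbf{G}_n$. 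Using $\partial_\mu\nu \in L^\infty(\mu)$, the matrix $\|\partial_\mu\nu\|_{L^\infty(\mu)}(\langle v_i,v_j\rangle_{L^2(\mu)}) - (\langle v_i,v_j\rangle_{L^2(\nu)})$ is positive semi-definite, so inversion reverses this inequality and conjugation by $\mathbf{G}_n$ yields
\[
\limsup_{N\to\infty} \|\mathbf{E}_{m,n}^{X_N}\|_{\rm op}^{1/2} \le \|\partial_\mu\nu\|_{L^\infty(\mu)}^{1/2}\|\mathbf{G}_m\|_{\rm op}\|\mathbf{Q}_{n,m}(\nu)\|_{\rm op} \quad \text{a.s.}
\]
Feeding these convergences into the bound of Theorem~\ref{thm: error analysis for PF operator for algorithm: conti.} and collecting the prefactors into $L_m$ produces exactly \eqref{explicit asymptotics, A_F}.

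I do not expect any substantive obstacle. The only step requiring some care is the almost-sure convergence of $\|\mathbf{E}_{m,n}^{X_N}\|_{\rm op}$, which needs the invertibility of the limiting Gram matrix $(\langle v_i,v_j\rangle_{L^2(\mu)})_{i,j}$; this is guaranteed by the hypothesis that the natural map $V_{p,n} \to L^2(\mu)$ is injective, exactly as in the discrete case. Because Theorem~\ref{thm: error analysis for PF operator for algorithm: conti.} has the same structural form as Theorem~\ref{thm: error analysis for PF operator for algorithm, disc} --- differing only in that $\mathbf{V}_n^{Y_N}$ is replaced by $\mathbf{W}_m^{X_N,F(X_N)}$ and $\|C_f|_{V_{p,m}}\|_{\rm op}$ by $\|A_F|_{V_{p,m}}\|_{\rm op}$ --- the entire asymptotic argument transfers verbatim.
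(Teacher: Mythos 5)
Your proposal is correct and takes essentially the same route as the paper: the paper omits the proof of this corollary, remarking that it is proved in the same way as Corollary \ref{cor: error analysis for PF operator, infinite samples: disc.}, and your argument is exactly that discrete proof transplanted to the continuous setting, with Theorem \ref{thm: error analysis for PF operator for algorithm: conti.} replacing Theorem \ref{thm: error analysis for PF operator for algorithm, disc}, the law of large numbers handling $\frac{1}{N}\sum_i\mathcal{E}_{p,n}(x_i)^2$ and $\mathbf{E}_{m,n}^{X_N}$, and the positive semi-definiteness argument controlling the $\nu$--$\mu$ comparison. The slight difference in the exponent on $\|\partial_\mu\nu\|_{L^\infty(\mu)}$ in your intermediate bound mirrors the same looseness already present in the paper's own proof of the discrete corollary, so it does not constitute a gap relative to the paper.
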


\subsection{Eigendecompostions of the generator of Koopman operators}
Here, we use the notation introduced in Section \ref{sec: generalized spectrum}.
We assume that Assumption \ref{asm: basic} holds and that there exist $p_1,\dots, p_r \in \Omega$ satisfying Assumption \ref{asm: existence of a fixed point} and define $\Lambda := \{p_1,\dots, p_r\} \subset \Omega$.
The corresponding results to Theorem \ref{thm: jordan decomposition of C_f} and Corollary \ref{cor: eigendecomposition: fixed point} are as follows:

\begin{theorem}\label{thm: jordan decomposition of A_F}
Suppose that Assumption \ref{asm: basic} holds and that $A_F$ is densely defined.
Let $r_n := \mathop{\rm dim}V_{\Lambda,n}$.
Then, there exist continuous linear operators $S_F^\times, N_F^\times: \Phi^* \to \Phi^*$ and $S_F, N_F: \Phi \to \Phi$ such that
\begin{align*}
    A_F^\times &= S_F^\times + N_F^\times,\\
    A_F^*|_{\Phi} &= S_F + N_F,
\end{align*}
with the following properties:
\begin{enumerate}[{\rm (1)}]
    \item there exist a family of complex numbers $\{\gamma_i\}_{i=1}^\infty$ and those of vectors $\{w_i\}_{i=1}^\infty \subset \Phi$ and $\{u_i\}_{i=1}^\infty \subset \Phi^*$ such that $\{w_i\}_{i=1}^{r_n}$ constitutes a basis of $V_{\Lambda, n}$, and
    \begin{align*}
        S_F^\times u_i &= \gamma_i u_i,\\
        S_F w_i &= \overline{\gamma_i} w_i,\\
        \langle u_i, w_j \rangle &= \delta_{i, j}
    \end{align*}
    for positive integers  $i,j \ge 1$, and 
    \begin{align*}
        S_F^\times u &= \sum_{i=1}^\infty \gamma_i \langle w_i, u \rangle u_i,\\
        S_Fw &= \sum_{i=1}^\infty \overline{\gamma_i}  \langle w, u_i \rangle w_i
    \end{align*}
    for $u \in \Phi^*$ and $w \in \Phi$, where the convergences on the right hand sides are in the topologies of $\Phi^*$ and $\Phi$, respectively, 
    \item for each $u \in \Phi^*$, $(N_F^\times)^nu \to 0$ as $n\to \infty$, 
    \item for each $w \in \Phi$, there exists $n \ge 0$ such that $N_F^nw = 0$,
    \item $S_FN_F = N_FS_F$ and $S_F^\times N_F^\times  = S_F^\times N_F^\times $. 
\end{enumerate}
\end{theorem}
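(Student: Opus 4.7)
The plan is to mirror the proof of Theorem \ref{thm: jordan decomposition of C_f} essentially verbatim, since the only structural input used in that argument is that the operator in question preserves each finite-dimensional subspace $V_{\Lambda,n}$. For $A_F$, this invariance is supplied by part \eqref{preserving property, generator} of Theorem \ref{thm: basic theorem for PF operator, continuous}, so $A_F^*$ restricts to a linear endomorphism of the finite-dimensional space $V_{\Lambda,n}$ for every $n \ge 0$.

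First, I would apply the classical Jordan--Chevalley decomposition on each $V_{\Lambda,n}$ to obtain $A_F^*|_{V_{\Lambda,n}} = S_{F,n} + N_{F,n}$, where $S_{F,n}$ is diagonalizable, $N_{F,n}$ is nilpotent, and the two commute. By the uniqueness of the Jordan--Chevalley decomposition and the compatibility $A_F^*|_{V_{\Lambda,n+1}}|_{V_{\Lambda,n}} = A_F^*|_{V_{\Lambda,n}}$, we must have $S_{F,n+1}|_{V_{\Lambda,n}} = S_{F,n}$ and $N_{F,n+1}|_{V_{\Lambda,n}} = N_{F,n}$, which glues these endomorphisms to well-defined linear maps $S_F, N_F : \Phi \to \Phi$ satisfying $A_F^*|_\Phi = S_F + N_F$, the pointwise nilpotency of $N_F$, and the commutativity $S_FN_F = N_FS_F$.

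Next, the adjoint families $(S_{F,n}^*)_n$ and $(N_{F,n}^*)_n$ on the finite-dimensional Hilbert spaces $V_{\Lambda,n}$ are compatible with the orthogonal projections $\pi_{n+1,n}$, so by the same construction used for $C_f^\times$ they assemble into continuous operators $S_F^\times := \lim S_{F,n}^*$ and $N_F^\times := \lim N_{F,n}^*$ on $\Phi^*$ with $A_F^\times = S_F^\times + N_F^\times$. The topological nilpotency of $N_F^\times$ in the product topology is immediate: for $u = (u_n)_n \in \Phi^*$ and any fixed index $n$, the $n$-th component of $(N_F^\times)^k u$ vanishes as soon as $k$ exceeds the nilpotency index of $N_{F,n}$.

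The construction of the biorthogonal system $\{w_i\} \subset \Phi$ and $\{u_i\} \subset \Phi^*$ then proceeds exactly as in the proof of Theorem \ref{thm: jordan decomposition of C_f}: diagonalizability together with the consistency $S_{F,n+1}|_{V_{\Lambda,n}} = S_{F,n}$ allows me to build an exhaustive sequence of eigenvectors $\{w_i\}$ of $S_F$ with eigenvalues $\{\overline{\gamma_i}\}$ such that $\{w_1,\dots,w_{r_n}\}$ spans $V_{\Lambda,n}$; for each $i$, the unique vector $u_{i,n} \in V_{\Lambda,n}$ dual to $w_i$ within $V_{\Lambda,n}$ automatically lies in the $\gamma_i$-eigenspace of $S_{F,n}^*$ and satisfies $\pi_{n+1,n}u_{i,n+1} = u_{i,n}$, yielding $u_i := (u_{i,n})_n \in \Phi^*$ with the required duality and eigenvalue identities. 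The spectral expansions for $S_Fw$ (a finite sum since $w$ lives in some $V_{\Lambda,n}$) and for $S_F^\times u$ (componentwise a finite sum, hence convergent in the product topology of $\Phi^*$) then follow directly. The only step demanding a moment's care is the verification of these convergence claims, but since no feature of the argument depends on whether one works with the semigroup or its generator, no genuinely new obstacle arises beyond those already addressed for $C_f^\times$.
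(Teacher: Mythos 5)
Your proposal is correct and takes essentially the same route the paper intends: the paper states Theorem \ref{thm: jordan decomposition of A_F} without proof as the direct analogue of Theorem \ref{thm: jordan decomposition of C_f}, and you reproduce that argument faithfully, with the only new ingredient being the invariance $A_F^*(V_{\Lambda,n})\subset V_{\Lambda,n}$ supplied by Theorem \ref{thm: basic theorem for PF operator, continuous}. There is no substantive difference from the paper's approach.
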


\begin{corollary}\label{cor: eigendecomposition: equilibrium point}
    Let $\mu_{i,1},\dots, \mu_{i,d}$ be the eigenvalues (with multiplicity) of the Jacobian matrix ${\rm d}F_{p_i}$ at the equilibrium point $p_i$. 
    Let $\mu_i:= (\mu_{i,1},\dots, \mu_{i,d})$.
    Assume that $\mu_i^\top \alpha \neq \mu_i^\top\beta$ for $\alpha, \beta \in \mathbb{Z}_{\ge 0}^d$ satisfying $\alpha \neq \beta$.
    Then, there exists a family of vectors $\big\{w_{i,\alpha}\big\}_{i \in \{1,\dots,r\},  \alpha \in \mathbb{Z}_{\ge 0}^d} \subset \Phi$ and $\left\{u_{i,\alpha}\right\}_{i \in \{1,\dots,r\},  \alpha \in \mathbb{Z}_{\ge 0}^d} \subset \Phi^*$ such that
    \begin{align*}
        A_F^\times u_\alpha &= \mu_i^\top\alpha u_{i,\alpha},\\
        A_F^* w_\alpha &= \overline{\mu_i^\top\alpha} w_{i, \alpha},\\
        \langle u_{i,\alpha}, w_{j,\beta} \rangle &= \delta_{(i,\alpha), (j,\beta)}
    \end{align*}
    for all $i,j \in \{1,\dots, r\}$, $\alpha, \beta \in \mathbb{Z}_{\ge 0}^d$, and 
    \begin{align*}
        A_F^\times u &= \sum_{i \in \{1,\dots,r\}, \alpha \in \mathbb{Z}_{\ge 0}^d} \mu_i^\top\alpha \langle w_{i,\alpha}, u \rangle u_{i,\alpha},\\
        A_F^*w &= \sum_{i \in \{1,\dots,r\}, \alpha \in \mathbb{Z}_{\ge 0}^d} \overline{\mu_i^\top\alpha}  \langle w, u_{i,\alpha} \rangle w_{i,\alpha}
    \end{align*}
    for $u \in \Phi^*$ and $w \in \Phi$, where the convergences on the right hand sides are in $\Phi^*$ and $\Phi$, respectively.
\end{corollary}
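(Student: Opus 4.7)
My plan is to derive Corollary \ref{cor: eigendecomposition: equilibrium point} from Theorem \ref{thm: jordan decomposition of A_F} by following exactly the strategy that yields Corollary \ref{cor: eigendecomposition: fixed point} from Theorem \ref{thm: jordan decomposition of C_f}. First I would invoke Theorem \ref{thm: jordan decomposition of A_F} to obtain the Jordan--Chevalley decompositions $A_F^\times = S_F^\times + N_F^\times$ and $A_F^*|_\Phi = S_F + N_F$ along with a family $\{\gamma_j\}_{j\ge 1}\subset \mathbb{C}$ and biorthogonal families $\{w_j\}_j \subset \Phi$, $\{u_j\}_j \subset \Phi^*$ satisfying $S_F w_j = \overline{\gamma_j} w_j$, $S_F^\times u_j = \gamma_j u_j$, and $\langle u_j, w_k\rangle = \delta_{j,k}$.

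Next I would identify the spectrum: by Proposition \ref{cor: explicit description of representation matrix of generator}\,(\ref{eigenvalues of generator}), the eigenvalues of the pushforward $\mathcal{A}_F'|_{\mathfrak{D}_{p_i,n}}$ form the multiset $\{\mu_i^\top\alpha : |\alpha|\le n\}$, and transferring through $\mathfrak{r}\circ\iota'$ gives the same eigenvalues for $A_F^*|_{V_{p_i,n}}$ (this transfer is essentially the content of Proposition \ref{prop: domain of PF operators, conti} together with Theorem \ref{thm: basic theorem for PF operator, continuous}\,(\ref{preserving property, generator})). Under the non-resonance assumption $\mu_i^\top\alpha\neq\mu_i^\top\beta$ for $\alpha\neq\beta$, these eigenvalues are pairwise distinct inside each $V_{p_i,n}$, so $A_F^*|_{V_{p_i,n}}$ is diagonalizable.

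Then I would argue that the nilpotent parts vanish. Since $V_{\Lambda,n}=\sum_{i=1}^r V_{p_i,n}$ is a sum of $A_F^*$-invariant subspaces on each of which $A_F^*$ is diagonalizable, $A_F^*|_{V_{\Lambda,n}}$ itself is diagonalizable. By uniqueness of the Jordan--Chevalley decomposition on each $V_{\Lambda,n}$, the nilpotent component $N_{F,n}$ is zero for every $n$, and consequently $N_F=0$ on $\Phi$ and $N_F^\times=0$ on $\Phi^*$, giving $A_F^*|_\Phi = S_F$ and $A_F^\times=S_F^\times$. It then remains to relabel $\{w_j,u_j,\gamma_j\}$ as $\{w_{i,\alpha},u_{i,\alpha},\mu_i^\top\alpha\}$ via a consistent choice of eigenbases compatible with the filtration by fixed points, at which point the spectral formulas of the corollary follow directly from Theorem \ref{thm: jordan decomposition of A_F}\,(1).

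The main obstacle is the relabeling step. Even though within a single $V_{p_i,n}$ the non-resonance hypothesis gives a canonical index by $\alpha$, the $V_{p_i,n}$ for distinct $i$ may overlap and their individual eigenvalues $\mu_i^\top\alpha$ may coincide across different fixed points, so there is no a priori direct sum decomposition indexed by $(i,\alpha)$. To handle this cleanly I would choose the eigenvector family $\{w_j\}$ inductively on $n$ so that each $w_j$ lies in some $V_{p_i}$, producing the doubly-indexed family $w_{i,\alpha}$ compatible with a chosen direct-sum refinement $V_{\Lambda,n}=\bigoplus_{(i,\alpha)\in I_n}\mathbb{C}\cdot w_{i,\alpha}$; eigenvalue collisions across different $i$ are harmless because the required biorthogonality is indexed by $(i,\alpha)$ rather than by $\gamma$, and the projective-limit construction of $\Phi^*$ guarantees that the dual eigenvectors $u_{i,\alpha}$ assemble coherently.
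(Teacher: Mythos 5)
Your proposal is correct and follows essentially the same route as the paper: the paper derives this corollary (like its discrete counterpart, Corollary \ref{cor: eigendecomposition: fixed point}) directly from the Jordan--Chevalley decomposition of Theorem \ref{thm: jordan decomposition of A_F}, with the non-resonance hypothesis forcing diagonalizability on each $V_{\Lambda,n}$ so that the nilpotent parts vanish. Your filled-in details (diagonalizability via Proposition \ref{cor: explicit description of representation matrix of generator}, uniqueness of the finite-dimensional Jordan--Chevalley decomposition, and the $(i,\alpha)$-indexed relabeling that is insensitive to eigenvalue collisions across distinct equilibria) are exactly the intended, omitted argument.
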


\section{Estimations with the exponential kernels and Gaussian kernels} \label{sec: special kernels}
Here, we introduce the Gaussian kernel and the exponential kernel, where Assumption \ref{asm: basic} trivially holds, and discuss their properties.
We precisely estimate the right hand sides of Corollaries \ref{cor: error analysis for PF operator, infinite samples: disc.} and \ref{cor: error analysis for PF operator, infinite samples: conti.} and finally prove the explicit convergence rate for JetEDMD.
In this section, we will use the notation introduced in Section \ref{sec: datadriven estimation of PF operator} and set $\Omega = \mathbb{R}^d$.
We denote by $L^2(\mathbb{R}^d)$ the $L^2$-space with respect to the Lebesgue measure on $\mathbb{R}^d$ and define the Fourier transform $\mathcal{F}[h]$ for $h \in  L^2(\mathbb{R}^d)$ by
\begin{align}
    \mathcal{F}[h](\xi) := \frac{1}{(2\pi)^{d/2}}\int_{\mathbb{R}^d} h(x) e^{-\mathrm{i}x^\top \xi} {\rm d}x.
\end{align}

\subsection{Exponential kernels and Gaussian kernels}
For $\sigma>0$ and $b \in \mathbb{R}^d$, we denote by $H^{\rm e}(\sigma, b)$ the RKHS associated with the {\em exponential kernel} defined by
\begin{align}
    k^{\rm e}(x,y) = e^{\frac{(x-b)^\top (y-b)}{\sigma^2}}. \label{exponential kernel}
\end{align}
For $\sigma>0$, we denote by $H^{\rm g}(\sigma)$ the RKHS associated with the {\em Gaussian kernel} defined by 
\begin{align}
    k^{\rm g}(x,y) =  e^{-|x-y|^2/2\sigma^2}. \label{gaussian kernel}
\end{align}

First, we provide the explicit description of the RKHS for each kernel as in the following propositions:
\begin{proposition}[RKHS of the exponential kernel]\label{prop: RKHS w.r.t exp kernel}
    Let $\sigma>0$ and $b \in \mathbb{R}^d$.
    We define the Hilbert spaces $H_0$ and $H_1$ by
    \begin{align*}
       H_0 &:= \left\{h(x) = \sum_{\alpha \in \mathbb{Z}_{\ge0}^d} a_\alpha (x-b)^\alpha : \sum_{\alpha \in \mathbb{Z}_{\ge 0}^d} \sigma^{2|\alpha|}\alpha! |a_\alpha|^2 < \infty \right\}, \\
        H_1 &:= \left\{ h|_{\mathbb{R}^d} : h\text{ is holomorphic on }\mathbb{C}^d\text{ and }\int_{\mathbb{R}^d \times \mathbb{R}^d} |h(x + y \mathrm{i})|^2 e^{-(\|x-b\|^2 + \|y\|^2)/\sigma^2}\,{\rm d}x {\rm d}y < \infty\right\},
    \end{align*}
    equipped with the inner products
    \begin{align*}
       \langle h, g\rangle_{H_0}  &:= \sum_{\alpha \in \mathbb{Z}_{\ge0}^d} \sigma^{2|\alpha|}\alpha! a_\alpha \overline{b_\alpha}, \\
        \langle h,\,g\rangle_{H_1} &:= \frac{1}{(\pi\sigma^2)^d}\int_{\mathbb{R}^d \times \mathbb{R}^d} h(x + y \mathrm{i})\overline{g(x+y\mathrm{i})} e^{-(\|x-b\|^2 + \|y\|^2)/\sigma^2}\,{\rm d}x {\rm d}y,
    \end{align*}
    where we put $h(x) = \sum_\alpha a_\alpha (x-b)^\alpha$, $g(x) = \sum_\alpha b_\alpha (x-b)^\alpha$ for the inner product of $H_0$.
    Then, we have $H^{\rm e}(\sigma, b) = H_0 = H_1$.
\end{proposition}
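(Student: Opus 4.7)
The plan is to prove the two equalities $H^{\rm e}(\sigma, b) = H_0$ and $H_0 = H_1$ separately. The first follows from the power-series expansion of the kernel together with the uniqueness of the RKHS; the second rests on a Fock-space-style orthogonality computation in $H_1$ and a Parseval identity via polar coordinates.

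For $H^{\rm e}(\sigma, b) = H_0$, the starting point is the expansion
\[k^{\rm e}(x,y) = e^{(x-b)^\top(y-b)/\sigma^2} = \sum_{\alpha \in \mathbb{Z}_{\ge 0}^d} \phi_\alpha(x)\overline{\phi_\alpha(y)},\qquad \phi_\alpha(x) := \frac{(x-b)^\alpha}{\sigma^{|\alpha|}\sqrt{\alpha!}},\]
which is valid and absolutely convergent on $\mathbb{R}^d \times \mathbb{R}^d$ (note $\phi_\alpha$ is real, so conjugation is harmless). For $h = \sum a_\alpha (x-b)^\alpha$ with $\sum \sigma^{2|\alpha|}\alpha!|a_\alpha|^2 < \infty$, a Cauchy--Schwarz estimate combined with $\sum_\alpha |x-b|^{2\alpha}/(\sigma^{2|\alpha|}\alpha!) = e^{|x-b|^2/\sigma^2}$ shows the series defining $h$ converges absolutely with $|h(x)| \le \|h\|_{H_0}\,e^{|x-b|^2/(2\sigma^2)}$, so $H_0$ is a Hilbert space of functions with continuous point evaluations. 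A direct computation then gives $\langle h, k^{\rm e}(\cdot,y)\rangle_{H_0} = h(y)$, identifying $k^{\rm e}$ as the reproducing kernel of $H_0$; by uniqueness of the RKHS with kernel $k^{\rm e}$, we conclude $H_0 = H^{\rm e}(\sigma,b)$.

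For $H_0 = H_1$, the decisive computation is monomial orthogonality in $H_1$. Substituting $z = (x-b) + y\mathrm{i} \in \mathbb{C}^d$ and using iterated polar coordinates on each complex factor,
\[\langle (x-b)^\alpha, (x-b)^\beta\rangle_{H_1} = \frac{1}{(\pi\sigma^2)^d}\int_{\mathbb{C}^d} z^\alpha \overline{z^\beta}\,e^{-|z|^2/\sigma^2}\,{\rm d}x\,{\rm d}y = \delta_{\alpha,\beta}\,\sigma^{2|\alpha|}\alpha!,\]
so the Taylor-coefficient map is an isometry on polynomials into $H_0$. To extend this to all of $H_1$ I would prove the Parseval identity
\[\|h\|_{H_1}^2 = \sum_{\alpha \in \mathbb{Z}_{\ge 0}^d} \sigma^{2|\alpha|}\alpha!\,|a_\alpha|^2\]
for $h \in H_1$ with Taylor expansion $h(z) = \sum a_\alpha (z-b)^\alpha$ around $b$. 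Writing $\|h\|_{H_1}^2$ in iterated polar coordinates, the angular integrations eliminate all cross terms $a_\alpha \overline{a_\beta}$ with $\alpha \neq \beta$ by Fourier-Parseval on each circle, reducing the integrand to the non-negative series $\sum |a_\alpha|^2 r^{2\alpha}$; Tonelli then legitimizes interchanging the radial integration with the sum and produces the identity. Consequently the coefficient map is an isometric bijection $H_1 \to H_0$.

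The main obstacle is the Parseval step, i.e., justifying the interchange of the integral with an infinite Taylor series. The polar-coordinate route makes this clean because after angular integration everything becomes non-negative and Tonelli applies without further hypotheses; the naive Cartesian route using dominated convergence is more delicate because of sign cancellations in the double sum, so I would commit to the polar approach. Once Parseval is in hand, the three identifications $H^{\rm e}(\sigma,b) = H_0 = H_1$ follow immediately.
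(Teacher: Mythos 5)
Your proposal is correct, and for the first equality it is the same argument the paper uses: the paper's proof of $H^{\rm e}(\sigma,b)=H_0$ is exactly the verification that $k_x\in H_0$ and $\langle h,k_x\rangle_{H_0}=h(x)$, followed by uniqueness of the RKHS (the paper additionally notes density of ${\rm span}\{k_x\}$ in $H_0$, which in any case follows from the reproducing property). The difference is in the second equality: the paper does not prove $H^{\rm e}(\sigma,b)=H_1$ at all, but simply cites Chapter 2 of Zhu's Fock space book, whereas you supply the standard Fock-space argument yourself --- monomial orthogonality $\langle (z-b)^\alpha,(z-b)^\beta\rangle_{H_1}=\delta_{\alpha,\beta}\sigma^{2|\alpha|}\alpha!$ via iterated polar coordinates, then the Parseval identity by angular integration (termwise integration being justified by locally uniform convergence of the Taylor series on each torus) and Tonelli for the radial integral. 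This buys a self-contained proof at the cost of length; your polar/Tonelli route is indeed the clean way to legitimize the interchange. The only loose end is the direction $H_0\subseteq H_1$: to say the coefficient map is a bijection onto $H_0$ you should note that a coefficient sequence with $\sum_\alpha\sigma^{2|\alpha|}\alpha!|a_\alpha|^2<\infty$ defines an \emph{entire} function on $\mathbb{C}^d$ (your Cauchy--Schwarz bound against $e^{\|z-b\|^2/\sigma^2}$ extends verbatim from $\mathbb{R}^d$ to $\mathbb{C}^d$), after which the same nonnegative Parseval computation shows its $H_1$-norm is finite; with that one sentence added, the identifications $H^{\rm e}(\sigma,b)=H_0=H_1$ are complete.
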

\begin{proof}
    As for $H^{\rm e}(\sigma, b)=H_0$, it follows from the fact that $k_x \in H_0$ and $\langle h, k_x\rangle_{H_0} = h(x)$ for any $x \in \mathbb{R}^d$, and that the subspace generated by $k_x$'s for $x \in \Omega$ is dense in $H_0$.
    As for $H^{\rm e}(\sigma, b) = H_1$, see for example, \cite[Chapter 2]{Zhu12}.
\end{proof}
\begin{proposition}[RKHS of the Gaussian kernel] \label{prop: RKHS w.r.t. gaussian kernel}
    Let $\sigma > 0$.
    We define the Hilbert spaces $H_0$ and $H_1$ by
    \begin{align*}
    H_0 &= 
    \left\{
    h \in L^2(\mathbb{R}^d) : \int_{\mathbb{R}^d} |\mathcal{F}[h](\xi)|^2  e^{\sigma^2|\xi|^2/2} {\rm d}\xi < \infty
    \right\},\\
    H_1 &= 
    \left\{
    h|_{\mathbb{R}^d} :  h\text{ is holomorphic on }\mathbb{C}^d\text{ and }\int_{\mathbb{R}^d\times\mathbb{R}^d} |h(x + y\mathrm{i})|^2 e^{-2\|y\|^2/\sigma^2} {\rm d}x {\rm d}y < \infty
    \right\},
\end{align*}
    equipped with the inner products
    \begin{align*}
        \langle g,\,h\rangle_{H_0} &:= \frac{\sigma^d}{(2\pi)^{d/2}}\int_{\mathbb{R}^d} \mathcal{F}[g](\xi)\overline{\mathcal{F}[h](\xi)} e^{\sigma^2|\xi|^2/2} {\rm d}\xi,\\
        \langle g,\,h\rangle_{H_1} &:= \frac{1}{(\pi\sigma^2)^d}\int_{\mathbb{R}^d\times\mathbb{R}^d} g(x + y \mathrm{i})\overline{h(x + y \mathrm{i})} e^{-2\|y\|^2/\sigma^2} {\rm d}x {\rm d}y.
    \end{align*}
    Then, we have $H^{\rm g}(\sigma)=H_0=H_1$
\end{proposition}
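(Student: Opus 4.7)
The plan is to prove both identifications $H^{\rm g}(\sigma)=H_0$ and $H^{\rm g}(\sigma)=H_1$ by anchoring everything at the kernel $k^{\rm g}_x(y)=e^{-|x-y|^2/2\sigma^2}$ and invoking uniqueness of the RKHS attached to a given positive definite kernel (e.g.\ Theorem 2.2 of \cite{6fc4e20b-e6b8-3651-8988-bc9975065f31}). Concretely, for each candidate space $H_i$ I would verify (a) $k^{\rm g}_x \in H_i$ for every $x \in \mathbb{R}^d$, and (b) the reproducing identity $\langle h, k^{\rm g}_x\rangle_{H_i} = h(x)$ for all $h \in H_i$. Uniqueness then forces $H_i = H^{\rm g}(\sigma)$.

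For the identification $H^{\rm g}(\sigma)=H_0$, the main computation is the Fourier transform of the shifted Gaussian, yielding $\mathcal{F}[k^{\rm g}_x](\xi) = \sigma^d e^{-\sigma^2|\xi|^2/2} e^{-\mathrm{i}x^\top \xi}$. Plugging this into the weighted integral defining $\|\cdot\|_{H_0}$ reduces the weight $e^{\sigma^2|\xi|^2/2}$ against $|\mathcal{F}[k^{\rm g}_x](\xi)|^2$ to a plain Gaussian, establishing $k^{\rm g}_x \in H_0$. The reproducing property is then the Fourier inversion formula applied to $h$, after cancellation of the weight against $|\mathcal{F}[k^{\rm g}_x]|^2$; the prefactor $\sigma^d/(2\pi)^{d/2}$ in $\langle \cdot,\cdot\rangle_{H_0}$ is chosen precisely so that this cancellation produces $h(x)$.

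For $H^{\rm g}(\sigma)=H_1$ I would proceed by showing $H_0 = H_1$ as sets with equal norms. Given $h \in H_0$, define the candidate analytic continuation by inverse Fourier transform, $\tilde h(z) := (2\pi)^{-d/2}\int \mathcal{F}[h](\xi) e^{\mathrm{i}z^\top\xi}\,{\rm d}\xi$ for $z = x + y\mathrm{i} \in \mathbb{C}^d$. The weight $e^{\sigma^2|\xi|^2/2}$ in the definition of $H_0$ dominates the factor $e^{-y^\top\xi}$ pointwise in $y$, so dominated convergence gives holomorphicity. Then, for fixed $y$, Plancherel in the $x$-variable yields $\int |\tilde h(x+y\mathrm{i})|^2\,{\rm d}x = \int |\mathcal{F}[h](\xi)|^2 e^{-2y^\top\xi}\,{\rm d}\xi$. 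Integrating against $e^{-2|y|^2/\sigma^2}$ and completing the square in $y$ inside the Gaussian weight produces a factor $e^{\sigma^2|\xi|^2/2}$ times a Gaussian integral in $y$, which collapses to exactly the integral defining $\|h\|_{H_0}^2$ up to explicit constants. Conversely, for $h \in H_1$, Fubini on the defining integral and restriction to $y=0$ gives $h|_{\mathbb{R}^d}\in L^2(\mathbb{R}^d)$, whose Fourier transform necessarily satisfies the weighted-$L^2$ condition of $H_0$ by the same calculation run in reverse.

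The main obstacle is purely bookkeeping of constants: matching the factors $\sigma^d$, $(\pi\sigma^2)^d$, $(2\pi)^{d/2}$ and $2^{d/2}$ arising from the Gaussian Fourier transform, the Gaussian integral in $y$ after completing the square, and the normalization built into $\langle\cdot,\cdot\rangle_{H_0}$ and $\langle\cdot,\cdot\rangle_{H_1}$. A secondary subtlety is justifying the interchange of integrals and the holomorphic extension globally on $\mathbb{C}^d$ (not merely on strips), but both follow routinely from the strong Gaussian decay in $\mathcal{F}[h]$ forced by membership in $H_0$. Once the constants are verified to match, the reproducing property on $H_1$ may either be read off from the identification $H_0 \cong H_1$ or checked directly by an analogous computation with $k^{\rm g}_x$, which extends holomorphically to $e^{-(z-x)^\top(z-x)/2\sigma^2}$ and lies in $H_1$ by an explicit Gaussian integral in $\mathbb{C}^d$.
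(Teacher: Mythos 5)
Your strategy is sound, and for the identification with $H_0$ it is essentially the paper's: the paper likewise verifies the reproducing identity $\langle h,k^{\rm g}_x\rangle = h(x)$ by computing the Fourier transform of the Gaussian and invoking Fourier inversion. Where you genuinely diverge is the identification with $H_1$: the paper disposes of it by citing Saitoh--Sawano (Theorem 1.17), whereas you prove $H_0=H_1$ isometrically by hand --- analytic continuation through the inverse Fourier integral, Plancherel in $x$ for each fixed $y$, then completing the square in $y$ so the Gaussian weight collapses to $e^{\sigma^2|\xi|^2/2}$. That forward computation is correct (a Bargmann/Paley--Wiener-type argument) and buys a self-contained proof with an explicit norm identity, at the cost of length; the paper's citation is shorter but opaque.

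Two caveats. First, the converse inclusion $H_1\subset H_0$ is thinner than ``the same calculation run in reverse'': for an entire $h$ with finite weighted double integral you must justify that the slice $h(\cdot+\mathrm{i}y)$ has Fourier transform $e^{-y^\top\xi}\,\mathcal{F}[h|_{\mathbb{R}^d}](\xi)$ (a contour shift that needs pointwise control of $h$, e.g.\ sub-mean-value estimates for $|h|^2$ against the Gaussian weight), and that the $y=0$ slice itself lies in $L^2$ --- finiteness of the double integral only controls almost every slice. This is standard machinery, but it is a step, not a reversal. Second, the constants do not come out as you assert: with your (correct) formula $\mathcal{F}[k^{\rm g}_x](\xi)=\sigma^d e^{-\sigma^2|\xi|^2/2}e^{-\mathrm{i}x^\top\xi}$, the displayed prefactor $\sigma^d/(2\pi)^{d/2}$ gives $\langle h,k^{\rm g}_x\rangle_{H_0}=\sigma^{2d}h(x)$, and your own complete-the-square computation yields $\|h\|_{H_1}^2=\sigma^{-d}(2\pi)^{-d/2}\int|\mathcal{F}[h](\xi)|^2e^{\sigma^2|\xi|^2/2}\,{\rm d}\xi$, so the reproducing identity forces the prefactor $1/(\sigma^d(2\pi)^{d/2})$. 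The discrepancy is a typo in the stated $H_0$ inner product (the paper's proof compensates by writing $\mathcal{F}[k_x]=\sigma^{-d}e^{-\mathrm{i}x^\top\xi}e^{-\sigma^2|\xi|^2/2}$, which is off in the other direction), but since you deferred the bookkeeping and promised it matches the displayed constant, you should instead note that carrying it out corrects the constant rather than confirms it.
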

\begin{proof}
We show $H^{\rm g}(\sigma)=H_0$.
It suffices to show that $\langle h, k_x\rangle_{H_1} = h(x)$ for $h \in H_1$ and $x \in \mathbb{R}^d$.
Since 
\[\mathcal{F}[k_x](\xi) = \frac{1}{\sigma^d} e^{-\mathrm{i}x^\top \xi} e^{-\sigma^2|\xi|^2/2},\]
we have
\[\langle \mathcal{F}[h], \mathcal{F}[k_x] \rangle_{H_1} = \frac{1}{\sigma^d} \mathcal{F}^{-1}[\mathcal{F}[h]](x) = h(x).\]
As for $H^{\rm g}(\sigma)=H_1$, see \cite[Theorem 1.17]{MR3560890_SaitohSawano}.
\end{proof}

We define 
\[\mathfrak{m}_{\sigma, b}: H^{\rm e}(\sigma, b) \to H^{\rm g}(\sigma)\]
by $(\mathfrak{m}_{\sigma, b}h)(x) := h(x)e^{-\|x-b\|^2/2\sigma^2}$.
Then, $\mathfrak{m}_{\sigma, b}$ induces an isomorphism from $H^{\rm e}(\sigma, b)$ to $H^{\rm g}(\sigma)$ as in the following proposition:
\begin{proposition}\label{prop: isom of exp and gauss}
    Let $\sigma>0$ and $b \in \mathbb{R}^d$.
    Then, $\mathfrak{m}_{\sigma, b}$ induces the isomorphism of Hilbert spaces such that $\mathfrak{m}_{\sigma, b}k^{\rm e}_p = e^{\|p-b\|^2/2\sigma^2}k^{\rm g}_p$ for all $p \in \mathbb{R}^d$.
\end{proposition}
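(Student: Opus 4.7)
The plan is to verify the two claims separately: the pointwise kernel identity by a direct calculation, and the isomorphism by using the second (Bargmann-type) integral descriptions of the two RKHSs given in Propositions \ref{prop: RKHS w.r.t exp kernel} and \ref{prop: RKHS w.r.t. gaussian kernel}.

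First I would check the identity $\mathfrak{m}_{\sigma, b}k^{\rm e}_p = e^{\|p-b\|^2/2\sigma^2}k^{\rm g}_p$ by expanding
\[ \|x-p\|^2 \;=\; \|(x-b)-(p-b)\|^2 \;=\; \|x-b\|^2 - 2(x-b)^\top(p-b) + \|p-b\|^2, \]
which rearranges to
\[ (x-b)^\top(p-b)/\sigma^2 - \|x-b\|^2/2\sigma^2 \;=\; \|p-b\|^2/2\sigma^2 - \|x-p\|^2/2\sigma^2. \]
Exponentiating both sides gives precisely $k^{\rm e}_p(x)\,e^{-\|x-b\|^2/2\sigma^2} = e^{\|p-b\|^2/2\sigma^2}\,k^{\rm g}_p(x)$. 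This also shows $\mathfrak{m}_{\sigma,b}$ sends the dense subspace $\mathrm{span}\{k^{\rm e}_p\}_{p \in \mathbb{R}^d} \subset H^{\rm e}(\sigma,b)$ into $H^{\rm g}(\sigma)$.

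Next I would prove that $\mathfrak{m}_{\sigma,b}$ is isometric. For $h \in H^{\rm e}(\sigma,b)$, use the $H_1$-description: $h$ extends holomorphically to $\mathbb{C}^d$, so the pointwise multiplication $(\mathfrak{m}_{\sigma,b}h)(x) = h(x)e^{-\|x-b\|^2/2\sigma^2}$ extends to the entire function
\[ (\mathfrak{m}_{\sigma,b}h)(x+y\mathrm{i}) = h(x+y\mathrm{i})\, \exp\!\bigl(-((x+y\mathrm{i})-b)^\top((x+y\mathrm{i})-b)/2\sigma^2\bigr). \]
A direct expansion of the quadratic form gives $((x+y\mathrm{i})-b)^\top((x+y\mathrm{i})-b) = \|x-b\|^2 - \|y\|^2 + 2\mathrm{i}(x-b)^\top y$, so
\[ \bigl|(\mathfrak{m}_{\sigma,b}h)(x+y\mathrm{i})\bigr|^2 \;=\; |h(x+y\mathrm{i})|^2\, e^{-(\|x-b\|^2 - \|y\|^2)/\sigma^2}. \]
Plugging this into the Gaussian-kernel norm formula in Proposition \ref{prop: RKHS w.r.t. gaussian kernel} gives
\[ \|\mathfrak{m}_{\sigma,b}h\|_{H^{\rm g}(\sigma)}^2 \;=\; \frac{1}{(\pi\sigma^2)^d}\int_{\mathbb{R}^d\times\mathbb{R}^d} |h(x+y\mathrm{i})|^2 e^{-(\|x-b\|^2+\|y\|^2)/\sigma^2}\,{\rm d}x{\rm d}y \;=\; \|h\|_{H^{\rm e}(\sigma,b)}^2. \]
In particular $\mathfrak{m}_{\sigma,b}$ really lands in $H^{\rm g}(\sigma)$ and is an isometric embedding.

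Finally, for surjectivity, consider the reverse operation $g \mapsto g(x)e^{\|x-b\|^2/2\sigma^2}$ acting on $g \in H^{\rm g}(\sigma)$. Since $g$ extends holomorphically to $\mathbb{C}^d$ and $e^{((\cdot)-b)^\top((\cdot)-b)/2\sigma^2}$ is entire, the product extends holomorphically. Running the previous integral computation in reverse shows this product lies in $H^{\rm e}(\sigma,b)$ with equal norm, and it is a two-sided inverse of $\mathfrak{m}_{\sigma,b}$. Hence $\mathfrak{m}_{\sigma,b}$ is a Hilbert space isomorphism, completing the proof. The only subtlety in carrying this out carefully will be justifying that the pointwise formula on $\mathbb{R}^d$ agrees with the holomorphic extension used in the $H_1$ norm, which is automatic from the uniqueness of holomorphic extensions since both $h(z)$ and $\exp(-(z-b)^\top(z-b)/2\sigma^2)$ are entire.
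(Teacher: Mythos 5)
Your proposal is correct and follows essentially the same route as the paper: the paper's one-line proof simply invokes the $H=H_1$ (Bargmann-type) descriptions from Propositions \ref{prop: RKHS w.r.t exp kernel} and \ref{prop: RKHS w.r.t. gaussian kernel}, and your computation with the entire multiplier $e^{-(z-b)^\top(z-b)/2\sigma^2}$, the weight identity $e^{-(\|x-b\|^2-\|y\|^2)/\sigma^2}\cdot e^{-2\|y\|^2/\sigma^2}=e^{-(\|x-b\|^2+\|y\|^2)/\sigma^2}$, and the explicit inverse is exactly the detail that argument suppresses. The kernel identity check via completing the square is likewise the intended verification.
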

\begin{proof}
    It follows from the the equalities $H = H_1$ for both kernels in Propositions \ref{prop: RKHS w.r.t exp kernel} and \ref{prop: RKHS w.r.t. gaussian kernel}.
\end{proof}

\subsection{Explicit description of the intrinsic observables}
\label{subsec: explicit intinsic observables}
In this section, we denote $r_n := \binom{n+d}{d}$ (eventually, it coincides with the dimension of the space of the intrinsic observables).
Let $\mathfrak{P}_n$ be the set of polynomial functions on $\mathbb{R}^d$ of degree less than or equal to $n$.
Then, we can determine the space of intrinsic observables $V_{p,n}$ as follows:
\begin{proposition}\label{prop: explicit form of Vpn}
    Let $k$ be the exponential kernel \eqref{exponential kernel} or the Gaussian kernel \eqref{gaussian kernel}.
    Then, $V_{p,n} = \{q k_p : q \in \mathfrak{P}_n\}$.
\end{proposition}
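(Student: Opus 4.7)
The plan is to use Corollary 2.10, which identifies $V_{p,n}$ with the span of the functions $\partial_x^\alpha k(x,\cdot)|_{x=p}$ for $|\alpha|\le n$. For each of the two kernels I would compute these derivatives explicitly and show that, as $\alpha$ ranges over $|\alpha|\le n$, the resulting functions form a basis of $\{q k_p : q\in \mathfrak{P}_n\}$. Since the multiplication map $q\mapsto q k_p$ is injective (as $k_p$ is everywhere nonzero), both $V_{p,n}$ and $\{qk_p : q\in \mathfrak{P}_n\}$ have dimension $r_n = \binom{n+d}{d}$, so it is enough to prove inclusion $V_{p,n}\subset \{qk_p : q\in \mathfrak{P}_n\}$ together with the spanning statement.

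For the exponential kernel $k^{\rm e}(x,y)=e^{(x-b)^\top(y-b)/\sigma^2}$, the identity $\partial_{x_i} k^{\rm e}(x,y)=\sigma^{-2}(y_i-b_i)k^{\rm e}(x,y)$ gives by induction
\[
\partial_x^\alpha k^{\rm e}(x,y)\big|_{x=p} \;=\; \sigma^{-2|\alpha|}(y-b)^\alpha k^{\rm e}_p(y).
\]
Since $\{(y-b)^\alpha : |\alpha|\le n\}$ is a basis of $\mathfrak{P}_n$ (it is the monomial basis in the translated variable $y-b$), the span of these functions equals $\{q k^{\rm e}_p : q\in\mathfrak{P}_n\}$, proving the exponential case.

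For the Gaussian kernel $k^{\rm g}(x,y)=e^{-\|x-y\|^2/2\sigma^2}$, I would substitute $u=x-p$ and factor:
\[
k^{\rm g}(x,y) \;=\; k^{\rm g}_p(y)\cdot e^{-\|u\|^2/2\sigma^2}\cdot e^{u^\top(y-p)/\sigma^2}.
\]
Applying $\partial_x^\alpha|_{x=p} = \partial_u^\alpha|_{u=0}$ yields $\partial_x^\alpha k^{\rm g}(x,y)|_{x=p} = Q_\alpha(y-p)\,k^{\rm g}_p(y)$, where $Q_\alpha(z)$ is the polynomial obtained as $\alpha!$ times the coefficient of $u^\alpha$ in the right-hand product. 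Only the factor $e^{u^\top z/\sigma^2}$ contributes terms of top $u$-degree $|\alpha|$, and its $u^\alpha$-coefficient is $z^\alpha/(\sigma^{2|\alpha|}\alpha!)$; contributions from $e^{-\|u\|^2/2\sigma^2}$ have even $u$-degree $\ge 2$ and therefore lower the resulting degree in $z$. Thus
\[
Q_\alpha(z) \;=\; \sigma^{-2|\alpha|} z^\alpha \;+\; (\text{polynomial in }z\text{ of total degree}<|\alpha|).
\]
Ordering multi-indices by $|\alpha|$, the transition matrix from $\{z^\alpha\}_{|\alpha|\le n}$ to $\{Q_\alpha\}_{|\alpha|\le n}$ is block-triangular with nonzero diagonal entries $\sigma^{-2|\alpha|}$, hence invertible. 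Consequently $\{Q_\alpha\}_{|\alpha|\le n}$ is a basis of $\mathfrak{P}_n$, and the span of $\{\partial_x^\alpha k^{\rm g}(x,\cdot)|_{x=p}\}_{|\alpha|\le n}$ equals $\{q k^{\rm g}_p : q\in \mathfrak{P}_n\}$.

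The only nontrivial step is the triangularity argument in the Gaussian case; it is not conceptually hard but must be carried out carefully with the generating-function expansion to confirm that the lower-order corrections really do strictly lower the $z$-degree. Once this is checked, the proposition follows for both kernels by a dimension count.
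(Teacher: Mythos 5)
Your proposal is correct and follows essentially the same route as the paper: the paper's proof consists precisely of invoking Corollary \ref{cor: alternative definition of Vpn}, with the explicit derivative computations (which for the exponential kernel give $\sigma^{-2|\alpha|}(y-b)^\alpha k^{\rm e}_p$, and for the Gaussian kernel give $\sigma^{-2|\alpha|}(y-p)^\alpha k^{\rm g}_p$ plus lower-order terms) left implicit. Your triangularity argument for the Gaussian case is a valid way to fill in that detail, and it is consistent with the explicit bases later given in Propositions \ref{prop: ONB for exp kernel} and \ref{prop: ONB for gaussian kernel}.
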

\begin{proof}
    It follows from Corollary \ref{cor: alternative definition of Vpn}.
\end{proof}
\begin{proposition}\label{prop: ONB for exp kernel}
    Let $\sigma > 0$ and $b \in \mathbb{R}^d$.
    We fix a numbering of $\mathbb{Z}_{\ge 0}^d$ such that  $\mathbb{Z}_{\ge 0}^d = \{\alpha^{(i)}\}_{i=1}^\infty$ and $|\alpha^{(i)}| \le |\alpha^{(j)}|$ if $i \le j$.
    Let 
    \[v_{p, i}^{\rm e}(x) := v_{p, i}^{\rm e}(x; \sigma, b) := \frac{(x-p)^{\alpha^{(i)}}}{\sigma^{|\alpha^{(i)}|}} \exp\left(\frac{(p-b)^\top(2x-p-b)}{2\sigma^2}\right).\]
    Then, $\{v_{p,i}^{\rm e}\}_{i = 1}^{r_n}$ is an orthogonal basis of $V_{p,n}$ and the matrix $\mathbf{G}_n$ of \eqref{gram matrix} is a diagonal matrix whose $i$-th diagonal component is $\alpha^{(i)}!$.
    In particular, $\iota'$, the dual map of $\iota$ of \eqref{inclusion} is injective on $\sum_{p \in \mathbb{R}^d} \mathfrak{D}_p$.
\end{proposition}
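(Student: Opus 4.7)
The plan is to reduce the orthogonality computation to the Gaussian-kernel side via the isometry $\mathfrak{m}_{\sigma,b}$ of Proposition \ref{prop: isom of exp and gauss}, carry out the inner products in the Fock-space description, and then deduce the dimension count and injectivity of $\iota'$. First I would unpack the formula: the identity $(p-b)^\top(2x-p-b)/(2\sigma^2) = (p-b)^\top(x-b)/\sigma^2 - |p-b|^2/(2\sigma^2)$ gives
\[
v_i^{\rm e}(x) \;=\; \sigma^{-|\alpha^{(i)}|}(x-p)^{\alpha^{(i)}}\, e^{-|p-b|^2/(2\sigma^2)}\, k^{\rm e}_p(x),
\]
so $v_i^{\rm e}$ is a polynomial of degree $|\alpha^{(i)}| \le n$ times $k^{\rm e}_p$; by Proposition \ref{prop: explicit form of Vpn}, $v_i^{\rm e} \in V_{p,n}$ for $i \le r_n$.

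Next, applying $\mathfrak{m}_{\sigma,b}$ and completing the square yields $\mathfrak{m}_{\sigma,b}(v_i^{\rm e})(x) = \sigma^{-|\alpha^{(i)}|}(x-p)^{\alpha^{(i)}}\, k^{\rm g}_p(x)$. Since $\mathfrak{m}_{\sigma,b}$ is an isometry, it suffices to compute $\langle g_i, g_j\rangle_{H^{\rm g}(\sigma)}$ for $g_i := \mathfrak{m}_{\sigma,b}(v_i^{\rm e})$. I use the Fock-space form $H^{\rm g}(\sigma) = H_1$ of Proposition \ref{prop: RKHS w.r.t. gaussian kernel}: the analytic continuation is $g_i(z) = \sigma^{-|\alpha^{(i)}|}(z-p)^{\alpha^{(i)}} e^{-\sum_j (z_j-p_j)^2/(2\sigma^2)}$, and with $z_j = x_j + iy_j$ a direct computation gives $|e^{-\sum (z_j-p_j)^2/(2\sigma^2)}|^2 = e^{(-|x-p|^2 + |y|^2)/\sigma^2}$. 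Substituting $(u_j,v_j) := (x_j-p_j, y_j)$ the defining integral factors coordinatewise; in each factor, writing $u_j + iv_j = re^{i\theta}$ reduces to $\int_0^\infty \int_0^{2\pi} r^{\alpha^{(i)}_j + \alpha^{(j)}_j + 1} e^{i(\alpha^{(i)}_j - \alpha^{(j)}_j)\theta} e^{-r^2/\sigma^2}\, d\theta\, dr$. The angular integral kills cross terms, the radial integral equals $\sigma^{2\alpha^{(i)}_j+2}\alpha^{(i)}_j!/2$ when $\alpha^{(i)}_j = \alpha^{(j)}_j$, and collecting factors through the normalization $(\pi\sigma^2)^{-d}$ and the $\sigma^{-|\alpha^{(i)}|-|\alpha^{(j)}|}$ prefactor yields $\langle g_i, g_j\rangle = \alpha^{(i)}!\, \delta_{ij}$. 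By the isometry, $\mathbf{G}_n$ is diagonal with entries $\alpha^{(i)}!$.

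For the basis claim, $\mathop{\rm dim}\mathfrak{D}_{p,n} = r_n$ by Proposition \ref{prop: alternative definition of Delta} and Lemma \ref{lem: linear independence of Delta p}, so $\mathop{\rm dim} V_{p,n} \le r_n$; on the other hand the $r_n$ nonzero pairwise-orthogonal vectors $v_1^{\rm e},\ldots, v_{r_n}^{\rm e}$ are linearly independent in $V_{p,n}$, so equality holds and they form an orthogonal basis. This also shows $\mathfrak{r}\circ \iota'|_{\mathfrak{D}_{p,n}}$ is injective for every $p$ and $n$, hence $\mathfrak{r}\circ\iota'|_{\mathfrak{D}_p}$ is injective. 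Finally, to upgrade this to injectivity of $\iota'$ on $\sum_{p \in \mathbb{R}^d} \mathfrak{D}_p$, suppose $\sum_{i=1}^N D_i \mapsto 0$ with $D_i \in \mathfrak{D}_{p_i}$ and the $p_i$ pairwise distinct; writing $D_i = \sum_\alpha c_{i,\alpha}\delta_{p_i}^{(\alpha)}$, Proposition \ref{prop: explicit formula of iota D delta p} gives $(\mathfrak{r}\circ\iota')(\sum_i D_i)(y) = \sum_i P_i(y) e^{(p_i-b)^\top(y-b)/\sigma^2}$ where $P_i(y) = \sum_\alpha c_{i,\alpha}(y-b)^\alpha/\sigma^{2|\alpha|}$. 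Because the linear forms $y \mapsto (p_i-b)^\top(y-b)/\sigma^2$ are pairwise distinct, the classical linear independence of ``polynomial times exponential'' functions (e.g.\ by testing asymptotics along a ray on which the real parts of these forms separate) forces each $P_i \equiv 0$, hence each $D_i = 0$.

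The main obstacle I expect is the Fock-space inner product calculation: it is not deep but requires careful bookkeeping of the holomorphic extension, the weight $e^{-2|y|^2/\sigma^2}$, and the polar-coordinate reduction across $d$ coordinates. Everything else is a clean dimension count combined with the standard linear-independence principle for exponentials.
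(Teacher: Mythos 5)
Your proposal is correct and takes essentially the same route as the paper: the Gram matrix claim rests on the standard Fock-space orthogonality of monomials (the paper computes directly in the exponential-kernel space $H_1$ and cites Zhu, while you pass through the isometry $\mathfrak{m}_{\sigma,b}$ to the Gaussian side and evaluate the same weighted integral by polar coordinates), and in both arguments the injectivity of $\iota'$ on $\sum_{p}\mathfrak{D}_p$ reduces to the classical linear independence of polynomial-times-exponential families. The only differences are cosmetic: you carry out the integral explicitly rather than citing it, and you justify the linear-independence step by a generic-ray asymptotic argument where the paper appeals to an omitted induction with differential operators.
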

\begin{proof}
    By direct calculation, we have
    \[v_{p,i}^{\rm e}(z) \overline{v_{p,j}^{\rm e}(z)} e^{-\|z-b\|^2/\sigma^2} = \sigma^{-|\alpha^{(i)}|-|\alpha^{(j)}|}(z-p)^{\alpha^{(i)}}(\overline{z} - p)^{\alpha^{(j)}} e^{-\|z-p\|^2/2\sigma^2}. \]
    The first statement follows from the combination of the above equality with the identity $H=H_1$ in Proposition \ref{prop: RKHS w.r.t exp kernel} and \cite[Proposition 2.1]{Zhu12}.
    As for the injectivity of $\iota'$, by Lemma \ref{lem: linear independence of Delta p}, it suffices to show the linear independence of $\cup_{p \in \mathbb{R}^d} \{v_{p,i}^{\rm e}\}_{i = 1}^{r_n}$ for all $n \ge 0$.
    We omit the proof as it is easily proved via the induction on $n$ using the differential operator.
\end{proof}
We denote $\mathbf{v}_{p,n}(x)$ and $\mathbf{G}_n$ defined by the orthogonal system $\{v_{p, i}^{\rm e}\}_{i=1}^\infty$ by $\mathbf{v}_{p,n}^{\rm e}(x;\sigma)$ and $\mathbf{G}_n^{\rm e}$, respectively.

\begin{proposition}\label{prop: ONB for gaussian kernel}
    Let $\sigma > 0$.
    We fix a numbering of $\mathbb{Z}_{\ge 0}^d$ such that  $\mathbb{Z}_{\ge 0}^d = \{\alpha^{(i)}\}_{i=1}^\infty$ and $|\alpha^{(i)}| \le |\alpha^{(j)}|$ if $i \le j$.
    Let 
    \[v_{p,i}^{\rm g}(x) := v_{p,i}^{\rm g}(x; \sigma) := \frac{(x-p)^{\alpha^{(i)}}}{\sigma^{|\alpha^{(i)}|}} e^{\frac{-\|x-p\|^2}{2\sigma^2}}.\]
    Then, $\{v_{p,i}^{\rm g}\}_{i = 1}^{r_n}$ is an orthogonal basis of $V_{p,n}$ and $\mathbf{G}_n$ is a diagonal matrix whose $i$-th diagonal component is $\alpha^{(i)}!$.
    In particular, $\iota'$ is injective on $\sum_{p \in \mathbb{R}^d} \mathfrak{D}_p$.
\end{proposition}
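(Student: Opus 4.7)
The strategy is to transfer the corresponding result for the exponential kernel (Proposition \ref{prop: ONB for exp kernel}) through the isometric isomorphism $\mathfrak{m}_{\sigma, b}: H^{\rm e}(\sigma, b) \to H^{\rm g}(\sigma)$ supplied by Proposition \ref{prop: isom of exp and gauss}, specialized to $b = p$. With this choice of base point, the exponential Gaussian-shift factor $e^{(p-b)^\top(2x-p-b)/(2\sigma^2)}$ collapses to $1$, so $v_i^{\rm e}(x; \sigma, p) = (x-p)^{\alpha^{(i)}}/\sigma^{|\alpha^{(i)}|}$. Applying $\mathfrak{m}_{\sigma, p}$, which multiplies by $e^{-\|x-p\|^2/2\sigma^2}$, reproduces exactly $v_i^{\rm g}(x;\sigma)$. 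Thus the basis $\{v_i^{\rm g}\}_{i=1}^{r_n}$ is literally the image of the basis $\{v_i^{\rm e}(\cdot;\sigma,p)\}_{i=1}^{r_n}$ under $\mathfrak{m}_{\sigma, p}$.

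Next, I would confirm that $\mathfrak{m}_{\sigma, p}$ carries $V_{p,n} \subset H^{\rm e}(\sigma, p)$ bijectively onto $V_{p,n} \subset H^{\rm g}(\sigma)$. By Proposition \ref{prop: explicit form of Vpn}, each $V_{p,n}$ equals $\{q k_p : q \in \mathfrak{P}_n\}$, and since $\mathfrak{m}_{\sigma, p}(k^{\rm e}_p) = k^{\rm g}_p$ (Proposition \ref{prop: isom of exp and gauss}, noting that $e^{\|p-p\|^2/2\sigma^2}=1$) and $\mathfrak{m}_{\sigma, p}$ acts on a polynomial $q \in \mathfrak{P}_n$ by $q \mapsto q\,e^{-\|\cdot - p\|^2/2\sigma^2} = q\,k^{\rm g}_p$, the restriction is surjective. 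Because the isomorphism in Proposition \ref{prop: isom of exp and gauss} is moreover \emph{isometric} — a direct check on the $H_1$-integral realizations shows the weight $|e^{-(x-b)^\top(x-b)/2\sigma^2}|^2 = e^{-\|u-b\|^2/\sigma^2+\|v\|^2/\sigma^2}$ (for $x=u+v\mathrm{i}$) exactly converts the Gaussian weight $e^{-2\|v\|^2/\sigma^2}$ into the exponential weight $e^{-(\|u-b\|^2+\|v\|^2)/\sigma^2}$ — the Gram matrix of $\{v_i^{\rm g}\}$ coincides with that of $\{v_i^{\rm e}(\cdot;\sigma,p)\}$, and Proposition \ref{prop: ONB for exp kernel} then gives that $\mathbf{G}_n$ is diagonal with entries $\alpha^{(i)}!$.

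For the final injectivity claim on $\sum_{p \in \mathbb{R}^d}\mathfrak{D}_p$, I would invoke Lemma \ref{lem: linear independence of Delta p} together with Proposition \ref{prop: alternative definition of Delta}, reducing the claim to the linear independence of $\bigcup_{p \in \mathbb{R}^d}\{v_i^{\rm g}(\cdot;\sigma)\}_{i=1}^{r_n}$ (viewed with varying base point $p$) for every $n$. Given a finite relation $\sum_{i=1}^r P_i(\cdot - p_i)\,e^{-\|\cdot - p_i\|^2/2\sigma^2} = 0$ with distinct $p_1,\ldots,p_r$ and polynomials $P_i$ of degree at most $n$, multiplying through by $e^{\|\cdot\|^2/2\sigma^2}$ rewrites it as $\sum_{i=1}^r c_i P_i(\cdot - p_i)\,e^{p_i^\top \cdot /\sigma^2} = 0$ with nonzero scalars $c_i = e^{-\|p_i\|^2/2\sigma^2}$; the classical linear independence of the distinct exponential characters $\{e^{p_i^\top \cdot/\sigma^2}\}$ over polynomial coefficients then forces each $P_i$ to vanish, as in the exponential-kernel argument. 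I do not expect any serious obstacle here — the only mildly delicate point is checking that $\mathfrak{m}_{\sigma, p}$ is truly isometric (not merely a continuous algebraic isomorphism), which is precisely the content of the weight computation mentioned above.
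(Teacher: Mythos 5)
Your proof is correct and follows essentially the same route as the paper: the paper's one-line proof likewise transfers Proposition \ref{prop: ONB for exp kernel} through the multiplication isomorphism of Proposition \ref{prop: isom of exp and gauss}, only with the fixed choice $b=0$, so that the single map $\mathfrak{m}_{\sigma,0}$ sends $v_i^{\rm e}$ to $v_i^{\rm g}$ for every base point $p$ simultaneously and thereby also carries over the injectivity statement, whereas your per-point choice $b=p$ forces you to handle the injectivity separately, which you do correctly via linear independence of polynomial-weighted exponentials. Your explicit verification that $\mathfrak{m}_{\sigma,b}$ is isometric is exactly the content of Proposition \ref{prop: isom of exp and gauss} (an isomorphism of Hilbert spaces), so it is redundant but harmless.
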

\begin{proof}
    Since
    \[\mathfrak{m}_{\sigma,0}(v_{p,i}^{\rm e}) = v_{p,i}^{\rm g},\]
    it follows from Proposition \ref{prop: isom of exp and gauss}.
\end{proof}
We denote $\mathbf{v}_{p,n}(x)$ and $\mathbf{G}_n$ defined by the orthogonal system $\{v_{p, i}^{\rm g}\}_{i=1}^\infty$ by $\mathbf{v}_{p,n}^{\rm g}(x;\sigma)$ and $\mathbf{G}_n^{\rm g}$, respectively.

\subsection{Validity of Assumptions \ref{asm: domain of C_f} and \ref{asm: domain of A_F}}\label{subsec: validity of domain assumption of koopman}
Here, we provide several sufficient conditions for Assumptions \ref{asm: domain of C_f} and \ref{asm: domain of A_F} to be satisfied.

First, we discuss the discrete case.
Let $f: \mathbb{R}^d \to \mathbb{R}^d$ be a map of class $C^\infty$.
At present, we know the nontrivial results only for the RKHS with respect to the exponential kernel.
\begin{proposition}\label{prop: sufficient condition for domain condition, discrete, exp, p=b}
    Let $\sigma > 0$ and $p \in \mathbb{R}^d$.
    Assume $f(\cdot)^\alpha \in H^{\rm e}(\sigma, p)$ for all $\alpha \in \mathbb{Z}_{\ge 0}^d$.
    Then, Assumption \ref{asm: domain of C_f} for $p \in \mathbb{R}^d$ holds for $H^{\rm e}(\sigma, p)$.
\end{proposition}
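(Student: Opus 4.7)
The plan is to unfold the definitions and reduce to the hypothesis. By Proposition \ref{prop: explicit form of Vpn} applied to the exponential kernel $k^{\rm e}$ with $b = p$, we have $V_{p,n} = \{q \cdot k^{\rm e}_p : q \in \mathfrak{P}_n\}$. Since $k^{\rm e}_p(x) = e^{(p-p)^\top(x-p)/\sigma^2} = 1$ in this case, this simplifies dramatically to $V_{p,n} = \mathfrak{P}_n$, the space of polynomial functions on $\mathbb{R}^d$ of degree at most $n$. Hence $V_p = \bigcup_n V_{p,n}$ is exactly the algebra of all polynomial functions.

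It then suffices to verify that $C_f h = h \circ f$ lies in $H^{\rm e}(\sigma, p)$ for every polynomial $h$. Any such $h$ can be written as $h(y) = \sum_{|\alpha| \le n} c_\alpha\, y^\alpha$ for some $n$ and complex coefficients $c_\alpha$, so that
\begin{align*}
(h \circ f)(x) \;=\; \sum_{|\alpha| \le n} c_\alpha\, f(x)^\alpha.
\end{align*}
By the standing hypothesis, each function $x \mapsto f(x)^\alpha$ belongs to $H^{\rm e}(\sigma, p)$. Since $H^{\rm e}(\sigma, p)$ is a vector space and the sum above is finite, $h \circ f \in H^{\rm e}(\sigma, p)$, which shows $h \in D(C_f)$ and hence $V_p \subset D(C_f)$.

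There is essentially no obstacle: the only substantive input is the explicit description of $V_{p,n}$ given by Proposition \ref{prop: explicit form of Vpn} and the fortunate fact that centering the exponential kernel at the same point $p$ kills the exponential factor, reducing $V_p$ to ordinary polynomials. The proposition is then a tautology given the hypothesis on $f(\cdot)^\alpha$.
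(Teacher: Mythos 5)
Your proof is correct and follows essentially the same route as the paper: the paper's (terse) argument is exactly that with $b=p$ the space $V_p$ reduces to the polynomial functions, after which the hypothesis $f(\cdot)^\alpha \in H^{\rm e}(\sigma,p)$ and linearity of the RKHS finish the job. You have simply spelled out the details the paper leaves implicit.
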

\begin{proof}
    Since $V_p$ in $H^{\rm e}(\sigma, p)$ coincides with the space of polynomial functions, it follows from Proposition \ref{prop: RKHS w.r.t exp kernel}.
\end{proof}
For example, if we take $f$ as the polynomial map, $f(\cdot)^\alpha \in H^{\rm e}(\sigma, p)$ always holds and thus Assumption \ref{asm: domain of C_f} is satisfied.
Here, we note that we assume $b=p$ in Proposition \ref{prop: sufficient condition for domain condition, discrete, exp, p=b}, and this assumption is essential.
We do not assume $b = p$ although the class of the dynamical systems satisfying Assumption \ref{asm: domain of C_f} is more restricted, in the following result:

\begin{proposition}\label{prop: sufficient condition for domain condition, discrete, exp, quadratic}
    Let $\sigma > 0$.
    Let $p \in \mathbb{R}^d$, and $b \in \mathbb{R}$ with $p \neq b$.
    Assume that there exists $\widetilde{f}: \mathbb{C}^d \to \mathbb{C}^d$ and $0 \le \varepsilon < 1/2$ such that $\widetilde{f}|_{\mathbb{R}^d} = f$ and
    \begin{align}
        \|\widetilde{f}(z)\| \le \frac{\varepsilon }{\|p - b\|}  \|z\|^2
    \end{align}
    for all $z \in \mathbb{C}^d$.
    Then, Assumption \ref{asm: domain of C_f} for $p \in \mathbb{R}^d$ holds for $H^{\rm e}(\sigma, b)$.
\end{proposition}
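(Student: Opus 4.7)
The plan is to combine the concrete description of $V_p$ from Proposition~\ref{prop: explicit form of Vpn} with the Fock-type realization of $H^{\mathrm{e}}(\sigma, b)$ given by Proposition~\ref{prop: RKHS w.r.t exp kernel}. By Proposition~\ref{prop: explicit form of Vpn}, every $h \in V_p$ has the form $h(x) = q(x) k_p(x)$ for some polynomial $q$, so it will suffice to show that $(q \cdot k_p)\circ f$ belongs to $H^{\mathrm{e}}(\sigma,b)$ for every polynomial $q$. Using the characterization $H^{\mathrm{e}}(\sigma,b) = H_1$, the problem reduces to verifying that the entire extension
\begin{equation*}
    F(z) := q(\widetilde{f}(z))\, \exp\bigl((p-b)^\top(\widetilde{f}(z)-b)/\sigma^2\bigr)
\end{equation*}
satisfies
\begin{equation*}
    \int_{\mathbb{R}^d\times\mathbb{R}^d}|F(x+y\mathrm{i})|^2 e^{-(\|x-b\|^2 + \|y\|^2)/\sigma^2}\,dx\,dy < \infty.
\end{equation*}
For $F$ to be entire, $\widetilde{f}$ must itself be holomorphic, and the growth bound $\|\widetilde{f}(z)\| \le \varepsilon\|z\|^2$ then forces $\widetilde{f}$ to be a polynomial map of degree at most two by a Liouville-type argument; this affords a clean decomposition $\widetilde{f}(z) = A_0 + A_1(z) + A_2(z)$ with operator norm of $A_2$ bounded by $\varepsilon$.

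I would then bound the integrand in three pieces. First, $|q(\widetilde{f}(z))|^2$ grows at most polynomially in $\|\widetilde{f}(z)\|$, hence at most polynomially in $\|z\|$ via the hypothesis. Second, since $\mathrm{Re}\bigl((p-b)^\top(\widetilde{f}(z)-b)\bigr) \le \|p-b\|(\|\widetilde{f}(z)\| + \|b\|)$, the exponential factor is controlled by
\begin{equation*}
    \bigl|\exp\bigl((p-b)^\top(\widetilde{f}(z)-b)/\sigma^2\bigr)\bigr|^2 \le \exp\bigl(2\|p-b\|(\varepsilon\|z\|^2 + \|b\|)/\sigma^2\bigr).
\end{equation*}
Third, since $\|x-b\|^2 + \|y\|^2 = \|z\|^2 - 2 b^\top x + \|b\|^2$, the Gaussian weight decays like $e^{-\|z\|^2/\sigma^2}$ at infinity. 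Combining the three estimates, the integrand is dominated by a polynomial in $\|z\|$ times $\exp\bigl((2\varepsilon\|p-b\| - 1)\|z\|^2/\sigma^2 + O(\|z\|)\bigr)$, so integrability follows as soon as the coefficient of $\|z\|^2$ is strictly negative, which is secured by the smallness ensured by the hypothesis $\varepsilon < 1$.

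The main technical obstacle is this sharp quantitative balance between the quadratic growth of $\widetilde{f}$ and the Gaussian decay of the weight. One must expand the real quadratic form $2\mathrm{Re}((p-b)^\top \widetilde{f}(z))$ in the real variables $(x,y)$ through the decomposition of $\widetilde{f}$ into linear and quadratic parts and verify that the net quadratic form $2\mathrm{Re}((p-b)^\top\widetilde{f}(z))/\sigma^2 - (\|x-b\|^2 + \|y\|^2)/\sigma^2$ is strictly negative definite at infinity. Using the bound on the operator norm of $A_2$ extracted from $\|\widetilde{f}(z)\| \le \varepsilon\|z\|^2$, this reduces to checking the sign of $2\varepsilon\|p-b\| - 1$, which is where the hypothesis $\varepsilon < 1$ enters in an essential way, and the remaining lower-order terms are absorbed by routine Gaussian estimates.
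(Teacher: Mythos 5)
Your reduction --- identifying $V_p$ with $\{q k_p : q \text{ polynomial}\}$ via Proposition \ref{prop: explicit form of Vpn} and then checking square-integrability of the entire extension against the weight of $H_1$ in Proposition \ref{prop: RKHS w.r.t exp kernel} --- is exactly the route the paper's one-line proof points to, and your estimates are correct up to the point where the integrand is dominated by a polynomial factor times $\exp\bigl(((2\varepsilon\|p-b\|-1)\|z\|^2+O(\|z\|))/\sigma^2\bigr)$. The gap is the very last step: integrability requires $2\varepsilon\|p-b\|<1$, and this is \emph{not} ``secured by'' the hypothesis $\varepsilon<1$, which carries no information whatsoever about $\|p-b\|$ (note that $\varepsilon\|p-b\|$ is the scale-invariant quantity entering the quadratic form, whereas the condition $\varepsilon<1$ is not scale-invariant). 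What your argument actually establishes is Assumption \ref{asm: domain of C_f} for those $p$ with $\|p-b\|<1/(2\varepsilon)$, together with the unproblematic case $p=b$, where $k_p\equiv 1$ and only the polynomially growing factor remains; the conclusion for arbitrary $p\in\mathbb{R}^d$ does not follow from your bound.

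Moreover, this is not a defect of your particular estimate that a sharper computation could repair. Take $d=1$, $b=0$, $\widetilde f(z)=\varepsilon z^2$ with $\varepsilon=0.9$ (so the growth hypothesis holds with equality and $\varepsilon<1$), and $p=1$. Then $k_p\circ f(x)=e^{\varepsilon p x^2/\sigma^2}$, and its entire extension yields the integrand $e^{((2\varepsilon p-1)x^2-(2\varepsilon p+1)y^2)/\sigma^2}$ against the weight of $H^{\rm e}(\sigma,0)$, which diverges because $2\varepsilon p=1.8>1$; hence already $k_p\in V_p$ fails to lie in $D(C_f)$. So under the literal hypotheses the desired conclusion is false for such $p$, and any complete argument must either restrict $p$ (for instance require $2\varepsilon\|p-b\|<1$) or strengthen the growth hypothesis on $\widetilde f$. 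Your final sentence, attributing the needed strict negativity of the quadratic coefficient to $\varepsilon<1$, is precisely where the proof breaks, and the example shows the breakdown is intrinsic rather than an artifact of the crude Cauchy--Schwarz bound you used.
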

\begin{proof}
    Since any element of $V_p$ is in the for of product of a polynomial and $e^{(p-b)^\top z/\sigma^2}$, it follows from definition of $H_1$ in Proposition \ref{prop: RKHS w.r.t exp kernel}.
\end{proof}

Next, we discuss the continuous case.
In contrast to the discrete case, a quite general class of dynamical systems satisfies Assumption \ref{asm: domain of A_F}.
Let $F: \mathbb{R}^d \to \mathbb{R}^d$ be a $C^\infty$ map and regard it as a vector field on $\mathbb{R}^d$.
Regarding the continuous dynamical system of \eqref{ODE}, we have the following propositions:
\begin{proposition}\label{prop: sufficient condition for domain condition, continuous, exp}
    Let $\sigma > 0$ and $b \in \mathbb{R}^d$.
    Let $k^{\rm e}$ be the positive definite kernel for $H^{\rm e}(\sigma, b)$.
    Let $p \in \mathbb{R}^d$.
    Assume that there exists $\widetilde{F} = (\widetilde{F}_1,\dots, \widetilde{F}_d) : \mathbb{C}^d \to \mathbb{C}^d$ such that $\widetilde{F}|_{\mathbb{R}^d} = F$ and that
    \begin{align}
        \int_{\mathbb{R}^d \times \mathbb{R}^d}|(\widetilde{F}_iPk_p^{\rm e})(x + y \mathrm{i})|^2 e^{-(\|x-b\|^2 + \|y\|^2)/\sigma^2}\,{\rm d}x {\rm d}y < \infty
    \end{align}
    for $i=1,\dots,d$ and any polynomial function $P$.
    Then, Assumption \ref{asm: domain of A_F} for $p \in \mathbb{R}^d$ holds for $H^{\rm e}(\sigma, b)$.
\end{proposition}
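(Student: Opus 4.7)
The plan is to reduce the claim to a direct integral estimate, using the explicit description of $V_p$ for the exponential kernel. By Proposition \ref{prop: explicit form of Vpn}, every element of $V_{p,n}$ has the form $q k_p^{\rm e}$ for some polynomial $q \in \mathfrak{P}_n$, so it suffices to show that $A_F(q k_p^{\rm e}) \in H^{\rm e}(\sigma,b)$ for each polynomial $q$.

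The first step is to observe that differentiation preserves the structure ``(polynomial) $\times k_p^{\rm e}$''. Indeed, since $k_p^{\rm e}(x) = e^{(x-b)^\top(p-b)/\sigma^2}$, we have $\partial_{x_i} k_p^{\rm e}(x) = \sigma^{-2}(p_i-b_i) k_p^{\rm e}(x)$, so by the Leibniz rule,
\begin{align*}
\partial_{x_i}(q k_p^{\rm e}) = \left(\partial_{x_i} q + \sigma^{-2}(p_i - b_i) q\right) k_p^{\rm e} =: Q_i \, k_p^{\rm e},
\end{align*}
where each $Q_i$ is again a polynomial. Therefore $A_F(q k_p^{\rm e}) = \sum_{i=1}^d F_i Q_i k_p^{\rm e}$ on $\mathbb{R}^d$.

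Next, I will verify membership in $H^{\rm e}(\sigma,b)$ via the $H_1$-characterization of Proposition \ref{prop: RKHS w.r.t exp kernel}. Since $Q_i$ and $k_p^{\rm e}$ extend to entire functions on $\mathbb{C}^d$ and $\widetilde{F}_i$ extends $F_i$ to $\mathbb{C}^d$, the function $\sum_i \widetilde{F}_i Q_i k_p^{\rm e}$ provides a natural extension of $A_F(q k_p^{\rm e})$ to $\mathbb{C}^d$ (holomorphicity of the full sum follows from the holomorphicity of $\widetilde{F}_i$, which is the tacit content of the hypothesis since the $H_1$ description requires a holomorphic extension). Applying the triangle inequality together with the hypothesized finiteness of $\int |(\widetilde{F}_i P k_p^{\rm e})(x+y\mathrm{i})|^2 e^{-(\|x-b\|^2+\|y\|^2)/\sigma^2} {\rm d}x {\rm d}y$ for each polynomial $P$ (taking $P = Q_i$), we obtain
\begin{align*}
\int_{\mathbb{R}^d \times \mathbb{R}^d}\Bigl|\sum_{i=1}^d (\widetilde{F}_i Q_i k_p^{\rm e})(x+y\mathrm{i})\Bigr|^2 e^{-(\|x-b\|^2+\|y\|^2)/\sigma^2} {\rm d}x {\rm d}y < \infty,
\end{align*}
which by Proposition \ref{prop: RKHS w.r.t exp kernel} is precisely the statement that $A_F(q k_p^{\rm e}) \in H^{\rm e}(\sigma,b)$. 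Hence $V_p \subset D(A_F)$.

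The argument is largely bookkeeping once the right characterization of $H^{\rm e}(\sigma,b)$ is in hand; there is no substantive obstacle. The only point requiring a bit of care is ensuring that the pointwise equality $A_F(q k_p^{\rm e}) = \sum_i F_i Q_i k_p^{\rm e}$ on $\mathbb{R}^d$ genuinely corresponds to holomorphic extension via $\widetilde{F}$, but this is clear from the uniqueness of holomorphic extensions of real-analytic functions.
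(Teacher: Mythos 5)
Your proof is correct and follows essentially the same route as the paper's (which simply cites Proposition \ref{prop: explicit form of Vpn} and the definition of $A_F$, with the $H_1$-description of $H^{\rm e}(\sigma,b)$ from Proposition \ref{prop: RKHS w.r.t exp kernel} doing the work); you have merely made the Leibniz-rule computation $\partial_{x_i}(q k_p^{\rm e}) = Q_i k_p^{\rm e}$ and the application of the hypothesis with $P = Q_i$ explicit. Your remark that holomorphy of $\widetilde{F}$ is the tacit content of the hypothesis matches the paper's intended reading.
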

\begin{proof}
    By Proposition \ref{prop: explicit form of Vpn}, it follows from the definition of $A_F$.
\end{proof}
\begin{proposition}\label{prop: sufficient condition for domain condition, continuous, gauss}
    Let $\sigma > 0$.
    Let $k^{\rm g}$ be the positive definite kernel for $H^{\rm g}(\sigma)$.
    Let $p \in \mathbb{R}^d$.
    Assume that there exists $\widetilde{F}: \mathbb{C}^d \to \mathbb{C}^d$ such that $\widetilde{F}|_{\mathbb{R}^d} = F$ and that
    \begin{align}
        \int_{\mathbb{R}^d \times \mathbb{R}^d}|(F_iPk_p^{\rm e})(x + y \mathrm{i})|^2 e^{-\|y\|^2/\sigma^2}\,{\rm d}x {\rm d}y < \infty
    \end{align}
    for $i=1,\dots,d$ and any polynomial function $P$.
    Then, Assumption \ref{asm: domain of A_F} for $p \in \mathbb{R}^d$ holds for $H^{\rm g}(\sigma)$.
\end{proposition}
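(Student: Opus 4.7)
The plan mirrors the short proof of the companion exponential-kernel statement, Proposition \ref{prop: sufficient condition for domain condition, continuous, exp}: I would reduce $V_p \subset D(A_F)$ to a concrete integrability question using the explicit description of $V_p$ from Proposition \ref{prop: explicit form of Vpn}, and then conclude by invoking the alternative description $H^{\rm g}(\sigma) = H_1$ in Proposition \ref{prop: RKHS w.r.t. gaussian kernel}.

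First, Proposition \ref{prop: explicit form of Vpn} identifies $V_p$ with the set of products $Q\,k_p^{\rm g}$ where $Q$ ranges over all polynomial functions. Since $A_F$ is linear, $V_p \subset D(A_F)$ reduces to showing $A_F(Q\,k_p^{\rm g}) \in H^{\rm g}(\sigma)$ for every polynomial $Q$. Applying the product rule to $k_p^{\rm g}(x) = e^{-\|x-p\|^2/2\sigma^2}$ gives
\[
A_F(Q\,k_p^{\rm g}) \;=\; \sum_{i=1}^d F_i\!\left(\partial_{x_i}Q - \frac{x_i-p_i}{\sigma^2}Q\right)\!k_p^{\rm g} \;=\; \sum_{i=1}^d F_i R_i\, k_p^{\rm g},
\]
where each $R_i$ is again a polynomial. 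Hence it suffices to prove $F_i P k_p^{\rm g} \in H^{\rm g}(\sigma)$ for each $i$ and every polynomial $P$.

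To verify this remaining membership I would appeal to the equivalent description $H^{\rm g}(\sigma) = H_1$ from Proposition \ref{prop: RKHS w.r.t. gaussian kernel}, which requires (a) a holomorphic extension of $F_i P k_p^{\rm g}$ to $\mathbb{C}^d$ and (b) the $L^2$-type integrability against $e^{-2\|y\|^2/\sigma^2}\,{\rm d}x\,{\rm d}y$. Step (a) is immediate: $\widetilde{F}_i$ supplies the extension of $F_i$, $P$ is already holomorphic, and $k_p^{\rm g}$ extends as $z \mapsto e^{-(z-p)^\top(z-p)/2\sigma^2}$. Step (b) follows from the hypothesized integrability condition after rewriting $|k_p^{\rm g}(x+\mathrm{i}y)|^2 = e^{-(\|x-p\|^2 - \|y\|^2)/\sigma^2}$ and combining it with the weights, reducing to the inequality built into the hypothesis.

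The only piece of the argument requiring genuine care is the exponent bookkeeping in step (b), matching the weight implicit in the hypothesis with the defining weight of $H_1$ after multiplication by $|k_p^{\rm g}(x+\mathrm{i}y)|^2$; this is the main (and essentially the only) nontrivial calculation in the proof, and it is routine rather than substantive.
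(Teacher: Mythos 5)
Your proposal is correct and follows essentially the same route as the paper, whose proof is just the one-line observation that by Proposition \ref{prop: explicit form of Vpn} every element of $V_p$ is a polynomial times $k_p^{\rm g}$, so the claim follows from the definition of $A_F$ together with the $H_1$ description in Proposition \ref{prop: RKHS w.r.t. gaussian kernel}. Your version simply spells out the product-rule reduction and the weight bookkeeping (and sensibly reads the kernel in the hypothesis as the Gaussian one, which is evidently what the displayed condition is meant to say), so there is nothing to add.
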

\begin{proof}
    By Proposition \ref{prop: explicit form of Vpn}, it follows from the definition of $A_F$.
\end{proof}

\subsection{Explicit convergence rates} \label{subsec: explicit convergence rate}

\begin{proposition}\label{prop: explicit asymptotic of E, exp kernel}
    Let $\sigma>0$ and $b \in \mathbb{R}^d$.
    We consider $\mathcal{E}_{p,n}$  of \eqref{error} for $H^{\rm e}(\sigma, b)$.
    For any $x \in \mathbb{R}^d$, we have
    \begin{align}
    \mathcal{E}_{p,n}(x) 
    = e^{\frac{\|x-b\|^2 - \|x-p\|^2}{2\sigma^2}} \sqrt{\sum_{m={n+1}}^\infty \frac{1}{m!} \left(\frac{\|x-p\|^2}{\sigma^2} \right)^{m} } \le \frac{1}{\sqrt{(n+1)!}}\left(\frac{\|x-p\| }{\sigma} \right)^{n+1} e^{\|x-b\|^2/2\sigma^2}. \label{asymptotic rate of error; exp kernel}
   \end{align}
\end{proposition}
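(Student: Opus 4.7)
The plan is to compute $\mathcal{E}_{p,n}(x)^2$ directly via Pythagoras as $\|k_x\|_H^2 - \|\pi_n k_x\|_H^2$, using the explicit orthogonal basis of $V_{p,n}$ from Proposition \ref{prop: ONB for exp kernel} and the reproducing property. Since $\{v_i^{\rm e}\}_{i=1}^{r_n}$ is orthogonal with $\|v_i^{\rm e}\|_H^2 = \alpha^{(i)}!$, I would write
\[
\pi_n k_x = \sum_{i=1}^{r_n} \frac{\langle k_x, v_i^{\rm e}\rangle_H}{\alpha^{(i)}!}\, v_i^{\rm e}
= \sum_{|\alpha| \le n} \frac{\overline{v_\alpha^{\rm e}(x)}}{\alpha!}\, v_\alpha^{\rm e},
\]
so $\|\pi_n k_x\|_H^2 = \sum_{|\alpha| \le n} |v_\alpha^{\rm e}(x)|^2/\alpha!$. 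Substituting the explicit formula for $v_\alpha^{\rm e}$ factors out the common factor $c(x) := e^{(p-b)^\top(2x-p-b)/\sigma^2}$ and leaves the sum $\sum_{|\alpha| \le n} (x-p)^{2\alpha}/(\alpha!\,\sigma^{2|\alpha|})$.

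The key algebraic step is the multinomial identity
\[
\sum_{|\alpha| = m} \frac{(x-p)^{2\alpha}}{\alpha!} = \frac{1}{m!}\left(\sum_{i=1}^d (x_i-p_i)^2\right)^{m} = \frac{\|x-p\|^{2m}}{m!},
\]
which collapses the multi-index sum into a single-index one: $\|\pi_n k_x\|_H^2 = c(x)\sum_{m=0}^n \|x-p\|^{2m}/(m!\,\sigma^{2m})$. Then I would verify by direct algebra that $(p-b)^\top(2x-p-b) + \|x-p\|^2 = \|x-b\|^2$, which matches $c(x)\cdot e^{\|x-p\|^2/\sigma^2} = e^{\|x-b\|^2/\sigma^2} = k(x,x) = \|k_x\|_H^2$. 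Consequently
\[
\mathcal{E}_{p,n}(x)^2 = \|k_x\|_H^2 - \|\pi_n k_x\|_H^2 = c(x)\sum_{m=n+1}^\infty \frac{1}{m!}\left(\frac{\|x-p\|^2}{\sigma^2}\right)^m,
\]
and taking the square root gives the claimed equality since $\sqrt{c(x)} = e^{(\|x-b\|^2-\|x-p\|^2)/2\sigma^2}$.

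For the upper bound, I would use the elementary tail estimate $\sum_{m \ge n+1} t^m/m! \le t^{n+1} e^t/(n+1)!$ for $t \ge 0$, proved by factoring out $t^{n+1}/(n+1)!$ and noting that $(n+1)!/(n+1+k)! \le 1/k!$ for $k \ge 0$. Applied with $t = \|x-p\|^2/\sigma^2$, combined with $\sqrt{c(x)}\cdot e^{\|x-p\|^2/2\sigma^2} = e^{\|x-b\|^2/2\sigma^2}$, this yields precisely the stated bound.

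The only place that needs a moment's care is the multinomial collapse and the bookkeeping identity $(p-b)^\top(2x-p-b) + \|x-p\|^2 = \|x-b\|^2$; everything else is a routine Pythagoras-plus-reproducing-property computation. There is no genuine obstacle here because Proposition \ref{prop: ONB for exp kernel} has already done the hard work of diagonalizing the Gram matrix of $V_{p,n}$.
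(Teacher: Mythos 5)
Your proof is correct and takes essentially the same route as the paper's: both expand $k_x$ in the orthogonal basis of Proposition \ref{prop: ONB for exp kernel}, identify $\mathcal{E}_{p,n}(x)^2$ with the tail of the resulting series (collapsed by the multinomial theorem and the identity $(p-b)^\top(2x-p-b)+\|x-p\|^2=\|x-b\|^2$), and conclude with the elementary factorial tail estimate. Your explicit bound $\sum_{m\ge n+1} t^m/m! \le t^{n+1}e^{t}/(n+1)!$ is in fact exactly what the stated inequality requires, whereas the paper's displayed intermediate inequality omits the factor $t^{n+1}$, so your write-up is if anything slightly more careful on that step.
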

\begin{proof}
Let 
    \[v_\alpha(x) := \frac{(x-p)^{\alpha}}{\sqrt{\alpha !}\sigma^{|\alpha|}} e^{\frac{(p-b)^\top(2x-p-b)}{2\sigma^2}}.\]
Since $v_\alpha$'s constitutes an orthonormal basis of $V_p$ by Proposition \ref{prop: ONB for exp kernel}, we have
\begin{align*}
    k(x,y) =  \sum_{\alpha \in \mathbb{Z}_{\ge 0}^d}v_\alpha(x)v_\alpha(y).
\end{align*}
Thus, we have
\begin{align*}
    \mathcal{E}_{p,n}(x)^2 = e^{\frac{(p-b)^\top (2x-p-b)}{\sigma^2}}\sum_{m=n+1}^\infty \frac{1}{m!} \left( \frac{\|x-p\|^2}{\sigma^2} \right)^m.
\end{align*}
Since $(p-b)^\top (2x-p-b) = \|x-b\|^2 - \|x-p\|^2$, we have the first equality.
The second follows from the inequality:
\begin{align*}
    \sum_{m=n+1}^\infty \frac{1}{m!} \left( \frac{\|x-p\|^2}{\sigma^2} \right)^m 
    \le \frac{1}{(n+1)!} e^{ \frac{\|x-p\|^2}{\sigma^2}}\left(\frac{\|x-p\|^2 }{\sigma^2} \right)^{n+1}.
\end{align*}
\end{proof}
\begin{proposition}\label{prop: explicit asymptotic of E, gaussian}
    Let $\sigma>0$.
    We consider $\mathcal{E}_{p,n}$ of \eqref{error} for $H^{\rm g}(\sigma)$.
    For any $x \in \mathbb{R}^d$, we have
      \begin{align}
       \mathcal{E}_{p,n}(x) = e^{-|x-p|^2/\sigma^2} \sqrt{\sum_{m={n+1}}^\infty \frac{1}{m!} \left(\frac{\|x-p\|^2}{\sigma^2} \right)^{m} } \le \frac{1}{\sqrt{(n+1)!}}\left(\frac{\|x-p\| }{\sigma} \right)^{n+1}. \label{asymptotic rate of error}
   \end{align}
\end{proposition}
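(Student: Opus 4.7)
The plan is to mirror the proof of Proposition \ref{prop: explicit asymptotic of E, exp kernel} in the Gaussian setting, using the explicit orthonormal basis furnished by Proposition \ref{prop: ONB for gaussian kernel}. First I would normalize: set $\tilde{v}_\alpha(x) := (x-p)^\alpha/(\sqrt{\alpha!}\sigma^{|\alpha|}) \cdot e^{-\|x-p\|^2/(2\sigma^2)}$, so that Proposition \ref{prop: ONB for gaussian kernel} gives that $\{\tilde{v}_\alpha\}_{|\alpha|\le n}$ is an orthonormal basis of $V_{p,n}$. Since Assumption \ref{asm: basic} is trivial for the Gaussian kernel, Proposition \ref{prop: density} ensures $V_p$ is dense in $H^{\rm g}(\sigma)$, so $\{\tilde{v}_\alpha\}_{\alpha \in \mathbb{Z}_{\ge 0}^d}$ is actually an orthonormal basis of $H^{\rm g}(\sigma)$.

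Next I would expand the reproducing kernel via Parseval, $k^{\rm g}(x,y) = \sum_\alpha \tilde{v}_\alpha(x)\overline{\tilde{v}_\alpha(y)}$, and use orthogonality to write
\[
\mathcal{E}_{p,n}(x)^2 \;=\; \|k_x\|_H^2 - \|\pi_n k_x\|_H^2 \;=\; k^{\rm g}(x,x) - \sum_{|\alpha|\le n} |\tilde{v}_\alpha(x)|^2 \;=\; \sum_{|\alpha|>n} |\tilde{v}_\alpha(x)|^2.
\]
Then I would collapse the sum over multi-indices by the multinomial identity
\[
\sum_{|\alpha|=m} \frac{(x-p)^{2\alpha}}{\alpha!\,\sigma^{2|\alpha|}} \;=\; \frac{1}{m!}\left(\frac{|x-p|^2}{\sigma^2}\right)^m,
\]
pulling out the common factor $e^{-\|x-p\|^2/\sigma^2}$ coming from $|\tilde{v}_\alpha|^2$, which yields the identity claimed in the proposition.

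The bound is then obtained by the standard tail estimate $\sum_{m>n} t^m/m! \le t^{n+1}/(n+1)!\cdot e^{t}$ with $t = |x-p|^2/\sigma^2$, so that the surviving $e^t$ cancels against the prefactor leaving $\frac{1}{(n+1)!}(|x-p|/\sigma)^{2(n+1)}$, whose square root is the asserted inequality.

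The main obstacle is really only cosmetic bookkeeping of the Gaussian prefactor; there is no genuine difficulty since density, orthogonality, and the multinomial collapse are all in hand. As a much shorter alternative, one can invoke the Hilbert space isomorphism $\mathfrak{m}_{\sigma, p}: H^{\rm e}(\sigma, p) \to H^{\rm g}(\sigma)$ of Proposition \ref{prop: isom of exp and gauss} (taking $b = p$). Since $\mathfrak{m}_{\sigma, p}$ sends polynomials to polynomials times $e^{-\|\cdot - p\|^2/(2\sigma^2)}$, it maps $V_{p,n}^{\rm e}$ onto $V_{p,n}^{\rm g}$, and from $\mathfrak{m}_{\sigma,p}k_x^{\rm e} = e^{\|x-p\|^2/(2\sigma^2)}k_x^{\rm g}$ one gets $\mathcal{E}_{p,n}^{\rm g}(x) = e^{-\|x-p\|^2/(2\sigma^2)}\mathcal{E}_{p,n}^{\rm e}(x)$; then Proposition \ref{prop: explicit asymptotic of E, exp kernel} with $b=p$ (whose own prefactor collapses to $1$) delivers both the equality and the inequality at once.
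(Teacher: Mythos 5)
Your proposal is correct, and in fact contains the paper's proof inside it: the ``much shorter alternative'' you sketch at the end --- transporting the exponential-kernel estimate through the unitary $\mathfrak{m}_{\sigma,b}$ of Proposition \ref{prop: isom of exp and gauss} --- is precisely the paper's argument, which cites only Propositions \ref{prop: explicit asymptotic of E, exp kernel} and \ref{prop: isom of exp and gauss}. Your primary route is genuinely different in execution: rather than transferring the result, you redo the Parseval/multinomial computation directly in $H^{\rm g}(\sigma)$ using the orthogonal system of Proposition \ref{prop: ONB for gaussian kernel}, i.e.\ you repeat for the Gaussian kernel what the paper does for the exponential kernel. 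That costs a little bookkeeping with the Gaussian prefactor but avoids having to check that $\mathfrak{m}_{\sigma,b}$ carries $V_{p,n}$ for the exponential kernel onto $V_{p,n}$ for the Gaussian kernel (which the transfer argument implicitly needs and which is what the proof of Proposition \ref{prop: ONB for gaussian kernel} supplies); the isomorphism route buys a one-line proof. One point you should make explicit rather than gloss over: your correct identity $\mathcal{E}_{p,n}(x)^2 = e^{-\|x-p\|^2/\sigma^2}\sum_{m>n}\frac{1}{m!}\left(\|x-p\|^2/\sigma^2\right)^m$ gives, after taking square roots, the prefactor $e^{-\|x-p\|^2/(2\sigma^2)}$, and the same prefactor comes out of the transfer route since $\mathcal{E}^{\rm g}_{p,n}(x) = e^{-\|x-b\|^2/2\sigma^2}\,\mathcal{E}^{\rm e}_{p,n}(x)$ combined with Proposition \ref{prop: explicit asymptotic of E, exp kernel}. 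So the equality displayed in \eqref{asymptotic rate of error}, with prefactor $e^{-|x-p|^2/\sigma^2}$, appears to carry a typo in the exponent; your derivation (like the paper's own) produces the corrected version, and the final inequality is unaffected because, exactly as you note, the tail bound $\sum_{m>n}t^m/m!\le \frac{t^{n+1}}{(n+1)!}e^{t}$ cancels the squared prefactor $e^{-t}$ with $t=\|x-p\|^2/\sigma^2$. Saying the computation ``yields the identity claimed'' without this remark would be slightly inaccurate.
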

\begin{proof}
    It follows from the combination of Proposition \ref{prop: explicit asymptotic of E, exp kernel} with Proposition \ref{prop: isom of exp and gauss}.
\end{proof}

Then, we have a detailed result of Corollary \ref{cor: error analysis for PF operator, infinite samples: disc.} as follows:

\begin{theorem}\label{thm: explicit asymptotic, disc}
    We use the same notation and assumptions as those in Corollary \ref{cor: error analysis for PF operator, infinite samples: disc.}.
    Assume that $H = H^{\rm e}(\sigma, b)$ or $H^{\rm g}(\sigma)$.
    Assume that $\mu$ is compactly supported and absolutely continuous  with respect to the Lebesgue measure, namely there exists a compactly supported non-negative measurable function $\rho: \mathbb{R}^d \to \mathbb{R}_{\ge 0}$ such that $\mu = \rho(x) {\rm d}x$.
    We assume that there exists a rectangle $R(a,r):=\prod_{i=1}^d[a_i-r_i,a_i+r_i]$ for $a_i \in \mathbb{R}$ and $r_i \in \mathbb{R}_{>0}$ ($i=1,\dots,d$) such that $\mathrm{ess.inf}_{x\in R(a,r)}\rho(x) > 0$.
    Let
    \begin{align*}
        B_1 &:= \sup_{x \in {\rm supp}(\rho) } \|x - p\|,\\
        B_2 &:= \sup_{i=1,\dots,d}\left(1+ \frac{|a_i-p_i|}{r_i}\right).
    \end{align*}
    Then, there exists a constant $C > 0$ independent of $n$ such that
    \begin{align}
        \limsup_{N \to \infty}\left\| \mathbf{C}_{f, m}^\star - \widehat{\mathbf{C}}_{m, n, N}\right\|
        \le \frac{C n^{m+d}}{\sqrt{(n+1)!}}
        \left(\frac{2B_1B_2}{\sigma}\right)^n~~~{\rm a.e.}
        \label{explicit asymptotic rate}
    \end{align}
\end{theorem}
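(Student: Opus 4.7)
The plan is to apply Corollary~\ref{cor: error analysis for PF operator, infinite samples: disc.} with the auxiliary measure $\nu$ chosen to be the normalized uniform (Lebesgue) measure on the rectangle $R(a,r)$. The hypothesis $\mathrm{ess.inf}_{x \in R(a,r)}\rho(x) > 0$ ensures that $\nu$ is absolutely continuous with respect to $\mu$ with $\partial_\mu\nu \in L^\infty(\mu)$; Propositions~\ref{prop: ONB for exp kernel} and \ref{prop: ONB for gaussian kernel} provide an orthogonal basis $\mathcal{B}_{p,n}$ of $V_{p,n}$ in $H$ whose Gram matrix $\mathbf{G}_n$ is diagonal with entries $\alpha^{(i)}!$; and $V_{p,n}\subset L^2(\mu)$ trivially, since $\mu$ has compact support. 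Applying the corollary reduces the task to bounding the two factors $\|\mathcal{E}_{p,n}\|_{L^2(\mu)}$ and $\|\mathbf{Q}_{n,m}(\nu)\|_{\mathrm{op}}$ separately; the constant $L_m$ depends only on $m,\sigma,b,p$ and $\rho$.

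For the first factor, Propositions~\ref{prop: explicit asymptotic of E, exp kernel} and \ref{prop: explicit asymptotic of E, gaussian} give a pointwise bound of the form
\[
\mathcal{E}_{p,n}(x) \le \frac{M_0(x)}{\sqrt{(n+1)!}}\bigl(\|x-p\|/\sigma\bigr)^{n+1},
\]
with $M_0(x)= e^{\|x-b\|^2/2\sigma^2}$ for the exponential kernel and $M_0(x)=1$ for the Gaussian kernel. Since $\mu$ is compactly supported and $\|x-p\|\le B_1$ on $\mathrm{supp}(\mu)$, integration against $\mu$ yields $\|\mathcal{E}_{p,n}\|_{L^2(\mu)} \le C_0 (B_1/\sigma)^{n+1}/\sqrt{(n+1)!}$. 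Noting that the product $(B_1/\sigma)^{n+1}\cdot(2B_2)^n$ already reproduces the target factor $(2B_1B_2/\sigma)^n$ up to a constant, the remaining task is to show $\|\mathbf{Q}_{n,m}(\nu)\|_{\mathrm{op}}\lesssim n^{m+d}(2B_2)^n$.

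For the second factor, the relation $u_i = \sum_j q_{ij} v_j$ with $\{u_i\}$ orthonormal in $L^2(\nu)$ gives $Q^*Q = L^{-1}$, where $L_{ij} = \langle v_i,v_j\rangle_{L^2(\nu)}$, so $\|\mathbf{Q}_{n,m}(\nu)\|_{\mathrm{op}}^2$, being the operator norm of a leading $r_m\times r_m$ principal submatrix of $L^{-1}$, is at most $\|L^{-1}\|_{\mathrm{op}} = 1/\lambda_{\min}(L)$. To lower-bound $\lambda_{\min}(L)$, I change variables $t_i = (x_i-a_i)/r_i$, pulling $R(a,r)$ back to the cube $[-1,1]^d$ (with $\nu$ becoming $dt/2^d$) and $(x-p)^\alpha/\sigma^{|\alpha|}$ back to $\prod_i(r_i/\sigma)^{\alpha_i}(t_i+s_i)^{\alpha_i}$, where $s_i := (a_i-p_i)/r_i$. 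On the compact set $R(a,r)$, the exponential or Gaussian weight factor of $v_\alpha$ is bounded above and below by positive constants, so it contributes only multiplicative constants. The problem thus reduces to estimating the Gram matrix of the shifted monomials $\psi_\alpha(t) := \prod_i(t_i+s_i)^{\alpha_i}$, $|\alpha|\le n$, in $L^2([-1,1]^d,dt/2^d)$.

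Expanding $\psi_\alpha = \sum_\beta E_{\alpha\beta}\phi_\beta$ in the tensor product of normalized Legendre polynomials $\phi_\beta(t) = \prod_i\phi_{\beta_i}(t_i)$, I combine two complementary estimates: by Cauchy--Schwarz and $|t_i+s_i| \le 1+|s_i| \le B_2$ on $[-1,1]$,
\[
|E_{\alpha\beta}| \le \|\psi_\alpha\|_{L^2([-1,1]^d,dt/2^d)} \le B_2^{|\alpha|} \le B_2^n,
\]
while the leading coefficient of the normalized one-dimensional Legendre polynomial $\phi_k$ equals $\sqrt{(2k+1)/2}\,\binom{2k}{k}/2^k$, which is of order $2^k$; this forces the diagonal entries of $E$ to be of order $2^{-|\alpha|}$ before the scale factors $\prod_i(r_i/\sigma)^{\alpha_i}$ are reintroduced. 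Combining both estimates through the tensor product structure and tracking the truncation to the leading $r_m$-block of $L^{-1}$ (using $r_n=\binom{n+d}{d}=O(n^d)$) produces the sought bound $\|\mathbf{Q}_{n,m}(\nu)\|_{\mathrm{op}}\le C_1 n^{m+d}(2B_2)^n$, and the two bounds combined yield~\eqref{explicit asymptotic rate}. The main obstacle is precisely this spectral estimate: the matrix $E$ is lower triangular with diagonal entries as small as $O(2^{-n})$ and off-diagonal entries as large as $B_2^n$, so crude determinant- or Frobenius-based bounds on $\sigma_{\min}(E)$ are badly suboptimal, and recovering only a polynomial-in-$n$ correction on top of $(2B_2)^n$ requires exploiting both the orthonormality of the Legendre basis and the tensor product structure of $E$.
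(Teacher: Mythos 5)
Your overall frame (apply Corollary \ref{cor: error analysis for PF operator, infinite samples: disc.}, bound $\|\mathcal{E}_{p,n}\|_{L^2(\mu)}$ and $\|\mathbf{Q}_{n,m}(\nu)\|_{\rm op}$ separately) matches the paper, and your treatment of the first factor via Propositions \ref{prop: explicit asymptotic of E, exp kernel} and \ref{prop: explicit asymptotic of E, gaussian} is fine. The gap is in the second factor, and it is not a technicality you left to the reader: the step you propose cannot yield \eqref{explicit asymptotic rate}. You bound $\|\mathbf{Q}_{n,m}(\nu)\|_{\rm op}^2$ by the norm of a principal block of $L^{-1}$ and then by $1/\lambda_{\min}(L)$, where $L=\bigl(\langle v_i,v_j\rangle_{L^2(\nu)}\bigr)$. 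This discards exactly the feature that produces the stated rate, namely that only the columns with $|\beta|\le m$ are kept. Already in the model case $d=1$, $p=a$, $r=\sigma=1$ (so $B_2=1$), $L$ is essentially the Gram matrix of the monomials $1,t,\dots,t^n$ on $[-1,1]$; testing against the coefficient vector of the Chebyshev polynomial $T_n$ shows $\lambda_{\min}(L)^{-1/2}$ grows at least like $2^{n}$ and in fact like $(1+\sqrt{2})^{\,n}$ up to polynomial factors, which strictly exceeds the target $n^{m+d}\,2^n$; moreover $\lambda_{\min}(L)$ has no dependence on $m$, so it can never produce the $n^{m+d}$ correction. Your own closing paragraph concedes that the needed spectral estimate is the "main obstacle" and only gestures at exploiting orthonormality and tensor structure; the two estimates you do state for $E$ (entries bounded by $B_2^n$, diagonal of order $2^{-|\alpha|}$) do not control blocks of $(EE^*)^{-1}$ — a triangular matrix with tiny diagonal and large off-diagonal entries can have an enormous inverse — so the argument is genuinely missing, not merely rough.

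The paper sidesteps this by expanding in the opposite direction, which makes the problem elementary. It chooses $\nu$ with a weight that cancels the kernel factor of the basis in Proposition \ref{prop: ONB for exp kernel}, so that an $L^2(\nu)$-orthonormal basis of $V_{p,n}$ can be written down explicitly as shifted, rescaled Legendre polynomials $Q_\alpha$ times the kernel factor. The entries of $\mathbf{Q}_{n,m}(\nu)$ are then the coefficients of these Legendre polynomials in the shifted monomials (your $E^{-1}$, not $E$), and these are bounded directly by factoring each $P_{\alpha_i}$ over its roots: the roots lie in $[-1,1]$ and the leading coefficient is $\lesssim 2^{\alpha_i}$ (Szeg\H{o}), giving $|c_{\alpha_i,\beta_i}|\lesssim \binom{\alpha_i}{\beta_i}2^{\alpha_i}B_2^{\alpha_i}r_i^{-\beta_i-1/2}$. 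Because $\mathbf{Q}_{n,m}$ only contains columns with $|\beta|\le m$, the binomial factors contribute at most $n^{m}$, and a crude Frobenius bound over the $r_n\times r_m$ entries gives $\|\mathbf{Q}_{n,m}(\nu)\|_{\rm op}\lesssim n^{m+d}2^nB_2^n$. If you want to salvage your route, you must bound the leading $r_m\times r_m$ block of $L^{-1}$ (not $\lambda_{\min}(L)^{-1}$), and the cleanest way to do that is precisely to compute the low-degree rows of the inverse change-of-basis matrix explicitly via the Legendre root factorization — i.e., the paper's argument.
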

\begin{proof}
    First, we show the case of $H = H^{\rm e}(\sigma, b)$.
    We note that $\pi_nk_{x_1}, \dots, \pi_nk_{x_N}$ are linearly independent with probability $1$ by the assumption $\mathrm{ess.inf}_{x\in R(a,r)}\rho(x) > 0$.
    We define 
    \begin{align}
        \nu := e^{-\frac{(p-b)^\top(2x-p-b)}{\sigma^2}}\mathbf{1}_{R(a,r)}(x) {\rm d}x.
    \end{align}
    Now, we estimate $\|\mathcal{E}_{p,n}\|_{L^2(\mu)}$, and $\|\mathbf{Q}_{n,m}(\nu)\|$ in Corollary \ref{cor: error analysis for PF operator, infinite samples: disc.}.
    As for $\|\mathcal{E}_{p,n}\|_{L^2(\mu)}$, by Proposition \ref{prop: explicit asymptotic of E, exp kernel} , we immediately have
    \begin{align}
        \|\mathcal{E}_{p,n}\|_{L^2(\mu)} \lesssim \frac{B_1^n}{\sigma^n\sqrt{(n+1)!}}. \label{estimation of E}
    \end{align}
    Next, we estimate $\|\mathbf{Q}_{n,m}(\nu)\|$.
    Let $\{P_n(t)\}_{n\ge0}$ be the orthonormal polynomials in the $L^2$-space on $\mathbb{R}$ associated with the weight function $\mathbf{1}_{[-1,1]}(x){\rm d}x$ such that the degree of $P_n$ coincides with $n$.
    Let $\gamma_{n}$ be the leading coefficient of $P_{n}(t)$ and let $\omega_{n,1},\dots,\omega_{n,n}$ be the roots (with multiplicity) of $P_{n}(t)$.
    Then, by \cite[Theorem 12.7.1]{MR0372517}, the following inequality holds:
    \begin{align}
        |\gamma_n| \lesssim 2^{n}. \label{leading coefficient of legendre polynomials}
    \end{align}
    We define
    \[Q_\alpha(x) := e^{\frac{(p-b)^\top(2x-p-b)}{2\sigma^2}}\prod_{i=1}^d \frac{1}{\sqrt{r_i}} P_{\alpha_i}\left(\frac{x_i-a_i}{r_i}\right) .\]
    Then, $\{Q_\alpha(x)\}_{|\alpha| \le n}$ constitutes an orthonormal basis of $V_{p,n}$ in $L^2(\nu)$.
    Let
    \begin{align*}
        \frac{1}{\sqrt{r_i}} P_{\alpha_i}\left(\frac{x_i-a_i}{r_i}\right) 
        &= \sum_{j=0}^{\alpha_i} c_{\alpha_i,j}(x_i-p_i)^j
    \end{align*}
    for some real numbers $c_{\alpha_i,j} \in \mathbb{R}^d$.
    By \cite[Theorem 3.3.1]{MR0372517}, we have $|\omega_{n,i}| \le 1$
    for $i=1,\dots,n$.
    Thus, by
    \[\frac{1}{\sqrt{r_i}} P_{\alpha_i}\left(\frac{x_i-a_i}{r_i}\right) 
    = \frac{\gamma_{\alpha_i}}{r_i^{\alpha_i+1/2}}\prod_{j=1}^{\alpha_i} \left(x_i -p_i - (r_i\omega_{\alpha_i,j}  + a_i - p_i)\right), \]
    we have
    \begin{align}
        |c_{\alpha_i,j}| 
        &\le \sum_{\substack{ S \subset \{1,\dots, \alpha_i\} \\ |S|=\alpha_i - j}} 
    \frac{\gamma_{\alpha_i}}{r_i^{\alpha_i+1/2}} \prod_{\ell \in S}|r_i\omega_{\alpha_i,\ell}  + a_i - p_i| \nonumber\\
        &\le \binom{\alpha_i}{j} \frac{\gamma_{\alpha_i}}{r_i^{\alpha_i+1/2}} (r_i + |a_i - p_i|)^{\alpha_i - j} \nonumber\\
        &\le \binom{\alpha_i}{j} \frac{\gamma_{\alpha_i}}{r_i^{j+1/2}} \left(1 + \frac{|a_i - p_i|}{r_i}\right)^{\alpha_i}. \label{estimation of c}
    \end{align}
    Let
    \[q_{\alpha,\beta} := \prod_{i=1}^dc_{\alpha_i,\beta_i}.\]
    Thus, combining this with \eqref{leading coefficient of legendre polynomials}, we have
    \begin{align}
        |q_{\alpha,\beta}| \lesssim \frac{2^{|\alpha|}}{\min(r_1,\dots,r_d)^{|\beta| + d/2}}  \left(1 + \sup_{i=1,\dots,d}\frac{|a_i - p_i|}{r_i}\right)^{|\alpha|}\prod_{i=1}^d \alpha_i^{\beta_i}. 
    \end{align}
    Since
    \begin{align*}
        \|\mathbf{Q}_{n,m}(\nu)\|_{\rm op}^2 
        \le \|\mathbf{Q}_{n,m}(\nu)\|^2 
        = \sum_{|\alpha| \le n, |\beta| \le m} |q_{\alpha,\beta}|^2
        \lesssim (nm)^d \sup_{|\alpha| \le n, |\beta| \le m}|q_{\alpha,\beta}|^2,
    \end{align*}
    we have
    \begin{align}
        \|\mathbf{Q}_{m,n}(\nu)\|_{\rm op} \lesssim C_m n^{m+d} 2^n \left(1 + \sup_{i=1,\dots,d}\frac{|a_i - p_i|}{r_i}\right)^n.
    \label{estimation of Q}
    \end{align}
    for some constant $C_m>0$ only depending on $m$.
    Therefore, by \eqref{estimation of E} and \eqref{estimation of Q}, we have \eqref{explicit asymptotic rate} in the case of $H = H^{\rm e}(\sigma, b)$.
    Regarding the case of $H^{\rm g}(\sigma)$, the argument is completely the same as above but we use $\nu := e^{|x-p|^2}{\sigma^2}\mathbf{1}_{R(a,r)}(x) {\rm d}x$ and Proposition \ref{prop: explicit asymptotic of E, exp kernel} in the corresponding parts.
\end{proof}
\begin{remark}
    For example, if the density function $\rho$ of $\mu$ is continuous with compact support, then such a rectangle $R(a,r)$ always exists.
\end{remark}

We also have the corresponding theorem to the continuous case.
As the proof is completely the same as that of Theorem \ref{thm: explicit asymptotic, disc}, we omit it.
\begin{theorem}\label{thm: explicit asymptotic, conti}
    We use the same notation and assumptions as in those Corollary \ref{cor: error analysis for PF operator, infinite samples: disc.}.
    Assume that $H = H^{\rm e}(\sigma, b)$ or $H^{\rm g}(\sigma)$.
    Assume that $\mu$ is compactly supported and absolutely continuous with respect to the Lebesgue measure, namely there exists a compactly supported $\rho: \mathbb{R}^d \to \mathbb{R}_{\ge 0}$ such that $\mu = \rho(x) {\rm d}x$.
    We assume that there exists a rectangle $R(a,r):=\prod_{i=1}^d[a_i-r_i,a_i+r_i]$ for $a_i \in \mathbb{R}$ and $r_i \in \mathbb{R}_{>0}$ ($i=1,\dots,d$) such that $\mathrm{ess.inf}_{x\in R(a,r)}\rho(x) > 0$.
    Let
    \begin{align*}
        B_1 &:= \sup_{x \in {\rm supp}(\rho)} \|x - p\|,\\
        B_2 &:= \sup_{i=1,\dots,d}\left(1+ \frac{|a_i-p_i|}{r_i}\right).
    \end{align*}
    Then, there exists a constant $C > 0$ independent of $n$ such that
    \begin{align}
        \limsup_{N \to \infty}\left\| \mathbf{A}_{F, m}^\star - \widehat{\mathbf{A}}_{m, n, N}\right\|
        \le \frac{Cn^{m+d}}{\sqrt{(n+1)!}}
        \left(\frac{2B_1B_2}{\sigma}\right)^n~~~{\rm a.e.}
    \end{align}
\end{theorem}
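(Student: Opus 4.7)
The plan is to mirror the proof of Theorem \ref{thm: explicit asymptotic, disc} word for word, replacing $C_f, \mathbf{C}_{f,m}^\star, \widehat{\mathbf{C}}_{m,n,N}$ throughout with their continuous counterparts $A_F, \mathbf{A}_{F,m}^\star, \widehat{\mathbf{A}}_{m,n,N}$, and using Corollary \ref{cor: error analysis for PF operator, infinite samples: conti.} in place of Corollary \ref{cor: error analysis for PF operator, infinite samples: disc.}. The key observation is that the three ingredients needed to bound the error (the decay of $\|\mathcal{E}_{p,n}\|_{L^2(\mu)}$, the growth of $\|\mathbf{Q}_{n,m}(\nu)\|_{\rm op}$, and the finiteness of the prefactor $L_m$) are intrinsic to the RKHS geometry and the choice of reference measure, and are completely insensitive to whether we approximate the Perron--Frobenius operator or its generator.

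Concretely, I would proceed as follows. First, verify that $\pi_n k_{x_1}, \dots, \pi_n k_{x_N}$ span $V_{p,n}$ almost surely from the assumption $\mathrm{ess.inf}_{x \in R(a,r)} \rho(x) > 0$, so that Corollary \ref{cor: error analysis for PF operator, infinite samples: conti.} applies. Second, take the same auxiliary measure as in the discrete proof, namely $\nu := e^{-(p-b)^\top(2x-p-b)/\sigma^2}\mathbf{1}_{R(a,r)}(x)\,{\rm d}x$ for the exponential kernel and $\nu := e^{\|x-p\|^2/\sigma^2}\mathbf{1}_{R(a,r)}(x)\,{\rm d}x$ for the Gaussian kernel; in both cases $\partial_\mu\nu \in L^\infty(\mu)$ holds by the assumption on $\rho$, and the Legendre-polynomial construction of $Q_\alpha$ in the discrete proof shows that $Q_\alpha(x) = e^{(p-b)^\top(2x-p-b)/2\sigma^2}\prod_{i=1}^d r_i^{-1/2} P_{\alpha_i}((x_i-a_i)/r_i)$ forms an orthonormal basis of $V_{p,n}$ in $L^2(\nu)$.

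Third, bound the two quantitative factors exactly as before: Propositions \ref{prop: explicit asymptotic of E, exp kernel} and \ref{prop: explicit asymptotic of E, gaussian} give
\[\|\mathcal{E}_{p,n}\|_{L^2(\mu)} \lesssim \frac{B_1^n}{\sigma^n \sqrt{(n+1)!}},\]
while the coefficient estimates \eqref{estimation of c} for Legendre polynomials together with the growth bound $|\gamma_n| \lesssim 2^n$ on their leading coefficients yield
\[\|\mathbf{Q}_{n,m}(\nu)\|_{\rm op} \lesssim C_m\, n^{m+d}\, 2^n\, B_2^n.\]
Fourth, combine these estimates via Corollary \ref{cor: error analysis for PF operator, infinite samples: conti.}, where the prefactor $L_m$ is finite because $\|A_F|_{V_{p,m}}\|_{\rm op} < \infty$ (Assumption \ref{asm: domain of A_F} combined with the finite-dimensionality of $V_{p,m}$ automatically bounds $A_F|_{V_{p,m}}: V_{p,m} \to H$). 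Absorbing $L_m$ into the constant $C$ produces the bound
\[\frac{Cn^{m+d}}{\sqrt{(n+1)!}}\left(\frac{2B_1B_2}{\sigma}\right)^n.\]

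I do not anticipate any genuine obstacle; the only conceptual point requiring care is that in the continuous case the boundedness of $A_F|_{V_{p,m}}$ (and hence finiteness of $L_m$) comes from Assumption \ref{asm: domain of A_F} rather than Assumption \ref{asm: domain of C_f}, but these play identical roles in the respective Corollaries \ref{cor: error analysis for PF operator, infinite samples: disc.} and \ref{cor: error analysis for PF operator, infinite samples: conti.}. All Legendre-polynomial coefficient bounds, Dini-type arguments, and $L^2(\mu)$-estimates are unchanged.
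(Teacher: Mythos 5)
Your proposal is correct and coincides with the paper's own treatment: the paper explicitly omits the proof of this theorem, stating it is ``completely the same'' as that of Theorem \ref{thm: explicit asymptotic, disc}, and your substitutions ($A_F$ for $C_f$, Corollary \ref{cor: error analysis for PF operator, infinite samples: conti.} for Corollary \ref{cor: error analysis for PF operator, infinite samples: disc.}, Assumption \ref{asm: domain of A_F} for Assumption \ref{asm: domain of C_f}) together with the unchanged estimates of $\|\mathcal{E}_{p,n}\|_{L^2(\mu)}$ and $\|\mathbf{Q}_{n,m}(\nu)\|_{\rm op}$ are exactly what that reduction amounts to.
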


\subsection{Explicit reconstruction error for dynamical systems}
As a crucial application of JetEDMD, we can reconstruct the dynamical system from only a set of discrete data on the trajectories.
We actually provide a theoretical guarantee for our reconstruction method as well as the algorithms and numerical results in Section \ref{sec: system identification}.
We will use the results in this section to prove the theoretical guarantee.

First, we provide an estimate of the reconstruction error for discrete dynamical systems.
\begin{theorem}\label{thm: reconstruction, discrete}
    Let $H = H^{\rm e}(\sigma, b)$ for $\sigma > 0$ and $b \in \mathbb{R}^d$.
    Let $p \in \mathbb{R}^d$.
    Let $f=(f_1,\dots, f_d): \mathbb{R}^d\to \mathbb{R}^d$ be a map of class $C^\infty$.
    Assume that Assumptions \ref{asm: existence of a fixed point} and \ref{asm: domain of C_f} for $p$ hold.
    Let $m \ge 1$ be an integer and let $\{v_i\}_{i=1}^{r_m}$ be a basis of $V_{p,m}$.
    Let $\mathbf{C}_{f,m}^\star$ be the representation matrix of $C_f^*|_{V_{p,m}}$ with respect to the basis.
    For $i=1,\dots, d$, we define 
    \begin{align}
        \widehat{f}_{m,i}(x) := (\partial_{x_i}\mathbf{v}_{p,m}(b))^* \sigma^2 \mathbf{C}_{f,m}^\star \mathbf{G}_m^{-1} \mathbf{v}_{p,m}(x) + b_i.
        \label{reconstructed dynamical system, discrete, exp}
    \end{align}
    Then, for any compact subset $K \subset \mathbb{R}^d$ and $i=1,\dots,d$, we have
    \begin{align}
        \sup_{y \in K}\left|f_i(y) - \widehat{f}_{m,i}(y)\right| 
        \le \|f_i-b_i\|_H 
        \frac{\sup_{y \in K}\|y-p\|^{m+1}e^{\|y-b\|^2/2\sigma^2}}{\sigma^{m+1}\sqrt{(m+1)!}}.  \label{reconstruction error rate, discrete, exp}
    \end{align}
\end{theorem}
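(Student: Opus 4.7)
The plan is to interpret the matrix formula $\widehat{f}_{m,i}(x)$ as an inner product in $H$ involving the projected kernel section $\pi_m k_x$, and then reduce the reconstruction error to the approximation error $\mathcal{E}_{p,m}$ controlled by Proposition \ref{prop: explicit asymptotic of E, exp kernel}.

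First, I would observe that the coordinate shift $e_i(y) := y_i - b_i$ is a kernel derivative in $H$: since $k(x,y) = \exp((x-b)^\top(y-b)/\sigma^2)$, direct differentiation gives $(\partial_{y_i} k_y|_{y=b})(x) = (x_i - b_i)/\sigma^2$, so $e_i = \sigma^2\,\partial_{y_i} k_y|_{y=b}$ as an element of $H$ (consistent with Proposition \ref{prop: explicit formula of iota D delta p}, since $e_i = (\mathfrak{r}\circ\iota')(\sigma^2\delta_b^{(\mathbf{1}_i)})$). By the reproducing identity applied to derivatives, for every $h \in H$,
\[\sigma^2\,\partial_{y_i} h(b) = \langle h, e_i\rangle_H.\]

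Second, I would unpack the matrix expression. By Proposition \ref{prop: coefficient of minimizer}, $\mathbf{G}_m^{-1}\mathbf{v}_{p,m}(x)$ is precisely the coefficient vector of $\pi_m k_x \in V_{p,m}$ in the basis $\{v_j\}_{j=1}^{r_m}$. Since $f(p)=p$ forces $C_f^*(V_{p,m}) \subset V_{p,m}$ by Theorem \ref{thm: basic theorem for PF operator, discrete}, the vector $\mathbf{C}_{f,m}^\star \mathbf{G}_m^{-1}\mathbf{v}_{p,m}(x)$ is the coefficient vector of $C_f^*\pi_m k_x$ in the same basis. Pairing from the left with $(\partial_{x_i}\mathbf{v}_{p,m}(b))^*$ then reproduces $\partial_{z_i}(C_f^*\pi_m k_x)(z)|_{z=b}$, so
\[\widehat{f}_{m,i}(x) - b_i = \sigma^2\,\partial_{z_i}(C_f^*\pi_m k_x)(z)|_{z=b} = \langle C_f^*\pi_m k_x, e_i\rangle_H.\]

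Third, I would transfer $C_f^*$ to the other slot. By Proposition \ref{prop: domain of PF operators}, $\pi_m k_x \in V_{p,m} \subset D(C_f^*)$. If $f_i - b_i \notin H$ the inequality \eqref{reconstruction error rate, discrete, exp} is vacuous, so one may assume $f_i - b_i \in H$; since $C_f e_i = f_i - b_i$, this places $e_i \in D(C_f)$, and the adjoint relation yields $\widehat{f}_{m,i}(x) - b_i = \langle \pi_m k_x, f_i - b_i\rangle_H$. Combining with the plain reproducing identity $f_i(x) - b_i = \langle k_x, f_i - b_i\rangle_H$ and subtracting,
\[f_i(x) - \widehat{f}_{m,i}(x) = \langle k_x - \pi_m k_x,\, f_i - b_i\rangle_H.\]

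Finally, Cauchy--Schwarz gives $|f_i(y) - \widehat{f}_{m,i}(y)| \le \mathcal{E}_{p,m}(y)\,\|f_i - b_i\|_H$, and the claimed bound \eqref{reconstruction error rate, discrete, exp} follows by inserting the pointwise estimate of Proposition \ref{prop: explicit asymptotic of E, exp kernel} and taking the supremum over $y \in K$. The only care-points are verifying the two domain memberships so the adjoint swap is legitimate; both are immediate from the hypotheses, so no real obstacle arises beyond identifying $\widehat{f}_{m,i}$ with the projected inner product above.
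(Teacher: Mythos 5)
Your proposal is correct and follows essentially the same route as the paper: it identifies $\widehat{f}_{m,i}(x)-b_i$ with $\sigma^2\langle C_f^*\pi_m k_x,\partial_{x_i}k(x,\cdot)|_{x=b}\rangle_H$ via Proposition \ref{prop: coefficient of minimizer}, uses $C_f\bigl(\partial_{x_i}k(x,\cdot)|_{x=b}\bigr)=\sigma^{-2}(f_i-b_i)$, and concludes by the adjoint swap plus Cauchy--Schwarz together with Proposition \ref{prop: explicit asymptotic of E, exp kernel}. The only difference is presentational: you perform the adjoint-swap/Cauchy--Schwarz step inline (and make the implicit assumption $f_i-b_i\in H$ explicit), whereas the paper packages that step as Lemma \ref{lem: lemma for system identification}.
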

\begin{proof}
    Since
    \[ C_f^*k_y(z) = k(f(y),z) = e^{(f(y)-b)^\top (z-b)/\sigma^2},\]
    we have
    \[f_i(y)-b_i = \sigma^2 \partial_{x_i}C_f^*k_y(x)|_{x=b} 
    = \sigma^2 \langle C_f^* k_y, \partial_{x_i}k(x,\cdot)|_{x=b}\rangle_H.\]
    On the other hand, by Proposition \ref{prop: coefficient of minimizer}, we have
    \[C_f^*\pi_nk_y = (v_\alpha)_{|\alpha| \le m}^\top \mathbf{C}_{f,m}^\star \mathbf{G}_m^{-1} \mathbf{v}_{p,m}(y). \]
    Thus, we have
    \[\sigma^2 \langle C_f^*\pi_nk_y, \partial_{x_i}k(x,\cdot)|_{x=b}\rangle_H = \widehat{f}_{m,i}(y) - b_i.\]
    Since $C_f^* \partial_{x_i}k(x,\cdot)|_{x=b} = \sigma^{-2}(f_i - b)$,  we have the formula \eqref{reconstruction error rate, discrete, exp} by Lemma \ref{lem: lemma for system identification} and Proposition \ref{prop: explicit asymptotic of E, exp kernel}.
\end{proof}

Next, we provide a corresponding result for continuous dynamical systems as follows.
We omit the proof as it is the same as the discrete case.
\begin{theorem}\label{thm: reconstruction continuous, exp}
    Let $H = H^{\rm e}(\sigma, b)$ for $\sigma > 0$ and $b \in \mathbb{R}^d$.
    Let $p \in \mathbb{R}^d$.
    Let $F=(F_1,\dots, F_d): \Omega \to \mathbb{R}^d$ be a map of class $C^\infty$.
    Assume that Assumptions \ref{asm: existence of an equilibrium point} and \ref{asm: domain of A_F} hold.
    Let $m \ge 1$ be an integer and let $\{v_i\}_{i=1}^{r_m}$ be a basis of $V_{p,m}$.
    Let $\mathbf{A}_{F,m}^\star$ be the representation matrix of $A_F^*$ on $V_{p,m}$.
    For $i=1,\dots, d$, we define 
    \begin{align}
        \widehat{F}_{m,i}(x) := (\partial_{x_i}\mathbf{v}_m(p))^* \sigma^2 \mathbf{A}_{F,m}^\star \mathbf{G}_n^{-1} \mathbf{v}_m(x)
        \label{reconstructed dynamical system, continuous, exp}
    \end{align}
    Then, for any compact subset $K \subset \mathbb{R}^d$ and $i=1,\dots,d$, we have
    \begin{align}
        \sup_{y \in K}\left|F_i(y) - \widehat{F}_{m,i}(y)\right| \le \|F_i\|_H \frac{\sup_{y \in K}\|y-p\|^{m+1}}{\sigma^{m+1}\sqrt{(m+1)!}}.  \label{reconstruction error rate, continuous, exp}
    \end{align}
\end{theorem}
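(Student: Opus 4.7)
The plan is to mirror the proof of Theorem \ref{thm: reconstruction, discrete} step by step, with the Koopman operator $C_f$ replaced by the generator $A_F$ and the discrete identity $C_f \partial_{x_i}k(x,\cdot)|_{x=b} = \sigma^{-2}(f_i - b_i)$ replaced by a suitable continuous counterpart. The only nontrivial ingredient is to establish the key pointwise identity
\[\sigma^{2}\,\partial_{y_i}\bigl(A_F^{*}k_x\bigr)(y)\Big|_{y=p} \;=\; F_i(x),\]
which, under Assumption \ref{asm: domain of A_F}, follows from the adjoint relation
\[\bigl\langle A_F^{*}k_x,\,\partial_{x_i}k(x,\cdot)|_{x=p}\bigr\rangle_H \;=\; \bigl\langle k_x,\, A_F\bigl(\partial_{x_i}k(x,\cdot)|_{x=p}\bigr)\bigr\rangle_H\]
combined with a direct differentiation of the exponential kernel showing that $\sigma^{2}A_F(\partial_{x_i}k(x,\cdot)|_{x=p})$ agrees with $F_i$ (the analogue of the computation $\sigma^{2}C_f\partial_{x_i}k(x,\cdot)|_{x=b} = f_i - b_i$ used in the discrete proof).

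Next, I would verify that the matrix expression \eqref{reconstructed dynamical system, continuous, exp} for $\widehat{F}_{m,i}$ is precisely the projected analogue of this identity. Concretely, by Proposition \ref{prop: coefficient of minimizer} we have $\pi_m k_x = \sum_j (\mathbf{G}_m^{-1}\mathbf{v}_m(x))_j v_j$, and applying $A_F^{*}|_{V_{p,m}}$, whose action is encoded by $\mathbf{A}_{F,m}^{\star}$, gives $A_F^{*}\pi_m k_x = \sum_j (\mathbf{A}_{F,m}^{\star}\mathbf{G}_m^{-1}\mathbf{v}_m(x))_j v_j$. Pairing this with $\partial_{x_i}k(x,\cdot)|_{x=p}$, i.e. differentiating the function in the argument and evaluating at $p$, recovers exactly the row-vector expression $(\partial_{x_i}\mathbf{v}_m(p))^{*}\,\sigma^{2}\mathbf{A}_{F,m}^{\star}\mathbf{G}_m^{-1}\mathbf{v}_m(x)$, so that
\[\widehat{F}_{m,i}(x) \;=\; \sigma^{2}\,\bigl\langle A_F^{*}\pi_m k_x,\,\partial_{x_i}k(x,\cdot)|_{x=p}\bigr\rangle_H.\]

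Subtracting and transferring the adjoint back to $A_F$ then yields
\[F_i(y) - \widehat{F}_{m,i}(y) \;=\; \sigma^{2}\bigl\langle k_y - \pi_m k_y,\; A_F(\partial_{x_i}k(x,\cdot)|_{x=p})\bigr\rangle_H \;=\; \bigl\langle k_y - \pi_m k_y,\; F_i\bigr\rangle_H,\]
so that Cauchy--Schwarz gives $|F_i(y) - \widehat{F}_{m,i}(y)| \le \|F_i\|_H\,\mathcal{E}_{p,m}(y)$. The uniform bound on $K$ in \eqref{reconstruction error rate, continuous, exp} is then immediate from Proposition \ref{prop: explicit asymptotic of E, exp kernel}.

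The main technical obstacle is the first step: unlike the discrete case, where $C_f^{*}k_y = k_{f(y)}$ provides a completely explicit formula, for the generator $A_F^{*}$ there is no analogous closed form, so the key identity has to be extracted indirectly through the adjoint relation and an explicit calculation on the exponential kernel. Once this identity is in hand, everything else is routine bookkeeping that exactly parallels the discrete proof.
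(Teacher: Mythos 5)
Your overall route is the one the paper intends: the paper omits this proof, stating it is identical to that of Theorem \ref{thm: reconstruction, discrete}, and your steps (express $\widehat{F}_{m,i}(x)$ as $\sigma^2\langle A_F^*\pi_m k_x,\partial_{x_i}k(x,\cdot)|_{x=\ast}\rangle_H$ via Proposition \ref{prop: coefficient of minimizer}, compare with the exact quantity, transfer the adjoint, apply Cauchy--Schwarz, then invoke Proposition \ref{prop: explicit asymptotic of E, exp kernel}) are exactly the intended ones. However, the single step you yourself identify as nontrivial is asserted incorrectly. For $k(x,z)=e^{(x-b)^\top(z-b)/\sigma^2}$ one has $\sigma^2\partial_{x_i}k(x,\cdot)|_{x=b}=z_i-b_i$, hence $A_F\bigl(\sigma^2\partial_{x_i}k(x,\cdot)|_{x=b}\bigr)=F_i$ and $F_i(y)=\sigma^2\langle A_F^*k_y,\partial_{x_i}k(x,\cdot)|_{x=b}\rangle_H$: the correct base point is $b$, exactly as in the discrete proof you quote. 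At $x=p$ with $p\neq b$ one instead has $\sigma^2\partial_{x_i}k(x,\cdot)|_{x=p}=(z_i-b_i)k_p$, so $\sigma^2A_F\bigl(\partial_{x_i}k(x,\cdot)|_{x=p}\bigr)=F_ik_p+\sigma^{-2}(z_i-b_i)\bigl((p-b)^\top F\bigr)k_p$, and correspondingly $\sigma^2\partial_{z_i}(A_F^*k_y)(z)\big|_{z=p}=k(y,p)\bigl(F_i(y)+\sigma^{-2}(y_i-b_i)(p-b)^\top F(y)\bigr)\neq F_i(y)$ in general. So your key identity holds only when $b=p$; for general $b$ the argument must be run with the derivative functional at $b$ (the statement's $\partial_{x_i}\mathbf{v}_m(p)$ should be read in line with the discrete formula \eqref{reconstructed dynamical system, discrete, exp}, which uses $\partial_{x_i}\mathbf{v}_{p,m}(b)$, or $b=p$ assumed). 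Note also that for $b\neq p$ the function $z_i-b_i$ does not lie in $V_p$, so Assumption \ref{asm: domain of A_F} no longer supplies $F_i\in H$, which your Cauchy--Schwarz step tacitly requires; for $b=p$ it does.

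A second, smaller point: your closing claim that the stated bound is ``immediate'' from Proposition \ref{prop: explicit asymptotic of E, exp kernel} overstates what the argument delivers. Cauchy--Schwarz gives $|F_i(y)-\widehat{F}_{m,i}(y)|\le\|F_i\|_H\,\mathcal{E}_{p,m}(y)$, and that proposition bounds $\mathcal{E}_{p,m}(y)$ by $\sigma^{-(m+1)}\|y-p\|^{m+1}e^{\|y-b\|^2/2\sigma^2}/\sqrt{(m+1)!}$, so what you (and the paper's discrete argument) actually prove is \eqref{reconstruction error rate, continuous, exp} with the extra factor $e^{\|y-b\|^2/2\sigma^2}$ on the right-hand side, matching the discrete bound \eqref{reconstruction error rate, discrete, exp}; since $\mathcal{E}_{p,m}(y)\ge\sigma^{-(m+1)}\|y-p\|^{m+1}/\sqrt{(m+1)!}$ when $b=p$, this factor cannot be discarded along the Cauchy--Schwarz route. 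Both issues are largely inherited from the printed statement rather than from your strategy, but a complete write-up has to carry out the computation at $b$ and record the exponential factor (or state the extra hypotheses under which it disappears).
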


Under Assumption \ref{asm: domain of C_f} or \ref{asm: domain of A_F}, for example, under the assumption for Proposition \ref{prop: sufficient condition for domain condition, discrete, exp, p=b}, \ref{prop: sufficient condition for domain condition, discrete, exp, quadratic}, or \ref{prop: sufficient condition for domain condition, continuous, exp}, the right hand side of \eqref{reconstructed dynamical system, discrete, exp} can be estimated from data as in Theorem \ref{thm: explicit asymptotic, disc} or \ref{thm: explicit asymptotic, conti}.

We have another result for the reconstruction error for continuous dynamical systems using the Gaussian kernel.
\begin{theorem}\label{thm: reconstruction continuous, gaussian}
    Let $H = H^{\rm g}(\sigma)$ for $\sigma > 0$.
    Let $p \in \mathbb{R}^d$.
    Let $F=(F_1,\dots, F_d): \Omega \to \mathbb{R}^d$ be a map of class $C^\infty$.
    Assume that Assumptions \ref{asm: existence of an equilibrium point} and \ref{asm: domain of A_F} for $p \in \mathbb{R}^d$ hold.
    Let $m \ge 1$ be an integer and let $\{v_i\}_{i=1}^{r_m}$ be a basis of $V_{p,m}$.
    Let $\mathbf{A}_{F,m}^\star$ be the representation matrix of $A_F^*$ on $V_{p,m}$.
    For $i=1,\dots, d$, we define 
    \begin{align}
        \widehat{F}_{m,i}(x) := (\partial_{x_i}\mathbf{v}_m(x))^* \sigma^2 \mathbf{A}_{F,m}^\star \mathbf{G}_n^{-1} \mathbf{v}_m(x)
        \label{reconstructed dynamical system, gaussian}
    \end{align}
    Then, for any compact subset $K \subset \mathbb{R}^d$ and $i=1,\dots,d$, we have
    \begin{align}
        \sup_{y \in K}\left|F_i(y) - \widehat{F}_{m,i}(y)\right| \le \sup_{y \in K}\|A_F\partial_{x_i}k(x,\cdot)|_{x=y}\|_H \frac{\sup_{y \in K}\|y-p\|^{m+1}}{\sigma^{m-1}\sqrt{(m+1)!}}.  \label{reconstruction error rate, gaussian}
    \end{align}
\end{theorem}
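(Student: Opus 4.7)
The plan is to mirror the template of the exponential-kernel reconstruction result (Theorem~\ref{thm: reconstruction continuous, exp}), but to exploit a Gaussian-specific reproducing identity that recovers $F_i(y)$ from an inner product based at the point $y$ itself; this is precisely what frees the formula \eqref{reconstructed dynamical system, gaussian} from a fixed base point $b$.

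The first step is to set $h_y := \partial_{x_i}k(x,\cdot)|_{x=y}$ and establish the pointwise identity
\[
F_i(y) \;=\; \sigma^2\,\langle h_y,\, A_F^* k_y\rangle_H.
\]
This is obtained by direct computation: using $k(x,z) = e^{-\|x-z\|^2/2\sigma^2}$ one gets $\partial_{z_j}\partial_{x_i}k(x,z)\big|_{x=z=y} = \delta_{ij}/\sigma^2$, hence
\[
(A_F h_y)(y) = \sum_{j=1}^d F_j(y)\,\partial_{z_j}\partial_{x_i}k(x,z)\big|_{x=z=y} = F_i(y)/\sigma^2.
\]
Rewriting $(A_F h_y)(y) = \langle A_F h_y, k_y\rangle_H$ via the reproducing property and using $\langle A_F h_y, k_y\rangle_H = \langle h_y, A_F^* k_y\rangle_H$ (valid since $k_y \in D(A_F^*)$ by Proposition~\ref{prop: domain of PF operators, conti}; we may restrict attention to those $y\in K$ with $h_y \in D(A_F)$, as otherwise $\|A_F h_y\|_H = +\infty$ and the bound in \eqref{reconstruction error rate, gaussian} is vacuous) yields the identity.

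The second step is to unwind the matrix formula to obtain
\[
\widehat{F}_{m,i}(y) \;=\; \sigma^2\,\langle A_F^*\pi_m k_y,\, h_y\rangle_H.
\]
By Proposition~\ref{prop: coefficient of minimizer}, $\mathbf{G}_m^{-1}\mathbf{v}_m(y)$ is the coordinate vector of $\pi_m k_y \in V_{p,m}$ in the basis $\{v_j\}_{j=1}^{r_m}$; by Theorem~\ref{thm: basic theorem for PF operator, continuous}, $A_F^*$ preserves $V_{p,m}$, so $\mathbf{A}_{F,m}^\star \mathbf{G}_m^{-1}\mathbf{v}_m(y)$ is the coordinate vector of $A_F^*\pi_m k_y$; and the row vector $(\partial_{x_i}\mathbf{v}_m(y))^*$ contracts these coordinates as $\sum_j a_j\,\partial_{x_i}v_j(y)$. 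The derivative reproducing identity $\partial_{x_i}v_j(y) = \langle v_j, h_y\rangle_H$ then identifies this contraction with $\langle A_F^*\pi_m k_y, h_y\rangle_H$.

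The final step is to subtract, shift the adjoint, apply Cauchy--Schwarz, and invoke Proposition~\ref{prop: explicit asymptotic of E, gaussian}. Concretely,
\[
F_i(y) - \widehat{F}_{m,i}(y) = \sigma^2\,\langle A_F^*(k_y - \pi_m k_y),\, h_y\rangle_H = \sigma^2\,\langle k_y - \pi_m k_y,\, A_F h_y\rangle_H,
\]
so Cauchy--Schwarz gives $|F_i(y) - \widehat{F}_{m,i}(y)| \le \sigma^2\,\mathcal{E}_{p,m}(y)\,\|A_F h_y\|_H$, and Proposition~\ref{prop: explicit asymptotic of E, gaussian} supplies $\mathcal{E}_{p,m}(y) \le \|y-p\|^{m+1}/(\sigma^{m+1}\sqrt{(m+1)!})$. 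Taking the supremum over $y\in K$ of each factor yields the claim. I do not foresee a serious obstacle; the only conceptual ingredient specific to the Gaussian case is the identity $\partial_{z_j}\partial_{x_i}k(x,z)|_{x=z=y} = \delta_{ij}/\sigma^2$, which is what allows the reconstruction base-point to drift with $y$ and makes the reconstructed vector field independent of any auxiliary parameter.
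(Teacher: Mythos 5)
Your proposal is correct and follows essentially the same route as the paper's proof: express $F_i(y)$ and $\widehat{F}_{m,i}(y)$ as $\sigma^2$ times the pairings of $A_F^*k_y$ and $A_F^*\pi_m k_y$ with $\partial_{x_i}k(x,\cdot)|_{x=y}$ (via Proposition \ref{prop: coefficient of minimizer} and the derivative reproducing property), subtract, apply Cauchy--Schwarz, and invoke Proposition \ref{prop: explicit asymptotic of E, gaussian}. The only cosmetic difference is that you derive the identity $F_i(y)=\sigma^2\langle A_F^*k_y, \partial_{x_i}k(x,\cdot)|_{x=y}\rangle_H$ by evaluating $(A_Fh_y)(y)$ and moving the adjoint, whereas the paper computes $A_F^*k_y$ explicitly and differentiates it; your explicit treatment of the case $h_y\notin D(A_F)$ (the bound being vacuous) is a harmless addition.
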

\begin{proof}
    Since
    \[ A_F^*k_y(z) = \sum_{i=1}^d F_i(y)\partial_xk(x,z)|_{x=y} = \sum_{i=1}^d F_i(y)\frac{z_i - y_i}{\sigma^2}e^{-|y-z|^2/2\sigma^2},\]
    we have
    \[F_i(y) = \sigma^2 \partial_{x_i}A_F^*k_y(x)|_{x=y} 
    = \sigma^2 \langle A_F^* k_y, \partial_{x_i}k(x,\cdot)|_{x=y}\rangle_H.\]
    On the other hand, by Proposition \ref{prop: coefficient of minimizer}, we have
    \[A_F^*\pi_nk_y = (v_\alpha)_{|\alpha| \le m}^\top \mathbf{A}_{F,m}^\star \mathbf{G}_n^{-1} \mathbf{v}_m(y). \]
    Thus, we have
    \[\sigma^2 \langle A_F^*\pi_nk_y, \partial_{x_i}k(x,\cdot)|_{x=y}\rangle_H = \widehat{F}_{m,i}(y).\]
    Therefore, by Lemma \ref{lem: lemma for system identification} and Proposition \ref{prop: explicit asymptotic of E, gaussian}, we have the formula \eqref{reconstruction error rate, gaussian}.
\end{proof}
Since the right hand side of \eqref{reconstructed dynamical system, gaussian} can be estimated from data as in Theorem \ref{thm: explicit asymptotic, conti}, our framework can effectively estimate the unknown system using only data.


\section{Jet Extended Dynamic Mode Decomposition}\label{sec:NumericalSimulation}
Here, we describe the details of the algorithms of JetEDMD and provide several computational results.
The code for the numerical simulation is available at \url{https://github.com/1sa014kawa/JetEDMD}.

\subsection{Eigenvalues and eigenvectors of Perron--Frobenius operators and extended Koopman operators}\label{subsec: data-driven estimation}
Here, we introduce the details of the estimation algorithms for eigenvalues and eigenvectors of the Perron-Frobenius operator and the extended Koopman operator.
We also provide several numerical results using the van der Pol oscillator, Duffing oscillator, and the H\'enon map.
We describe the algorithms in Algorithms \ref{algorithm: data-driven PF operator estimation, discrete}, \ref{algorithm: data-driven PF operator estimation, continuous}, \ref{algorithm: eigenfunction, discrete}, and \ref{algorithm: eigenfunction, continuous}.
We provide several remarks for these algorithms and accurate convergence theorems for them as follows:
\begin{remark}\label{rmk: matrix log}
    When $f = \phi^{T_s}$ for some $T_s>0$ in Algorithm \ref{algorithm: data-driven PF operator estimation, discrete}, using the output $\widehat{\mathbf{C}}$, we may define
    \begin{align}
        \widehat{\mathbf{A}} := \frac{1}{T_s}\log \widehat{\mathbf{C}}.
    \end{align}
    Then, $\widehat{\mathbf{A}}$ can provide an estimation of the generator $A_F|_{V_{p,m}}$ by Proposition \ref{prop: expC is equal to A} under suitable conditions.
    However, in general, we need to carefully choose a branch of $\log \widehat{\mathbf{C}}$ to get a correct approximation of the generator (see also Figure \ref{fig: vdP_eigenvalues_exp} below).
\end{remark}

\begin{remark}
    The Gaussian-Hermite quadrature (see, for example, \cite[Section 3]{MR2401585}) is a more effective and faster way to compute $\mathbf{G}_{m}^{ij}$ than a usual numerical integration in Algorithm \ref{algorithm: eigenfunction, discrete}.
\end{remark}

Using Theorem \ref{thm: explicit asymptotic, disc} and Proposition \ref{prop: sufficient condition for domain condition, discrete, exp, p=b}, we immediately have the following theorems:
\begin{theorem}[Convergence of Algorithm \ref{algorithm: data-driven PF operator estimation, discrete}]\label{thm: convergence of algorithm, discrete}
    Assume that $\Omega = \mathbb{R}^d$.
    Let $p \in \mathbb{R}^d$ be a fixed point of $f$ and let $k^{\rm e}(x,y) = e^{-(x-p)^\top(y-p)/\sigma^2}$ for some $\sigma > 0$.
    Let $x_1,\dots, x_N$ be i.i.d random variables of the distribution $\rho(x){\rm d}x$ with compactly supported density function $\rho$ such that ${\rm ess.inf}_{x \in U}\rho(x) >0$ for some open subset $U \subset \mathbb{R}^d$.
    Assume that $f^\alpha \in H^{\rm e}(\sigma,p)$ for all $\alpha \in \mathbb{Z}_{\ge 0}^d$.
    Let $\widehat{\mathbf{C}}(m,n,N)$ be the output of Algorithm \ref{algorithm: data-driven PF operator estimation, discrete} with input $m$, $n$, $p$, $X=(x_1,\dots,x_N)$, and $Y=(y_1,\dots,y_N)$.
    Then, we have
    \begin{align}
        \lim_{n\to \infty}\lim_{N\to \infty} \left\|\widehat{\mathbf{C}}(m,n,N) -\mathbf{G}_m\mathbf{C}_{f,m}^\star\mathbf{G}_m^{-1}\right\| = 0~~{\rm a.e.},
    \end{align}
    where $\mathbf{C}_{f,m}^\star$ is the representation matrix of $C_f^*|_{V_{p,m}}$ with respect to $\mathcal{B}_{p,m}$.
\end{theorem}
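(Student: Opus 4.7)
The plan is to deduce Theorem~\ref{thm: convergence of algorithm, discrete} as a direct consequence of the explicit asymptotic bound in Theorem~\ref{thm: explicit asymptotic, disc}, once the hypotheses of that theorem are verified and the normalization conventions of Algorithm~\ref{algorithm: data-driven PF operator estimation, discrete} and Corollary~\ref{cor: error analysis for PF operator, infinite samples: disc.} are reconciled. First I would check the assumptions: Assumption~\ref{asm: basic} holds automatically because $k^{\rm e}(x,y)=e^{(x-p)^\top(y-p)/\sigma^2}$ is real-analytic on $\mathbb{R}^d\times\mathbb{R}^d$; Assumption~\ref{asm: existence of a fixed point} is given by hypothesis; and Assumption~\ref{asm: domain of C_f} follows from Proposition~\ref{prop: sufficient condition for domain condition, discrete, exp, p=b} because with $b=p$ the basis in Example~\ref{ex: exponential kernel for algorithm} shows that $V_p$ is exactly the space of polynomial functions, so the hypothesis $f^\alpha\in H^{\rm e}(\sigma,p)$ for all $\alpha$ yields $C_f(V_p)\subset H^{\rm e}(\sigma,p)$. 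A rectangle $R(a,r)$ as required by Theorem~\ref{thm: explicit asymptotic, disc} exists because $\mathrm{ess.inf}_{x\in U}\rho>0$ on an open set $U$, so any closed rectangle contained in $U$ works.

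Next I would verify the sample-dependent conditions $r_n\le N$ (eventually true) and ${\rm span}\{\pi_n k_{x_i}\}_{i=1}^N=V_{p,n}$ almost surely. For the spanning condition, observe that the map $(x_1,\dots,x_{r_n})\mapsto\det(\langle\pi_n k_{x_i},\pi_n k_{x_j}\rangle_H)$ is a real-analytic function on $\Omega^{r_n}$, so its zero set is either all of $\Omega^{r_n}$ or has Lebesgue measure zero. The former is excluded by exhibiting explicit configurations, for instance $x_i$ near $p$ whose Taylor jets span $V_{p,n}$, for which the determinant is positive; since the joint density of $(x_1,\dots,x_{r_n})$ dominates a positive multiple of Lebesgue measure on $U^{r_n}$, the spanning condition then holds almost surely once $N\ge r_n$.

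With the hypotheses in place, Theorem~\ref{thm: explicit asymptotic, disc} supplies
\[
\limsup_{N\to\infty}\bigl\|\mathbf{C}_{f,m}^\star-\widetilde{\mathbf{C}}_{m,n,N}\bigr\|\;\lesssim\;\frac{n^{m+d}}{\sqrt{(n+1)!}}\Bigl(\frac{2B_1 B_2}{\sigma}\Bigr)^{\!n}\quad\text{a.e.,}
\]
where $\widetilde{\mathbf{C}}_{m,n,N}$ denotes the principal block in the convention of Corollary~\ref{cor: error analysis for PF operator, infinite samples: disc.}. The output $\widehat{\mathbf{C}}_{m,n,N}$ of Algorithm~\ref{algorithm: data-driven PF operator estimation, discrete} differs from $\widetilde{\mathbf{C}}_{m,n,N}$ only by whether the diagonal weights $\alpha!$ of the Gram matrix $\mathbf{G}_m^{\rm e}$ are absorbed into the matrix or left on the outside; reconciling the two conventions gives $\widehat{\mathbf{C}}_{m,n,N}=\mathbf{G}_m\widetilde{\mathbf{C}}_{m,n,N}\mathbf{G}_m^{-1}$, so the limit object is $\mathbf{G}_m\mathbf{C}_{f,m}^\star\mathbf{G}_m^{-1}$ and the asymptotic rate survives multiplication by the $n$-independent factor $\|\mathbf{G}_m\|_{\rm op}\|\mathbf{G}_m^{-1}\|_{\rm op}$. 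Since $\sqrt{(n+1)!}$ dominates any exponential in $n$ by Stirling, letting $n\to\infty$ sends the right-hand side to zero and yields the claim.

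The main obstacle I anticipate is the basis-convention bookkeeping, namely matching precisely how Algorithm~\ref{algorithm: data-driven PF operator estimation, discrete} packages its output against the convention of Corollary~\ref{cor: error analysis for PF operator, infinite samples: disc.}; it is this mismatch that produces the $\mathbf{G}_m$-conjugation on the right-hand side of the theorem statement. Everything else --- the hypothesis verification, the real-analytic spanning argument, and the factorial-over-exponential decay --- is routine once that identification has been carried out.
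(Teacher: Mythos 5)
Your proposal is correct and follows essentially the same route as the paper: the authors also obtain this theorem directly from Theorem \ref{thm: explicit asymptotic, disc} together with Proposition \ref{prop: sufficient condition for domain condition, discrete, exp, p=b}, with the almost-sure spanning of $V_{p,n}$ and the superexponential decay of $1/\sqrt{(n+1)!}$ handled exactly as you describe. Your explicit reconciliation $\widehat{\mathbf{C}}_{m,n,N}=\mathbf{G}_m\widetilde{\mathbf{C}}_{m,n,N}\mathbf{G}_m^{-1}$ (using that $\mathbf{G}_n$ is diagonal for the orthogonal basis of Example \ref{ex: exponential kernel for algorithm}) is precisely the bookkeeping that produces the conjugated limit $\mathbf{G}_m\mathbf{C}_{f,m}^\star\mathbf{G}_m^{-1}$ in the statement, so no gap remains.
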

We note that the condition $f^\alpha \in H^{\rm e}(\sigma,p)$ for all $\alpha \in \mathbb{Z}_{\ge 0}^d$ holds for any holomorphic map on $\mathbb{C}^d$ with exponentially growth, in particular, any polynomial map.
\begin{theorem}[Convergence of Algorithm \ref{algorithm: data-driven PF operator estimation, continuous}]\label{thm: convergence of algorithm, continuous}
    Assume $\Omega = \mathbb{R}^d$ and let $k$ be the Gaussian kernel or the exponential kernel.
    Let $p \in \mathbb{R}^d$ be an equilibrium point of $F$.
    Let $x_1,\dots, x_N$ be i.i.d random variables of the distribution $\rho(x){\rm d}x$ with compactly supported density function $\rho$ such that ${\rm ess.inf}_{x \in U}\rho(x) >0$ for some open subset $U \subset \mathbb{R}^d$.
    Assume that $p$ satisfies Assumptions \ref{asm: existence of an equilibrium point} and \ref{asm: domain of A_F} (see Propositions \ref{prop: sufficient condition for domain condition, continuous, exp} and \ref{prop: sufficient condition for domain condition, continuous, gauss}).
    Define $\widehat{\mathbf{A}}(m,n,N)$ as the output of Algorithm \ref{algorithm: data-driven PF operator estimation, continuous} with input $m$, $n$, $p$, $X=(x_1,\dots,x_N)$, and $Y=(y_1,\dots,y_N)$.
    Then, we have
    \begin{align}
        \lim_{n\to \infty}\lim_{N\to \infty} 
        \left\|\widehat{\mathbf{A}}(m,n,N) - \mathbf{G}_m\mathbf{A}_{F,m}^\star\mathbf{G}_m^{-1}\right\| = 0~~~{\rm a.e.},
    \end{align}
    where $\mathbf{A}_{F,m}^\star$ is the representation matrix of $A_F^*|_{V_{p,m}}$ with respect to $\mathcal{B}_{p,m}$.
\end{theorem}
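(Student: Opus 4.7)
The plan is to deduce Theorem \ref{thm: convergence of algorithm, continuous} essentially as a corollary of the continuous-time error analysis already developed, namely Theorem \ref{thm: error analysis for PF operator for algorithm: conti.}, Corollary \ref{cor: error analysis for PF operator, infinite samples: conti.}, and ultimately Theorem \ref{thm: explicit asymptotic, conti}. The first step is bookkeeping: identify the matrix $\widehat{\mathbf{A}}_{m,n,N}$ produced by Algorithm \ref{algorithm: data-driven PF operator estimation, continuous} with the leading principal submatrix of $\mathbf{G}_m^{-1}\mathbf{W}_m^{X_N,F(X_N)}(\mathbf{V}_n^{X_N})^\dagger \mathbf{G}_n$ (up to conjugation by $\mathbf{G}_m$, which is exactly what turns the matrix $\widehat{\mathbf{A}}_{m,n,N}$ into an approximation of $\mathbf{G}_m \mathbf{A}_{F,m}^\star \mathbf{G}_m^{-1}$ rather than $\mathbf{A}_{F,m}^\star$ itself). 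This is purely a comparison of notation with the matrices defined in Theorem \ref{thm: error analysis for PF operator for algorithm: conti.}.

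Next I would verify the hypotheses of Theorem \ref{thm: explicit asymptotic, conti} under the assumptions of the present statement. Assumption \ref{asm: basic} holds for the Gaussian and exponential kernels by Propositions \ref{prop: RKHS w.r.t exp kernel} and \ref{prop: RKHS w.r.t. gaussian kernel}. Assumption \ref{asm: existence of an equilibrium point} is given, and Assumption \ref{asm: domain of A_F} is assumed via the sufficient conditions in Propositions \ref{prop: sufficient condition for domain condition, continuous, exp} and \ref{prop: sufficient condition for domain condition, continuous, gauss}. The linear independence conditions $r_n \le N$ and $\operatorname{span}\{\pi_n k_{x_1},\dots,\pi_n k_{x_N}\} = V_{p,n}$ hold almost surely for sufficiently large $N$: because $\rho$ has a positive-measure subset $U$ on which it is essentially bounded below, and the evaluation functionals $k_{x}$ at points of $U$ span a dense subspace of $H$ whose projections onto the finite-dimensional $V_{p,n}$ therefore span $V_{p,n}$ with probability one once $N \ge r_n$. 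Finally, I would take $\mu = \rho(x)\,{\rm d}x$ and $\nu = \mathbf{1}_{R(a,r)}(x)\,{\rm d}x$ (times the appropriate Gaussian weight so as to match the orthogonality choices in the proof of Theorem \ref{thm: explicit asymptotic, disc}) for some rectangle $R(a,r) \subset U$, so that $\partial_\mu \nu \in L^\infty(\mu)$.

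With these verifications in place, Theorem \ref{thm: explicit asymptotic, conti} applied to our kernel delivers, for each fixed $m$ and $n$ with $m \le n$,
\begin{align*}
\limsup_{N \to \infty} \left\|\mathbf{A}_{F,m}^\star - \widehat{\mathbf{A}}_{m,n,N}\right\|
\le \frac{C\, n^{m+d}}{\sqrt{(n+1)!}}\left(\frac{2B_1 B_2}{\sigma}\right)^n \quad \text{a.e.}
\end{align*}
for a constant $C$ independent of $n$ (the constants $B_1, B_2$ depend only on $p$, $\rho$, and the rectangle). The factorial in the denominator dominates the exponential numerator, so the right-hand side tends to $0$ as $n \to \infty$. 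Conjugating by the fixed invertible matrix $\mathbf{G}_m$ only multiplies this bound by $\|\mathbf{G}_m\|_{\rm op}\|\mathbf{G}_m^{-1}\|_{\rm op}$, hence
\begin{align*}
\lim_{n\to\infty}\lim_{N\to\infty}\left\|\widehat{\mathbf{A}}_{m,n,N} - \mathbf{G}_m \mathbf{A}_{F,m}^\star \mathbf{G}_m^{-1}\right\| = 0 \quad \text{a.e.},
\end{align*}
which is the claimed convergence.

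The main obstacle I anticipate is not the analytic estimate itself, which has already been done in Theorem \ref{thm: explicit asymptotic, conti}, but rather the careful matching of normalizations: the algorithm works in the basis $\mathcal{B}_{p,m}$ (for which $\mathbf{G}_m$ is diagonal with entries $\alpha^{(i)}!$ by Propositions \ref{prop: ONB for exp kernel}--\ref{prop: ONB for gaussian kernel}), so one must keep track of the difference between representing the operator on $V_{p,m}$ versus on its dual, which accounts for the $\mathbf{G}_m$-conjugation on the right-hand side of the statement. Once this identification is transparent, the theorem reduces immediately to the continuous-time analogue of the convergence already established in Theorem \ref{thm: explicit asymptotic, conti}.
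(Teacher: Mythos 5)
Your proposal is correct and follows essentially the same route as the paper, which obtains this theorem immediately from the explicit convergence rate of Theorem \ref{thm: explicit asymptotic, conti} (via Corollary \ref{cor: error analysis for PF operator, infinite samples: conti.}) together with the domain conditions of Propositions \ref{prop: sufficient condition for domain condition, continuous, exp} and \ref{prop: sufficient condition for domain condition, continuous, gauss}; your extra bookkeeping on the $\mathbf{G}_m$-conjugation and the almost-sure spanning condition matches what the paper leaves implicit.
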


\begin{algorithm}
\caption{Estimation of Perron--Frobenius operators for discrete dynamical systems}
\label{algorithm: data-driven PF operator estimation, discrete}
\begin{algorithmic}[1]
\Require Positive integers $m$ and $n$ with $m \le n$, a fixed point $p \in {\Omega}$ of $f$, $X := (x_1,\dots, x_N) \in \Omega^N$, and $Y := (y_1 , \dots, y_N) \in \Omega^N$ with $N \ge r_n$, where $y_i = f(x_i)$.
\State Construct an orthogonal basis $\{v_{p, i}\}_{i=1}^{r_n}$ of $V_{p,n}$ such that $\{v_{p,i}\}_{i=1}^{r_m}$ constitutes a basis of $V_{p,m}$.
\State Define ${\bf v}_{p,m} \in V_{p,n}^{r_m}$ with variable $x$ by ${\bf v}_{p,m}(x) := (\overline{v_{p,i}(x)})_{i=1}^{r_m}$.
\State Define ${\bf v}_{p,n} \in V_{p,n}^{r_n}$ with variable $x$ by ${\bf v}_{p,n}(x) := (\overline{v_{p,i}(x)})_{i=1}^{r_n}$.
\State Construct the matrix $\mathbf{V}_{m}^Y := \left({\bf v}_{p,m}(y_1), \dots, {\bf v}_{p,m}(y_N)\right)$ of size $r_m \times N$.
\State Construct the matrix $\mathbf{V}_{n}^X := \left({\bf v}_{p,n}(x_1), \dots, {\bf v}_{p,n}(x_N)\right)$ of size $r_n \times N$.
\State Compute $\mathbf{C}_0 := \mathbf{V}_{m}^Y(\mathbf{V}_{n}^X)^\dagger$, where $(\cdot)^\dagger$ indicates the Moore--Penrose pseudo inverse.
\State Extract the leading principal minor matrix $\widehat{\mathbf{C}} $ of $\mathbf{C}_0$  of order $r_m$:
\begin{align}
    \mathbf{C}_0 = \mathbf{V}_m^Y (\mathbf{V}_n^X)^\dagger = 
    \begin{pmatrix}
        \widehat{\mathbf{C}} & *
    \end{pmatrix}. \label{computation of hatCnm}
\end{align}
\Ensure 
    $\widehat{\mathbf{C}}$.
\end{algorithmic}
\end{algorithm}

\begin{algorithm}
\caption{Estimation of the generators of Perron--Frobenius operators for continuous dynamical systems}\label{algorithm: data-driven PF operator estimation, continuous}
\begin{algorithmic}[1]
    \Require 
    Positive integers $m$ and $n$ with $m \le n$, a fixed point $p \in {\Omega}$ of $f$, 
    $X := (x_1,\dots, x_N) \in \Omega^N$, and $Y := (y_{ij})_{i,j} \in \mathbb{R}^{d\times N}$ with $N \ge r_n$, where $y_{ij}=F_i(x_j)$.
    \State Construct an orthogonal basis $\{v_{p, i}\}_{i=1}^{r_n}$ of $V_{p,n}$ such that $\{v_{p,i}\}_{i=1}^{r_m}$ constitutes a basis of $V_{p,m}$.
    \State Define ${\bf v}_{p,m} \in V_{p,n}^{r_m}$ with variable $x$ by ${\bf v}_{p,m}(x) := (\overline{v_{p,i}(x)})_{i=1}^{r_m}$.
    \State Define ${\bf v}_{p,n} \in V_{p,n}^{r_n}$ with variable $x$ by ${\bf v}_{p,n}(x) := (\overline{v_{p,i}(x)})_{i=1}^{r_n}$.
    \State $\mathbf{W}_{m}^{X,Y} := \mathbf{O} \in \mathbb{C}^{r_m \times N}$.
    \For{$i=1,\dots,d$}
        \For{$j = 1,\dots, r_m$}
            \State Compute the derivative $\partial_{x_i}v_{p,j}$.
        \EndFor
        \State Construct the matrix $\mathbf{W}' := \big(y_{i1} \partial_{x_i}{\bf v}_{p,n}(x_1), \dots, y_{iN}\partial_{x_i}{\bf v}_{p,n}(x_N)\big) \in \mathbb{C}^{r_m \times N}$.
        \State Add $\mathbf{W}'$ to $\mathbf{W}_{m}^{X,Y}$.
    \EndFor
    \State Compute $\mathbf{A}_0 := \mathbf{W}_{m}^{X,Y} (\mathbf{V}_{n}^X)^\dagger$, where $(\cdot)^\dagger$ indicates the Moore--Penrose pseudo inverse.
    \State Extract the the leading principal minor matrix $\widehat{\mathbf{A}} $ of $\mathbf{A}_0$ of order $r_m$:
    \begin{align}
    \mathbf{A}_0 = 
    \mathbf{W}_{m}^{X,Y} (\mathbf{V}_n^X)^\dagger = 
    \begin{pmatrix}
        \widehat{\mathbf{A}} & *
    \end{pmatrix}.
\end{align}
\Ensure 
    $\widehat{\mathbf{A}}$.
\end{algorithmic}
\end{algorithm}


\begin{algorithm}
\caption{A computational framework for estimating eigenvectors of Perron--Frobenius operators and extended Koopman operators for discrete dynamical systems}\label{algorithm: eigenfunction, discrete}
\begin{algorithmic}[1]
    \Require positive integers $m_1,\dots,m_r$ and $n_1,\dots,n_r$ with $m_i \le n_i$ for $i=1,\dots,r$, fixed points $p_1, \dots, p_r\in {\Omega}$ of $f$, $X_i := (x_1^i,\dots, x_{N_i}^i) \in \Omega^{N_i}$ and , $Y_i := (y_1^i , \dots, y_{N_i}^i) \in \Omega^{N_i}$ with $N_i \ge r_{p_i,n_i}$ and $f(x_j^i) = y_j^i$ for $j=1,\dots,N_i$.
    \For{$i=1,\dots, r$}
        \State \label{line2} Execute Algorithm \ref{algorithm: data-driven PF operator estimation, discrete} with inputs $m_i$, $n_i$, $p_i$, $X_i$, and $Y_i$, and assign the output to $\widehat{\mathbf{C}}_i$.
        \State  Compute the eigenvalues $(\gamma_i^{(1)}, \dots, \gamma_i^{(r_{m_i})})$ and eigenvectors $(\hat{\bf w}_i^{(1)}, \dots, \hat{\bf w}_i^{(r_{m_i})})$ of $\widehat{\mathbf{C}}_i$.
        \For{$j=1,\dots,r$}
        \State Compute the matrix $\mathbf{G}^{ij} := \left( \langle v_{p_i,s}, v_{p_j,t} \rangle \right)_{s=1,\dots, r_{m_i}, t=1,\dots, r_{m_j}}$ of size $r_{m_i} \times r_{m_j}$, where $v_{p_i, s}$'s are the elements of the orthogonal basis constructed in Algorithm \ref{algorithm: data-driven PF operator estimation, discrete}.
        \EndFor
        \For{$\ell=1,\dots,r_{m_i}$}
            \State  Define $\widehat{w}_i^{(\ell)}$ with variable $x$ by $\widehat{w}_i^{(\ell)}(x) := \mathbf{v}_{p_i,m_i}(x)^* (\mathbf{G}^{ii})^{-1} \mathbf{w}_i^{(\ell)}$.
        \EndFor
            \State Compute the eigenvector  $(\hat{\bf u}_i^{(1)}, \dots, \hat{\bf u}_i^{(r_{m_i})})$ of $\widehat{\mathbf{C}}_i^*$ corresponding to $(\overline{\gamma_i}^{(1)}, \dots, \overline{\gamma_i}^{(r_m)})$.

    \EndFor
    \State Compute $\mathbf{H}^{ij} \in \mathbb{C}^{r_{m_i} \times r_{m_j}}$ defined as
    \begin{align}
        \begin{pmatrix}
            \mathbf{H}^{11} &\cdots& \mathbf{H}^{1r}\\
            \vdots & \ddots & \vdots\\
            \mathbf{H}^{r1} & \cdots & \mathbf{H}^{rr}
        \end{pmatrix} &:=
        \begin{pmatrix}
            \mathbf{G}^{11} &\cdots& \mathbf{G}^{1r}\\
            \vdots & \ddots & \vdots\\
            \mathbf{G}^{r1} & \cdots & \mathbf{G}^{rr}
        \end{pmatrix}^{-1}
        \begin{pmatrix}
            \mathbf{G}^{11} && \mathbf{O}\\
            & \ddots & \\
            \mathbf{O} & & \mathbf{G}^{rr}
        \end{pmatrix}. \label{computation of H}
    \end{align}
    \For{$i=1,\dots, r$}
        \For{$\ell=1,\dots,r_{m_i}$}
            \State Define $\widehat{u}_i^{(\ell)}$ with variable $x$ by $\widehat{u}_i^{(\ell)}(x) := \sum_{j=1}^r\mathbf{v}_{p_j,m}(x)^* \mathbf{H}_m^{ji} \mathbf{u}_i^{(\ell)}$.
        \EndFor
    \EndFor
    \Ensure $(\gamma_i^{(\ell)}, \widehat{w}_i^{(\ell)})$ as the pairs of the eigenvalue and the eigenvector of the Perron--Frobenius operator, and $(\overline{\gamma_i^{(\ell)}}, \widehat{v}_i^{(\ell)})$ as those of the Koopman operator for $i=1,\dots, r$ and $\ell = 1,\dots, r_{m_i}$.
\end{algorithmic}
\end{algorithm}

\begin{algorithm}
\caption{A computational framework for estimating eigenvectors of Perron--Frobenius operators and extended Koopman operators for continuous dynamical systems}\label{algorithm: eigenfunction, continuous}
\begin{algorithmic}[1]
    \Require positive integers $m_1,\dots,m_r$ and $n_1,\dots,n_r$ with $m_i \le n_i$ for $i=1,\dots,r$, equilibrium points $p_1, \dots, p_r\in {\Omega}$ of $F$, $X_i := (x_1^i,\dots, x_{N_i}^i) \in \Omega^{N_i}$ and , $Y_i := (y_1^i , \dots, y_{N_i}^i) \in \mathbb{R}^{d \times N_i}$ with $N_i \ge r_{p_i,n_i}$ and $F(x_j^i) = y_j^i$ for $j=1,\dots,N_i$.
    \Ensure the outputs of Algorithm \ref{algorithm: eigenfunction, discrete} using Algorithm \ref{algorithm: data-driven PF operator estimation, continuous} instead of Algorithm \ref{algorithm: data-driven PF operator estimation, discrete} in Line \ref{line2}.
\end{algorithmic}
\end{algorithm}

\subsubsection{Eigenvalues for Van der Pol oscillators} \label{subsubsec: vdP eigenvalues}
Let $\mu>0$ be a positive number, and we consider the van der Pol oscillator:
\begin{align}
    \begin{split}
        x' &= y, \\
        y' &= \mu(1-x^2)y - x.
    \end{split}\label{vdP}
\end{align}
This dynamical system has one equilibrium point at the origin and a single limit cycle.
The eigenvalues of the Jacobian matrix of the vector field are $(\mu \pm \sqrt{\mu^2-4})/2$.

Figure \ref{fig: vdP_eigenvalues_exp} describes the estimation of eigenvalues of the Perron--Frobenius operators $A_F^*|_{V_{p,m}}$ of the van der Pol oscillator \eqref{vdP} for $\mu=1$ using Algorithm \ref{algorithm: data-driven PF operator estimation, continuous} and one with Algorithm \ref{algorithm: data-driven PF operator estimation, discrete} with the method as in Remark \ref{rmk: matrix log}, using the exponential kernel $k^{\rm e}(x,y) = e^{(x-b)^\top (y-b)/\sigma^2}$ with $\sigma=2$ and $b=0$.
On the left panel, we use $m=5$, $n=7$, $N=36$ samples from the uniform distribution on $[-1,1]^2$, and the exact velocities at the samples to compute $\widehat{\mathbf{A}}$.
On the middle and right panels, we use the method as in Remark \ref{rmk: matrix log} with the matrix logarithm.
On the middle panels, we take $T_s = 0.5$, and it provides fairly good approximation of the Perron--Frobenius operator.
Moreover, it seems to work well although it has not been proved that $\phi^{T_s}$ satisfies Assumption \ref{asm: domain of C_f}.
The reason is either that $\phi^{T_s}$ meets Assumption \ref{asm: domain of C_f} or that Assumption \ref{asm: domain of C_f} is too strong.
On the other hand, the estimation fails if $T_s$ is relatively large as in the right panel. 
The difference of the two green circles in the right panel is approximately $2\pi$, indicating the algorithm of the numerical computation of the matrix logarithm chooses the principal logarithm \cite{MR2396439}, that is an incorrect branch for our purpose.
We also note that the method in Remark \ref{rmk: matrix log} needs larger $n$ and $N$.  It reflects that the the flow map is much more complicated than its original vector fields.

\begin{figure}
\begin{minipage}[c]{1.0\linewidth}
    \begin{minipage}[c]{0.33\linewidth}
      \includegraphics[keepaspectratio, width=\linewidth]{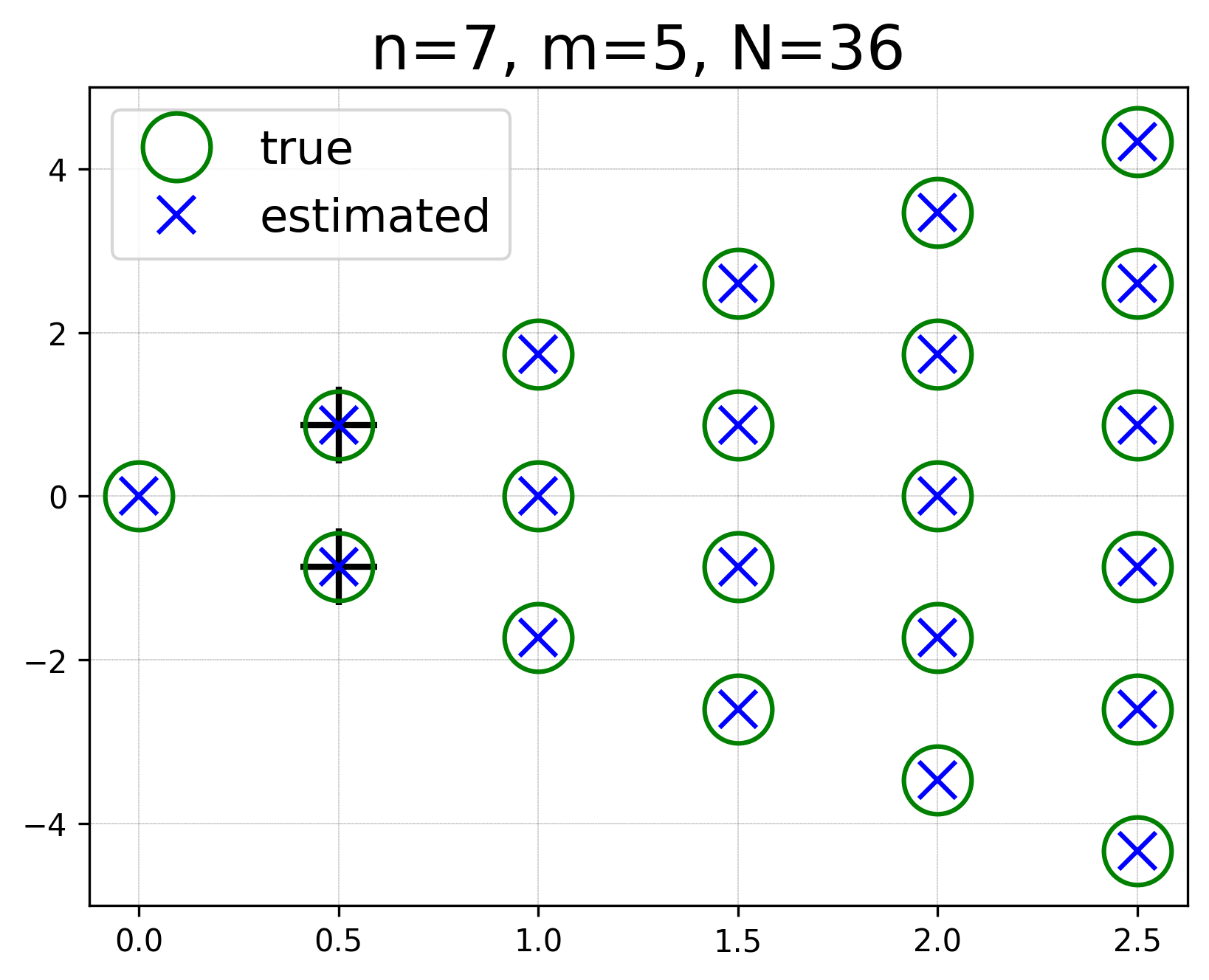}
    \end{minipage}\hfill 
    \begin{minipage}[c]{0.33\linewidth}
      \includegraphics[keepaspectratio, width=\linewidth]{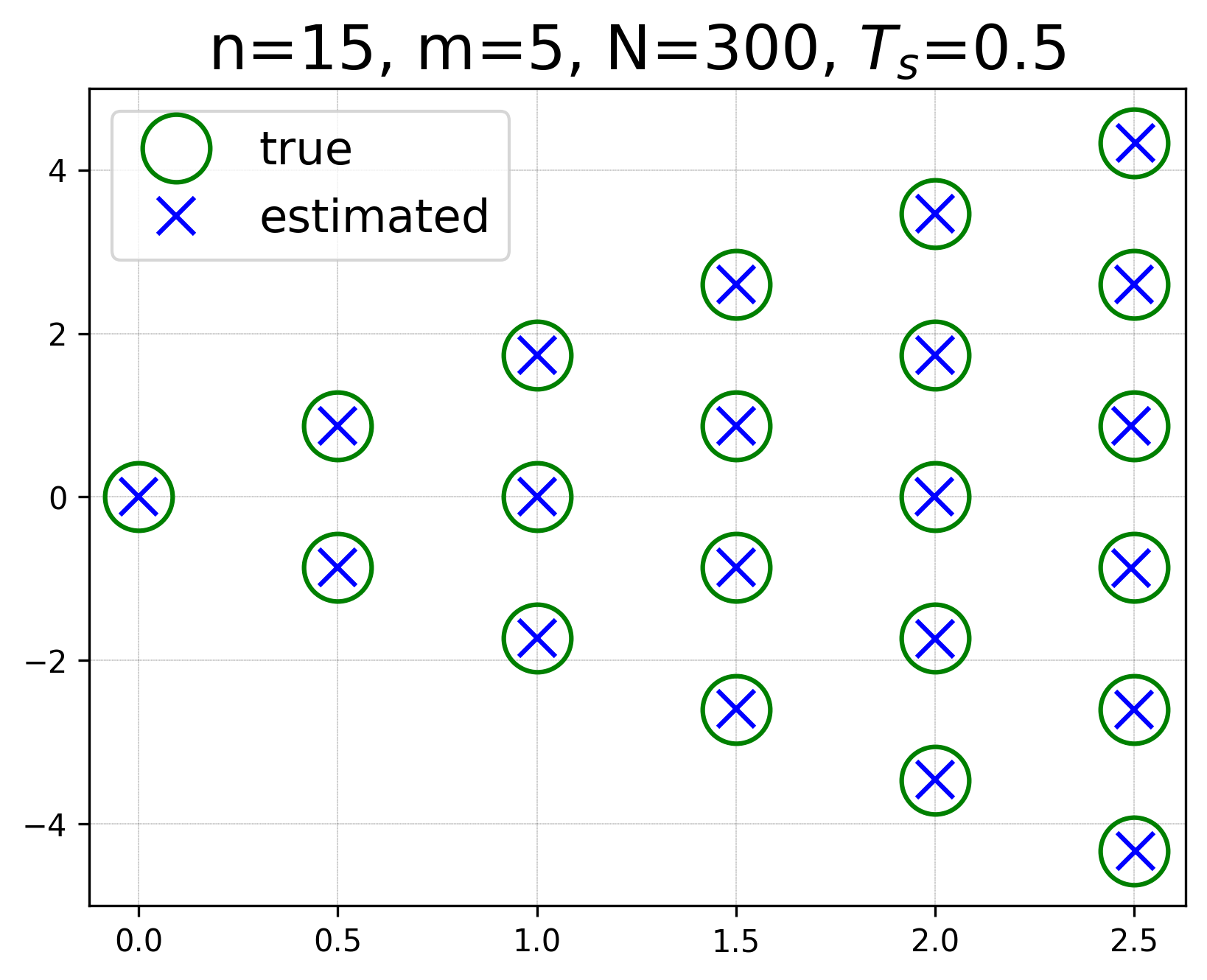}
    \end{minipage}\hfill 
    \begin{minipage}[c]{0.33\linewidth}
      \includegraphics[keepaspectratio, width=\linewidth]{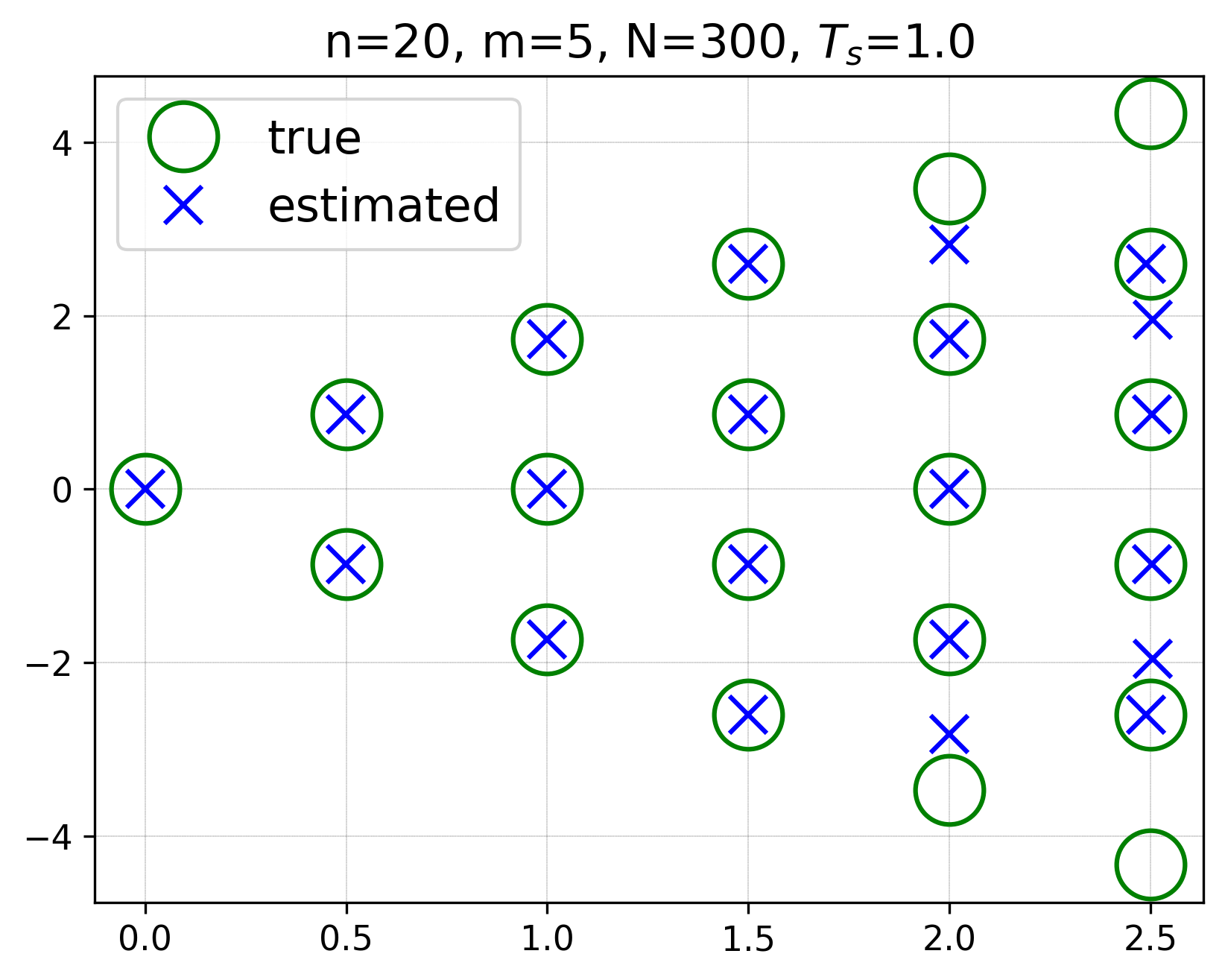}
    \end{minipage}
\end{minipage}
\caption{
Eigenvalue estimation for the van der Pol oscillator ($\mu=1$) using Algorithm \ref{algorithm: data-driven PF operator estimation, continuous} (left) and Algorithm \ref{algorithm: data-driven PF operator estimation, discrete} with the matrix-log method in Remark \ref{rmk: matrix log} (middle, right). We use the exponential kernel $k^{\rm e}(x,y)=e^{x^\top y/4}$. Blue $\times$ marks show the eigenvalues of the estimated Perron--Frobenius operator $\widehat{\mathbf{A}}$; green circles show those of $A_F|{V{p,m}}$; plus signs show the eigenvalues of the Jacobian of $F(x,y)=(y,,\mu(1-x^2)y-x)$ at $p=(0,0)$. {\bf Left:} continuous setting with $m=5$, $n=7$, $p=(0,0)$, $N=36$ samples drawn uniformly from $[-1,1]^2$, and their exact velocities. {\bf Middle:} discrete setting with $m=5$, $n=15$, $T_s=0.5$, $p=(0,0)$, and $N=300$ input-output pairs from $[-1,1]^2$ and their images under $\phi^{T_s}$.  
{\bf Right:} discrete setting with $m=5$, $n=20$, $T_s=1.0$, $p=(0,0)$, and $N=300$ input-output pairs as above.
}
\label{fig: vdP_eigenvalues_exp}
\end{figure}

\subsubsection{Eigenfunctions for Duffing oscillators}
Here, we consider the Duffing oscillator (without external force):
\begin{align}
    \begin{split}
        x' &= y, \\
        y' &= -\delta y - \alpha x - \beta x^3
    \end{split}\label{duffing}
\end{align}
only in the case of $(\alpha, \beta, \delta) = (-1,1,0.5)$.
This dynamical system has two stable equilibrium points, $(-1,0)$, $(1,0)$, and an unstable one $(0,0)$.
There exist two domains of attraction and trajectories that start from the points within the same colored region all converge to the same equilibrium points (left panel in Figure \ref{fig: duffing_eigenfunctions_two_equilibrium_points}). 

Figure \ref{fig: duffing_eigenfunctions_two_equilibrium_points} describes the approximated eigenfunctions for $-1$ using the exponential kernel $e^{x^\top y/\sigma^2}$ with $\sigma = 1$.
We take two equilibrium points, $p_1=(-1,0)$ and $p_2=(1,0)$.
Then, we set $m_1=m_2=10$, $n_1=n_2=16$, and take $N_1=N_2=7000$ samples from the uniform distribution on $[-1.5,1.5] \times [-0.5,0.5]$, and the exact velocities on them as input.
Here, we draw graphs of even and odd eigenfunctions constructed as in the following procedure:
first, we note that the linear operator $C_{-1}: H \to H; h \mapsto h((-1)\times\cdot)$ induces a Hermitian unitary operator.
Moreover, $C_{-1}$ is commutative with $A_F^*$ and satisfies $C_{-1}(V_{p_1,n}) = V_{p_2,n}$.
Thus, for the eigenfunction $v$ of the extended Koopman operator $A_F^\times$ in $V_{p_1,n}$, the image $C_{-1}v$ corresponds to an eigenvector for the same eigenvalue in $V_{p_2,n}$.
Therefore, we canonically construct even and odd eigenfunctions $v \pm C_{-1}v$ of the extended Koopman operator $A_F^\times$.
The right panel is the heat map of the odd eigenfunctions.
We observe that the eigenfunction seems to capture the domain of attractions.
\begin{figure}
\begin{minipage}[c]{1.0\linewidth}
    \begin{minipage}[c]{0.38\linewidth}
      \includegraphics[keepaspectratio, width=\linewidth]{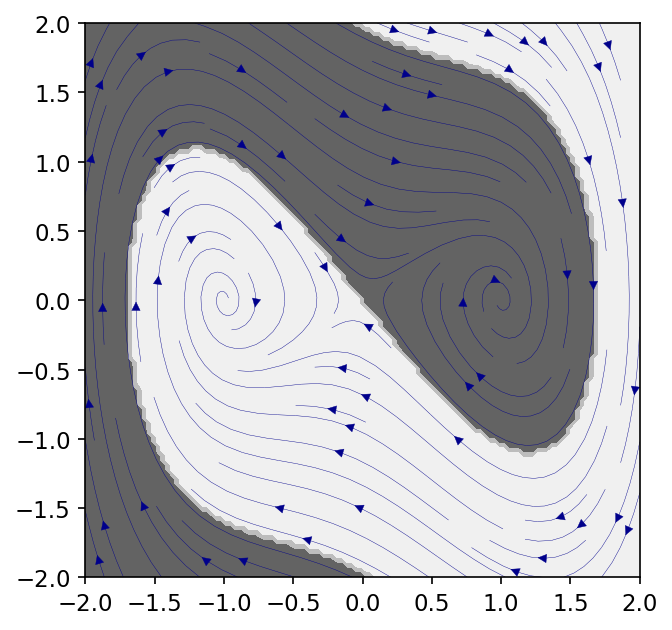}
    \end{minipage}\hfill 
    \begin{minipage}[c]{0.45\linewidth}
      \includegraphics[keepaspectratio, width=\linewidth]{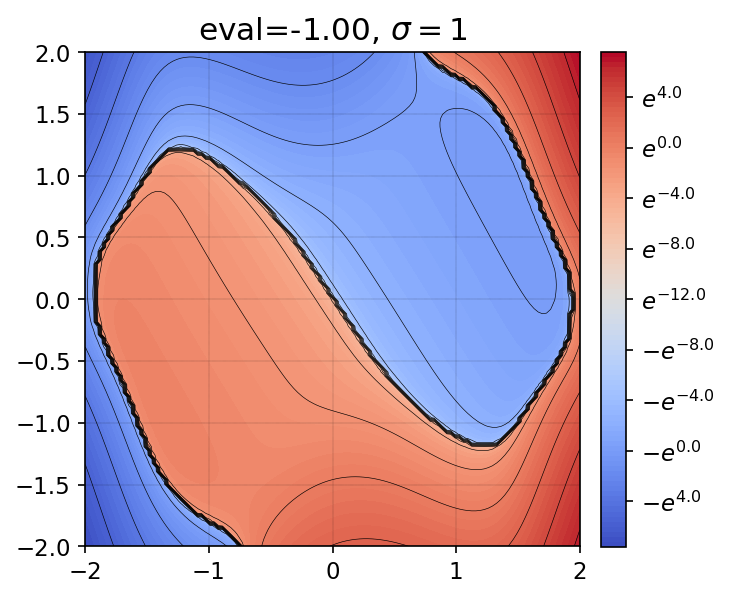}
    \end{minipage}
\end{minipage}
\caption{
Two domains of attraction of the Duffing oscillator \eqref{duffing} (left).
{\bf Left:} two domains of attraction of the Duffing oscillator \eqref{duffing}. {\bf Right:} an odd estimated eigenfunction for eigenvalue $-1$ of the extended Koopman operator $A_F^\times$ for the Duffing oscillator \eqref{duffing}, computed via Algorithm \ref{algorithm: eigenfunction, continuous} using the exponential kernel $e^{x^\top y}$ with input $m_1=m_2=10$, $n_1=n_2=16$, $p_1=(-1,0)$, $p_2=(1,0)$, and $N_1=N_2=7000$ samples drawn uniformly from $[-1.5,1.5]\times[-0.5,0.5]$, together with their exact velocities.
}
\label{fig: duffing_eigenfunctions_two_equilibrium_points}
\end{figure}


\subsubsection{H\'enon maps}
Here, we consider the H\'enon map, the discrete dynamical system $f: \mathbb{R}^2 \to \mathbb{R}^2$, defined by
\begin{align}
    f(x,y) = (y + 1 - {a} x^2, {b} x) \label{henon}
\end{align}
in the case of $a=1.4$ and $b=0.3$.
This dynamical system has two fixed points 
\begin{align}
    p = \left(\frac{{b}-1 \pm \sqrt{({b}-1)^2 + 4{a}}}{2{a}}, \frac{{b}({b}-1)\pm {b}\sqrt{({b}-1)^2 + 4{a}}}{2{a}} \right).
\end{align}
We describe some characteristic features on the left panel in Figure~\ref{fig: henon_eigenfunctions}.

The middle and right pictures in Figure \ref{fig: henon_eigenfunctions} describes the approximated eigenfunctions using the exponential kernel $e^{(x-b)^\top (y-b)/\sigma^2}$ with $\sigma = 0.6$ and $b=0$ with input $m_1=6$, $n_1=30$.
We take $N_1=3000$ pairs of samples from the uniform distribution from $p_1 + [-0.25,0.25]^2$ and their images under the dynamical system $f$, where $p_1 \approx (-1.13135448, -0.33940634)$ is a fixed point.

Basically, the vanishing region of the eigenfunctions captures the several characteristic features of the dynamical system, for example, some parts of invariant manifold and the attractor.
Our framework shows that the estimated eigenfunctions actually approximate those of $C_f^\times$.
The study of the mathematical properties of these eigenfunctions of $C_f^\times$ is a crucial research task for the future.

\begin{figure}
\begin{minipage}[c]{1.0\linewidth}
    \begin{minipage}[c]{0.33\linewidth}
      \includegraphics[keepaspectratio, width=\linewidth]{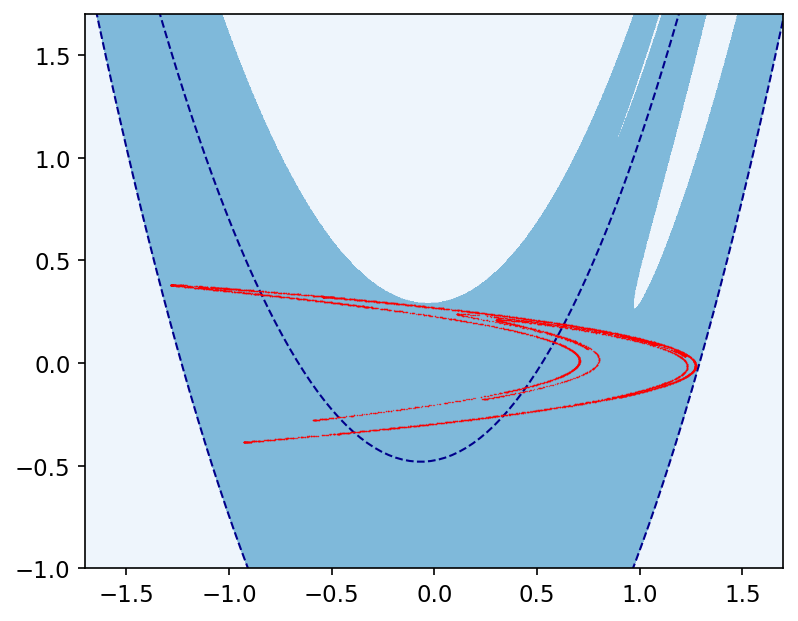}
    \end{minipage}\hfill 
    \begin{minipage}[c]{0.34\linewidth}
      \includegraphics[keepaspectratio, width=\linewidth]{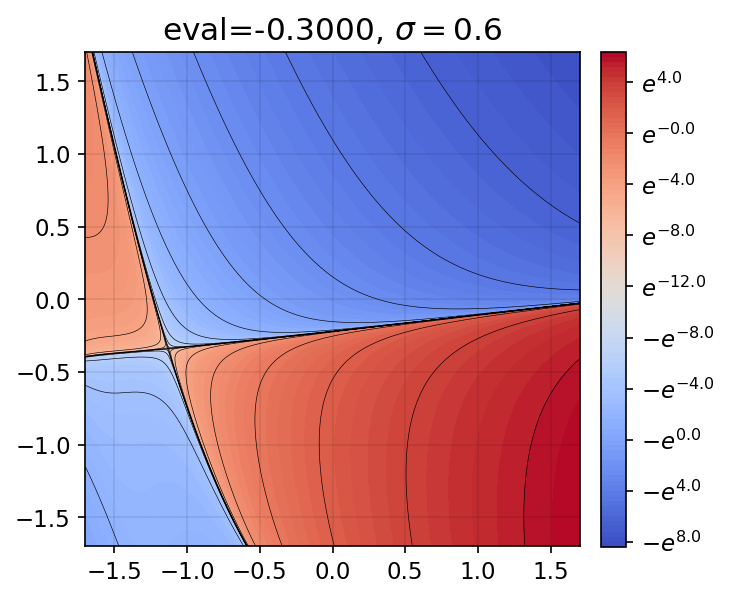}
    \end{minipage}\hfill 
    \begin{minipage}[c]{0.28\linewidth}
      \includegraphics[keepaspectratio, width=\linewidth]{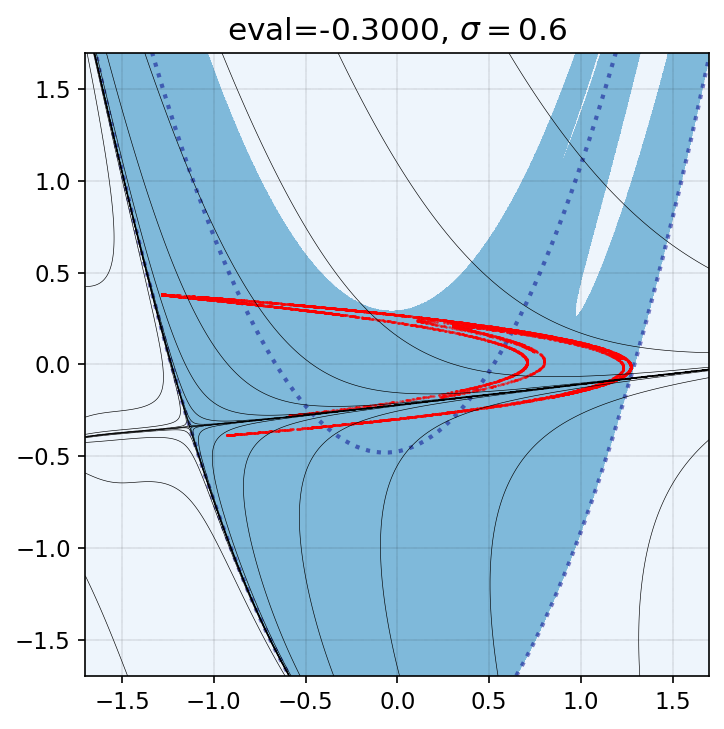}
    \end{minipage}
\end{minipage}
\caption{
{\bf Left:} the attractor (red scatter), the domain of attraction (blue region), and the invariant manifold (dark blue dashed line) of the fixed points of the H'enon map \eqref{henon}. {\bf Middle:} the heat map of the estimated eigenfunction with eigenvalue $0.3$ of the extended Koopman operator $C_f^\times$ for the H\'enon map \eqref{henon}, computed via Algorithm \ref{algorithm: eigenfunction, discrete} using the exponential kernel $e^{x^\top y/0.6}$ with input $m_1=6$, $n_1=30$, $p_1\approx(-1.131,-0.339)$, and $N_1=3000$ input--output pairs sampled uniformly from $p_1+[-0.25,0.25]^2$ and their images under $f$. {\bf Right:} the contours of the eigenfunction overlaid on the left panel.
}
\label{fig: henon_eigenfunctions} 
\end{figure}

\subsection{Data-driven reconstruction of dynamical systems}\label{sec: system identification}
As an application of JetEDMD, we propose a method to reconstruct the original dynamical system from finite discrete data on trajectories of the dynamical system.
We show the performance of the reconstruction using the Lorenz system.

We describe the data-driven reconstruction algorithms in Algorithms \ref{algorithm: reconstruction, discrete}, \ref{algorithm: reconstruction, continuous}, and \ref{algorithm: reconstruction, conti. with disc.}.
Moreover, combining Theorems \ref{thm: reconstruction, discrete} and \ref{thm: convergence of algorithm, discrete}, we have accurate convergence results for these algorithms as follows:
\begin{theorem}[Convergence of Algorithm \ref{algorithm: reconstruction, discrete}]\label{convergence of algorithm, reconstruction, discrete}
    Let $\sigma>0$.
    Let $k^{\rm e}(x,y) = e^{-(x-p)^\top(y-p)/\sigma^2}$.
    Let $x_1,\dots, x_N$ be i.i.d random variables of the distribution with compactly supported density function $\rho$ such that ${\rm ess.inf}_{x \in U}\rho(x) >0$ for some open subset $U \subset \mathbb{R}^d$.
    Let $x_0, y_0 \in \mathbb{R}^d$ such that $y_0 = f(x_0)$. Assume that $f^\alpha \in H^{\rm e}(\sigma, p)$ for all $\alpha \in \mathbb{Z}_{\ge 0}^d$.
    Define $\widehat{f}_{m,n,N}$ as the output of Algorithm \ref{algorithm: reconstruction, discrete} with input $k=k^{\rm e}$, $\sigma$, $m$, $n$, $(x_0, y_0)$, $X=(x_1,\dots, x_N)$, and $Y=(y_1,\dots, y_N)$.
    Then, for any compact set $K \subset \mathbb{R}^d$, we have
    \begin{align*}
            \lim_{m\to \infty}\lim_{n\to \infty}\lim_{N\to\infty}
            \sup_{y \in K}\left\|f(y)-\widehat{f}_{m,n,N}(y) \right\|
            = 0~~~{\rm a.e.}
    \end{align*}
\end{theorem}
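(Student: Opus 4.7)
The plan is to combine the two main pillars already established in the paper: Theorem \ref{thm: reconstruction, discrete} gives a deterministic error bound for the oracle reconstruction $\widehat{f}_m$ built from the exact restricted Perron--Frobenius matrix $\mathbf{C}_{f,m}^\star$, while Theorem \ref{thm: convergence of algorithm, discrete} supplies almost-sure convergence of the data-driven matrix $\widehat{\mathbf{C}}_{m,n,N}$ to its target $\mathbf{G}_m \mathbf{C}_{f,m}^\star \mathbf{G}_m^{-1}$ as $n, N \to \infty$. I would interpolate the two by the triangle inequality
\[
\|f(y) - \widehat{f}_{m,n,N}(y)\| \le \|f(y) - \widehat{f}_m(y)\| + \|\widehat{f}_m(y) - \widehat{f}_{m,n,N}(y)\|,
\]
and take the three limits in the stated order: first $N \to \infty$, then $n \to \infty$, and finally $m \to \infty$.

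For the second (data-driven) term, I fix $m$ and note that the output $\widehat{f}_{m,n,N}$ of Algorithm \ref{algorithm: reconstruction, discrete} is obtained from formula \eqref{reconstructed dynamical system, discrete, exp} by substituting $\widehat{\mathbf{C}}_{m,n,N}$ (rescaled appropriately by $\mathbf{G}_m$) in place of $\mathbf{C}_{f,m}^\star$. Since the map $\mathbf{M} \mapsto (\partial_{x_i}\mathbf{v}_{p,m}(p))^* \sigma^2 \mathbf{G}_m^{-1} \mathbf{M}\,\mathbf{v}_{p,m}(y)$ is linear in $\mathbf{M}$, with $\mathbf{v}_{p,m}(y)$ a continuous (polynomial-times-Gaussian) vector uniformly bounded on any compact $K$, the almost-sure matrix convergence from Theorem \ref{thm: convergence of algorithm, discrete} (valid here because $f^\alpha \in H^{\rm e}(\sigma, p)$ for every $\alpha$) yields
\[
\lim_{n\to\infty}\lim_{N\to\infty}\sup_{y \in K}\|\widehat{f}_m(y) - \widehat{f}_{m,n,N}(y)\| = 0 \quad \text{a.e.}
\]
for every fixed $m$.

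For the first (oracle) term, Theorem \ref{thm: reconstruction, discrete} applied with $b = p$ gives, coordinate-wise,
\[
\sup_{y \in K}|f_i(y) - \widehat{f}_{m,i}(y)| \le \|f_i - p_i\|_H \cdot \frac{\sup_{y \in K}\|y-p\|^{m+1}\, e^{\|y-p\|^2/2\sigma^2}}{\sigma^{m+1}\sqrt{(m+1)!}}.
\]
The assumption $f^\alpha \in H^{\rm e}(\sigma, p)$ for $\alpha = e_i$, together with the membership of constants in the Fock-type space, guarantees $\|f_i - p_i\|_H < \infty$, and the super-factorial decay of $1/\sqrt{(m+1)!}$ defeats the polynomial factor $\sup_K \|y-p\|^{m+1}/\sigma^{m+1}$ and the bounded exponential factor on $K$. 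Hence the oracle error tends to $0$ as $m \to \infty$, uniformly on $K$, completing the nested-limit argument.

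The main obstacle I anticipate is organizational rather than analytic: one must carefully match the $\mathbf{G}_m$-normalization convention between Algorithm \ref{algorithm: data-driven PF operator estimation, discrete} (whose output approximates $\mathbf{G}_m \mathbf{C}_{f,m}^\star \mathbf{G}_m^{-1}$) and the reconstruction formula \eqref{reconstructed dynamical system, discrete, exp} (which uses $\mathbf{C}_{f,m}^\star \mathbf{G}_m^{-1}$ directly), so that $\widehat{f}_m - \widehat{f}_{m,n,N}$ really is linear in the matrix discrepancy; and one must align the full-measure sets produced by Theorem \ref{thm: convergence of algorithm, discrete} across the countable family of $(m,n)$-pairs so that the nested limit is realized on a single common event of probability one.
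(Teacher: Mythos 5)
Your proposal follows essentially the same route as the paper, which proves this result precisely by combining the oracle reconstruction bound of Theorem \ref{thm: reconstruction, discrete} with the almost-sure matrix convergence of Theorem \ref{thm: convergence of algorithm, discrete}, exactly as in your triangle-inequality decomposition and nested limits. Your attention to the $\mathbf{G}_m$-normalization and to taking a single full-measure event is sound and consistent with the paper's (terse) argument.
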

Combining  Theorem \ref{thm: convergence of algorithm, continuous} with Theorems \ref{thm: reconstruction continuous, exp} and \ref{thm: reconstruction continuous, gaussian}, we have
\begin{theorem}[Convergence of Algorithm \ref{algorithm: reconstruction, continuous}]\label{convergence of algorithm, reconstruction, continuous}
    Let $\sigma > 0$.
    Let $k$ be the exponential kernel or the Gaussian kernel.
    Let $x_1,\dots, x_N$ are i.i.d random variables of the distribution with compactly supported density function $\rho$ such that ${\rm ess.inf}_{x \in U}\rho(x) >0$ for some open subset $U \subset \mathbb{R}^d$.
    Let $x_0, y_0 \in \mathbb{R}^d$ such that $y_0 = F(x_0)$ and assume that $x_0$ satisfies Assumptions \ref{asm: existence of an equilibrium point} and \ref{asm: domain of A_F} (see Propositions \ref{prop: sufficient condition for domain condition, continuous, exp} and \ref{prop: sufficient condition for domain condition, continuous, gauss}).
    Define $\widehat{F}_{m,n,N}$ as the output of Algorithm \ref{algorithm: reconstruction, continuous} with input $k$, $\sigma$, $m$, $n$, $(x_0, y_0)$, $X=(x_1,\dots, x_N)$, and $Y=(y_1,\dots, y_N)$.
    Then, for any compact set $K \subset \mathbb{R}^d$, we have
    \begin{align*}
           \lim_{m\to \infty}\lim_{n\to \infty}\lim_{N\to\infty}
            \sup_{y \in K}\left\|F(y)-\widehat{F}_{m,n,N}(y) \right\|  = 0~~~{\rm a.e.}
    \end{align*}
\end{theorem}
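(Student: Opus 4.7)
The plan is to combine the two cited families of results via a triangle inequality, separating the ``truncation'' error (using the finite-dimensional restriction $A_F^*|_{V_{p,m}}$) from the ``estimation'' error (due to using the data-driven matrix in place of the exact one). For each fixed $m$, let $\widetilde{F}_{m,i}$ denote the ``ideal'' reconstruction obtained by plugging the exact representation matrix $\mathbf{A}_{F,m}^\star$ into the closed-form formula \eqref{reconstructed dynamical system, continuous, exp} (exponential case) or \eqref{reconstructed dynamical system, gaussian} (Gaussian case). Then I would write
\begin{align*}
\sup_{y\in K}\big|F_i(y)-\widehat{F}_{m,n,N,i}(y)\big|
\le \sup_{y\in K}\big|F_i(y)-\widetilde{F}_{m,i}(y)\big|
+ \sup_{y\in K}\big|\widetilde{F}_{m,i}(y)-\widehat{F}_{m,n,N,i}(y)\big|.
\end{align*}
The first summand is exactly what Theorems \ref{thm: reconstruction continuous, exp} and \ref{thm: reconstruction continuous, gaussian} bound, by an expression of order $1/\sqrt{(m+1)!}$ times a constant depending only on $K$, $\sigma$, and $F$. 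Hence this truncation error tends to zero as $m\to\infty$ for any compact $K$.

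For the second summand, one unwinds the algorithm: $\widehat{F}_{m,n,N,i}$ is defined by the same closed formula as $\widetilde{F}_{m,i}$ with the product $\mathbf{A}_{F,m}^\star \mathbf{G}_m^{-1}$ replaced by $\mathbf{G}_m^{-1}\widehat{\mathbf{A}}_{m,n,N}$. By bilinearity of the formula,
\[
\sup_{y\in K}\big|\widetilde{F}_{m,i}(y)-\widehat{F}_{m,n,N,i}(y)\big|
\le \sigma^2\, M_{K,m}\, \big\|\mathbf{G}_m^{-1}\big\|_{\rm op}\,
\big\|\mathbf{G}_m \mathbf{A}_{F,m}^\star \mathbf{G}_m^{-1} - \widehat{\mathbf{A}}_{m,n,N}\big\|_{\rm op},
\]
where $M_{K,m}$ is any finite constant bounding $\|\partial_{x_i}\mathbf{v}_m(y^\sharp)\|\cdot\|\mathbf{v}_m(y)\|$ over $y\in K$ (with $y^\sharp=p$ in the exponential case and $y^\sharp=y$ in the Gaussian case); finiteness is immediate since each entry of $\mathbf{v}_m$ and $\partial_{x_i}\mathbf{v}_m$ is real-analytic and $K$ is compact. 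For fixed $m$, Theorem \ref{thm: convergence of algorithm, continuous} guarantees that the operator norm on the right-hand side vanishes almost surely as $N\to\infty$ followed by $n\to\infty$.

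Taking the limits in the prescribed order -- first $N\to\infty$, then $n\to\infty$ (both with $m$ fixed), and finally $m\to\infty$ -- the estimation error vanishes first and the truncation error vanishes last, yielding the claim. The main delicate point is simply the order of limits: since $m$ is held fixed while $N$ and $n$ diverge, the $m$-dependent quantities $M_{K,m}$ and $\|\mathbf{G}_m^{-1}\|_{\rm op}$ act as constants and do not interfere with the inner limit, while the factorial decay in the outer limit easily absorbs whatever $m$-dependence is accumulated. Beyond carefully identifying the output of Algorithm \ref{algorithm: reconstruction, continuous} as the plug-in of $\widehat{\mathbf{A}}_{m,n,N}$ into the ideal reconstruction formula, no genuine obstacle arises; Theorems \ref{thm: reconstruction continuous, exp}, \ref{thm: reconstruction continuous, gaussian}, and \ref{thm: convergence of algorithm, continuous} do all of the actual work.
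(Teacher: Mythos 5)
Your proposal is correct and follows essentially the same route as the paper, which proves this theorem simply by combining Theorem \ref{thm: convergence of algorithm, continuous} with Theorems \ref{thm: reconstruction continuous, exp} and \ref{thm: reconstruction continuous, gaussian}; your triangle-inequality split into a truncation error (controlled by the factorial-decay reconstruction bounds) and an estimation error (controlled by the a.e.\ convergence of $\widehat{\mathbf{A}}_{m,n,N}$ for fixed $m$) is exactly the intended argument, just written out in more detail. The only point you rightly flag and should verify explicitly is the bookkeeping identifying the algorithm's output (e.g.\ the $(i+1)$-th component of $\sigma\widehat{\mathbf{A}}\mathbf{v}_{x_0,m}^{\rm e}(x)+y_0$, with $y_0=0$ at the equilibrium) with the plug-in of the estimated matrix into formulas \eqref{reconstructed dynamical system, continuous, exp} and \eqref{reconstructed dynamical system, gaussian}, which is routine for the stated orthogonal bases.
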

\begin{theorem}[Convergence of Algorithm \ref{algorithm: reconstruction, conti. with disc.}]\label{convergence of algorithm, reconstruction, discrete continuous}
    Let $\sigma>0$.
    Let $T_s>0$ and $f := \phi^{T_s}$.
    Let $x_1,\dots, x_N$ are i.i.d random variables of the distribution with compactly supported density function $\rho$ such that ${\rm ess.inf}_{x \in U}\rho(x) >0$ for some open subset $U \subset \mathbb{R}^d$.
    Let $x_0, y_0 \in \mathbb{R}^d$ such that $y_0 = f(x_0)$ and let $k^{\rm e}(x,y) = e^{-(x-x_0)^\top(y-x_0)/\sigma^2}$.
    Assume $f^\alpha \in H^{\rm e}(\sigma, p)$ for all $\alpha \in \mathbb{Z}_{\ge 0}^d$.
    Define $\widehat{F}_{m,n,N}$ as the output of Algorithm \ref{algorithm: reconstruction, conti. with disc.} with input $k=k^{\rm e}$, $\sigma$, $m$, $n$, $(x_0, y_0)$, $X=(x_1,\dots, x_N)$, and $Y=(y_1,\dots, y_N)$.
    Then, for any compact set $K \subset \mathbb{R}^d$, we have
    \begin{align*}
           \lim_{m\to \infty}\lim_{n\to \infty}\lim_{N\to\infty}
            \sup_{y \in K}\left\|F(y)-\widehat{F}_{m,n,N}(y) \right\| = 0~~~{\rm a.e.}
    \end{align*}
\end{theorem}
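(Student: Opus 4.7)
The plan is to obtain this convergence by chaining three results already established in the paper: the discrete estimation convergence (Theorem \ref{thm: convergence of algorithm, discrete}), the exponential-generator identity (Proposition \ref{prop: expC is equal to A}), and the continuous reconstruction error bound (Theorem \ref{thm: reconstruction continuous, exp}). Algorithm \ref{algorithm: reconstruction, conti. with disc.} proceeds by first estimating the Perron--Frobenius matrix of the time-$T_s$ map $f=\phi^{T_s}$ on $V_{x_0,m}$ from the discrete pairs $(x_i,f(x_i))$, then converting that matrix to a generator matrix via the matrix logarithm as in Remark \ref{rmk: matrix log}, and finally plugging the generator matrix into the continuous-time reconstruction formula \eqref{reconstructed dynamical system, continuous, exp}. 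I will track the error introduced at each stage and pass to the iterated limits $N\to\infty$, $n\to\infty$, $m\to\infty$ in order.

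First, I would apply Theorem \ref{thm: convergence of algorithm, discrete} to $f=\phi^{T_s}$ with the exponential kernel centered at $x_0$; the hypothesis $f^\alpha\in H^{\rm e}(\sigma,x_0)$ is exactly what the present theorem assumes, so
\[
\lim_{n\to\infty}\lim_{N\to\infty}\bigl\|\widehat{\mathbf{C}}_{m,n,N}-\mathbf{G}_m\mathbf{C}_{f,m}^\star\mathbf{G}_m^{-1}\bigr\|=0\quad\text{a.e.},
\]
where $\mathbf{C}_{f,m}^\star$ is the representation matrix of $C_f^*|_{V_{x_0,m}}$. By Proposition \ref{prop: expC is equal to A}, $\mathbf{C}_{f,m}^\star=\exp(T_s\mathbf{A}_{F,m}^\star)$, so the limiting matrix is the exponential of $T_s\,\mathbf{G}_m\mathbf{A}_{F,m}^\star\mathbf{G}_m^{-1}$. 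Choosing the branch of $\log$ that sends this exponential back to $T_s\,\mathbf{G}_m\mathbf{A}_{F,m}^\star\mathbf{G}_m^{-1}$ and using the continuity of $\log$ on a neighborhood of that matrix, the estimated generator $\widehat{\mathbf{A}}_{m,n,N}:=T_s^{-1}\log\widehat{\mathbf{C}}_{m,n,N}$ converges a.e. to $\mathbf{G}_m\mathbf{A}_{F,m}^\star\mathbf{G}_m^{-1}$.

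Second, let $\widetilde F_m$ denote the reconstruction obtained by substituting the exact matrix $\mathbf{A}_{F,m}^\star$ into the formula \eqref{reconstructed dynamical system, continuous, exp}. Theorem \ref{thm: reconstruction continuous, exp} yields
\[
\sup_{y\in K}\bigl|\widetilde F_{m,i}(y)-F_i(y)\bigr|\le\|F_i\|_H\,\frac{\sup_{y\in K}\|y-x_0\|^{m+1}}{\sigma^{m+1}\sqrt{(m+1)!}},
\]
which vanishes as $m\to\infty$ because $(m+1)!$ dominates any geometric factor. On the other hand, for fixed $m$ the map from the generator matrix to the reconstructed vector field via \eqref{reconstructed dynamical system, continuous, exp} is affine and continuous uniformly on the compact set $K$, so $\sup_{y\in K}\|\widehat F_{m,n,N}(y)-\widetilde F_m(y)\|$ is bounded by a constant (depending only on $m$, $\sigma$, $K$, and $x_0$) times $\|\widehat{\mathbf{A}}_{m,n,N}-\mathbf{G}_m\mathbf{A}_{F,m}^\star\mathbf{G}_m^{-1}\|$. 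Applying the triangle inequality and taking the limits $N\to\infty$, then $n\to\infty$, then $m\to\infty$ in that order delivers the claim.

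The main obstacle is the matrix-logarithm step, since $\log$ is multivalued and continuity on a neighborhood of a target matrix requires the target's spectrum to avoid the chosen branch cut. By Theorem \ref{thm: basic theorem for PF operator, continuous}, the spectrum of $\mathbf{A}_{F,m}^\star$ is $\{\alpha^\top\mu:|\alpha|\le m\}$, so the numerical matrix logarithm recovers $T_s\mathbf{A}_{F,m}^\star$ only when the $T_s\alpha^\top\mu$ all lie in the principal strip; this is the obstruction observed concretely in Figure \ref{fig: eigenvalues_vdP_matrixlog, vdP} and flagged in Remark \ref{rmk: matrix log}. In the proof I would either impose that $T_s$ is small enough relative to $\operatorname{Im}(\mu_j)$ so that the principal branch works for all $|\alpha|\le m$, or specify that the branch is chosen consistently using the known asymptotic target spectrum; once a fixed branch is pinned down, holomorphy of that branch on a neighborhood of $\exp(T_s\mathbf{A}_{F,m}^\star)$ makes the continuity argument routine and the chaining of limits goes through.
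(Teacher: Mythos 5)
Your proposal is correct and follows essentially the same route the paper intends: it chains Theorem \ref{thm: convergence of algorithm, discrete} applied to $f=\phi^{T_s}$, the semigroup identity $C_{\phi^{T_s}}^*|_{V_{p,m}}=\exp(T_s A_F^*|_{V_{p,m}})$ from Proposition \ref{prop: expC is equal to A} together with a fixed branch of the matrix logarithm (the ``appropriate branch'' of Remark \ref{rmk: matrix log}), and the reconstruction bound of Theorem \ref{thm: reconstruction continuous, exp}, taking the limits in $N$, $n$, $m$ in that order. Your additional care about the branch cut and continuity of $\log$ near $\exp(T_s\mathbf{A}_{F,m}^\star)$ is exactly the caveat the paper leaves implicit, so nothing further is needed.
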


\begin{remark}
Theorem \ref{convergence of algorithm, reconstruction, discrete continuous} guarantees that, under Assumption \ref{asm: domain of C_f}, namely, analytically favorable properties of the dynamical system, the original vector field can be reconstructed from finite data even if the sampling period remains long.
This significantly improves \cite[Theorem 1]{MauroyGoncalves2020}.
\end{remark}

\begin{algorithm}
\caption{
A computational framework for data-driven reconstruction of discrete dynamical systems
}\label{algorithm: reconstruction, discrete}
\begin{algorithmic}[1]
    \Require
    a positive number $\sigma > 0$,
    positive integers $m$ and $n$ with $m \le n$,
    a pair of points $(x_0, y_0) \in \Omega^2$ with $y_0 = f(x_0)$,
    and $X := (x_1,\dots, x_N), Y := (y_1 , \dots, y_N) \in \Omega^N$ such that $y_i = f(x_i)$ with $N \ge r_n$.
    \State Set $\widetilde{Y} := (y_1 - y_0 + x_0, \dots, y_N - y_0 + x_0) \in \Omega^N$.
    \State Compute $\widehat{\mathbf{C}}$ by Algorithm \ref{algorithm: data-driven PF operator estimation, discrete} using positive definite kernel $k(x,y) = e^{(x-x_0)^\top (y-x_0)/\sigma^2}$ with inputs $m$, $n$, $p=x_0$, $X$, and $\widetilde{Y}$  (c.f. Section \ref{subsec: explicit intinsic observables}).
    \For{$i=1,\dots, d$}
        \State Define $\widehat{f}_i$ with variable $x$ by the $(i+1)$-th component of
        $\sigma \widehat{\mathbf{C}}\mathbf{v}_{x_0,m}^{\rm e}(x) + y_0$.
    \EndFor
    \Ensure $\widehat{f} := (\widehat{f}_1, \dots, \widehat{f}_d)$.
\end{algorithmic}
\end{algorithm}

\begin{algorithm}
\caption{
A computational framework for data-driven reconstruction of continuous dynamical systems
}\label{algorithm: reconstruction, continuous}
\begin{algorithmic}[1]
    \Require 
    a positive number $\sigma > 0$,
    positive integers $m$ and $n$ with $m \le n$,
    a pair of points $(x_0, y_0) \in \Omega \times \mathbb{R}^d$ with $y_0 = F(x_0)$,
    and $X := (x_1,\dots, x_N) \in \Omega$ and $Y := (y_1 , \dots, y_N) \in \mathbb{R}^{d \times N}$ with $y_i=F(x_i)$.
    \State Set $\widetilde{Y} := (y_1 - y_0, \dots, y_N - y_0) \in \Omega^N$.
    \State Compute $\widehat{\mathbf{A}}$ by Algorithm \ref{algorithm: data-driven PF operator estimation, continuous} using either of the positive definite kernels $k(x,y) = e^{(x-x_0)^\top (y-x_0)/\sigma^2}$ or $e^{-\|x-y\|^2/2\sigma^2}$ with input $m$, $n$, $p=x_0$, $X$, and $Y$ (c.f. Section \ref{subsec: explicit intinsic observables}).
    \For{$i=1,\dots, d$} 
        \If {$k(x,y) = e^{(x-x_0)^\top (y-x_0)/\sigma^2}$}
        \State Define $\widehat{F}_i$ with variable $x$ as the $(i+1)$-th component of  
        $\sigma \widehat{\mathbf{A}}\mathbf{v}_{x_0,m}^{\rm e}(x) + y_0$  
        \ElsIf{$k(x,y) = e^{-\|x-y\|^2/2\sigma^2}$}
        \State Define $\widehat{F}_i$ with variable $x$ as $(\partial_{x_i}\mathbf{v}_{x_0,m}^{\rm g}(x))^\top \sigma^2 (\mathbf{G}_m^{\rm g})^{-1}\widehat{\mathbf{A}}\mathbf{v}_{x_0,m}^{\rm g}(x) + y_0$.
        \EndIf
    \EndFor
    \Ensure 
        $\widehat{F} := (\widehat{F}_1,\dots, \widehat{F}_d)$.
\end{algorithmic}

\end{algorithm}

\begin{algorithm}
\caption{A computational framework for data-driven reconstruction of continuous dynamical systems with knowledge of the location of equilibrium points}
\label{algorithm: reconstruction, conti. with disc.}
\begin{algorithmic}[1]
    \State  
    a positive number $\sigma > 0$,
    positive integers $m$ and $n$ with $m \le n$,
    an equilibrium point $p\in \Omega$ of $F$,
    and $X := (x_1,\dots, x_N), Y := (y_1 , \dots, y_N) \in \Omega^N$ such that $y_i=\phi^{T_s}(x_i)$ for a fixed $T_s > 0$ for $i=1,\dots N$.
    \State Compute $\widehat{\mathbf{C}}$ by Algorithm \ref{algorithm: data-driven PF operator estimation, discrete} using $k(x,y) := e^{(x-p)^\top (y-p)/\sigma^2}$ with input $m$, $n$, $p$, $X$, and $Y$ (c.f. Section \ref{subsec: explicit intinsic observables}).
    \State Choosing an appropriate brunch of matrix logarithm, compute $\widehat{\mathbf{A}}$ by $\widehat{\mathbf{A}} := \frac{1}{T_s}\log \widehat{\mathbf{C}}$.
    \For{$i=1,\dots, d$} 
      \State Define $\widehat{F}_i$ with variable $x$ by the $(i+1)$-th component of $\sigma (\mathbf{G}_m^{\rm e})^{-1}\widehat{\mathbf{A}}\mathbf{v}_{p,m}^{\rm e}(x) + y_0$.
    \EndFor
    \Ensure $\widehat{F} := (\widehat{F}_1,\dots, \widehat{F}_d)$.
\end{algorithmic}
\end{algorithm}

Now, we consider the Lorenz equation: 
\begin{align}
    \begin{split}
        x' &= 10(y-x), \\
        y' &=x(28-z) - y, \\
        z' &=xy - \frac{8}{3}z.
    \end{split}\label{lorenz}
\end{align}

Figure \ref{fig: reconstruction_lorenz} describes the performance of Algorithm \ref{algorithm: reconstruction, continuous} for the Lorenz system \eqref{lorenz} using different kernels; the exponential kernel and the Gaussian kernel.
We take $m=2$, $n=4$, and $M=50$ samples $x_1^{(0)},\dots, x_M^{(0)}$ from the uniform distribution on $[-10,10]^3$, and define
\[x_i^{(j)} := \phi^{0.01j}(x_i^{(0)}),\]
for $i=1,2,\dots,M$ and $j=1,\dots,9$.
Then, using the finite difference method \cite{d8dce9b2-3375-3c07-b2d5-4fe136d49ddb} of order $10$, there exist rational numbers $c_k^{ij}$ ($k=1,\dots,9)$ such that
\begin{align*}
    y_i^{(j)} &:= \frac{1}{0.01}\sum_{k=0}^9 c_k^{ij}x_i^{(j)}
\end{align*}
provides suitable approximations of the velocities $F(x_i^{(j)})$.
Then, we set $X := (x_i^{(j)})_{i=1,\dots,M, j=0,\dots,9}$ and $Y := (y_i^{(j)})_{i=1,\dots,M, j=0,\dots,9}$ (thus $N=500$).
We set $x_0$ as an element of $\{x_i^{(j)}\}_{i=1,\dots,M, j=0,\dots,9}$ closest to the mean of $\{x_i^{(j)}\}_{i=1,\dots,M, j=0,\dots,9}$, and $y_0$ as an element of $\{y_i^{(j)}\}_{i=1,\dots,M, j=0,\dots,9}$ corresponding to $x_0$.
On the left panel, we use the exponential kernel with $\sigma=30$, while on the right panel, we use the Gaussian kernel with $\sigma=300$.
In the Gaussian case, the output $\widehat{F}_{m,n,N}$ decays as fast as the Gaussian function, and we need to take a relatively large $\sigma$ depending on the region we need to approximate the target.

\begin{figure}
\begin{minipage}[c]{1.0\linewidth}
    \begin{center}
    \begin{minipage}[c]{0.45\linewidth}
      \includegraphics[keepaspectratio, width=\linewidth]{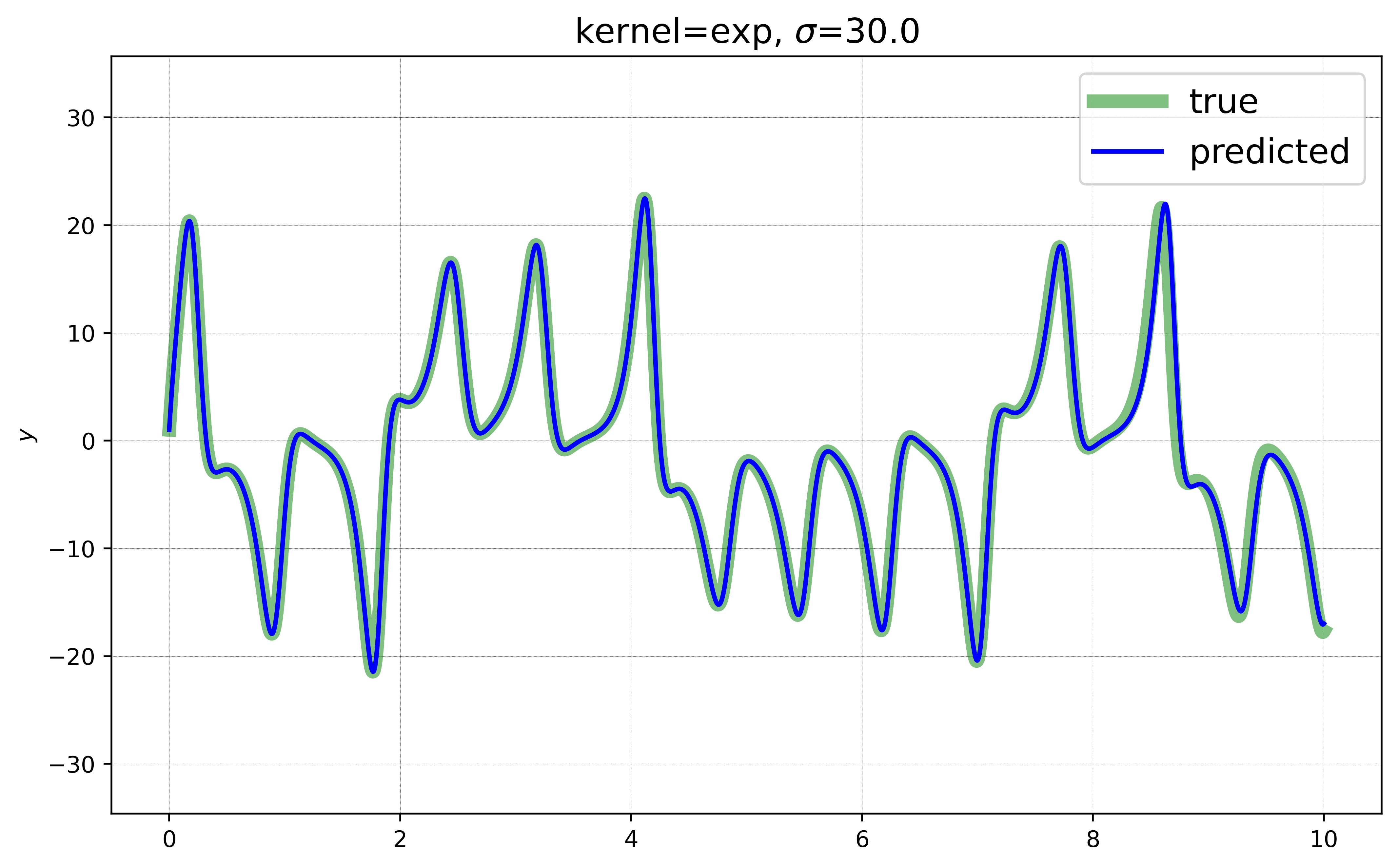}
    \end{minipage}
    \begin{minipage}[c]{0.45\linewidth}
      \includegraphics[keepaspectratio, width=\linewidth]{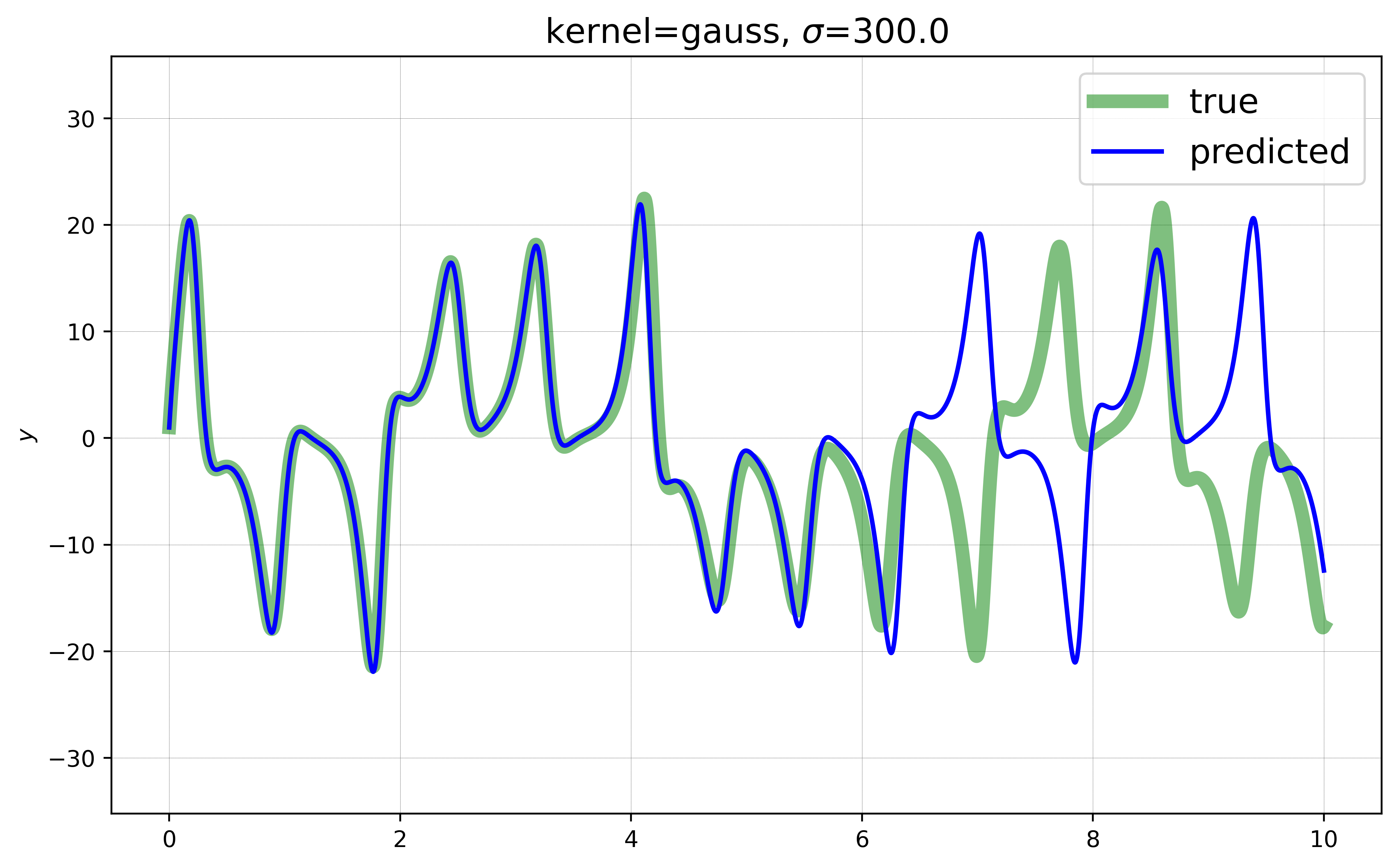}
    \end{minipage}
    \end{center}
\end{minipage}
\caption{
Data-driven reconstruction of the Lorenz attractor \eqref{lorenz} using Algorithm \ref{algorithm: reconstruction, continuous} with the exponential kernel $(\sigma=30)$ (left) and the Gaussian kernel $(\sigma=300)$ (right), with input $(m,n)=(2,4)$. The data consist of $50$ collections of $10$ snapshots at times $0, 0.01, \dots, 0.09$ of trajectories with initial points sampled from the uniform distribution on $[-20,20]^3$, together with the velocities at these points, computed using a $10$th-order finite-difference method. {\bf Left:} results with the exponential kernel $(\sigma=30)$; the blue thin curves show the $y$-coordinate of the predicted trajectories, and the green thick curves show those of the true trajectories. {\bf Right:} results with the Gaussian kernel $(\sigma=300)$; the same plotting convention is used. The initial point for each curve is $(10,1,10)\in\mathbb{R}^3$.
}
\label{fig: reconstruction_lorenz}
\end{figure}

Figure \ref{fig: reconstruction_lorenz, discrete} describes the performance of Algorithm \ref{algorithm: reconstruction, conti. with disc.} for the Lorenz system \eqref{lorenz} for different sampling periods $T_s$.
We set $p=(0,0,0)$ as the equilibrium point.
On the left panel, we set $\sigma=80$, $m=3$, $n=12$, and $T_s=0.033$, and take $20$ collections $\{(x_i^{(0)}, \dots, x_i^{(15)})\}_{i=1}^{20}$ of snapshots at times $0, T_s, \dots, 15T_s$ of trajectories with initial points sampled from the uniform distribution on $[-20,20]^3$, namely $x_i^{(j)} = \phi^{jT_s}(x_i^{(0)})$ holds.
We set $X=(x_1^{(0)},\dots,x_{20}^{(0)}, x_1^{(1)}, \dots, x_{20}^{(14)})$ and $Y=(x_1^{(1)},\dots,x_{20}^{(1)}, x_1^{(2)}, \dots, x_{20}^{(15)})$.
On the right panel, we set $T_s=0.06$. 
Even in a highly complex and chaotic system like the Lorenz system, we see that the original system can be reconstructed from data with a small number of samples with relatively long sampling periods.

\begin{remark}
    On the left panel of Figure \ref{fig: reconstruction_lorenz, discrete}, we use the same data as in \cite{MauroyGoncalves2020}, and our method also has the same or better performance than the lifting method proposed in \cite{MauroyGoncalves2020}.
    Our approach approximates the operators using the space of the intrinsic observables in an RKHS defined via the jets, independent of data, and this constitutes an essential difference from their lifting method.
\end{remark}

\begin{figure}
\begin{minipage}[c]{1.0\linewidth}
    \begin{center}
    \begin{minipage}[c]{0.45\linewidth}
      \includegraphics[keepaspectratio, width=\linewidth]{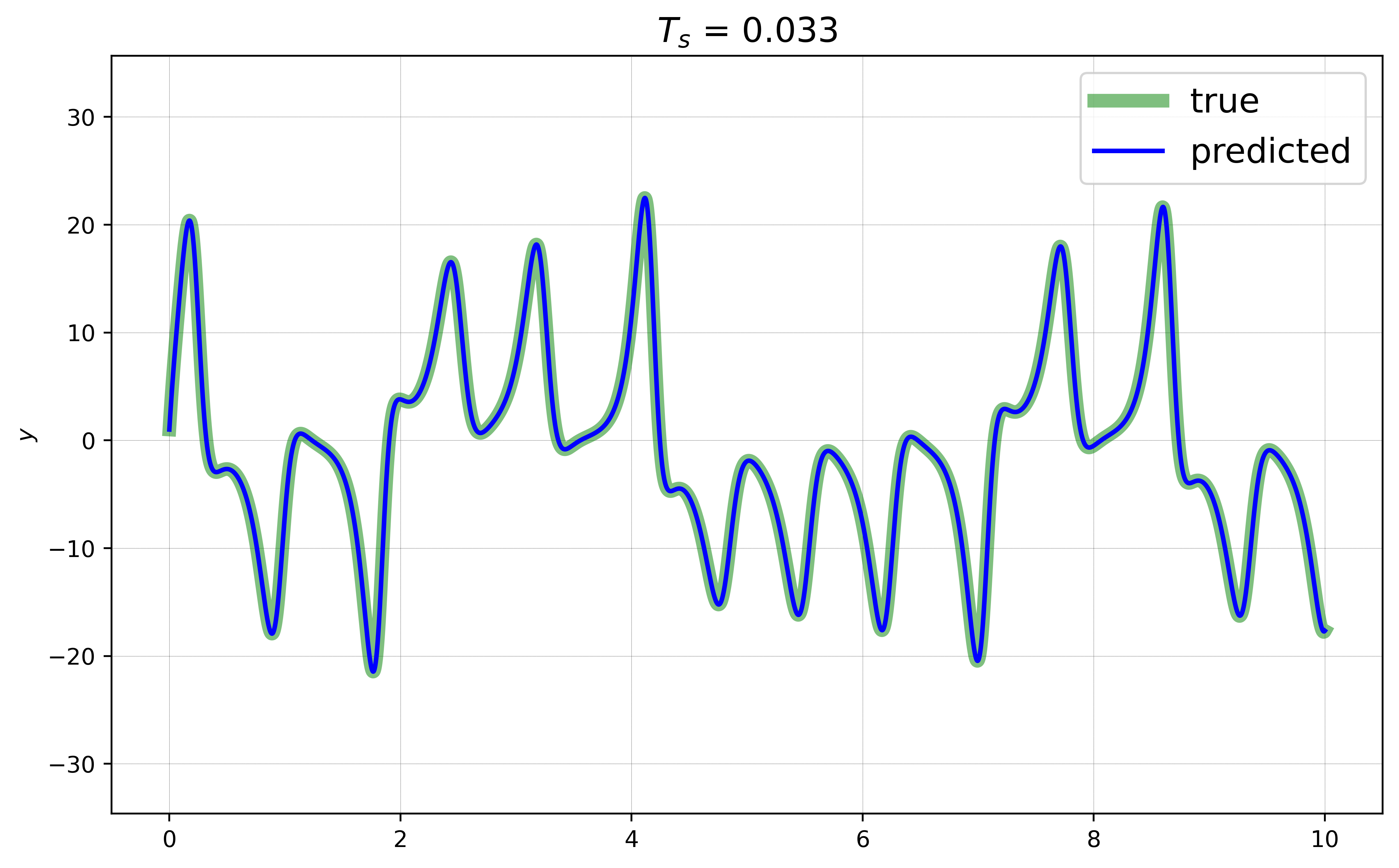}
    \end{minipage}
    \begin{minipage}[c]{0.45\linewidth}
      \includegraphics[keepaspectratio, width=\linewidth]{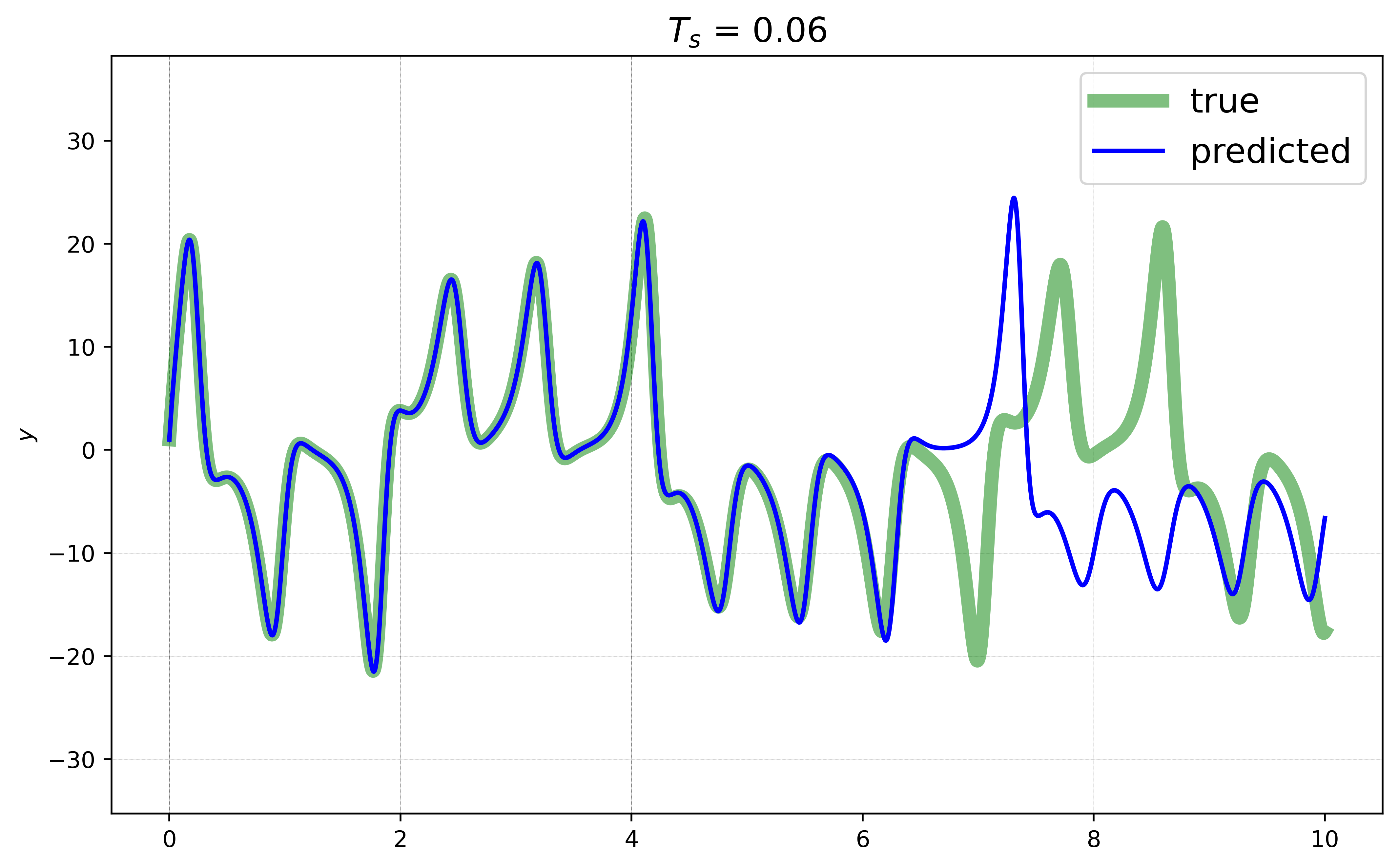}
    \end{minipage}
    \end{center}
\end{minipage}
\caption{
Data-driven reconstruction of the Lorenz attractor \eqref{lorenz} using Algorithm \ref{algorithm: reconstruction, conti. with disc.} with the exponential kernel $(\sigma=80)$ and input $(m,n)=(3,12)$. As data, we use $20$ collections of snapshots at times $0, T_s, \dots, 15T_s$ of trajectories with initial points sampled from the uniform distribution on $[-10,10]^3$, where $T_s = 0.033$ (left) and $0.06$ (right). {\bf Left:} results for $T_s=0.033$; the blue thin curves are the $y$-coordinate of the predicted trajectories and the green thick curves are the $y$-coordinate of the true ones. {\bf Right:} results for $T_s=0.06$; the same plotting convention is used. The initial point for each displayed curve is $(10,1,10)\in \mathbb{R}^3$.
}
\label{fig: reconstruction_lorenz, discrete}
\end{figure}

\subsection{\revised{Error analysis and sample complexity}}
Here, we discuss the error analysis and sample complexity for the JetEDMD algorithm for the parameter $n$ and the number of samples $N$
We consider the Ricker map \cite{Ricker1954}, a one-dimensional dynamical system $f:\mathbb{R} \to \mathbb{R}$ defined by 
\begin{align}
    f(x) = xe^{r(x-1)}.
\end{align}
We consider the  case of $r = 2.8$.
We note that the Ricker map chaotically behaves if $r$ is larger than a constant $r_0 \approx 2.692$.

We consider the exponential kernel $k(x,y) = e^{xy}$ of $\sigma = 1$.
We take $N$ points $x_1,\dots, x_N \in \mathbb{R}$ sampled from the uniform distribution on $[-1/2, 1/2]$ and define $y_i := f(x_i)$.
According to Theorem \ref{thm: convergence of algorithm, discrete}, the JetEDMD matrix $\widehat{\mathbf{C}}(m,n,N) \in \mathbb{R}^{(m+1) \times (m+1)}$ computed using these data converges to the matrix $\mathbf{B}_m: (b_{i,j})_{i,j=1,\dots,m+1}$ defined by 
\begin{align*}
    b_{i,j} := 
    \begin{cases}
        \displaystyle \frac{(i-1)!B_{j-1,i-1}\big(f'(0), f''(0), \dots, f^{(j-i+1)}(0)\big)}{(j-1)!} & \text{ if }j \ge i\\
        0 & \text{ if } j < i,
    \end{cases}
\end{align*}
where $B_{i,j}$ is the exponential Bell polynomial (see \cite[Theorem A in Section 3.4]{Comtet2012}).

The left panel in Figure \ref{fig: error analysis} shows the approximation accuracy and its upper bound described in Theorem \ref{thm: explicit asymptotic, disc}.
Here, we set $m=4, 6$ and $N = 1000$.
The line plot with circular ($m=4$) or triangular ($m=6$) markers is the graph of $\log \|\widehat{\mathbf{C}}(m,n,1000) - \mathbf{B}_6\|$.
The thick solid ($m=4$) or dotted ($m=6$) line is that of $0.5n\log n - (m+1)\log n + \mathrm{const.}$ from $n=6$ to $35$.
We observe that the decay rate of the actual error is consistent with the theoretical upper bound.
In the case of $m = n$, the matrix $\widehat{\mathbf{C}}(5,5,1000)$ coincides with the EDMD matrix using monomials of degree up to $5$.
We observe that it does not approximate the matrix $ \mathbf{B}_5$.
We note that the constant $C$ of the upper bound in Theorem \ref{thm: explicit asymptotic, disc} is not explicit as it depends on various parameters except $n$.
Therefore, we choose the $y$-intercepts of the solid and dotted line plots arbitrarily, so the vertical gaps between the two lines and the lines with markers are not so meaningful.
The plots are only meant to compare their shapes or decay rates.

The right panel of Figure \ref{fig: error analysis} illustrate the sample complexity.
Here, we set $m=6$ and $n=33$.
The curve shows the means of $\log$ of errors across $5000$ trials for each $N = 34,\dots,200$, with the shaded band showing 1 standard deviation around the mean.
For small $N$ (about 34 - 44), the error is relatively large and has high variation, indicating an unstable regime due to the sampling noise.
As $N$ increases, the means of errors drop sharply and the band shrinks.
For $N\ge 150$, the curve of mean errors flattens around $\log(\text{error})\approx -4$ with a narrow shaded band.
The overlap of the bands across $N$ suggests only marginal improvement beyond this point even if $N$ increases.
We observe that there exists a optimal number of samples to estimate the target matrix, which is an crucial future work.

\begin{figure}
\begin{minipage}[c]{1.0\linewidth}
    \begin{center}
    \begin{minipage}[c]{0.45\linewidth}
      \includegraphics[keepaspectratio, width=\linewidth]{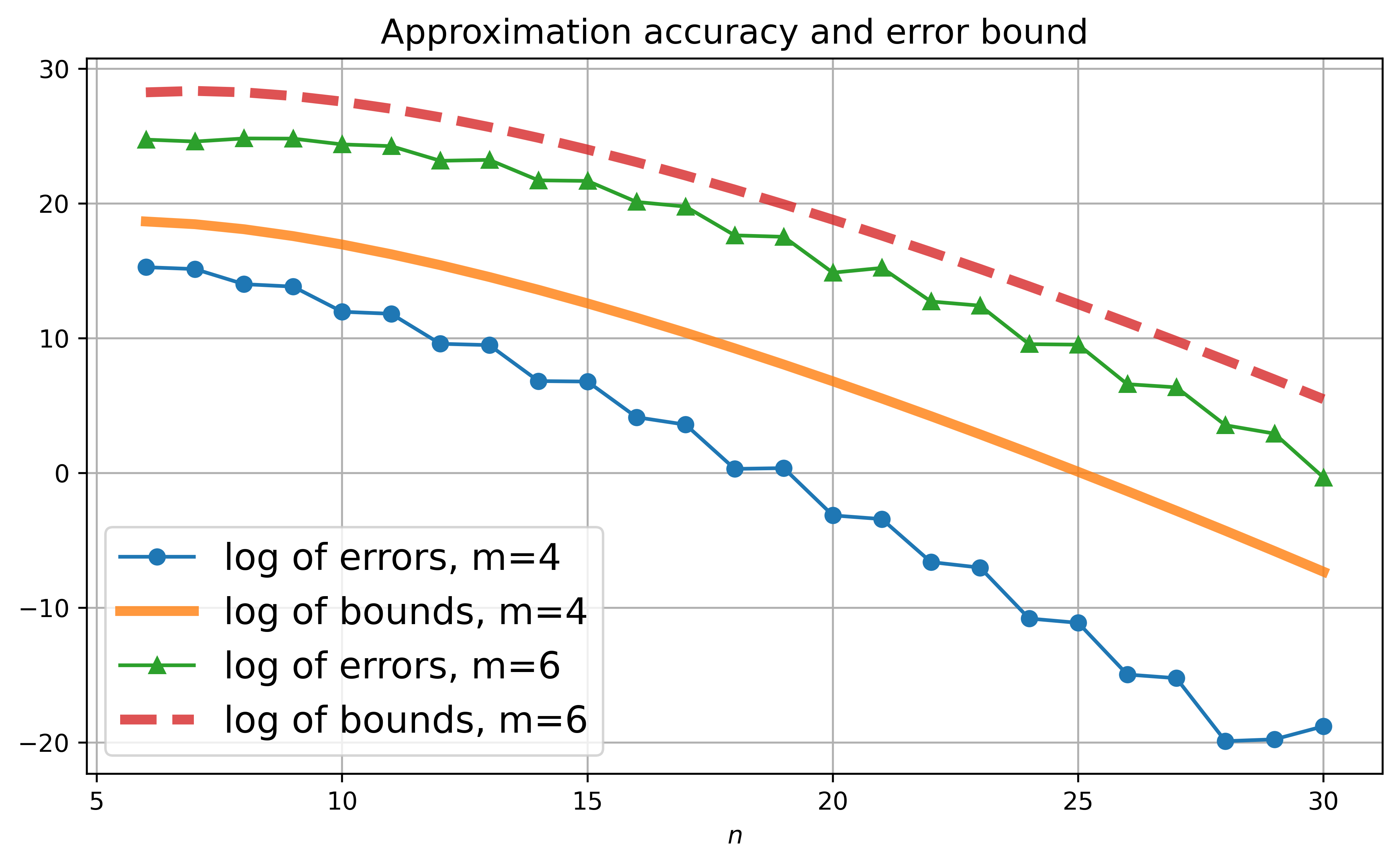}
    \end{minipage}
    \begin{minipage}[c]{0.45\linewidth}
      \includegraphics[keepaspectratio, width=\linewidth]{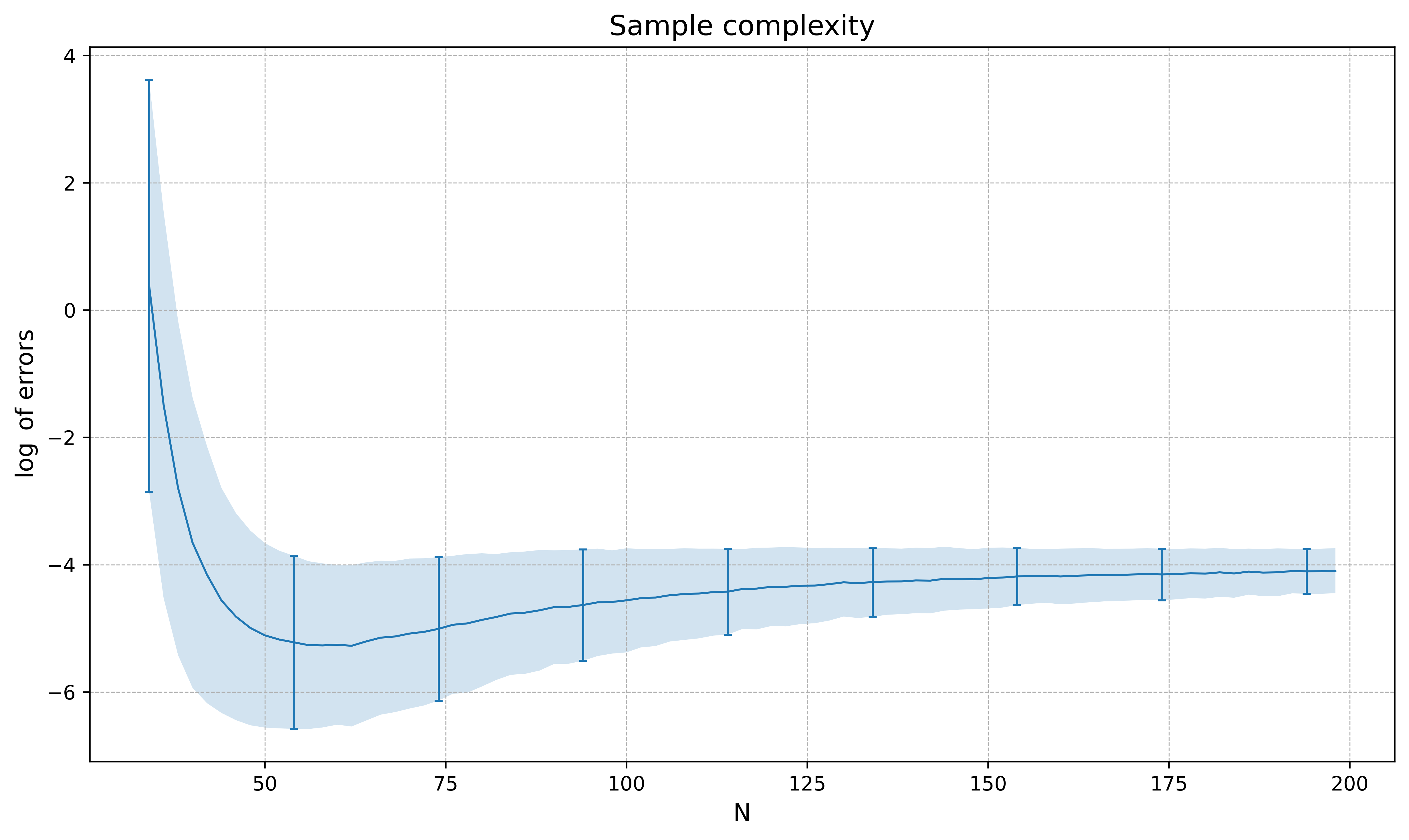}
    \end{minipage}
    \end{center}
\end{minipage}
\caption{
    {\bf Left:} Approximation error versus the upper bound in Theorem \ref{thm: explicit asymptotic, disc} with $m=4,6$ and $N=1000$. 
    {\bf Right:} Sample complexity with $m=6$ and $n=33$. The curve shows the mean log error over 5000 trials for $N=34, \dots, 200$, and the shaded band indicates one standard deviation. 
}
\label{fig: error analysis}
\end{figure}

\subsection{Relation among Jet DMD, Extended DMD, and Kernel DMD}\label{subsec: relation to EDMD}
The matrix $ \mathbf{V}_n^Y (\mathbf{V}_n^X)^\dagger$ appearing on the left-hand side of \eqref{computation of hatCnm} in Algorithm \ref{algorithm: data-driven PF operator estimation, discrete} coincides with the transpose of the matrix that appears in EDMD \cite{Williams2015} using $\{v_{p,\alpha}\}_{|\alpha| \le n}$ as the observable functions.
In the framework EDMD, they regard the matrix $ \mathbf{V}_n^Y (\mathbf{V}_n^X)^\dagger$ as the correct approximation of the Koopman operator.
However, as shown in Theorem \ref{thm: error analysis for PF operator for algorithm, disc}, this matrix will not correctly approximate the operator (see the discussion after Theorem \ref{thm: error analysis for PF operator for algorithm, disc}).
Therefore, JetEDMD provides an appropriate refinement of EDMD.
While JetEDMD involves only a very simple procedure of truncation to a leading principal submatrix of the matrix $ \mathbf{V}_n^Y (\mathbf{V}_n^X)^\dagger$, it enables a clear depiction of the eigenvalues as in Figure \ref{fig: difference of spectra quadratic map} in Section \ref{sec: introduction}.

When the positive definite kernel $k(x,y)$ is the exponential kernel $e^{x^\top y}$ and the dynamical system has a fixed point at origin, it corresponds to the EDMD with the monomials.
According to the Stone--Weirestrass theorem, any continuous function can be approximated with polynomials on a compact set with arbitrary precision, providing a certain theoretical justification to the use of polynomials.
However, in the context of JetEDMD, the polynomials appear much more naturally and canonically through a process involving the jets and the RKHS as in Section \ref{sec: theory}.
Furthermore, by considering rigged Hilbert space, the Koopman operator approximated by JetEDMD can be clearly understood as an approximation of the extended Koopman operator explained in Section \ref{sec: generalized spectrum}.

We also provide a remark on the relation between JetEDMD and the Kernel DMD (KDMD) \cite{Kawahara, WRK2015}.
We impose the condition that $r_n = \mathop{\rm dim}V_{p,n}$ is less than or equal to the number $N$ of samples in the framework of JetEDMD, but the matrix $ \mathbf{V}_n^Y (\mathbf{V}_n^X)^\dagger$ itself may be considered on the opposite case, namely the case of $r_n > N$.
In this case, the eigenvalues of $ \mathbf{V}_n^Y (\mathbf{V}_n^X)^\dagger$ approximately correspond to those computed by KDMD.
In fact, assume that $r_n > N$ and $\mathbf{V}_n^X$ is a full column rank matrix. 
By \cite{4aa77599-3504-3cfd-9aba-80cca6f578f2}, we have
\begin{align*}
    (\mathbf{V}_n^X)^\dagger\mathbf{G}_n^{1/2} = (\mathbf{G}_n^{-1/2}\mathbf{V}_n^X)^\dagger 
= \left((\mathbf{V}_n^X)^*\mathbf{G}_n^{-1}\mathbf{V}_n^X\right)^{-1}(\mathbf{V}_n^X)^*\mathbf{G}_n^{-1/2}. \label{eq1DMD}
\end{align*}
Thus, we have
\[ (\mathbf{V}_n^X)^\dagger \mathbf{V}_n^Y = \left((\mathbf{V}_n^X)^*\mathbf{G}_n^{-1}\mathbf{V}_n^X\right)^{-1}(\mathbf{V}_n^X)^*\mathbf{G}_n^{-1}\mathbf{V}_n^Y.\]
By Proposition \ref{coefficient of minimizer}, we see that 
\[(\mathbf{V}_n^X)^*\mathbf{G}_n^{-1}\mathbf{V}_n^Y = \left( \overline{\langle \pi_nk_{x_i}, \pi_nk_{y_i} \rangle_H} \right)_{i,j =1,\dots, N} 
\underset{n \to \infty}{\longrightarrow} \left(k(y_i, x_j) \right)_{i,j =1,\dots, N}.\]
By \cite[Theorem 1.3.22.]{MR2978290}, the nonzero eigenvalues of $\mathbf{V}_n^Y (\mathbf{V}_n^X)^\dagger$ coincide with those of $ (\mathbf{V}_n^X)^\dagger\mathbf{V}_n^Y$.
Therefore, we conclude that the computation of the eigenvalues of $\mathbf{V}_n^Y (\mathbf{V}_n^X)^\dagger$ is essentially equivalent to that of $\left(k(x_i, x_j) \right)_{i,j =1,\dots, N}^{-1}\left(k(y_i, x_j) \right)_{i,j =1,\dots, N}$, that is the objective matrix of the KDMD. 

\section{Conclusion}
This paper introduces a novel approach to estimating the Koopman operator on the RKHS, through the development of the JetEDMD.
In the context of data analysis, the $L^2$-space has been the central stage for Koopman analysis, originated from Koopman's work.
The Koopman operator on RKHS and its application in data analysis are relatively new fields.
Through this paper, we intend to show that the RKHS is also quite promising venue for developing the theory of the Koopman operator.
In fact, we present accurate convergence results, backing the performance of JetEDMD for RKHSs for two positive definite kernels, the exponential kernel and Gaussian kernel. 
We also show that some existing methods, such as EDMD with monomials, is considered within the framework of RKHS.
Furthermore, we investigate into the spectral analysis of Koopman operator.
More precisely. we introduce the notion of the extended Koopman operator in the framework of the rigged Hilbert space to achieve the deeper understanding of the ``Koopman eigenfunctions'', leading to a promising method to analyze the spectrum of the Koopman operator.

Although we only consider the the exponential kernel and Gaussian kernel, exploring the performance and properties of JetEDMDs with other kernels is also quite an important challenge.
\revised{Furthermore, relaxing the assumptions on the smoothness and analyticity of the dynamical system $f$, and extending our considerations to cases with unbounded state spaces, present important directions for future research.}
This paper also clarifies to which the ``Koopman eigenfunction'' belongs by regarding it as an eigenvector of the extended Koopman operator.
However, the definition of the extended Koopman operator is rather abstract, and its theoretical interpretation still is needed to be explored.
Additionally, elucidating the relationship between the eigenvectors of the extended Koopman operator and the dynamical systems is also a significant problem since the former must capture some important characteristics of dynamical system although it is not an eigenfunction in the usual sense.

In conclusion, we believe that this paper significantly advances the  study of Koopman operators on RKHSs.
We hope the ideas and methodologies presented in this paper will serve as a foundation for further development of Koopman operators.

\bibliography{reference}
\bibliographystyle{plain}

\appendix

\section{Additional Numerical Experiments}

\subsection{Eigenvalues for van der Pol oscillators}
Figure \ref{fig: vdP_eigenvalues_exp2} describes the estimation of eigenvalues of the Perron--Frobenius operators $A_F^*|_{V_{p,m}}$ of the van der Pol oscillator \eqref{vdP} for $\mu=1$, $2$, and $3$, using Algorithm \ref{algorithm: data-driven PF operator estimation, continuous} with the exponential kernel $k^{\rm e}(x,y) = e^{(x-b)^\top (y-b)/\sigma^2}$ and the Gaussian kernel $k^{\rm g}(x,y) = e^{-|x-y|^2/2\sigma^2}$ with $\sigma=2$ and $b=0$.
We use $m=5$, $n=7$, and $N=36$ samples from the uniform distribution on $[-1,1]^2$ for the exponential kernel, while $m=5$, $n=9$, and $N=66$ for the Gaussian kernel.
We use the exact velocities at the samples to compute $\widehat{\mathbf{A}}$.
Basically, the exponential kernel needs less $n$ and $N$ than the Gaussian kernel.
In the cases of $\mu=1,3$, the Hausdorff distance between the estimated eigenvalues and the true ones is less than $10^{-9}$, indicating that $\widehat{\mathbf{A}}$ approximates $A_F^*|_{V_{p,m}}$ in high precision, but for $\mu=2$, the Hausdorff distance becomes greater than $10^{-2}$.
This is due to the fact that the linear map $A_F^*|_{V_{p,m}}$ is not diagonalizable in the case of $\mu=2$, resulting in errors in the numerical computation of eigenvalues.
However, as shown in Section \ref{sec: system identification}, we demonstrate the high performance in the tasks of identification of differential equations.
It indicates that the matrix $\widehat{\mathbf{A}}$ itself provides a correct estimation of $A_F^*|_{V_{p,m}}$.


\begin{figure}
\begin{minipage}[c]{1.0\linewidth}
    \begin{minipage}[c]{0.33\linewidth}
      \includegraphics[keepaspectratio, width=\linewidth]{images/vdP_mu10_exp_m5_n7_sigma200_N36_seed1_conti_eigenvalues.png}
    \end{minipage}\hfill 
    \begin{minipage}[c]{0.33\linewidth}
      \includegraphics[keepaspectratio, width=\linewidth]{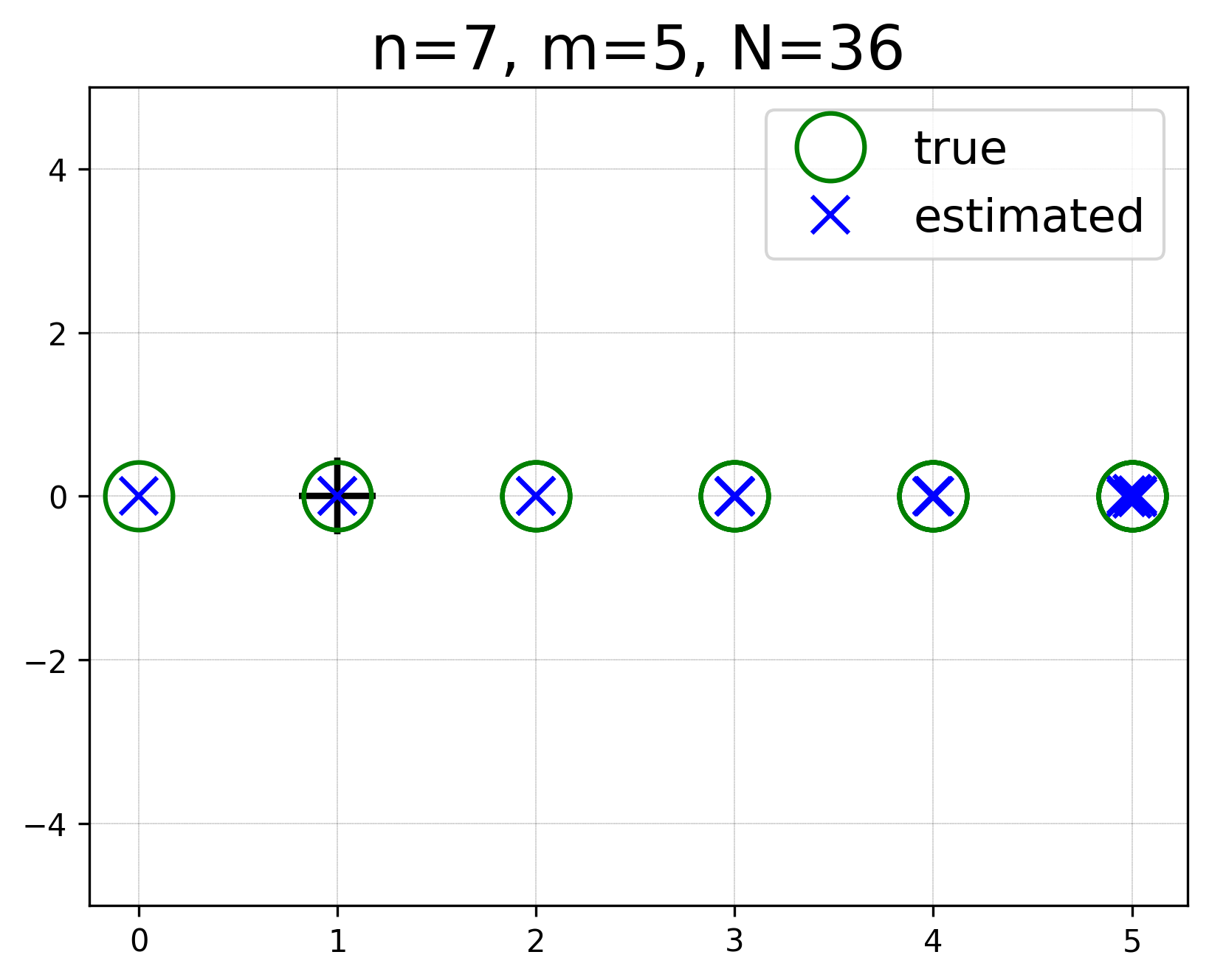}
    \end{minipage}\hfill 
    \begin{minipage}[c]{0.33\linewidth}
      \includegraphics[keepaspectratio, width=\linewidth]{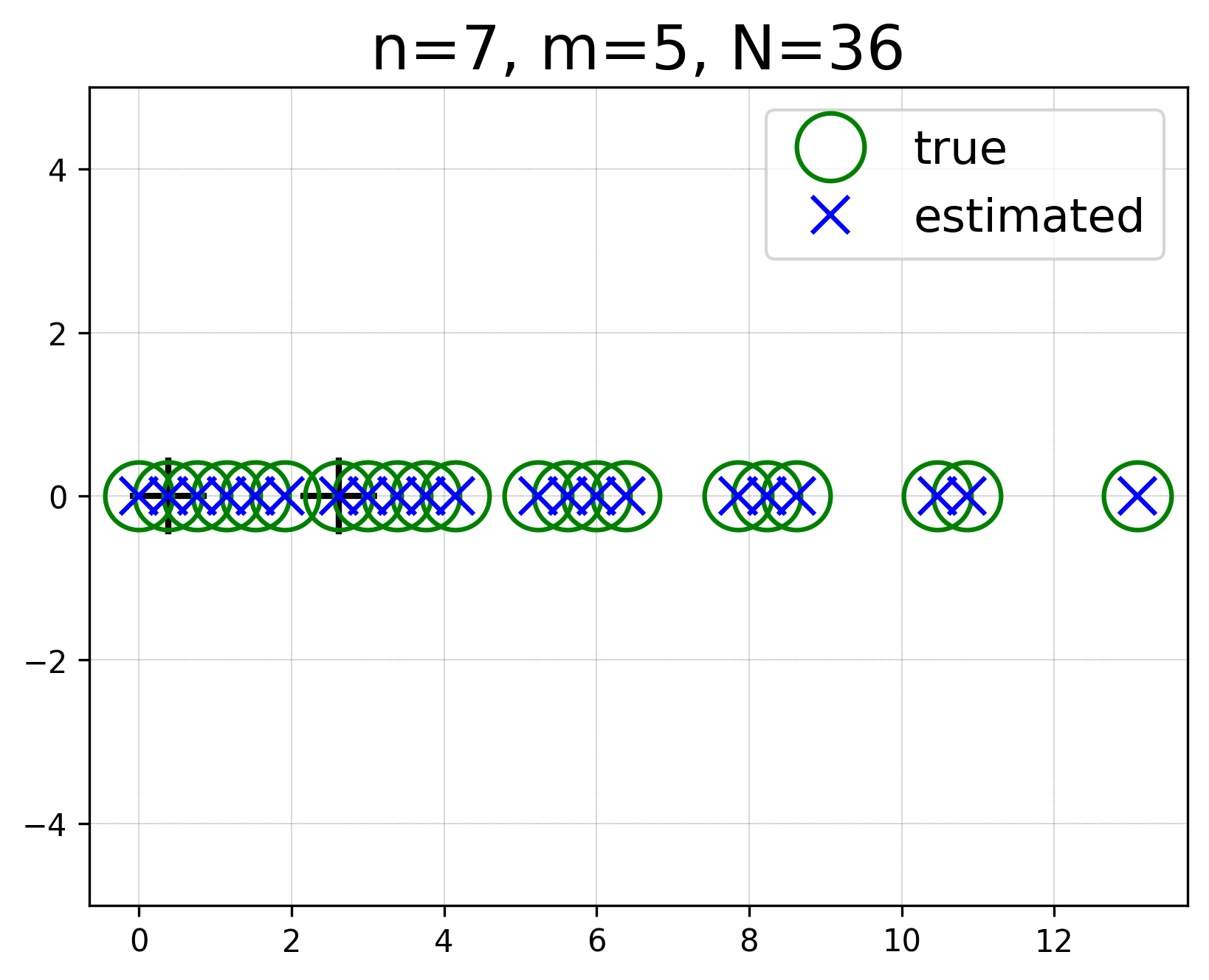}
    \end{minipage}
\end{minipage}
\begin{minipage}[c]{1.0\linewidth}
    \begin{minipage}[c]{0.33\linewidth}
      \includegraphics[keepaspectratio, width=\linewidth]{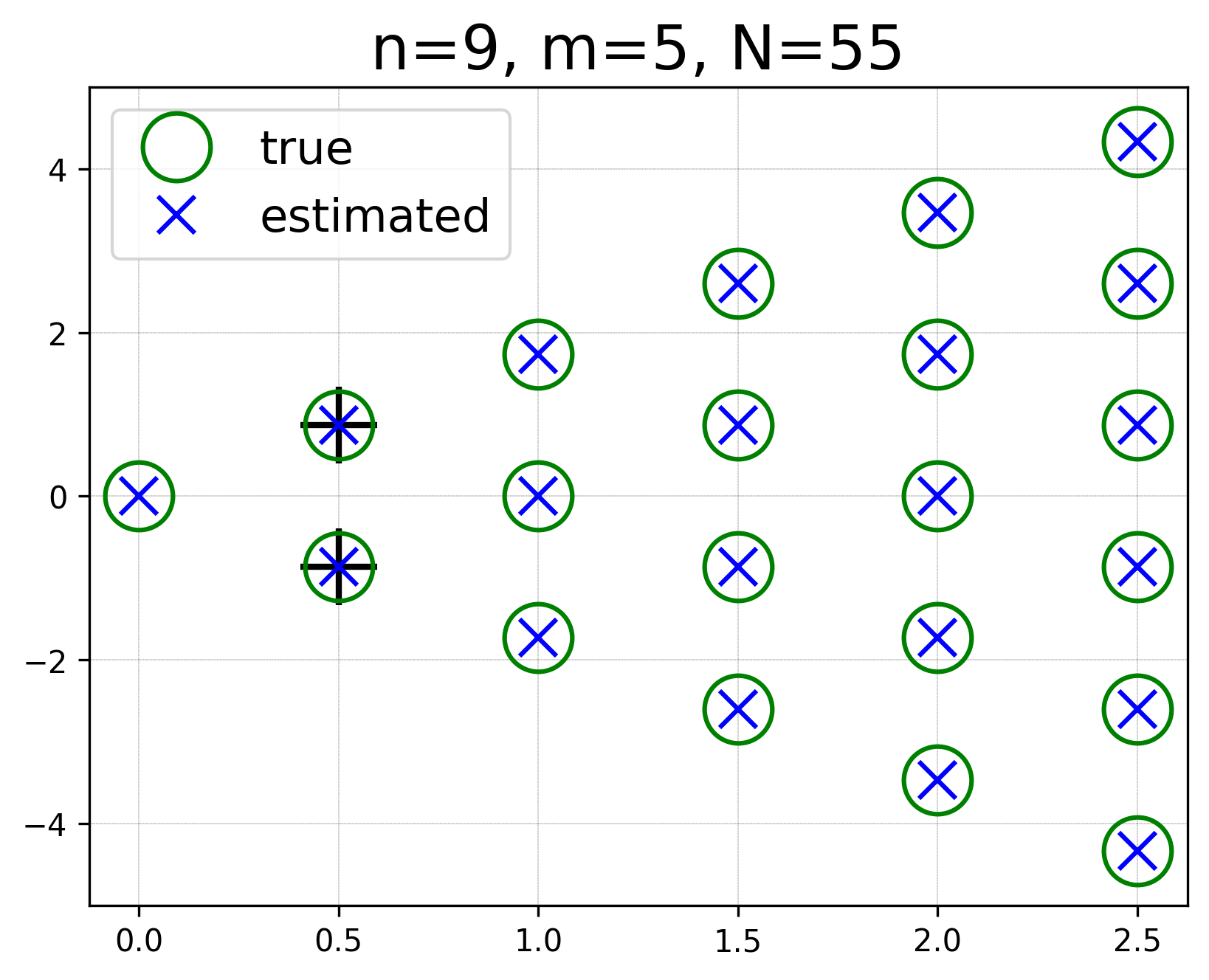}
    \end{minipage}\hfill 
    \begin{minipage}[c]{0.33\linewidth}
      \includegraphics[keepaspectratio, width=\linewidth]{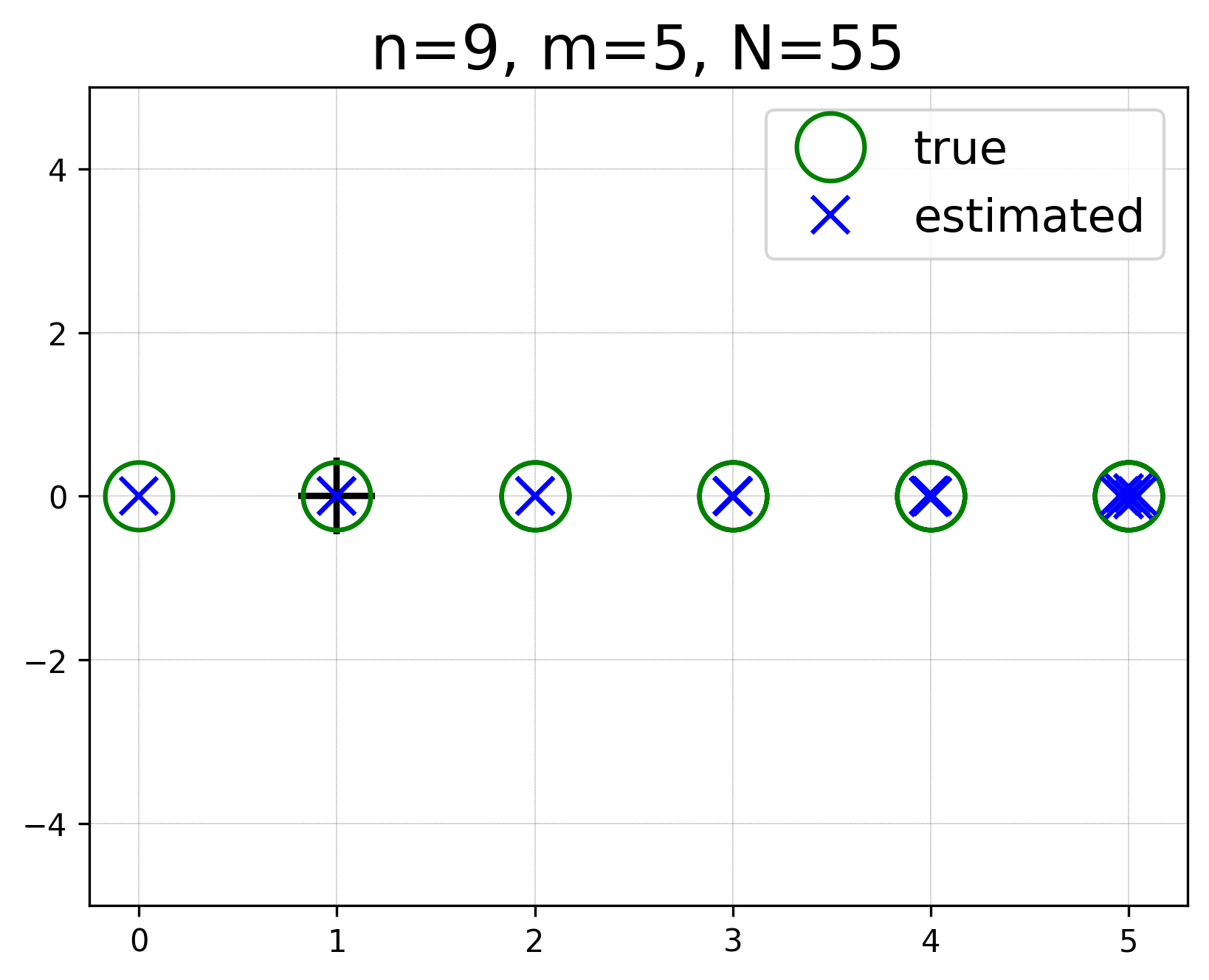}
    \end{minipage}\hfill 
    \begin{minipage}[c]{0.33\linewidth}
      \includegraphics[keepaspectratio, width=\linewidth]{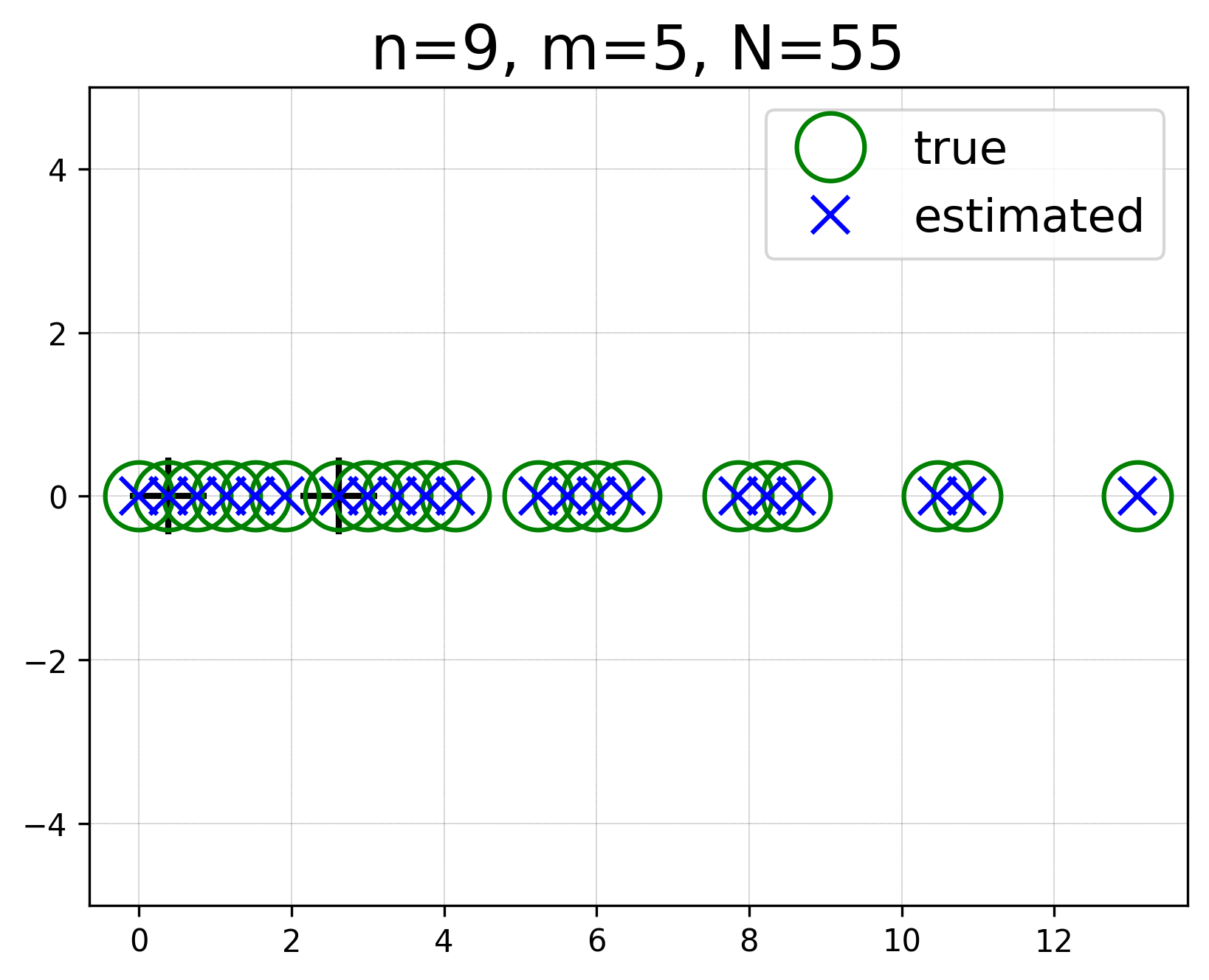}
    \end{minipage}
\end{minipage}
\caption{
Estimation of eigenvalues using Algorithm \ref{algorithm: data-driven PF operator estimation, continuous} for van der Pol oscillators with $\mu=1$ (left), $2$ (middle), and $3$ (right).
On the top row, we use the exponential kernel $k^{\rm e}(x,y) = e^{x^\top y/4}$ with $m=5$, $n=7$, $p=(0,0)$, $N=36$ samples from the uniform distribution on $[-1,1]^2$, and the exact velocities on them.
On the bottom row, we use the Gaussian kernel $k^{\rm g}(x,y) = e^{-|x-y|^2/8}$ with $m=5$, $n=9$, $p=(0,0)$, $N=55$ samples from the uniform distribution on $[-1,1]^2$, and the exact velocities on them.
The blue $\times$'s indicate the eigenvalues of the estimated Perron--Frobenius operator $\widehat{\mathbf{A}}$ and the green circles indicate the eigenvalues of $A_F|_{V_{p,m}}$.
The two $+$'s indicate the eigenvalues of the Jacobian matrix of the vector field $F(x,y) = (y,\,\mu(1-x^2)y-x)$ at $p=(0,0)$.
}
\label{fig: vdP_eigenvalues_exp2}
\end{figure}

\subsection{Eigenfunctions for van der Pol oscillators}
Figure \ref{fig: vdP_eigenfunctions_mu1} is a collection of the approximated eigenfunctions of the extended Koopman operator $A_F^\times$ of the van der Pol oscillator for $\mu=1$ and $\mu=3$ via Algorithm \ref{algorithm: eigenfunction, continuous} using the exponential kernel $e^{x^\top y/1.4}$.
We take $m=20$, $n=24$, $p=(0,0)$, $N=7000$ samples from the uniform distribution on $[-1,1]^2$, and the exact velocities on them.
The left two panels are the absolute values and arguments of the eigenfunction corresponding to the eigenvalue $0.5+0.87\mathrm{i} \approx (1+\sqrt{3}\mathrm{i})/2$.
The right panel is the eigenfunction corresponding to the eigenvalue $1$.
It can be observed that the absolute value of the estimated eigenfunction increases rapidly once it exceeds the limit cycle. 
Similar figures are obtained in \cite{6760712, 7384725}, and our method can be considered as a data-driven version of the Taylor expansion method proposed in these articles.

\begin{figure}
\begin{minipage}[c]{1.0\linewidth}
    \begin{minipage}[c]{0.64\linewidth}
      \includegraphics[keepaspectratio, width=\linewidth]{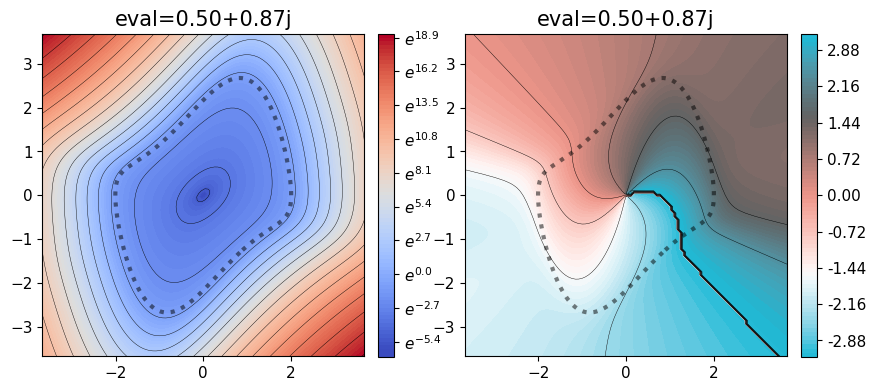}
    \end{minipage}\hfill 
    \begin{minipage}[c]{0.33\linewidth}
      \includegraphics[keepaspectratio, width=\linewidth]{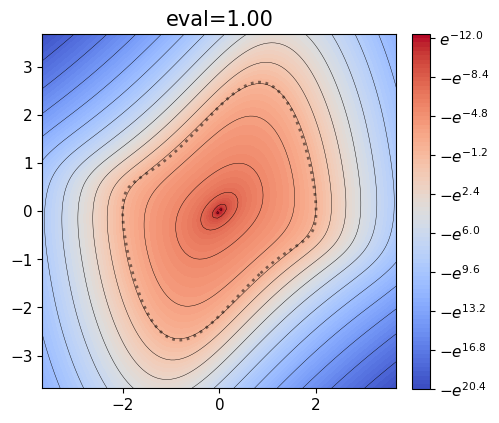}
    \end{minipage}
\end{minipage}
\caption{
Approximated eigenfunction of the extended Koopman operator $A_F^\times$ of the van der Pol oscillator for $\mu=3$ via Algorithm \ref{algorithm: eigenfunction, continuous} using the exponential kernel $e^{x^\top y/1.4}$ with input $m=20$, $n=22$, $p=(0,0)$, $N=7000$ samples from the uniform distribution on $[-1,1]^2$, and the exact velocities on them.
The left and middle panels are the heat map with contour lines corresponding to the absolute values and arguments of the eigenfunction for eigenvalue $0.5 + 0.87\mathrm{i}$.
The right panel is the heat map with contour lines corresponding to the eigenfunction for eigenvalue $1.0$.
The dotted lines indicate the limit cycles.
}
\label{fig: vdP_eigenfunctions_mu1}
\end{figure}

Figure \ref{fig: vdP_eigenfunctions_mu3} describes the approximated eigenfunction of the extended Koopman operator $A_F^\times$ of the van der Pol oscillator for $\mu=3$, and a capability to capture a characteristic of the dynamical system.
The left panel is the heat map with contour lines corresponding to the eigenfunctions of eigenvalue $1.0$ via Algorithm \ref{algorithm: eigenfunction, continuous} using the exponential kernel $e^{x^\top y/1.4}$.
We take $m=16$, $n=21$, $p=(0,0)$, $N=7000$ samples from the uniform distribution on $[-1,1]^2$, and the exact velocities on them.
Looking at the left panel, unlike the case of $\mu=1$, the contour lines of the eigenfunction form a distinctive inclined ``$S$'' shape around the origin, while, as shown in the middle panel, the dynamics with initial values near the origin exhibits highly skewed behavior over time.
The approximated eigenfunction captures this behavior of the dynamical system around the origin.

\begin{figure}
\begin{minipage}[c]{1.0\linewidth}
    \begin{minipage}[c]{0.38\linewidth}
      \includegraphics[keepaspectratio, width=\linewidth]{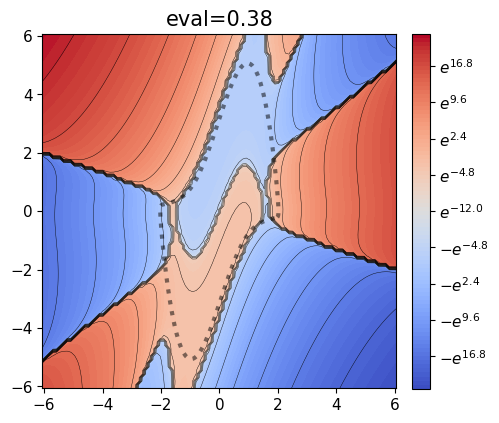}
    \end{minipage}\hfill 
    \begin{minipage}[c]{0.30\linewidth}
      \includegraphics[keepaspectratio, width=\linewidth]{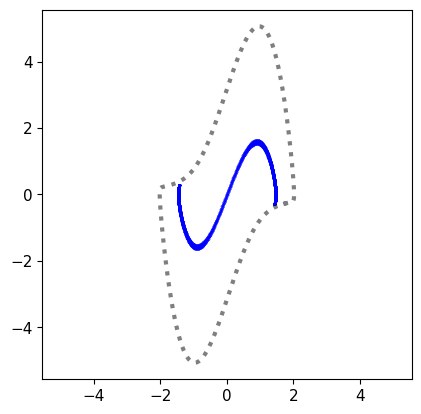}
    \end{minipage}\hfill 
    \begin{minipage}[c]{0.31\linewidth}
      \includegraphics[keepaspectratio, width=\linewidth]{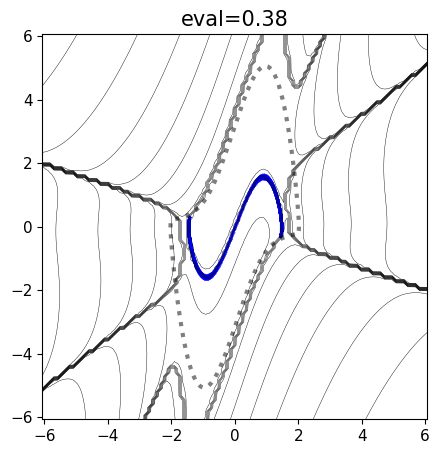}
    \end{minipage}
\end{minipage}
\caption{
Estimated eigenfunction of the extended Koopman operator $A_F^\times$ of the van der Pol oscillator for $\mu=3$ via Algorithm \ref{algorithm: eigenfunction, continuous} using the exponential kernel $e^{x^\top y/1.4}$ with input $m=16$, $n=21$, $p=(0,0)$, $N=7000$ samples from the uniform distribution on $[-1,1]^2$, and the exact velocities on them.
The left panel is the heat map with contour lines corresponding to the eigenfunctions of eigenvalue $0.38$.
The middle panel is the image of the small rectangle domain $[-0.01,0.01]^2$ under the flow map $\phi^t$  at $t=3$.
The right panel is a combination of the middle panel and the contour lines of the left panel.
The dotted lines indicate the limit cycles.
}
\label{fig: vdP_eigenfunctions_mu3}
\end{figure}

\subsection{Eigenvalues and eigenfunctions for Duffing oscillators}


Figure \ref{fig: duffing_eigenvalues} describes the estimation of eigenvalues of the Perron--Frobenius operators $A_F^*|_{V_{p,m}}$ of the Duffing oscillator \eqref{duffing}  using Algorithm \ref{algorithm: data-driven PF operator estimation, continuous} with the exponential kernel $k^{\rm e}(x,y) = e^{-(x-b)^\top (y-b)/\sigma^2}$ with $\sigma=1$ and $b=0$.
We take $m=5$, $n=10$ and pick $N=66$ samples from the uniform distribution on $[-2,2]^2$.
We use the exact velocities at the samples to compute $\widehat{\mathbf{A}}$.
In each case, the Hausdorff distance between the estimated eigenvalues and the true ones is less than $10^{-7}$, indicating the matrix $\widehat{\mathbf{A}}$ well approximates the Perron--Frobenius operator $A_F^*|_{V_{p,m}}$.

\begin{figure}
\begin{minipage}[c]{1.0\linewidth}
    \begin{minipage}[c]{0.33\linewidth}
      \includegraphics[keepaspectratio, width=\linewidth]{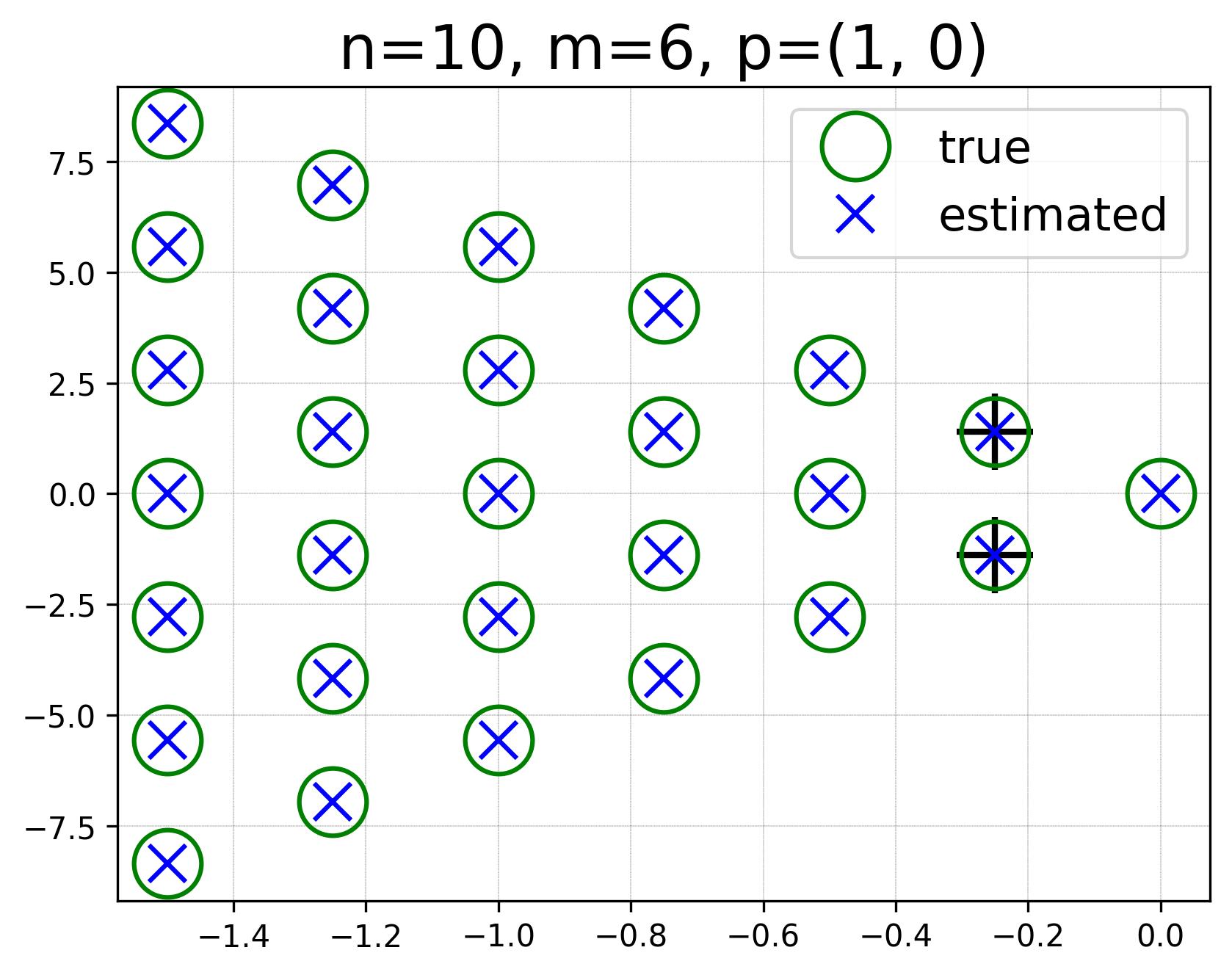}
    \end{minipage}\hfill 
    \begin{minipage}[c]{0.33\linewidth}
      \includegraphics[keepaspectratio, width=\linewidth]{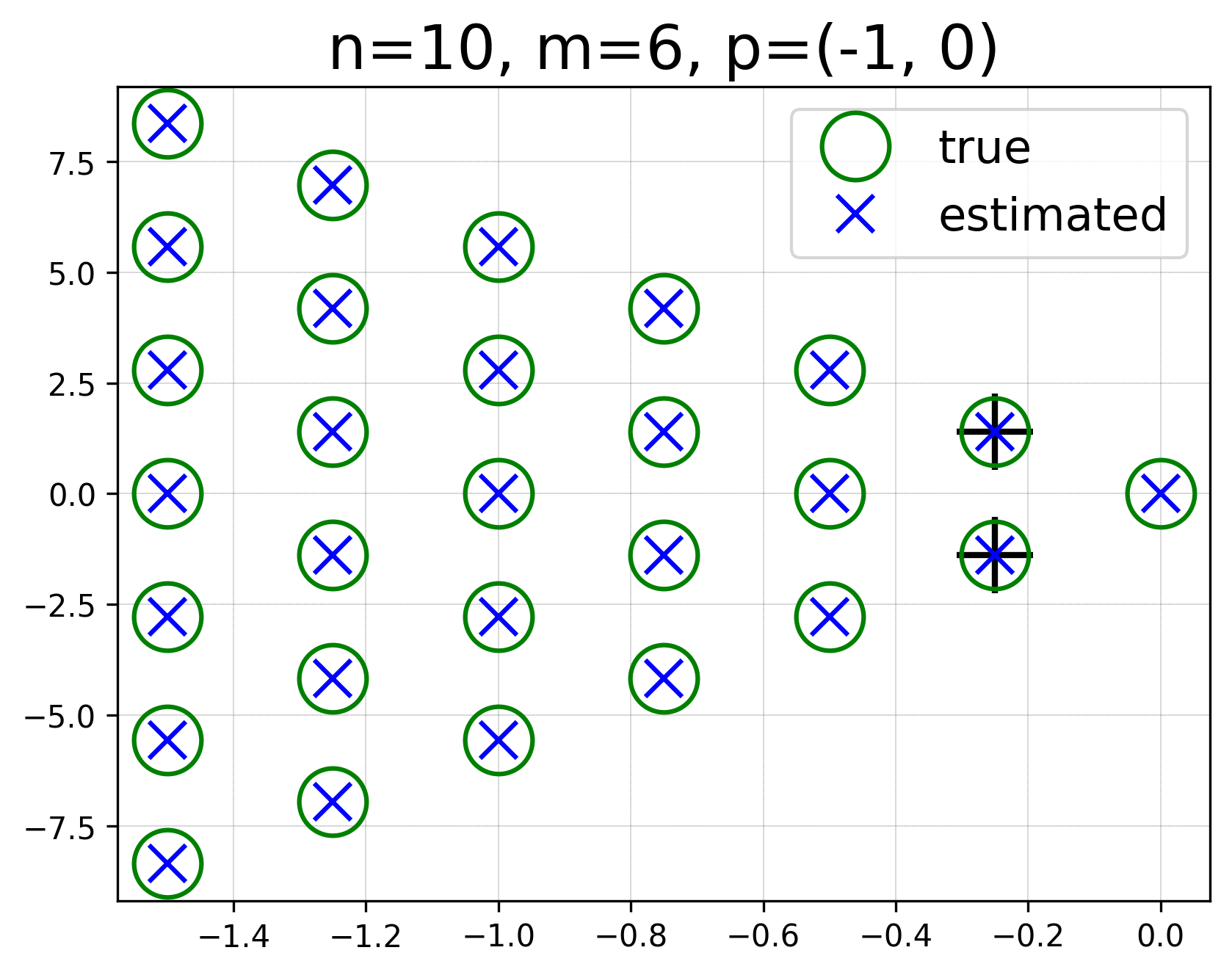}
    \end{minipage}\hfill 
    \begin{minipage}[c]{0.33\linewidth}
      \includegraphics[keepaspectratio, width=\linewidth]{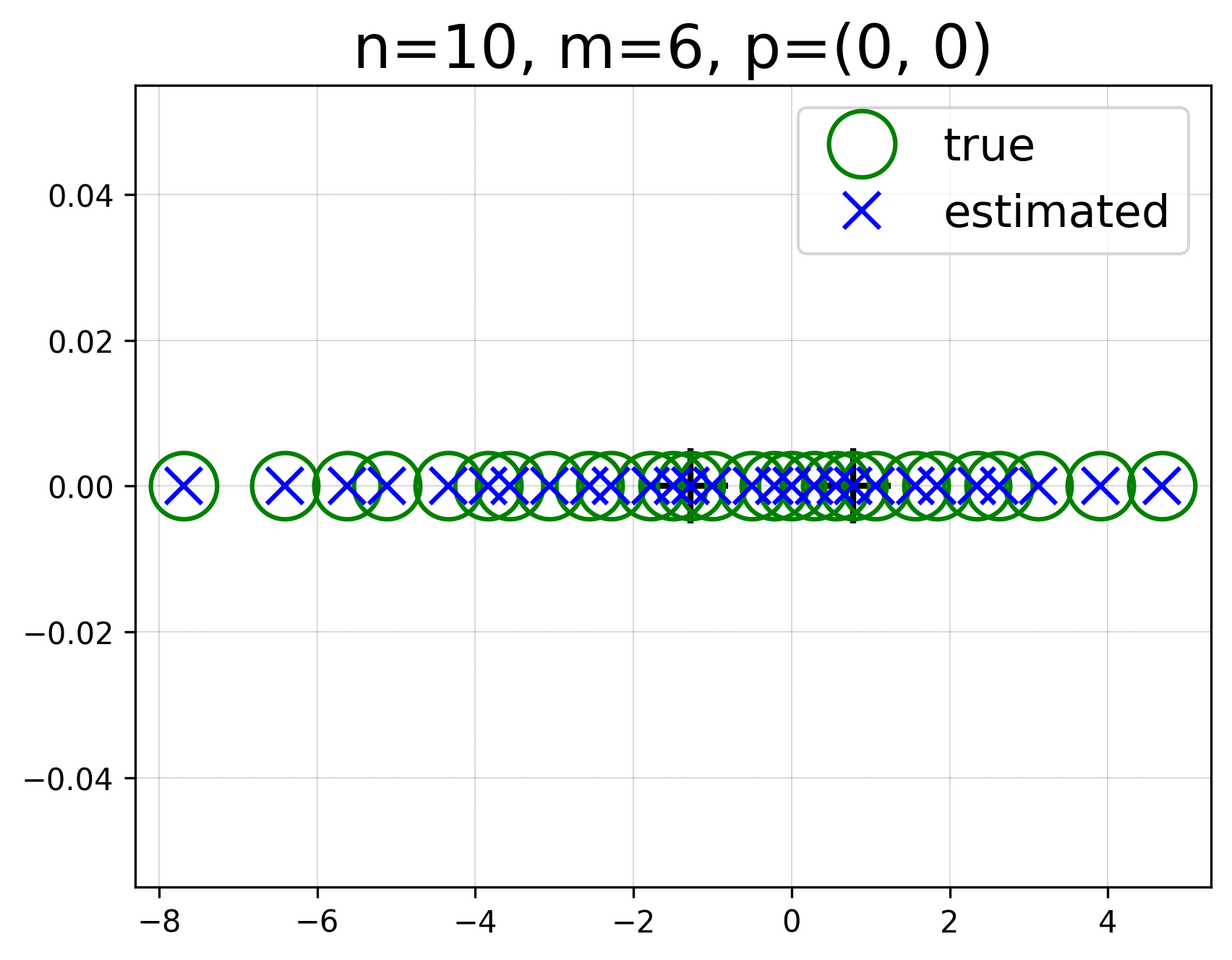}
    \end{minipage}
\end{minipage}
\caption{
Estimated eigenvalues of the Perron--Frobenius operators $A_F^*|_{V_{p,m}}$ of the Duffing oscillator \eqref{duffing} via Algorithm \ref{algorithm: eigenfunction, continuous} using the exponential kernel $e^{x^\top y}$ with input $m=6$, $n=10$, the equilibrium points, $N=66$ samples from the uniform distribution on $[-2,2]^2$, and the exact velocities on them.
The left, middle and right panels correspond to the eigenvalues computed using the three equilibrium points $(1,0)$, $(-1,0)$, $(0,0)$, respectively.
The blue $\times$'s indicate the eigenvalues of the estimated Perron--Frobenius operator $\widehat{\mathbf{A}}$ and the green circles indicate the eigenvalues of $A_F|_{V_{p,m}}$.
The two $+$'s indicate the eigenvalues of the Jacobian matrix of the vector field $F(x,y) = (y,\,-0.5y+x-x^3)$ at the equilibrium point.
}
\label{fig: duffing_eigenvalues}
\end{figure}

Figure \ref{fig: duffing_eigenfunctions_single_equilibrium_point} is a collection of the approximated eigenfunctions of the Duffing oscillator \eqref{duffing}.
for the eigenvalues $-0.25 + 1.39\mathrm{i} \approx  (-1 + \sqrt{31}\mathrm{i})/4 $ (left and middle-left), $ 0.78\approx (-1 + \sqrt{17})/4$ (middle-right), and $1.28 \approx (-1 - \sqrt{17})/4$ (right).
We use the exponential kernel  $k^{\rm e}(x,y) = e^{-(x-b)^\top (y-b)/\sigma^2}$ with $\sigma=1$ and $b=0$, and take $m_1=10$, $n_1=12$, $N_1=3000$ samples from the uniform distribution on $[-1.5,1.5] \times [-0.5,0.5]$, and the exact velocities on them as input.
The left panel is the heat map with contour lines of the absolute value of the approximated eigenfunction for the eigenvalue $ (-1 + \sqrt{31}\mathrm{i})/4 \approx -0.25 + 1.39\mathrm{i}$ with the equilibrium point $(-1,0)$.
The middle-right and right panels are the heat maps with contour lines of the eigenfunction for the eigenvalues $ (-1 + \sqrt{17})/4 \approx 0.78$ and $(-1 - \sqrt{17})/4 \approx 1.28$, respectively.
The eigenfunction for the positive eigenvalue captures the attracting direction, while that of the negative eigenvalue does the repelling direction around the origin.

\begin{figure}
\begin{minipage}[c]{1.0\linewidth}
    \begin{minipage}[c]{0.26\linewidth}
      \includegraphics[keepaspectratio, width=\linewidth]{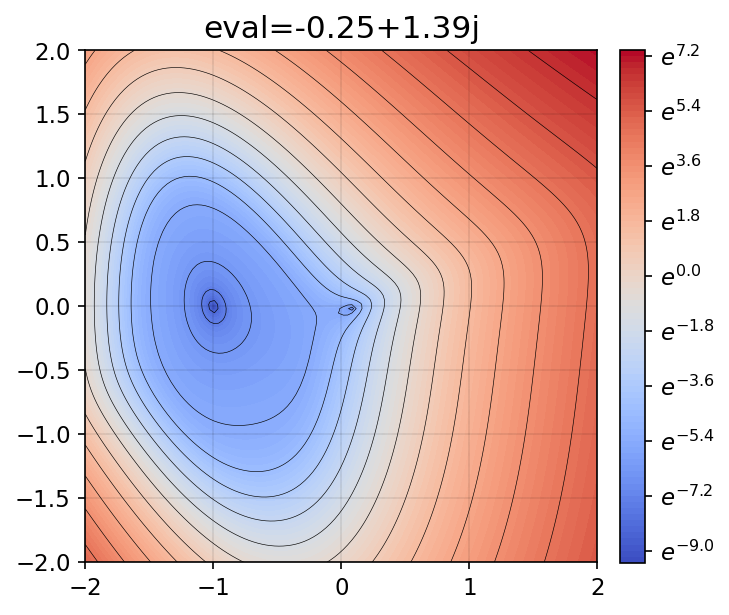}
    \end{minipage}\hfill 
    \begin{minipage}[c]{0.22\linewidth}
      \includegraphics[keepaspectratio, width=\linewidth]{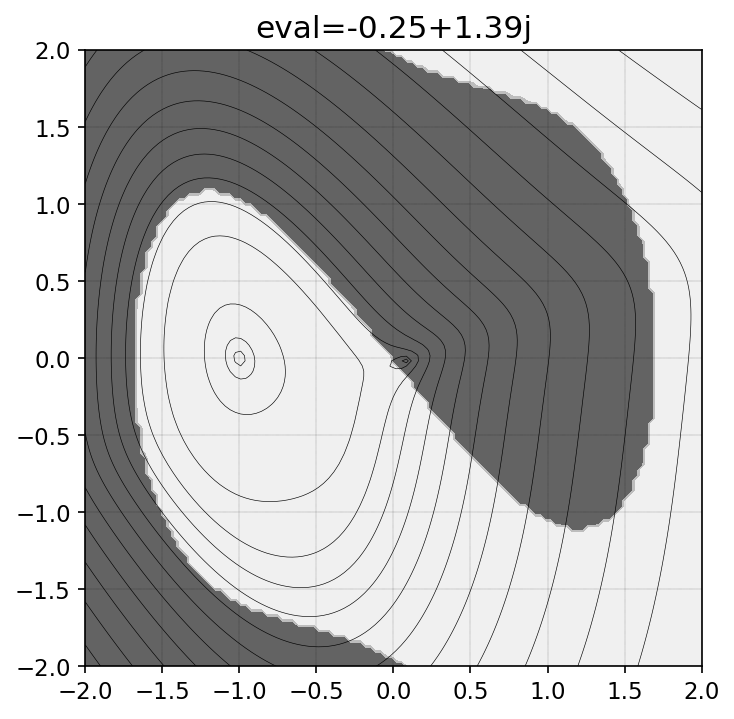}
    \end{minipage}\hfill 
    \begin{minipage}[c]{0.255\linewidth}
      \includegraphics[keepaspectratio, width=\linewidth]{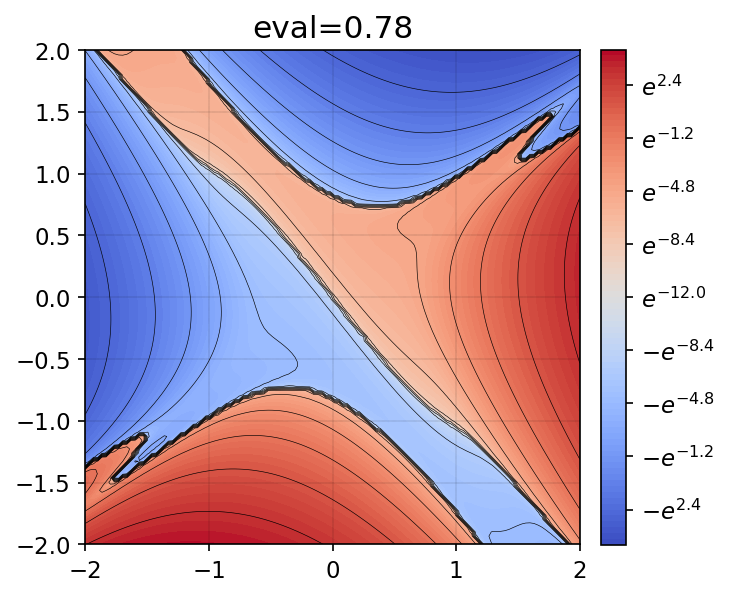}
    \end{minipage}\hfill 
    \begin{minipage}[c]{0.26\linewidth}
      \includegraphics[keepaspectratio, width=\linewidth]{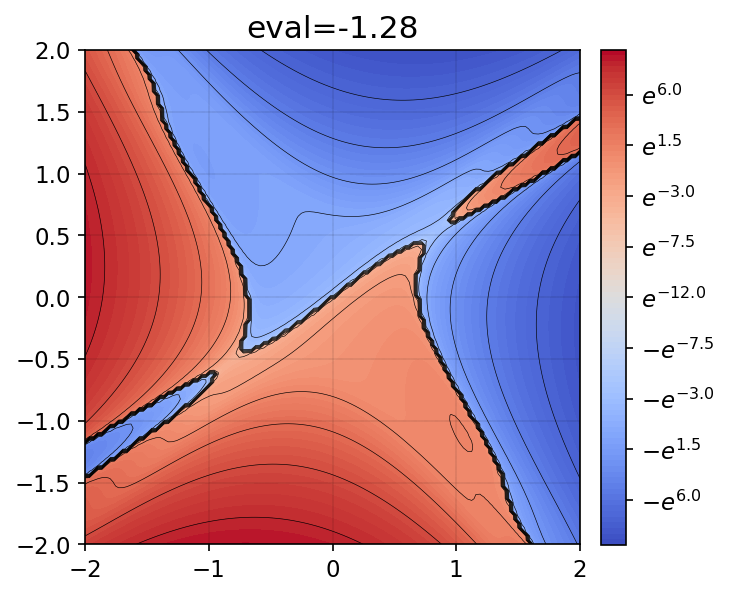}
    \end{minipage}
\end{minipage}
\caption{
Estimated eigenfunction of the extended Koopman operator $A_F^\times$ of the Duffing oscillator \eqref{duffing} via Algorithm \ref{algorithm: eigenfunction, continuous} using the exponential kernel $e^{x^\top y}$ with input $m_1=10$, $n_1=16$, $p_1=(-1,0)$ (left and left middle), $p_1=(0,0)$ (right middle and right), $N_1=3000$ samples from the uniform distribution on $[-1.5,1.5] \times [-0.5, 0.5]$, and the exact velocities on them.
The left panel is the heat maps with contour lines corresponding to the absolute value of the eigenfunction for eigenvalue $(-1 + \sqrt{31}i)/4$, computed using equilibrium point $(-1,0)$.
The middle-left panel is a combination of the contour lines of the left panel and the domain of attraction of the dynamical system.
The middle-right and right panels are the heat maps with contour lines corresponding to the eigenfunctions for eigenvalues $(-1 + \sqrt{17})/4$ and $(-1 - \sqrt{17})/4$, respectively, computed using the equilibrium point $(0,0)$.
}
\label{fig: duffing_eigenfunctions_single_equilibrium_point}
\end{figure}

Figure \ref{fig: duffing_eigenfunctions_two_equilibrium_points2} describes the approximated eigenfunctions for $-1$ using the exponential kernel $e^{x^\top y/\sigma^2}$ with $\sigma = 0.5$ and $1$.
We take two equilibrium points, $p_1=(-1,0)$ and $p_2=(1,0)$.
Then, we set $m_1=m_2=10$, $n_1=n_2=16$, 
 and take $N_1=N_2=7000$ samples from the uniform distribution on $[-1.5,1.5] \times [-0.5,0.5]$, and the exact velocities on them as input.
Here, we draw graphs of even and odd eigenfunctions constructed as in the following procedure:
first, we note that the linear operator $C_{-1}: H \to H; h \mapsto h((-1)\times\cdot)$ induces a Hermitian unitary operator.
Moreover, $C_{-1}$ is commutative with $A_F^*$ and satisfies $C_{-1}(V_{p_1,n}) = V_{p_2,n}$.
Thus, for the eigenfunction $v$ of the extended Koopman operator $A_F^\times$ in $V_{p_1,n}$, the image $C_{-1}v$ corresponds to an eigenvector for the same eigenvalue in $V_{p_2,n}$.
Therefore, we canonically construct even and odd eigenfunctions $v \pm C_{-1}v$ of the extended Koopman operator $A_F^\times$.
In the case of $\sigma = 0.5$, two peaks appear at the two equilibrium points, while they disappear in the case of $\sigma =1$.
This phenomenon is caused by the computation of the inverse matrix of $\widetilde{\mathbf{G}}_m := (\mathbf{G}_m^{ij})_{i,j}$ in \eqref{computation of H}.
In this computation, we employed the pseudo inverse of $\widetilde{\mathbf{G}}_m$ instead of its actual inverse.
When $\sigma=1$ and $m=10$, the condition number of $\widetilde{\mathbf{G}}_m$ becomes very large, thus, the smaller eigenvalues of $\widetilde{\mathbf{G}}_m$ are automatically truncated in the process of computing the pseudo inverse of $\widetilde{\mathbf{G}}_m$.
Indeed,  it can be seen that peaks appear on the equilibrium points in the case of a small $m$ with $\sigma=1$, but disappear if we intentionally truncate the small eigenvalues of $\widetilde{\mathbf{G}}_m$.
While this operation of ``truncating small eigenvalues'' thought to eliminate peaks at the equilibrium points, the appearance of contour lines along the boundary of the regions of attractions is quite interesting.
\begin{figure}
\begin{minipage}[c]{1.0\linewidth}
   \begin{minipage}[c]{0.25\linewidth}
      \includegraphics[keepaspectratio, width=\linewidth]{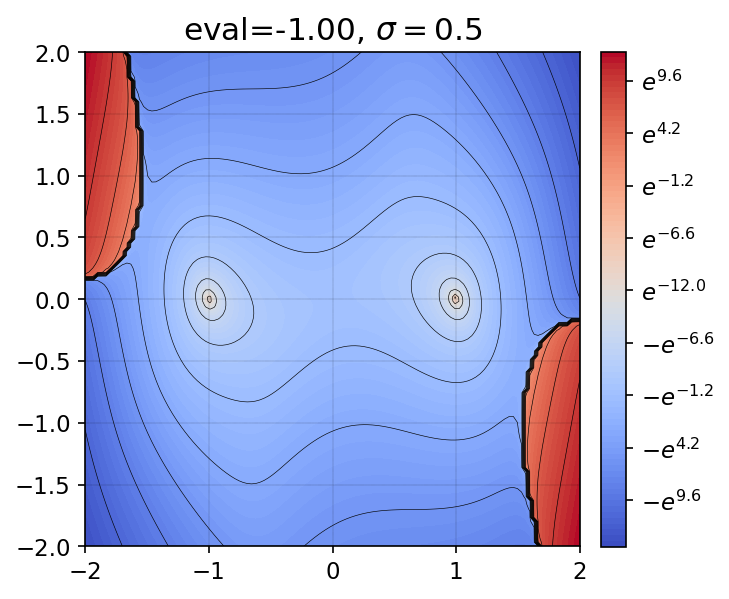}
    \end{minipage}\hfill 
    \begin{minipage}[c]{0.25\linewidth}
      \includegraphics[keepaspectratio, width=\linewidth]{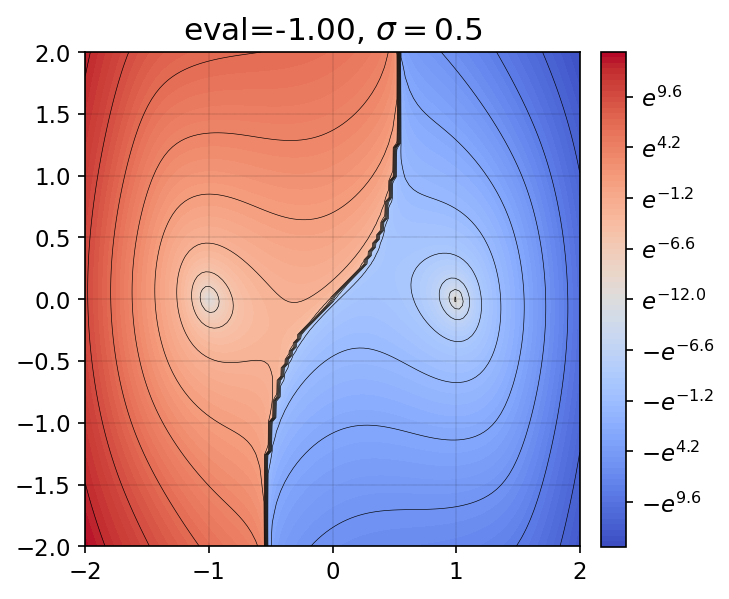}
    \end{minipage}\hfill 
    \begin{minipage}[c]{0.25\linewidth}
      \includegraphics[keepaspectratio, width=\linewidth]{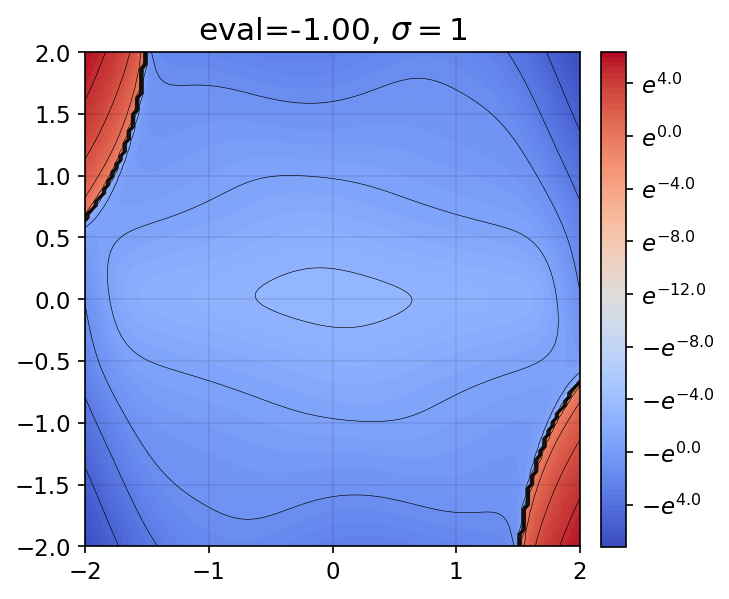}
    \end{minipage}\hfill 
    \begin{minipage}[c]{0.25\linewidth}
      \includegraphics[keepaspectratio, width=\linewidth]{images/duffing_realev_exp_m10_attr_ep_53_sigma100_epidx01_minus_withContour_eigenfunction.png}
    \end{minipage}
\end{minipage}
\caption{
Estimated eigenfunctions for the eigenvalue $-1$ of the extended Koopman operator $A_F^\times$ of the Duffing oscillator \eqref{duffing} via Algorithm \ref{algorithm: eigenfunction, continuous} using the exponential kernels $e^{x^\top y}$ (left and middle-left) and $e^{x^\top y/0.5}$ (right and middle-right) with input $m_1=m_2=10$, $n_1=n_2=16$, $p_1=(-1,0)$, $p_2=(1,0)$, $N_1=N_2=7000$ samples from the uniform distribution on $[-1.5,1.5] \times [-0.5,0.5]$, and the exact velocities on them.
}
\label{fig: duffing_eigenfunctions_two_equilibrium_points2}
\end{figure}

\end{document}